\newcommand{\myxysize}{0.4cm}
\newtheorem{theorem}{Theorem}[section]
\newtheorem{lemma}[theorem]{Lemma}
\newtheorem{definition}[theorem]{Definition}
\newtheorem{proposition}[theorem]{Proposition}
\newtheorem{corollary}[theorem]{Corollary}
\newtheorem{remark}[theorem]{Remark}
\newtheorem{example}[theorem]{Example}
\newtheorem{examples}[theorem]{Examples}
\numberwithin{equation}{section}
\def\pre-tr{\operatorname{pre-tr}}
\newcommand{\Tot}{\operatorname{Tot}}
\newcommand{\Sh}{\operatorname{Sh}}
\newcommand{\res}{\operatorname{res}}
\newcommand{\Sym}{\operatorname{Sym}}
\newcommand{\Lie}{\operatorname{Lie}}
\newcommand{\id}{\operatorname{id}}
\DeclareMathAlphabet{\mathpzc}{OT1}{pzc}{m}{it}
\newcommand{\groundring}{{\mathsf{k}}}
\newcommand{\matfak}[4]{{\xymatrix@C3ex{{{#1}} \ar@<0.3ex>[r]^{{#2}} & {{#3}} \ar@<0.3ex>[l]^{{#4}}}}}
\newcommand{\op}{\operatorname}
\newcommand{\ra}{\rightarrow}
\newcommand{\la}{\leftarrow}
\newcommand{\sra}{\twoheadrightarrow}
\newcommand{\hra}{\hookrightarrow}
\newcommand{\xra}[1]{\xrightarrow{#1}}
\newcommand{\xla}[1]{\xleftarrow{#1}}
\newcommand{\sira}{\xra{\sim}}
\newcommand{\xsira}[1]{\xrightarrow[\sim]{#1}}
\newcommand{\sila}{\overset{\sim}{\leftarrow}}
\newcommand{\xsra}[1]{\overset{#1}{\twoheadrightarrow}}
\newcommand{\mar}{\ar@{|->}}
\newcommand{\sar}{\ar@{->>}}
\newcommand{\iar}{\ar@{^{(}->}}
\newcommand{\gar}{\ar@{=}}
\newcommand{\gleichar}{\ar@{}|{=}}
\newcommand{\congar}{\ar@{}|{\cong}}
\newcommand{\Bl}[1]{{\mathbb{#1}}}
\newcommand{\DZ}{\Bl{Z}}
\newcommand{\DN}{\Bl{N}}
\newcommand{\DR}{\Bl{R}}
\newcommand{\DC}{\Bl{C}}
\newcommand{\internalHom}{\ul{\op{Hom}}}
\newcommand{\calMod}{\mathcal{M}od}
\newcommand{\Yoneda}[1]{{\widehat{#1}}}
\newcommand{\Mono}{{\op{\textbf{Mono}}}}
\newcommand{\ol}[1]{{\overline{#1}}}
\newcommand{\ul}[1]{{\underline{#1}}}
\newcommand{\leftadjointtores}{\op{prod}}
\newcommand{\pro}{\leftadjointtores}
\newcommand{\Kokern}{\op{cok}}
\newcommand{\inv}{^{-1}}
\newcommand{\can}{\op{can}}
\newcommand{\Fib}{\operatorname{Fib}}
\newcommand{\trFib}{\operatorname{trFib}}
\newcommand{\cof}{\operatorname{cof}}
\newcommand{\Obj}{{\op{Obj}\;}}
\newcommand{\per}{\op{per}}
\newcommand{\dgcat}{\op{dgcat}}
\newcommand{\point}{\text{pt}}
\newcommand{\HoMC}{\op{Ho}}
\newcommand{\as}{{{\raisebox{-0.2mm}{\scriptsize{*}}\mkern-7mu\raisebox{-0.1mm}{\scriptsize{!}}\mkern2mu}}}
\newcommand{\tzmat}[4]{{\left[\begin{smallmatrix} {#1} & {#2} \\ {#3} & {#4} \end{smallmatrix}\right]}}
\newcommand{\tzquadmat}[4]{{\begin{smallmatrix} {#1} & {#2} \\ {#3} & {#4} \end{smallmatrix}}}
\newcommand{\tildew}[1]{\widetilde{#1}}
\newcommand{\comp}{\circ}
\newcommand{\Dbl}{\op{Dbl}}
\newcommand{\tot}{\op{tot}}
\newcommand{\opp}{{\op{op}}}
\newcommand{\gr}{\op{gr}}
\newcommand{\mathovalbox}[1]{{\text{\ovalbox{${#1}$}}}}
\newcommand{\define}[1]{{\textbf{#1}}}
\numberwithin{equation}{section}
\newcommand{\inj}{\op{inj}}
\newcommand{\hinj}{\op{h-inj}}
\newcommand{\sweet}{good{}}
\newcommand{\hy}{\text{-}}
\newcommand{\svek}[2]{{\left[\begin{smallmatrix} {#1} \\ {#2} \end{smallmatrix}\right]}}
\renewcommand{\phi}{\varphi}
\newcommand{\Cone}{\op{Cone}}
\newcommand{\ampl}{\op{ampl}}
\title{Smoothness of equivariant derived categories}
\author{Valery A.~Lunts \and Olaf M.~Schn{\"u}rer}
\address{
  Department of Mathematics\\
  Indiana University\\
  Rawles Hall\\
  831 East 3rd Street\\
  Bloomington, IN 47405\\
  USA
}
\email{vlunts@indiana.edu} 
\address{
  Mathematisches Institut\\ 
  Universit{\"a}t Bonn\\
  Endenicher Allee 60\\
  53115 Bonn\\
  Germany
}
\email{olaf.schnuerer@math.uni-bonn.de}
\keywords{smoothness, equivariant derived category, dg category}
\subjclass[2010]{16E45, 14L30}
\begin{document}

\begin{abstract}
  We introduce the notion of (homological) $G$-smoothness for a complex $G$-variety
  $X,$ where $G$ is a connected affine algebraic group. This is 
  based on the notion of smoothness for dg algebras and uses a
  suitable enhancement of the 
  $G$-equivariant derived category of $X.$
  If there are only finitely many $G$-orbits and all stabilizers are
  connected, we show that $X$ is $G$-smooth if and only if all orbits
  $\mathcal{O}$ satisfy $H^*(\mathcal{O}; \DR)=\DR.$  
  On the way we prove several results concerning smoothness of dg
  categories over a graded commutative dg ring.
\end{abstract}

\maketitle
\tableofcontents

\section{Introduction}
\label{sec:introduction}

We introduce the notion of (homological) $G$-smoothness of a complex
$G$-variety $X,$ where $G$ is a connected complex affine
algebraic group. The idea is to describe the $G$-equivariant morphism
$X \ra \point$ in terms of dg algebras and to use 
the notion of
smoothness defined for such algebras. Under suitable conditions we give
sufficient and necessary conditions for $G$-smoothness of $X.$ 
This is based on results on smoothness for dg $K$-algebras (or categories)
over a graded commutative dg ring $K$ that are of independent interest.

We first explain the definition and results concerning $G$-smoothness.
We assume in the following that $G$ acts on $X$
with finitely many orbits and that all stabilizer subgroups are
connected. We work with the $G$-equivariant bounded
constructible derived category $D^b_{G,c}(X)$ of sheaves of real
vector spaces on $X$ (see \cite{BL}).  Using a suitable enhancement we
find  
a dg $H_G(\point)$-algebra $A$ such that the perfect derived category
$\per(A)$ of dg $A$-modules is equivalent to $D^b_{G,c}(X).$ 
The structure morphism $H_G(\point) \ra A$ may be thought of as an
analog of the $G$-morphism $X \ra \point$ (recall that
$D^b_{G,c}(\point)$ and $\per(H_G(\point))$ are equivalent).
Slightly generalizing the standard definition we
say that $A$ is $H_G(\point)$-smooth if the diagonal bimodule $A$ is
in $\per(A \otimes^L_{H_G(\point)} A^\opp).$ Then we define $X$ to be
$G$-smooth if $A$ is $H_G(\point)$-smooth.
Our first main result shows that $G$-smoothness can be tested
on the orbits.

\begin{theorem}
  [see Theorem~\ref{t:X-G-smooth-iff-all-orbits-G-smooth}]
  \label{t:X-G-smooth-iff-all-orbits-G-smooth-intro}
  Under the above conditions, 
  $X$ is $G$-smooth if and only if all $G$-orbits in $X$ are
  $G$-smooth. 
\end{theorem}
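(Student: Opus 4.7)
The plan is to prove the theorem by induction on the number of $G$-orbits in $X$, using an open--closed decomposition to cut $X$ into smaller pieces and applying a gluing principle for smoothness of dg algebras over a graded commutative base. The base case, $X$ a single orbit, is tautological. For the inductive step I would pick a closed $G$-orbit $Z \subset X$ (which exists by finiteness) and set $U := X \setminus Z$; then $U$ is a $G$-invariant open subvariety with strictly fewer orbits, and the $G$-orbits of $Z$ together with those of $U$ exhaust the $G$-orbits of $X$.

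Next I would lift the standard recollement
\begin{equation*}
  D^b_{G,c}(Z) \xrightarrow{i_*} D^b_{G,c}(X) \xrightarrow{j^*} D^b_{G,c}(U)
\end{equation*}
to a semi-orthogonal decomposition over $H_G(\point)$ at the level of dg enhancements. This should produce dg $H_G(\point)$-algebras $A$, $A_Z$, $A_U$ realizing the three corners, together with a gluing $(A_Z, A_U)$-bimodule $M$ expressed via $\op{RHom}$ of distinguished generators, so that $A$ is quasi-isomorphic to the triangular-matrix dg algebra built from $A_Z$, $A_U$, and $M$.

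The engine of the proof is the expected gluing principle for smoothness over a graded commutative base $K$, which should be among the results on smoothness of dg categories announced in the abstract: the triangular dg $K$-algebra built from $A_Z$, $A_U$, and $M$ is $K$-smooth if and only if both $A_Z$ and $A_U$ are $K$-smooth and $M \in \per(A_Z \otimes^L_K A_U^\opp)$. Specializing to $K = H_G(\point)$ reduces $G$-smoothness of $X$ to $G$-smoothness of $Z$ together with that of $U$; since $Z$ is a single orbit and $U$ has strictly fewer orbits than $X$, applying the inductive hypothesis on $U$ yields the biconditional for $X$ in both directions simultaneously.

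The main obstacle will be verifying that the gluing bimodule $M$ is automatically perfect over $A_Z \otimes^L_{H_G(\point)} A_U^\opp$. Geometrically this is a finiteness assertion about gluing functors such as $i^* j_*$ applied to a constructible equivariant generator on $U$, and should follow from the strong finiteness of the setting---finitely many orbits and connected stabilizers---together with constructibility-preservation properties of the equivariant six functors; once this perfectness is secured the induction closes cleanly and the theorem follows.
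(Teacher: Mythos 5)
Your high-level strategy matches the paper's: an open--closed decomposition of $X$, a lift of the recollement to a lower-triangular dg $H_G(\point)$-algebra, the smoothness-gluing criterion (the paper's Theorem~\ref{t:diagonal-smooth-bimod-smooth-iff-extension-smooth}), and an induction on the number of $G$-orbits. The difference is which piece is a single orbit: you peel off a \emph{closed} orbit $Z$ and keep the open complement $U$, whereas the paper peels off an \emph{open} orbit $U$ and keeps the closed complement $F$. This is not a cosmetic swap, and it is exactly at the "main obstacle" you flag --- perfectness of the gluing bimodule --- that the choice matters.

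In the paper's setup, with generator $i_\as\mathcal{F}\oplus j_!\mathcal{U}$, the triangular matrix has the single-orbit $U$-side as the target $\mathcal{A}$ and the multi-orbit $F$-side as the source $\mathcal{B}$. Once $U$ is $G$-smooth, Theorem~\ref{t:one-orbit-case-smoothness-and-quisos} gives a quasi-isomorphism $H_G(\point)\ra\mathcal{E}'(\mathcal{V})$, so by Corollary~\ref{c:bimodule-smoothness-quasi-isomorphism-invariance} sweetness of the bimodule reduces to perfectness over the complementary side $\mathcal{B}$, i.e.\ over the $F$-side; and the $(i_\as,i^!)$ adjunction produces the bimodule as $\operatorname{Hom}(\mathcal{F}',i^!\mathcal{V})$, which is perfect over $\mathcal{B}$ because $Ri^!$ preserves $D^b_{G,c}$ and $\mathcal{F}'$ is a classical generator. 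The alignment of "which side has the quasi-isomorphism to $K$" with "which side the adjunction hands us perfectness over" is what makes the sweetness check close with no extra input.

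In your setup this alignment breaks. If $Z$ is the single orbit and $Z$ is $G$-smooth, then $K\ra A_Z$ is the quasi-isomorphism, and Corollary~\ref{c:bimodule-smoothness-quasi-isomorphism-invariance} reduces sweetness to perfectness of $M$ over $A_U$, the \emph{multi-orbit} side. But the six-functor computation you quote --- $M\cong\operatorname{Hom}(i^*Rj_*\mathcal{U},\mathcal{F})$, using that $i^*Rj_*$ preserves $D^b_{G,c}$ --- gives perfectness over $A_Z$, the \emph{single-orbit} side, not over $A_U$. So your claim that the perfectness "should follow from the strong finiteness of the setting together with constructibility-preservation properties of the equivariant six functors" is not accurate as stated: constructibility-preservation pushes the bimodule into $\per(A_Z)$, and that is simply the wrong condition. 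There is a repair --- since $K\ra A_Z$ is a quasi-isomorphism, perfectness over $A_Z$ forces $M$ to be in $\per(K)$, i.e.\ locally $K$-perfect as an $A_U$-module, and if $A_U$ is already known to be $K$-smooth then Proposition~\ref{p:K-proper-objects-and-perfect-category-for-smooth-and-flat} (locally $K$-perfect $\subset$ perfect for a $K$-smooth dg category) yields $M\in\per(A_U)$. That chain makes your argument go through, but it is a genuinely different route, it uses a piece of theory (the locally-perfect-implies-perfect result) that the paper's argument avoids entirely, and it only delivers the implication $[(a'')\wedge(b'')]\Rightarrow(c')$ rather than the paper's sharper $(b'')\Rightarrow(c')$. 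As written, your proposal has not identified this and therefore leaves a gap. If you swap your decomposition to peel off an \emph{open} orbit instead, the adjunction and the quasi-isomorphism land on compatible sides and the argument closes cleanly, as in the paper.
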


Hence we need to understand when an orbit is $G$-smooth. Our second
main result gives a criterion answering this question.

\begin{theorem}
  [see Theorem~\ref{t:one-orbit-case-smoothness-and-quisos} and
  references there]
  \label{t:one-orbit-case-smoothness-and-quisos-intro}
  Let $\mathcal{O}=G/H$ where $G$ is as above and $H \subset G$ is a
  closed connected subgroup.
  Then the following conditions are equivalent:
  \begin{enumerate}
  \item 
    $\mathcal{O}$ is $G$-smooth.
  \item 
    $H_G(\point) \ra H_G(\mathcal{O})=H_H(\point)$ is an isomorphism.
  \item 
    $H_H(\point)$ is a smooth dg $H_G(\point)$-algebra.
  \item
    Any maximal compact subgroup of $H$ is a maximal compact subgroup of $G.$
  \item
    $H^*(\mathcal{O};\DR)=\DR.$ 
  \item 
    $\mathcal{O} \cong \DC^n$ as complex varieties for some $n \in \DN.$
  \end{enumerate}
\end{theorem}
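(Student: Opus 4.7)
The plan is to prove the six-way equivalence by splitting the cycle into a topological/geometric block establishing (b) $\Leftrightarrow$ (d) $\Leftrightarrow$ (e) $\Leftrightarrow$ (f) and an algebraic block establishing (a) $\Leftrightarrow$ (c) $\Leftrightarrow$ (b); the two blocks meet at (b), which suffices. The topological block rests on classical Lie theory and computations with classifying spaces, while the algebraic block invokes the rigidity results on smoothness of dg algebras over a graded commutative dg ring announced in the introduction.

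For the topological block, I fix compatible maximal compact subgroups $K_H \subset K_G$. The Iwasawa/Mostow decomposition gives deformation retractions $G \simeq K_G$ and $H \simeq K_H$, hence a homotopy equivalence $\mathcal{O} \simeq K_G/K_H$, and identifies $H_G(\point)$ with $H^*(BK_G;\DR)$ and similarly for $H$. The Eilenberg--Moore spectral sequence of the Borel fibration $\mathcal{O} \to BH \to BG$ (using that $BG$ is simply connected since $G$ is connected) gives the identification $H^*(\mathcal{O};\DR) = \Tor_{H_G(\point)}(H_H(\point),\DR)$. From this both directions (b) $\Leftrightarrow$ (e) follow: (b) clearly implies $\Tor = \DR$, hence (e); conversely $\Tor_{H_G(\point)}(H_H(\point),\DR) = \DR$ concentrated in degree zero forces $H_H(\point)$ to be free of rank one over $H_G(\point)$ (Nakayama in the graded setting, using that $K_G/K_H$ compact makes $H_H(\point)$ finitely generated as $H_G(\point)$-module), giving (b). The equivalence (e) $\Leftrightarrow$ (d) is immediate: $K_G/K_H$ is a closed connected manifold, hence has trivial real cohomology only if it is a point, i.e.\ $K_H = K_G$.

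For (d) $\Rightarrow$ (f): if $K_G \subset H$ then $H$ contains a Zariski-dense subgroup of a Levi factor $L$ of $G$, hence contains $L$; writing $G = L \ltimes U$ with $U$ the unipotent radical and using connectedness of $H$, one gets $H = L \ltimes (H \cap U)$, so $\mathcal{O} \cong U/(H \cap U)$, which is isomorphic to some $\DC^n$ since a quotient of a complex unipotent algebraic group by a closed connected subgroup is always an affine space (iterated extensions of $\mathbb{G}_a$). The converse (f) $\Rightarrow$ (e) is trivial.

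For the algebraic block, (a) $\Leftrightarrow$ (c) is essentially by definition in the single-orbit case: one has $D^b_{G,c}(\mathcal{O}) = D^b_{H,c}(\point) \simeq \per(H_H(\point))$, and the dg $H_G(\point)$-algebra representing this equivalence can be taken to be $H_H(\point)$ with its natural structure map. The implication (b) $\Rightarrow$ (c) is trivial since any dg algebra is smooth over itself. The hard part, and where the machinery of the paper truly enters, is (c) $\Rightarrow$ (b): if $H_H(\point)$ is smooth as a dg $H_G(\point)$-algebra, one must deduce that the structure map is already a quasi-isomorphism. Both algebras here are connected graded polynomial rings concentrated in non-negative even degrees with $H^0 = \DR$, and $H_H(\point)$ is finitely generated over $H_G(\point)$ as a module; the rigidity theorem proved earlier in the paper is precisely designed for this situation and asserts that the diagonal bimodule of such a map can be perfect only when the map is a quasi-isomorphism. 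The main obstacle of the entire theorem is therefore this implication (c) $\Rightarrow$ (b); everything else is either classical Lie theory or a formal consequence of the definitions.
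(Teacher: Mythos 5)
The overall architecture of your proposal — splitting the cycle into a topological block and an algebraic block meeting at condition (b), and identifying (c) $\Rightarrow$ (b) as the point where the paper's rigidity theorem (Proposition~\ref{p:smoothness-over-local-graded-finite-homological-dim-algebras}) must be invoked — is sound and essentially matches the paper's logic. However, there are two real gaps.

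The serious one is your claim that (a) $\Leftrightarrow$ (c) is ``essentially by definition.'' It is not. The paper's Definition~\ref{d:smoothness-DbGc} of $G$-smoothness is in terms of a \emph{specific} enhancement, namely $\calMod_G(c^*(\Omega_{BG}))_{\hinj}$ being a smooth dg $\Gamma(\Omega_{BG})$-category. Turning this into the statement ``$H_H(\point)$ is a smooth dg $H_G(\point)$-algebra'' requires (i) showing that the dg endomorphism algebra $(\calMod(c^*(\Omega_{BG})))(E)$ of an h-injective resolution $E$ of the generator $\Omega_{X_G}$ is, as a dg $\Gamma(\Omega_{BG})$-algebra, quasi-isomorphic to $\Gamma(\Omega_{X_G})=\Gamma(\Omega_{BH})$ — this is the content of the two action-map quasi-isomorphisms \eqref{eq:left-action} and \eqref{eq:right-action} in Theorem~\ref{t:one-orbit-case-smoothness-and-quisos}, which are far from formal; and (ii) base-changing via the formality quasi-isomorphism $H_G(\point)\to\Gamma(\Omega_{BG})$ using Theorem~\ref{t:smoothness-and-base-change}. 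Your sentence ``the dg $H_G(\point)$-algebra representing this equivalence can be taken to be $H_H(\point)$'' is precisely the formality/rigidity statement being proved, not something one reads off from the definition; the paper in fact never produces such a quasi-isomorphism of dg $H_G(\point)$-algebras between $\Gamma(\Omega_{BH})$ and $H_H(\point)$ directly, but rather shows both are $H_G(\point)$-smooth iff (b) holds, by two parallel applications of Proposition~\ref{p:smoothness-over-local-graded-finite-homological-dim-algebras}.

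The secondary concern is your use of the Eilenberg--Moore spectral sequence for (e) $\Rightarrow$ (b). The EM spectral sequence has $E_2=\Tor_{H_G(\point)}(H_H(\point),\DR)$ and converges to $H^*(\mathcal{O};\DR)$, but to go from $H^*(\mathcal{O};\DR)=\DR$ back to $E_2=\DR$ you need degeneration, which you do not argue. The paper avoids this entirely by applying Leray--Hirsch to $EG/H\to EG/G$: if the fiber has trivial cohomology then the total space's cohomology is a free rank-one module over the base's, giving (b) directly. On the other hand, your observation that (e) $\Leftrightarrow$ (d) can be settled by noting $K_G/K_H$ is a closed connected (orientable) manifold and so has trivial real cohomology only if it is a point is a genuinely simpler route than the paper's Weyl-group argument for (b) $\Rightarrow$ (c), provided one supplies the orientability remark (the isotropy representation of the connected $K_H$ lands in $SO$). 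Your (d) $\Rightarrow$ (f) argument coincides with the paper's Proposition~\ref{p:smoothness-one-orbit-case-algebraic-groups} and Lemma~\ref{l:quotient-unipotent-group}.
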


There are some more equivalent conditions given in
Theorem~\ref{t:one-orbit-case-smoothness-and-quisos}
which are actually
needed for the proof of
Theorem~\ref{t:X-G-smooth-iff-all-orbits-G-smooth-intro}.

For example, if $G$ is reductive and $B \subsetneq G$ is a Borel
subgroup, then the flag variety $G/B$ is
$B$-smooth but not $G$-smooth.

Let us provide some details on how the above dg
$H_{G}(\point)$-algebra $A$ is defined.
We fix a
suitable universal $G$-principal fiber bundle $EG \ra BG.$ Among other
things, this means that 
there is a (version of the) de Rham sheaf $\Omega_{BG}$ of
dg algebras on $BG$ computing $H_G(\point).$ Let $c: X_G:= EG \times_G
X \ra BG$ be the obvious map.
We identify $D^b_{G,c}(X)$ with a full subcategory of the derived
category of dg $c^*(\Omega_{BG})$-modules (= sheaves of dg modules
over the sheaf $c^*(\Omega_{BG})$ of dg algebras) on $X_G.$ 
Using an injective model structure we find a dg enhancement
$\mathcal{E}$ of
$D^b_{G,c}(X)$ that consists of h-injective dg
$c^*(\Omega_{BG})$-modules. 
Let $E \in \mathcal{E}$ be an object that is a classical generator of
$D^b_{G,c}(X).$ Then in the obvious way $A:= \mathcal{E}(E,E)$ is a dg
$\Gamma(\Omega_{BG})$-algebra. By composing the structure morphism
with a quasi-isomorphism $H_G(\point) \ra \Gamma(\Omega_{BG})$ we
obtain a dg $H_G(\point)$-algebra $A$ that we can use for the definition
of $G$-smoothness of $X,$ as explained above.

The proofs of the above theorems rely on some results on dg
$K$-categories, where $K$ is a
graded commutative dg ring, that we present now.
A dg $K$-category is by definition a
category enriched in the (abelian symmetric) monoidal category 
$C(K)$ of dg $K$-modules. For example, a dg $K$-category with one
object is a dg $K$-algebra. 
The obvious generalization of a result of 
G.~Tabuada \cite{tabuada-model-structure-on-cat-of-dg-cats}
shows that the category of small dg $K$-categories carries a
cofibrantly generated model structure whose weak equivalences are the
quasi-equivalences. In 
particular, any dg $K$-category 
(or algebra) $\mathcal{A}$ has a
cofibrant replacement $Q(\mathcal{A}) \ra \mathcal{A}$ which allows us
to define $\mathcal{A} \otimes^L_K \mathcal{A}^\opp= Q(\mathcal{A}) \otimes_K
Q(\mathcal{A}^\opp).$
We say that $\mathcal{A}$ is $K$-smooth if the
diagonal bimodule $\mathcal{A}$ is in
$\per(\mathcal{A} \otimes^L_K \mathcal{A}^\opp).$
Here, if $\mathcal{B}$ is a dg $K$-category, $\per(\mathcal{B})$ is
the perfect derived category of dg $\mathcal{B}$-modules, which can
also be characterized as the full subcategory of the derived category
of dg $\mathcal{B}$-modules consisting of compact objects.
Instead of $Q(\mathcal{A}) \ra \mathcal{A}$ we
can take any trivial fibration $\mathcal{A}' \ra \mathcal{A}$ with
$\mathcal{A}'$ $K$-h-flat for testing $K$-smoothness of $\mathcal{A}.$
Then $\mathcal{A}$ is $K$-smooth if and only if the diagonal bimodule
$\mathcal{A}$ is in $\per(\mathcal{A}' \otimes_K \mathcal{A}'^\opp).$

We generalize 
and strengthen
two results of \cite{lunts-categorical-resolution}:
Theorem~\ref{t:smoothness-preserved-by-dg-Morita-equivalence} says
that $K$-smooth\-ness is invariant under dg Morita equivalence, i.\,e.\
if the derived categories of two dg $K$-categories $\mathcal{A}$ and
$\mathcal{B}$ are connected by a zig-zag of tensor equivalences, then
$\mathcal{A}$ is $K$-smooth if and only if $\mathcal{B}$ is $K$-smooth.
Theorem~\ref{t:diagonal-smooth-bimod-smooth-iff-extension-smooth}
shows the following:
If $R$ and $S$ are dg $K$-algebras (or categories) and $N$ is a dg $S
\otimes_K R^\opp$-module, then
the dg $K$-algebra (or category)
$
\Big[\begin{smallmatrix}
  S & 0\\
  N & R
\end{smallmatrix}\Big]
$
is $K$-smooth if and only if $R$ and $S$ are $K$-smooth and $N$ is
an object of $\per(S \otimes^L_K R^\opp).$
This result is the key to the proof of
Theorem~\ref{t:X-G-smooth-iff-all-orbits-G-smooth-intro}: A
decomposition of $X$ into an open $G$-orbit and its closed complement
provides a dg $H_{G}(\point)$-algebra of this form.

For the proof of
Theorem~\ref{t:one-orbit-case-smoothness-and-quisos-intro}
we need two more results. The first one
is Theorem~\ref{t:smoothness-and-base-change} and explains how $K$-smoothness
behaves with respect to a base change: Let $K \ra K'$ be a morphism of dg rings.
If a dg $K$-category $\mathcal{A}$ is $K$-smooth, 
then $\mathcal{A} \otimes_K^L K' := Q(\mathcal{A}) \otimes_K K'$ is
$K'$-smooth. Moreover, the converse is true if $K \ra K'$ is a
quasi-isomorphism.
The second result is the slightly technical criterion for $K$-smoothness
given in Proposition~\ref{p:smoothness-over-local-graded-finite-homological-dim-algebras}.
It is based on an easier criterion, stated in 
Proposition~\ref{p:homologically-positive-dg-algebra-plus-conditions-not-smooth}:
Let $T$ be a dg algebra over a field $\groundring$ that satisfies $H^i(T)=0$
for $i<0,$ $H^0(T)=\groundring$ and $H^i(T)=0$ for $i\gg 0.$ Then $T$
is $\groundring$-smooth if and only if $H(T)=\groundring.$


Let us finally give some advice to the reader. 
It might be helpful
to take section~\ref{sec:smoothn-equiv-derived-cats} as a roadmap
and
to look at the results on dg $K$-categories
and their smoothness in sections
\ref{sec:diff-grad-K-categories-and-their-module-categories} and
\ref{sec:smoothn-dg-K-categories} when needed.
In section~\ref{sec:smoothn-equiv-derived-cats} we proceed as follows.
After some preparations in
section~\ref{sec:sheaves-dg-modules-over-dg-algebras}  
we define $G$-smoothness of $X$ in
section~\ref{sec:enhancem-equiv-deriv-cats}.
In section~\ref{sec:smoothness-homogeneous-spaces}
(and \ref{sec:results-for-homogeneous-space}) we treat
homogeneous spaces and prove
Theorem~\ref{t:one-orbit-case-smoothness-and-quisos-intro}. The
main ingredient there from the dg side is
Proposition~\ref{p:smoothness-over-local-graded-finite-homological-dim-algebras}
mentioned above. Even though its hypotheses
may seem restrictive
it is surprisingly useful. In section~\ref{sec:reduct-homog-spac} we prove
Theorem~\ref{t:X-G-smooth-iff-all-orbits-G-smooth-intro}. Thanks
to the assumption that $X$ consists of finitely many $G$-orbits,
we can decompose $X$ into an open $G$-orbit $U$ and its closed
complement $F.$ On the dg side this decomposition gives rise to a
lower triangular 
$H_{G}(\point)$-algebra 
$
\Big[\begin{smallmatrix}
  S & 0\\
  N & R
\end{smallmatrix}\Big]
$
which is dg Morita equivalent to $A,$ and $S$ (resp.\ $R$) is 
$H_{G}(\point)$-smooth if and only if $F$ (resp.\ $U$) is
$G$-smooth. 
Using 
Theorems~\ref{t:smoothness-preserved-by-dg-Morita-equivalence} 
and \ref{t:diagonal-smooth-bimod-smooth-iff-extension-smooth}
explained above we then deduce
Theorem~\ref{t:X-G-smooth-iff-all-orbits-G-smooth-intro}.

\subsection*{Acknowledgments}
\label{sec:acknowledgements}

We would like to thank Hanspeter Kraft and Bertrand To{\"e}n for useful
correspondence. For helpful comments, mainly on model categories, we are
grateful to Tobias Dyckerhoff and Steffen Sagave.
We also thank the referee for suggestions and comments.

The first author was partially supported by the NSF grant 0901301.

The second author is grateful to the first author and his family for
their hospitality during his visit to Bloomington. He also thanks 
Indiana University and in particular the people in the math department
for their hospitality. He is thankful to the
Collaborative Research Center SFB Transregio 45 
of the German Science foundation for support.
He was partially supported by the priority program SPP 1388 of the
German Science foundation.

\section{Differential graded \texorpdfstring{$K$}{K}-categories}
\label{sec:diff-grad-K-categories-and-their-module-categories}

This section generalizes in a straightforward manner well known
results from dg (= differential graded) 
categories over a commutative ring $\groundring$ to dg categories over
a graded commutative dg algebra $K$; for example we describe the
projective model structure on the category of modules over such a
dg $K$-category, and we equip the category $\dgcat_K$ of small dg
$K$-categories with a model structure (following G.~Tabuada 
\cite{tabuada-model-structure-on-cat-of-dg-cats}). The reader who is
familiar with the 
usual theory will find no surprises and is advised to pass directly to 
Section~\ref{sec:smoothn-dg-K-categories}.

Let $\groundring$ be a commutative (associative unital) ring and
$K$ a graded commutative dg (= differential
($\DZ$-)graded)
($\groundring$-)algebra, i.\,e.\
$K=\bigoplus_{p \in \DZ} K^p$ is a graded (associative unital)
$\groundring$-algebra (the structure morphism $\groundring\ra K$ lands in $K^0$ and in the
center of $K$) endowed with a $\groundring$-linear differential $d=(d^p:K^p \ra
K^{p+1})_{p \in \DZ}$ of degree one such that
$d(kl)=d(k)l+(-1)^{|k|}kd(l)$ for all elements $k,$ $l \in K$
with $k$ of degree $|k|$ (here and in the following we use the
convention that elements are assumed to be homogeneous if their degree
appears in a formula); the assumption that $K$ is graded commutative
means that $kl=(-1)^{|k||l|}lk$ for all $k,$ $l \in K.$
For example, $K$ could be $\groundring$ viewed as a dg algebra
concentrated in degree zero. We fix $\groundring$ and $K$ for the rest
of this article.

\subsection{Dg \texorpdfstring{$K$}{K}-categories}
\label{sec:dg-K-categories}

Let $C(K)$ be the abelian $\groundring$-linear category of (right) dg $K$-modules
(morphisms are $K$-linear, preserve the degree and commute with the respective
differentials). Given dg $K$-modules $M,$ $N,$ we can view $N$ as a left dg
$K$-module via $k.n:=(-1)^{|k||n|}nk$ and obtain the tensor product $M
\otimes_K N$ which is again an a dg $K$-module. In fact $C(K)$ becomes
a symmetric monoidal category in the obvious way, which is moreover 
closed:
Given any $M \in C(K),$ the functor $(? \otimes_K M)$ has an obvious
right adjoint denoted $\internalHom(M,?),$ i.\,e. $(C(K))(N \otimes_K M, P)=
(C(K))(N, \internalHom(M,P))$ naturally in $N$ and $P.$

This enables us to speak about dg $K$-categories 
(:= $C(K)$-(enriched )categories), dg $K$-functors (:=
$C(K)$-functors), and dg $K$-natural transformations (:=
$C(K)$-natural transformations), see \cite{kelly-enriched}.

To an arbitrary dg $K$-category $\mathcal{A}$ we can associate two
$\groundring$-linear categories, namely the category
$Z^0(\mathcal{A})$ and the homotopy category $[\mathcal{A}].$ They
have the same objects as $\mathcal{A},$ but their morphisms spaces are
given by the cocycles 
$(Z^0(\mathcal{A}))(A,A')= Z^0(\mathcal{A}(A,A))$ of degree zero
and by the cohomology classes 
$[\mathcal{A}](A,A')= H^0(\mathcal{A}(A,A'))$ of degree zero.

An example is the dg $K$-category $\calMod(K)$ of dg
$K$-modules. It has the same objects as $C(K),$ and its morphism
spaces are given by $(\calMod(K))(M, N)= \internalHom(M, N)$ where
$M,$ $N$ are dg $K$-modules.
Note that
\begin{equation*}
  (C(K))(M,N) = (C(K))(K, (\calMod(K))(M,N)) = Z^0((\calMod(K))(M,N)).
\end{equation*}
The first equality says that the underlying 
category of the dg $K$-category $\calMod(K)$ is $C(K),$ 
and then the second equality says that $C(K) = Z^0(\calMod(K)).$

\subsection{Module categories}
\label{sec:module-categories}

Let $\mathcal{A}$ be a small dg $K$-category.
A (right) dg $\mathcal{A}$-module $M$ is a dg $K$-functor
$M: \mathcal{A}^\opp \ra \calMod(K),$ 
where $\mathcal{A}^\opp$ is the opposite dg $K$-category.
More explicitly, such a functor is given by dg $K$-modules $M(A),$ for $A \in
\mathcal{A},$ and morphisms
\begin{equation*}
  M(A) \otimes_K \mathcal{A}(A',A) \ra M(A')
\end{equation*}
in $C(K),$ for $A,$ $A' \in \mathcal{A},$ that make the obvious
diagrams encoding unitality and associativity commutative.
We denote the category of dg $\mathcal{A}$-modules whose morphisms are
the dg $K$-natural transformations by $C(\mathcal{A}).$ This is an
abelian $\groundring$-linear category having all small limits and
colimits; we explain 
in Remark~\ref{rem:dependence-on-K}
below that it is essentially independent of $K.$

Again there is a dg $K$-category $\calMod(\mathcal{A})$ whose
underlying category is $C(\mathcal{A}).$ 
Let $M,$ $N$ be dg $\mathcal{A}$-modules. Then 
$(\calMod(\mathcal{A}))(M,N)$ is defined to be the
dg $K$-module
\begin{equation*}
  \Big\{(f(A)) \in \prod_{A \in \mathcal{A}} (\calMod(K))(M(A), N(A)) \mid
  \text{
    $N(a) f(A')= f(A'') M(a)$ for all 
    $a \in \mathcal{A}^\opp(A',A'')$
  }
  \Big\}. 
\end{equation*}
Similar as above we have
\begin{equation*}
  C(\mathcal{A})(M,N) 
  = Z^0((\calMod(\mathcal{A}))(M,N)).
\end{equation*}

We may consider $K$ as a dg $K$-category consisting of one object whose  
endomorphisms are $K.$ Then the definitions of $C(K)$ and $\calMod(K)$
are consistent with their previous definitions.

Any object $A \in \mathcal{A}$ gives rise to the dg
$\mathcal{A}$-module $\Yoneda{A} := \mathcal{A}(?,A)$ represented by $A.$
If $M$ is any dg $\mathcal{A}$-module, the map
\begin{equation}
  \label{eq:yoneda-dgA}
  (\calMod(\mathcal{A}))(\Yoneda{A}, M) \sira M(A),\quad
  f \mapsto (f(A))(\id_A),
\end{equation}
is an isomorphism in $C(K),$ the Yoneda-isomorphism. Taking degree
zero cocycles gives the isomorphism
\begin{equation*}
  (C(\mathcal{A}))(\Yoneda{A}, M) \sira Z^0(M(A)).
\end{equation*}

The dg $K$-functor 
\begin{equation*}
  \mathcal{A} \ra \calMod(\mathcal{A}), \quad
  A \mapsto \Yoneda{A}= \mathcal{A}(?,A),
\end{equation*}
is full and faithful by \eqref{eq:yoneda-dgA} and called the
Yoneda embedding.

Whenever we work with module categories over a dg $K$-category in the
following we implicitly assume that the given dg $K$-category is
small.

\subsection{Homotopy categories and derived categories}
\label{sec:homotopy-categories-and-derived-categories}

We have seen above that $Z^0(\calMod(\mathcal{A}))=C(\mathcal{A}).$ We
define $\mathcal{H}(\mathcal{A}):=[\calMod(\mathcal{A})]$ and call it
the homotopy category of dg $\mathcal{A}$-modules. There is an obvious
functor $C(\mathcal{A}) \ra \mathcal{H}(\mathcal{A}).$

In the usual way (cf.\ e.\,g.\ \cite[Ch.~10]{BL}) we equip
$\mathcal{H}(\mathcal{A})$ with the structure of a triangulated
category: One defines the
translation or shift functor $[1]$ on $\calMod(\mathcal{A})$ (and
$C(\mathcal{A}),$ $\mathcal{H}(\mathcal{A})$), and the
cone $\Cone(f)$ of a morphism $f:M \ra N$ in $C(\mathcal{A})$; this
cone fits into an obvious diagram $M \xra{f} N \ra \Cone(f) \ra [1]M$
in $C(\mathcal{A})$ called a standard triangle. 
We define a (distinguished) triangle in $\mathcal{H}(\mathcal{A})$ to
be a candidate triangle isomorphic to the image of a standard
triangle.
Then $\mathcal{H}(\mathcal{A})$ with the shift functor $[1]$ and this
class of triangles is a triangulated category.

A morphism $f: M \ra N$ in $C(\mathcal{A})$ (or
$\mathcal{H}(\mathcal{A})$) induces 
in the obvious way a morphism $H(f): H(M) \ra H(N)$
on cohomology. We call $f$ a quasi-isomorphism if $H(f)$ is an
isomorphism.

A dg $\mathcal{A}$-module $M$ is called acyclic if all $M(A)$ have
vanishing cohomology, i.\,e.\ $H(M(A))=0,$ for all $A \in
\mathcal{A}^\opp.$
Then a morphism in $\mathcal{H}(\mathcal{A})$ is a quasi-isomorphism
if and only if its cone is acyclic; here we mean by the cone of a
morphism $f$ in a triangulated category the third object
in a triangle whose first morphism is $f$ (it is well
defined up to isomorphism).

The derived category $D(\mathcal{A})$ of dg $\mathcal{A}$-modules is
defined to be the Verdier quotient of $\mathcal{H}(\mathcal{A})$ by
the (thick) triangulated subcategory of all acyclic dg
$\mathcal{A}$-modules. Note that $\mathcal{H}(\mathcal{A})$ and
$D(\mathcal{A})$ have all small coproducts and products, and the
functor $\mathcal{H}(\mathcal{A}) \ra 
D(\mathcal{A})$ commutes with coproducts and products.

A dg $\mathcal{A}$-module $P$ is called h-projective
if all morphisms $P \ra N$ in $C(\mathcal{A})$ with acyclic $N$
are homotopic to zero, 
i.\,e.\ $(\mathcal{H}(\mathcal{A}))(P, N)=0.$
For example all $[i]\Yoneda{A}$ for $A \in \mathcal{A}$ and $i \in
\DZ$ are h-projective
since
\begin{equation}
  \label{eq:yoneda-dgA-H0A}
  (\mathcal{H}(\mathcal{A}))([i]\Yoneda{A}, N) \sira H^{-i}(N(A)). 
\end{equation}
by \eqref{eq:yoneda-dgA}.

If $P$ and $M$ are dg $\mathcal{A}$-modules it is easy to see that the
canonical morphism 
\begin{equation}
  \label{eq:h-projective-and-morphisms-DA}
  (\mathcal{H}(\mathcal{A}))(P, M) \ra (D(\mathcal{A}))(P,M)
\end{equation}
is an isomorphism if $P$ is h-projective.

We define $\per(\mathcal{A})$ to be the
smallest strict full triangulated subcategory of $D(\mathcal{A})$ that
contains all dg $\mathcal{A}$-modules $\Yoneda{A},$ for $A \in
\mathcal{A},$ and is closed under summands. This category has an
alternative description. 
Let $D(\mathcal{A})^c$ be the full subcategory of $D(\mathcal{A})$
consisting of compact objects, i.\,e.\ objects $M$ such that
$(D(\mathcal{A}))(M,?)$ commutes with all coproducts.
Let $\mathcal{E}$ be the set of all objects
$[i]\Yoneda{A} \in D(\mathcal{A}),$ for $A \in \mathcal{A}$ and $i
\in \DZ.$
From 
\eqref{eq:h-projective-and-morphisms-DA} and \eqref{eq:yoneda-dgA-H0A}
we deduce that $\mathcal{E}$ consists of compact objects, and moreover
that $\mathcal{E}$ generates $D(\mathcal{A}).$
The arguments of
\cite{neeman-connection-TTYBR} (cf.\
\cite[Thm.~2.1.2]{bondal-vdbergh-generators}) 
show that 
\begin{equation}
  \label{eq:perA-equal-compactDA}
  \per(\mathcal{A})=D(\mathcal{A})^c
\end{equation}
and that
$D(\mathcal{A})$ is the 
smallest strict full triangulated subcategory of $D(\mathcal{A})$ that
contains $\mathcal{E}$
and is closed with respect to the formation of arbitrary
$D(\mathcal{A})$-coproducts, i.\,e.\ the
localizing subcategory of $D(\mathcal{A})$ generated by $\mathcal{E}$
is all of $D(\mathcal{A}).$

\begin{remark}
  \label{rem:dependence-on-K}
  Let $Z \ra K$ be a morphism of graded commutative dg algebras (for
  example the structure morphism $\groundring \ra K$).
  Let $\mathcal{A}$ be a dg $K$-category. We define 
  $\res^K_Z(\mathcal{A})$ to be the dg $Z$-category which is obtained from
  $\mathcal{A}$ by the obvious restriction along $Z \ra K.$

  Then one checks that the obvious restriction functor
  \begin{equation}
    \label{eq:restriction-from-K-to-Z}
    \res^\mathcal{A}_{\res^K_Z \mathcal{A}}: 
    C(\mathcal{A}) \sira C(\res^K_Z(\mathcal{A})) 
  \end{equation}
  is an isomorphism of $\groundring$-linear categories.
  This is just an elaborate version of the following fact.
  If $R' \ra R$ is a morphism of rings, and $A$ is an $R$-algebra,
  then the module categories of $A$ as an $R$-algebra and as an
  $R'$-algebra coincide (and only depend on the ring underlying $A$).

  The above isomorphism \eqref{eq:restriction-from-K-to-Z} in fact
  comes from an isomorphism
  $\res^K_Z(\calMod(\mathcal{A})) \sira \calMod(\res^K_Z \mathcal{A})$
  of dg $Z$-categories. Similarly, we have isomorphisms
  $\mathcal{H}(\mathcal{A}) \sira \mathcal{H}(\res^K_Z(\mathcal{A}))$
  and
  $D(\mathcal{A}) \sira D(\res^K_Z(\mathcal{A}))$
  of $\groundring$-linear categories.
\end{remark}

\subsection{Projective model structure for dg \texorpdfstring{$\mathcal{A}$}{A}-modules}
\label{sec:proj-model-structure-dg-A-modules}

We refer to \cite{hovey-model-categories} 
(and \cite[App.]{lurie-higher-topos})
for the language of model
categories. However we do not assume that 
functorial factorizations are part of a model structure.
Since all model categories we consider will be cofibrantly generated
we can fix such factorizations whenever convenient. 

We use the following terminology.
If (P) is a property of objects in a model category, we define a
\define{(P) resolution} (of an object $X$) to be a trivial fibration
whose domain has property (P) (and whose codomain is $X$).
It follows for example from the definition of a model category that
any object has a cofibrant resolution. 

Let $\mathcal{A}$ be a dg $K$-category.
For $A \in \mathcal{A}$ and $n \in \DZ$ define dg
$\mathcal{A}$-modules $S_{n,A}:= [n]\Yoneda{A}$ and
$D_{n,A}:= \Cone(\id_{S_{n,A}}).$ There are obvious morphisms 
$\iota_{n,A}: S_{n,A} \ra D_{n,A}$ in $C(\mathcal{A}).$ 
Define the following sets of morphisms in $C(\mathcal{A})$:
\begin{align*}
  I &:= \{S_{n,A} \xra{\iota_{n,A}} D_{n,A} \mid A \in \mathcal{A}, n \in \DZ\},\\
  J &:= \{0 \ra D_{n,A} \mid A \in \mathcal{A}, n \in \DZ\}.
\end{align*}

\begin{theorem}
  \label{t:CA-cofib-gen-model-cat}
  Let $\mathcal{A}$ be a (small) dg $K$-category.
  The category $C(\mathcal{A})$ can be equipped with the structure of a
  cofibrantly generated model category whose 
  weak equivalences are the quasi-isomorphisms and whose fibrations
  are the epimorphisms.
  One can take $I$ as the set of generating cofibrations and $J$ as
  the set of generating trivial cofibrations.
  
  We call this model structure on $C(\mathcal{A})$ the
  \define{projective} model structure.
\end{theorem}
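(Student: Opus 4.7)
The plan is to apply the recognition theorem for cofibrantly generated model categories (cf.\ \cite{hovey-model-categories}) with $I$ and $J$ as generating (trivial) cofibrations. Since $C(\mathcal{A})$ is essentially the category of modules over $\res^{K}_{\groundring}(\mathcal{A})$ by Remark~\ref{rem:dependence-on-K}, the standard arguments from the case $K=\groundring$ (cf.\ Keller, \cite{hovey-model-categories}) carry over, but the key points must be reformulated through the Yoneda isomorphism.

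First I would verify that $C(\mathcal{A})$ is complete and cocomplete (limits and colimits are formed objectwise in $C(K)$) and that the class $W$ of quasi-isomorphisms satisfies the two-out-of-three axiom and is closed under retracts (both follow immediately from the analogous facts for the long exact cohomology sequence in $C(K)$). Next, I would characterize the lifting classes: using the Yoneda isomorphism \eqref{eq:yoneda-dgA}, a morphism $f : M \to N$ in $C(\mathcal{A})$ has the right lifting property with respect to $\iota_{n,A}$ for all $A,n$ if and only if the map $Z^{-n}(M(A)) \to Z^{-n}(N(A)) \times_{Z^{-n+1}(N(A))} Z^{-n+1}(M(A))$ is surjective for all $A,n$; a short diagram chase shows this is equivalent to $f$ being a surjective quasi-isomorphism. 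Similarly, $f$ has the RLP with respect to $J$ if and only if $f(A) : M(A) \to N(A)$ is surjective in each degree for every $A$, i.e.\ $f$ is an epimorphism.

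Then I would check the smallness hypotheses. The dg $\mathcal{A}$-modules $S_{n,A}$ and $D_{n,A}$ are small (in fact $\aleph_0$-small) with respect to the whole category $C(\mathcal{A})$, since by \eqref{eq:yoneda-dgA} a morphism out of $S_{n,A}$ (resp.\ $D_{n,A}$) into a filtered colimit $\colim M_i$ corresponds to an element of $Z^{-n}(\colim M_i(A))$ (resp.\ of a bounded finite combinatorial datum in $M_i(A)$), and filtered colimits of dg $K$-modules are computed degreewise. Hence the small object argument applies to both $I$ and $J$.

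The main point — and the step I expect to be the main obstacle — is to show that every relative $J$-cell complex is a trivial cofibration, equivalently that $J$-cell complexes are weak equivalences. Each generating map $0 \to D_{n,A}$ is a quasi-isomorphism (its codomain is contractible: $D_{n,A}$ is the cone of the identity on $S_{n,A}$, hence acyclic, and the Yoneda-type argument \eqref{eq:yoneda-dgA-H0A} shows $H^{i}(D_{n,A}(B))=0$ for all $B,i$). Pushouts and transfinite compositions of such maps are again acyclic because pushouts of a $0 \to D_{n,A}$ along $0 \to M$ just adjoin the summand $D_{n,A}$, and transfinite compositions of levelwise split injections with acyclic cofibers remain acyclic by the usual filtered colimit argument on cohomology. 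Combined with the characterizations above, the recognition theorem then yields a cofibrantly generated model structure with $W$ as weak equivalences and epimorphisms as fibrations, with $I$ and $J$ as the generating (trivial) cofibrations.
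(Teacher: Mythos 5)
Your proposal is correct and takes essentially the same route as the paper: both adapt Hovey's construction of the projective model structure (Section~2.3 of his book) and conclude via his recognition theorem~2.1.19, using the Yoneda isomorphism~\eqref{eq:yoneda-dgA} to identify $J\text{-inj}$ with the epimorphisms and $I\text{-inj}$ with the surjective quasi-isomorphisms. One small imprecision to flag: you assert that showing $J\text{-cell}\subseteq \mathcal{W}$ is ``equivalently'' showing that relative $J$-cell complexes are trivial cofibrations, but the recognition theorem requires $J\text{-cell}\subseteq \mathcal{W}\cap I\text{-cof}$; the extra inclusion holds because each $0\to D_{n,A}$ factors as $0\to S_{n,A}\xrightarrow{\iota_{n,A}} D_{n,A}$ with $0\to S_{n,A}$ itself a pushout of a generator of $I$ along $S\to 0$, so $J\subseteq I\text{-cell}$, but this should be said. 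The paper verifies the corresponding condition by proving the slightly stronger statement that $J\text{-cof}$ consists precisely of the split monomorphisms with projective cokernel and that projectives in $C(\mathcal{A})$ are acyclic.
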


\begin{remark}
  Theorem~\ref{t:CA-cofib-gen-model-cat}
  seems to be well known, at least for $K = \groundring$
  (cf.\ \cite[Thm.~3.2]{keller-on-dg-categories-ICM} or
  \cite[3.2]{toen-lectures-dg-cats})
  and in the dg algebra
  case \cite[11.2.6]{fresse-modules-over-operads}.
  Presumably one can deduce its existence also from
  \cite[App.~3]{lurie-higher-topos} and even see that it is a
  $C(K)$-model structure.
  Our approach is elementary and essentially follows
  \cite[Section~2.3]{hovey-model-categories}. 
\end{remark}

\begin{proof}
  Let $\mathcal{W}$ be the class of quasi-isomorphisms in $C(\mathcal{A}).$
  Adapting the method of \cite[Section~2.3]{hovey-model-categories} (and using
  the notation explained there) one
  proves that
  $J\hy\inj$ consists precisely of epimorphisms,
  that $I \hy\inj = \mathcal{W} \cap J\hy\inj,$ 
  that projective objects of $C(\mathcal{A})$ are acyclic
  and that $J\hy\cof$ consists precisely of (split) monomorphisms with
  cokernel a projective object of $C(\mathcal{A}),$ so in particular
  $J\hy\cof \subset \mathcal{W}.$
  Then application of 
  \cite[Thm~2.1.19 and Lemma~2.1.10]{hovey-model-categories} shows the
  result.
\end{proof}

Note that any object of $C(\mathcal{A})$ is fibrant.
Examples of cofibrant objects are the objects $\Yoneda{A}$ and their
shifts (take the pushout of a map in $I$ along the morphism to the
zero object). 

The proof of
\cite[2.3.9]{hovey-model-categories},
adapted\footnote{
  For this we need the following result whose proof is similar to the proof
  of \cite[2.3.6]{hovey-model-categories}:
  Let $C$ be a cofibrant dg $\mathcal{A}$-module. 
  Then given any epimorphism $p: M \ra N$ in $C(\mathcal{A})$ and any
  morphism $f: C \ra N$ in $\calMod(\mathcal{A})$ of degree zero
  (i.\,e.\ a
  morphism of graded
  $\mathcal{A}$-modules),
  there is a morphism $\hat{f}:C \ra M$ in $\calMod(\mathcal{A})$ of
  degree zero such that $p \hat{f} =f.$
}
to our setting,
shows that the cofibrations are precisely the monomorphisms with cofibrant cokernel.
This fact and the trivial fact that cofibrations are closed under
composition shows the following two lemmata.

\begin{lemma}
  \label{l:mapping-cones-of-morphisms-between-cofibrants}  
  Let $f: M \ra N$ be a morphism in $C(\mathcal{A})$ between cofibrant
  objects. Then the canonical morphism $N \ra \Cone(f)$ is a
  cofibration, and in particular $\Cone(f)$ is cofibrant.
\end{lemma}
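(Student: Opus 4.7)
The plan is to use the characterization stated just above the lemma: cofibrations in $C(\mathcal{A})$ are exactly the monomorphisms with cofibrant cokernel. Combined with closure of cofibrations under composition, this will reduce everything to verifying one fact, namely that the shift functor $[1]$ preserves cofibrancy.

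First I unpack the mapping cone: by construction $\Cone(f)$ has underlying graded $\mathcal{A}$-module $N \oplus [1]M$ with the standard $f$-twisted differential, and the canonical morphism $N \ra \Cone(f)$ is the graded split inclusion of the first summand, hence a monomorphism in $C(\mathcal{A})$ with cokernel $[1]M$. Next I verify that $[1]$ preserves cofibrant objects: the shift is a self-equivalence of $C(\mathcal{A})$ (with quasi-inverse $[-1]$), so it commutes with all colimits and preserves monomorphisms; moreover it permutes the generating set $I = \{\iota_{n,A}: S_{n,A} \ra D_{n,A}\}$, since $[1]S_{n,A} = S_{n+1,A}$ and $[1]D_{n,A} \cong D_{n+1,A}$ (up to the usual sign). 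Hence $[1]$ sends $I$-cell complexes to $I$-cell complexes, and as every cofibrant object is a retract of such a cell complex, the class of cofibrant objects is closed under $[1]$.

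Putting it together, $N \ra \Cone(f)$ is a monomorphism with cofibrant cokernel $[1]M$, hence a cofibration. For the final assertion, the composite $0 \ra N \ra \Cone(f)$ is a cofibration ($0 \ra N$ is one because $N$ is cofibrant, and cofibrations are closed under composition), so $\Cone(f)$ is cofibrant. The only mildly nontrivial ingredient is the shift invariance of cofibrancy; everything else is formal given the characterization of cofibrations.
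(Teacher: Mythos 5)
Your proof is correct and takes essentially the same route as the paper: immediately before the lemma the authors record that cofibrations in $C(\mathcal{A})$ are exactly the monomorphisms with cofibrant cokernel and that cofibrations compose, and they state that these two facts prove the lemma. The only step the paper leaves implicit is that $[1]M$ is cofibrant; your observation that the shift is a colimit-preserving self-equivalence permuting the generating set $I$ (so it preserves $I$-cell complexes and their retracts) supplies exactly that missing detail.
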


\begin{lemma}
  \label{l:extension-of-cofibrant-dgAmodules}
  Let $M' \hra M \sra M''$ be a short exact sequence in
  $C(\mathcal{A})$ and assume that $M'$ and $M''$ are cofibrant.
  Then the inclusion $M' \hra M$ is a cofibration
  and $M$ is cofibrant.
  (In fact this short exact sequence is isomorphic
  to the standard short exact sequence $M' \hra \Cone(\tau) \sra M''$
  for some morphism $\tau:[-1]M'' \ra M'$ in $C(\mathcal{A}).$)
\end{lemma}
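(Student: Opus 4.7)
The plan relies entirely on the characterization (announced just before the statement) that cofibrations in $C(\mathcal{A})$ are precisely the monomorphisms with cofibrant cokernel, together with the lifting property from the cited footnote.

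First, the cofibration claim is immediate: $M' \hra M$ is a monomorphism, and its cokernel $M''$ is cofibrant by hypothesis, so by the characterization it is a cofibration. For the cofibrancy of $M$, I would note that $0 \ra M'$ is a cofibration (the cokernel $M'$ being cofibrant), and the composite $0 \ra M' \hra M$ is therefore a cofibration; since cofibrations are closed under composition, $M$ is cofibrant.

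For the parenthetical statement, the plan is to produce a splitting as graded $\mathcal{A}$-modules and then read off $\tau$. Applying the lifting property recorded in the footnote to the epimorphism $p:M \sra M''$ and the degree-zero graded morphism $\id_{M''}:M''\to M''$, using that $M''$ is cofibrant, yields a degree-zero graded morphism $s:M''\to M$ with $ps=\id_{M''}$. Hence as graded $\mathcal{A}$-modules $M\cong M'\oplus M''$ via $(\id_M - sp,\,p)$, with inclusion $(m',m'')\mapsto m'+s(m'')$. Transporting the differential of $M$ across this isomorphism and using that $M'\hra M$ and $M\sra M''$ commute with differentials, the only new piece of data is the component $M''\to M'$ of $d$, which has degree $+1$, i.e.\ a degree-zero morphism $\tau:[-1]M''\to M'$ of graded $\mathcal{A}$-modules; the identity $d^2=0$ on $M$ forces $\tau$ to be a cocycle, i.e.\ a morphism in $C(\mathcal{A})$. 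By construction the resulting dg $\mathcal{A}$-module $M'\oplus M''$ with this twisted differential is precisely $\Cone(\tau)$, and the splitting identifies the original short exact sequence with the standard one
\begin{equation*}
M' \hra \Cone(\tau) \sra M''.
\end{equation*}

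I do not anticipate a serious obstacle here: the only non-formal input is the graded lifting property of the footnote, which is explicitly given. The main care needed is the bookkeeping with signs and degrees in verifying that the $M''\to M'$ component of the differential on $M$, viewed via the graded splitting $s$, is indeed $d$-closed and thus legitimately defines a morphism $\tau:[-1]M''\to M'$ in $C(\mathcal{A})$.
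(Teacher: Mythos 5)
Your proof is correct and takes essentially the same approach the paper indicates: the two main claims follow directly from the characterization of cofibrations as monomorphisms with cofibrant cokernel together with closure under composition, and the parenthetical is obtained by using the footnote's graded lifting property to split $M \sra M''$ as graded $\mathcal{A}$-modules and then reading off the cone structure from the transported differential.
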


\begin{lemma}
  [{cf.~\cite[Lemma~2.3.8]{hovey-model-categories}}]
  \label{l:cofobj-is-h-proj-dgA}
  Any cofibrant object of $C(\mathcal{A})$ is h-projective.
\end{lemma}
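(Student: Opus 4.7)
The plan is to show that the class $\mathcal{P}$ of h-projective dg $\mathcal{A}$-modules contains every cofibrant object. By the small object argument associated with the cofibrantly generated model structure of Theorem~\ref{t:CA-cofib-gen-model-cat}, every cofibration $0 \hookrightarrow P$ is a retract of a transfinite composition $0 = X_0 \hookrightarrow X_1 \hookrightarrow \cdots \hookrightarrow X_\lambda = X$ of pushouts of generating cofibrations in $I$. Since h-projectivity is closed under retracts (immediate from the definition), it suffices to show that every such cell complex $X$ lies in $\mathcal{P}$.

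The key ingredient is a constructive extension of null-homotopies. Suppose $0 \to M' \to M \to M'' \to 0$ is a short exact sequence in $C(\mathcal{A})$ that splits as a sequence of graded $\mathcal{A}$-modules; this holds automatically at each successor stage of our cell complex, since the generating cofibrations $S_{n,A} \hookrightarrow D_{n,A}$ are graded-split (as $D_{n,A} = \Cone(\id_{S_{n,A}})$) and this property is preserved by pushouts. Assume moreover that $M''$ is h-projective. Then, given any $f: M \to N$ with $N$ acyclic and any null-homotopy $h': M' \to N$ of $f|_{M'}$, one constructs an extension $h: M \to N$ of $h'$ with $dh + hd = f$ as follows: extend $h'$ by zero along a chosen graded splitting $\sigma:M'' \to M$ to produce $h_0: M \to N$; observe that $f - (dh_0 + h_0 d)$ vanishes on $M'$ and so factors as $\tilde f \circ \pi$ for a chain map $\tilde f: M'' \to N$; pick a null-homotopy $\tilde h$ of $\tilde f$ (available by h-projectivity of $M''$); and set $h := h_0 + \tilde h \circ \pi$. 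A direct calculation verifies $dh + hd = f$ and $h|_{M'} = h'$.

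With this extension procedure in hand, a transfinite induction establishes the result. Given $f: X \to N$ with $N$ acyclic, build compatible null-homotopies $h_\alpha: X_\alpha \to N$ with $dh_\alpha + h_\alpha d = f|_{X_\alpha}$ and $h_\beta|_{X_\alpha} = h_\alpha$ for $\alpha \leq \beta$. The base case $h_0 = 0$ is trivial. At successor stages, $X_{\alpha+1}$ arises from $X_\alpha$ by a pushout of a coproduct of generating cofibrations, so the cokernel $X_{\alpha+1}/X_\alpha$ is a coproduct of shifted representables and therefore h-projective by \eqref{eq:yoneda-dgA-H0A}; the extension procedure then applies with input $h_\alpha$ to produce $h_{\alpha+1}$. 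At limit stages, $h_\lambda := \colim_{\alpha < \lambda} h_\alpha$ is well-defined by compatibility, and the identity $dh_\lambda + h_\lambda d = f|_{X_\lambda}$ follows upon restriction to each $X_\alpha$. The main technical subtlety is ensuring that each successor-stage construction produces a strict extension of the already-chosen $h_\alpha$ rather than a new null-homotopy unrelated to it; this is precisely what the input-output formulation of the key extension lemma accomplishes, and without this bookkeeping the limit step would fail.
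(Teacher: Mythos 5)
Your proof is correct, but it takes a genuinely different route from the paper's. The paper's argument is a one-shot lifting: for acyclic $N$, the canonical epimorphism $p:[-1]\Cone(\id_N)\to N$ is a trivial fibration, and since $C$ is cofibrant, any $f:C\to N$ lifts to $h:C\to[-1]\Cone(\id_N)$ with $ph=f$; writing $h$ componentwise as a pair $(D,f)$, the closedness of $h$ forces $f=-(dD+Dd)$, so $f=0$ in $\mathcal{H}(\mathcal{A})$, and that is the whole proof. You instead reduce to $I$-cell complexes via retracts (correct: factor $0\to C$ and use the retract argument), then build the null-homotopy by transfinite induction, driven by an extension lemma for graded-split short exact sequences with h-projective cokernel. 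Your extension lemma is stated and used correctly: the graded-splitness of $S_{n,A}\hookrightarrow D_{n,A}$ and its preservation under pushout and coproduct, the h-projectivity of the cokernels (coproducts of shifted representables, via \eqref{eq:yoneda-dgA-H0A}), and the compatibility requirement $h_\beta|_{X_\alpha}=h_\alpha$ needed at limit ordinals are all handled properly. What the paper's approach buys is brevity and a complete bypass of the cell-complex bookkeeping: the single lift against $[-1]\Cone(\id_N)$ \emph{is} the null-homotopy, and the compatibility issue you carefully flag at the end simply does not arise. What your approach buys is transparency: it unpacks exactly how the cell structure of a cofibrant object produces the homotopy, which can be useful in settings where one has the cell filtration in hand but not the full model-categorical apparatus. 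Both are standard; the paper's is the one that a quick read of Hovey's Lemma 2.3.8 (cited in the statement) would not suggest, which is why it is worth knowing both.
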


\begin{proof}
  Let $C \in C(\mathcal{A})$ be cofibrant.
  Let $f: C \ra N$ be a morphism in $C(\mathcal{A})$ and assume that
  $N$ is acyclic. Then the obvious epimorphism $p:[-1]\Cone(\id_N) \ra
  N$ is a quasi-isomorphism and hence a trivial 
  fibration. Since $C$ is cofibrant there is a lift $h: C \ra
  [-1]\Cone(\id_N)$ such that $ph=f.$ This lift has the form $h=
  \svek{D}{f}
  $ 
  and commutes with the differential.
  This implies that $f=d_N(-D)+(-D)d_C$ and hence $f=0$ in $\mathcal{H}(\mathcal{A}).$
\end{proof}

Denote by $C(\mathcal{A})_{cf}$
(resp.~$\mathcal{H}(\mathcal{A})_{cf}$) the full subcategory of
$C(\mathcal{A})$ (resp.~$\mathcal{H}(\mathcal{A})$) consisting of
cofibrant (and fibrant) objects.
From Lemma~\ref{l:mapping-cones-of-morphisms-between-cofibrants} 
we see that $\mathcal{H}(\mathcal{A})_{cf}$ is a triangulated
subcategory of $\mathcal{H}(\mathcal{A})$ (non-strict in general).  
Since any object of $C(\mathcal{A})$ has a cofibrant resolution, 
Lemma~\ref{l:cofobj-is-h-proj-dgA} and
\eqref{eq:h-projective-and-morphisms-DA}
immediately imply that the canonical triangulated functor
\begin{equation}
  \label{eq:homotopy-cat-cofibrants-isom-derived-cat}
  \mathcal{H}(\mathcal{A})_{cf} \sira D(\mathcal{A})
\end{equation}
is an equivalence.
We fix for any dg $\mathcal{A}$-module $M$ a cofibrant (and hence
h-projective) resolution $p(M) \ra M$ (we could even assume that $p:
C(\mathcal{A}) \ra C(\mathcal{A})_{cf}$ is a functor).
Then $M \mapsto p(M)$ extends to a functor
\begin{equation}
  \label{eq:homotopy-cat-cofibrants-isom-derived-cat-quasi-inverse}
  p: D(\mathcal{A}) \ra \mathcal{H}(\mathcal{A})_{cf}
\end{equation}
which is quasi-inverse to 
\eqref{eq:homotopy-cat-cofibrants-isom-derived-cat}.
We will use $p$ for (left-)deriving certain functors.

A dg $\mathcal{A}$-module $F$ is \define{free} if it is isomorphic in
$C(\mathcal{A})$ to a coproduct of shifts of objects $\Yoneda{A},$ where
$A$ varies in $\mathcal{A}.$ A dg $\mathcal{A}$-module $F$ is called \define{semi-free}
(cf.\ \cite[13.1, 14.8]{drinfeld-dg-quotients})
if it can be represented as the union of an increasing sequence of
dg $\mathcal{A}$-submodules $F_i$ (where $i \in \DN$) such that $F_0=0$ and
each quotient $F_{i+1}/F_{i}$ is a free dg $\mathcal{A}$-module.

\begin{lemma}
  \label{l:cofibrant-dg-A-modules}
  \rule{1mm}{0mm}
  \begin{enumerate}
  \item 
    \label{enum:dg-A-semifree-Icell-cofib}
    All semi-free dg $\mathcal{A}$-modules are 
    cofibrant.

  \item 
    \label{enum:dg-A-cofib-retract-semifree}
    Every cofibrant
    dg $\mathcal{A}$-module
    is a retract of a semi-free dg $\mathcal{A}$-module.
  \end{enumerate}
\end{lemma}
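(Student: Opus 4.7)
The plan is to use the cofibrant generation of the projective model structure on $C(\mathcal{A})$ from Theorem~\ref{t:CA-cofib-gen-model-cat}, together with the characterization (quoted just above the lemma) of cofibrations as monomorphisms with cofibrant cokernel, and Lemma~\ref{l:extension-of-cofibrant-dgAmodules}.

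For part (a), I first observe that every shift $S_{m,A}$ is cofibrant. Indeed, the pushout of $\iota_{n,A}\colon S_{n,A} \hra D_{n,A}$ along the unique morphism $S_{n,A} \to 0$ is the quotient $D_{n,A}/S_{n,A}$, which is a shift of $\Yoneda{A}$ and hence of the form $S_{m,A}$; thus $0 \to S_{m,A}$ is a pushout of a generating cofibration. Since coproducts of cofibrations are cofibrations, any free dg $\mathcal{A}$-module (a coproduct of such $S_{m,A}$) is then cofibrant. Given a semi-free $F = \bigcup_{i \in \DN} F_i$ with $F_0 = 0$ and each $F_{i+1}/F_i$ free, Lemma~\ref{l:extension-of-cofibrant-dgAmodules} applied inductively to the short exact sequences $F_i \hra F_{i+1} \sra F_{i+1}/F_i$ shows that every $F_i$ is cofibrant and every inclusion $F_i \hra F_{i+1}$ is a cofibration. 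The map $0 \to F$ is the sequential composition of these cofibrations, hence itself a cofibration, so $F$ is cofibrant.

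For part (b), I apply the small object argument to factor $0 \to M$ as $0 \to \widetilde{M} \xra{q} M$ with $q$ a trivial fibration and $0 \to \widetilde{M}$ an $I$-cell complex. I then claim that $\widetilde{M}$ can be realized as a $\DN$-indexed sequential colimit $\widetilde{M} = \colim_{i \in \DN} \widetilde{M}_i$ with $\widetilde{M}_0 = 0$ and each $\widetilde{M}_{i+1}$ the pushout of $\widetilde{M}_i$ along a coproduct of maps in $I$. This reduces to $\omega$-smallness of the domains $S_{n,A}$, which follows from the Yoneda isomorphism \eqref{eq:yoneda-dgA}: a morphism $S_{n,A} \to \colim_i X_i$ corresponds to a degree-$(-n)$ element in $(\colim_i X_i)(A) = \colim_i X_i(A)$, and any such element factors through a finite stage. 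Each subquotient $\widetilde{M}_{i+1}/\widetilde{M}_i$ is then a coproduct of quotients of the form $D_{n,A}/S_{n,A} \cong S_{m,A}$ and so is free, exhibiting $\widetilde{M}$ as semi-free. Since $M$ is cofibrant, the lifting axiom applied to the square with $0 \to M$ on the left, $q$ on the right and $\id_M$ at the bottom produces a section $s\colon M \to \widetilde{M}$ of $q$, exhibiting $M$ as a retract of $\widetilde{M}$.

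The main obstacle is justifying the $\DN$-indexing in part (b); once $\omega$-smallness of the generating domains is in hand, both parts flow from the structural lemmata and model-categorical facts already established in this section.
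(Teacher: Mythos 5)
Your proof of part (a) matches the paper's: free modules are cofibrant (each $S_{m,A}$ is a pushout of a generating cofibration, and coproducts of cofibrations are cofibrations), and then Lemma~\ref{l:extension-of-cofibrant-dgAmodules} together with closure of cofibrations under transfinite composition gives cofibrancy of semi-free modules.

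For part (b) you take a genuinely different route to the semi-free resolution. The paper simply invokes (the obvious variant of) Drinfeld's Lemma~13.3 from \cite{drinfeld-dg-quotients} to produce a surjective quasi-isomorphism $F \to C$ with $F$ semi-free, and then applies the lifting axiom to exhibit $C$ as a retract of $F$. You instead run the small object argument on $0 \to M$, check that the domains $S_{n,A}$ are $\omega$-small via the Yoneda isomorphism (so the transfinite composition can be taken $\DN$-indexed), observe that in the abelian category $C(\mathcal{A})$ each pushout stage $\widetilde{M}_i \hra \widetilde{M}_{i+1}$ is a monomorphism with free cokernel $\coprod_\alpha S_{n_\alpha+1,A_\alpha}$, and thereby identify the $I$-cell complex $\widetilde{M}$ as a semi-free module. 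The retraction step at the end is the same. What your variant buys is that it is entirely self-contained within the model-categorical machinery already set up in this section, rather than relying on an external lemma of Drinfeld; the tradeoff is that you must verify $\omega$-smallness to justify $\DN$-indexing, which the Drinfeld construction sidesteps by directly building the filtration. Both arguments are correct and lead to the same conclusion.
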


\begin{proof}
  \ref{enum:dg-A-semifree-Icell-cofib}
  This follows from Lemma~\ref{l:extension-of-cofibrant-dgAmodules}
  since free dg $\mathcal{A}$-modules are cofibrant (and cofibrations
  are closed under transfinite compositions). 


  \ref{enum:dg-A-cofib-retract-semifree}
  Let $C \in C(\mathcal{A})$ be cofibrant.
  The obvious variation of \cite[Lemma~13.3, cf.\ 14.8]{drinfeld-dg-quotients}
  shows that there is a surjective quasi-isomorphism (= trivial
  fibration)
  $f:F \ra C$ where $F$ is a semi-free dg $\mathcal{A}$-module.
  Since $C$ is cofibrant, $\id_C$ factors through the trivial fibration
  $f,$ and hence $C$ is a retract of $F.$
\end{proof}

\subsection{Tensor product and flatness}
\label{sec:tens-prod-flatn-1}

Let $\mathcal{A},$ $\mathcal{B}$ and $\mathcal{C}$ be dg
$K$-categories, and let 
$X=\leftidx{_\mathcal{B}}{X}{_\mathcal{A}}$ be a dg $\mathcal{A}
\otimes_K \mathcal{B}^\opp$-module
and 
$Y=\leftidx{_\mathcal{C}}{X}{_\mathcal{B}}$ a dg $\mathcal{B}
\otimes_K \mathcal{C}^\opp$-module. Their tensor product is the 
dg $\mathcal{A} \otimes_K \mathcal{C}^\opp$-module 
$Y \otimes_\mathcal{B} X$ defined in the usual way (cf.\
\cite[6.1]{Keller-deriving-dg-cat} or
\cite[14.3]{drinfeld-dg-quotients}). This construction yields a dg
$K$-functor
\begin{equation*}
  (? \otimes_{\mathcal{B}} ?):
  \calMod(\mathcal{B} \otimes \mathcal{C}^\opp) \otimes_K
  \calMod(\mathcal{A} \otimes \mathcal{B}^\opp) \ra
  \calMod(\mathcal{A} \otimes \mathcal{C}^\opp).
\end{equation*}

A dg $\mathcal{A}$-module $M$ is \define{$\mathcal{A}$-homotopically-flat} (abbreviated
\define{$\mathcal{A}$-h-flat}) (cf.\
\cite[14.7]{drinfeld-dg-quotients})
if $M \otimes_\mathcal{A} X$ is acyclic whenever $X$ is an acyclic
dg $\mathcal{A}^\opp$-module.
An equivalent condition is that $(M \otimes_\mathcal{A} ?)$ preserves
quasi-isomorphisms, i.\,e.\ whenever $f: X \ra Y$ in $C(\mathcal{A}^\opp)$
is a quasi-isomorphism, then $\id_M \otimes_\mathcal{A} f: M
\otimes_\mathcal{A} X \ra M \otimes_\mathcal{A} Y$ is a
quasi-isomorphism.

Note that this also defines the notion of $K$-h-flatness by
considering $K$ as a dg $K$-category with one object.

Note that a dg $\mathcal{A}^\opp$-module $M$ is
$\mathcal{A}^\opp$-h-flat if and only if
$(? \otimes_\mathcal{A} M)$ preserves acyclics.

If $M$ is a dg $\mathcal{A} \otimes_K \mathcal{B}^\opp$-module we say that
it is $\mathcal{A}$-h-flat if each
$M(?,B)$ (for $B \in \mathcal{B}^\opp$)
is $\mathcal{A}$-flat, i.\,e. if $(M \otimes_\mathcal{A} ?)$ maps acyclic dg
$\mathcal{A}^\opp$-modules to acyclic dg $\mathcal{B}^\opp$-modules,
and we define $\mathcal{B}^\opp$-h-flatness similarly.

For example, all $\Yoneda{A}$ (for $A \in \mathcal{A}$) are
$\mathcal{A}$-h-flat, since $\Yoneda{A} \otimes_\mathcal{A} X = X(A)$
canonically.
More generally, the following is true.

\begin{lemma}
  \label{l:cofibrant-dg-A-module-is-A-h-flat}
  Any cofibrant dg $\mathcal{A}$-module is $\mathcal{A}$-h-flat.  
  In particular any cofibrant dg $K$-module is $K$-h-flat. 
\end{lemma}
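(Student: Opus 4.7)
The plan is to reduce the statement, via Lemma~\ref{l:cofibrant-dg-A-modules}\ref{enum:dg-A-cofib-retract-semifree}, to the case of a semi-free dg $\mathcal{A}$-module, and then to verify $\mathcal{A}$-h-flatness by an induction along the defining filtration $0=F_0\subset F_1\subset F_2\subset\cdots$, passing to the colimit at the end.

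First I would record the easy stabilities: if $M$ is $\mathcal{A}$-h-flat and $M'$ is a retract of $M$ in $C(\mathcal{A})$, then $M'\otimes_\mathcal{A} X$ is a retract of $M\otimes_\mathcal{A} X$, hence acyclic whenever the latter is; so h-flatness passes to retracts. Coproducts of h-flat modules are h-flat since $?\otimes_\mathcal{A} X$ commutes with coproducts and coproducts of acyclic complexes are acyclic. Similarly, filtered colimits of h-flat modules are h-flat, since $?\otimes_\mathcal{A} X$ commutes with colimits and filtered colimits of acyclic complexes of $K$-modules are acyclic. Using $\Yoneda{A}\otimes_\mathcal{A} X=X(A)$, one sees immediately that each $\Yoneda{A}$ is h-flat; shifts of such are h-flat, and hence so is every free dg $\mathcal{A}$-module.

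Next I would handle semi-free modules. Let $F=\bigcup_{i\in\DN} F_i$ with $F_0=0$ and each $G_i:=F_{i+1}/F_i$ free. For each $i$, the short exact sequence $0\to F_i\to F_{i+1}\to G_i\to 0$ in $C(\mathcal{A})$ is split as a sequence of graded $\mathcal{A}$-modules (one picks a graded section of the projection onto the free graded module $G_i$). Tensoring over $\mathcal{A}$ with any dg $\mathcal{A}^\opp$-module $X$ therefore yields a short exact sequence
\begin{equation*}
0\to F_i\otimes_\mathcal{A} X\to F_{i+1}\otimes_\mathcal{A} X\to G_i\otimes_\mathcal{A} X\to 0
\end{equation*}
in $C(K)$. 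Assuming $X$ is acyclic, $G_i\otimes_\mathcal{A} X$ is acyclic since $G_i$ is free; by induction on $i$ (with $F_0\otimes_\mathcal{A} X=0$) the long exact cohomology sequence shows that every $F_i\otimes_\mathcal{A} X$ is acyclic. Then $F\otimes_\mathcal{A} X=\colim_i F_i\otimes_\mathcal{A} X$ is a filtered colimit of acyclic complexes of $K$-modules, hence acyclic. Thus $F$ is $\mathcal{A}$-h-flat.

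Finally, a general cofibrant dg $\mathcal{A}$-module is by Lemma~\ref{l:cofibrant-dg-A-modules}\ref{enum:dg-A-cofib-retract-semifree} a retract of a semi-free one, so h-flatness carries over by the retract stability noted above. The statement for $K$-modules is the special case $\mathcal{A}=K$. The only mildly delicate point is the splitting of $0\to F_i\to F_{i+1}\to G_i\to 0$ as graded $\mathcal{A}$-modules, which makes the tensor product short exact on the nose; everything else is routine.
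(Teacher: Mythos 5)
Your proof is correct and follows essentially the same route as the paper: reduce to the semi-free case via Lemma~\ref{l:cofibrant-dg-A-modules}\ref{enum:dg-A-cofib-retract-semifree}, then verify $\mathcal{A}$-h-flatness of semi-free modules. The paper simply cites Drinfeld \cite[14.8]{drinfeld-dg-quotients} for the latter step, whereas you spell out the filtration/graded-splitting argument explicitly.
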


\begin{proof}
  A cofibrant dg $\mathcal{A}$-module is a summand of a semi-free dg
  $\mathcal{A}$-module (Lemma~\ref{l:cofibrant-dg-A-modules}), and
  semi-free modules are obviously $\mathcal{A}$-h-flat
  (cf.\ \cite[14.8]{drinfeld-dg-quotients}).
\end{proof}

\begin{lemma}
  \label{l:qisos-between-Khflats-remain-qisos}
  Let $f: M \ra N$ be a quasi-isomorphism in $C(K)$ and assume that $M$
  and $N$ are $K$-h-flat. If $L$ is any dg $K$-module, then
  $f \otimes_K \id_L: M \otimes_K L \ra N \otimes_K L$ is a quasi-isomorphism.
\end{lemma}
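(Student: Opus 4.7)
The plan is to reduce the statement to the case where the second tensor factor is cofibrant (and hence automatically $K$-h-flat), after which everything follows from the three easy edges of a commutative square.

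First, I would choose a cofibrant resolution $p: P \ra L$ in $C(K)$, using the projective model structure of Theorem~\ref{t:CA-cofib-gen-model-cat}. By Lemma~\ref{l:cofibrant-dg-A-module-is-A-h-flat} the object $P$ is $K$-h-flat, and as a trivial fibration $p$ is in particular a quasi-isomorphism.

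Next I would consider the obvious commutative square with corners $M \otimes_K P$, $N \otimes_K P$, $M \otimes_K L$, $N \otimes_K L$, whose horizontal arrows are $f \otimes_K \id_P$ and $f \otimes_K \id_L$ and whose vertical arrows are $\id_M \otimes_K p$ and $\id_N \otimes_K p$. The two vertical arrows are quasi-isomorphisms because $M$ and $N$ are $K$-h-flat, i.e.\ the functors $(M \otimes_K ?)$ and $(N \otimes_K ?)$ preserve quasi-isomorphisms, applied to $p$. The upper horizontal arrow $f \otimes_K \id_P$ is a quasi-isomorphism because $P$ is $K$-h-flat: graded commutativity of $K$ provides a natural isomorphism $? \otimes_K P \cong P \otimes_K ?$ (with the standard signs), so $(? \otimes_K P)$ likewise preserves quasi-isomorphisms, and we apply this functor to $f$. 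By the two-out-of-three property for quasi-isomorphisms, now applied within this square, the lower horizontal arrow $f \otimes_K \id_L$ is a quasi-isomorphism, as desired.

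The only mildly subtle point is the identification of $(? \otimes_K P)$ with $(P \otimes_K ?)$ used in the last step: one has to check that the symmetry isomorphism $M \otimes_K P \sira P \otimes_K M$ supplied by the graded commutative structure of $K$ gives a natural isomorphism of functors $C(K) \ra C(K)$ compatible with differentials. Once this routine bookkeeping is done, the proof is a purely formal square chase, and no further ingredients beyond the previously established Theorem~\ref{t:CA-cofib-gen-model-cat} and Lemma~\ref{l:cofibrant-dg-A-module-is-A-h-flat} are required.
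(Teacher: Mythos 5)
Your proposal is correct and follows essentially the same route as the paper's proof: take a cofibrant (hence $K$-h-flat) resolution of $L$, form the evident commutative square with $f\otimes \id$ on the horizontals and $\id \otimes (\text{resolution})$ on the verticals, and conclude by two-out-of-three. The paper's version is terser (it simply asserts that the other three arrows are quasi-isomorphisms); you have usefully made explicit that the top arrow requires graded commutativity of $K$, via the symmetry $?\otimes_K P \cong P \otimes_K ?$, to convert $K$-h-flatness of $P$ into the statement that $(?\otimes_K P)$ preserves quasi-isomorphisms.
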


\begin{proof}
  Let $r: L' \ra L$ be a $K$-h-flat resolution of $L$ (e.\,g.\ a
  cofibrant resolution, cf.\
  Lemma~\ref{l:cofibrant-dg-A-module-is-A-h-flat}).
  In the commutative diagram
  \begin{equation*}
    \xymatrix@C1.6cm{
      {M \otimes_K L'} 
      \ar[r]^-{f\otimes_K \id_{L'}} 
      \ar[d]_-{\id_M \otimes_K r} &
      {N \otimes_K L'} 
      \ar[d]^-{\id_N \otimes_K r} \\
      {M \otimes_K L} 
      \ar[r]^-{f\otimes_K \id_{L}} &
      {N \otimes_K L,} 
    }
  \end{equation*}
  $f\otimes_K \id_L$ is a quasi-isomorphism since the other three
  morphisms are quasi-isomorphisms. 
\end{proof}

\begin{proposition}
  \label{p:cofibrant-bimodule-remains-cofibrant-if-insert-cofib-cat}
  Let $\mathcal{B}$ be a dg $K$-category with cofibrant morphism
  spaces.
  \begin{enumerate}
  \item 
    \label{enum:cofibrant-module-remains-cofibrant-if-evaluate}
    If $M$ is a cofibrant dg $\mathcal{B}$-module, then $M(B)$ is
    a cofibrant dg $K$-module, for any $B \in \mathcal{B},$ and is in
    particular $K$-h-flat.
  \end{enumerate}
  Let $\mathcal{A}$ be a dg $K$-category.
  \begin{enumerate}[resume]
  \item 
    \label{enum:cofibrant-bimodule-remains-cofibrant-if-partially-evaluate}
    Let $\mathcal{R}=\mathcal{A}\otimes_K \mathcal{B}^\opp$ or
    $\mathcal{R}=\mathcal{A}\otimes_K \mathcal{B}.$
    Let $X$ be a cofibrant 
    dg $\mathcal{R}$-module. Then $X^B:=X(?,B)$ is
    a cofibrant dg $\mathcal{A}$-module, for every $B \in
    \mathcal{B}.$
    In particular any cofibrant dg $\mathcal{A} \otimes_K
    \mathcal{B}^\opp$-module is $\mathcal{A}$-h-flat.
  \end{enumerate}
  Assume that  $\mathcal{A}$ has cofibrant morphism spaces.
  \begin{enumerate}[resume]
  \item 
    \label{enum:cofibrant-bimodule-remains-cofibrant-if-evaluate}
    If $X$ is a cofibrant dg $\mathcal{A}
    \otimes_K\mathcal{B}^\opp$-module, then $X(A,B)$ is a cofibrant dg
    $K$-module, for all $A \in \mathcal{A}$ and $B \in \mathcal{B},$
    and is in particular $K$-h-flat.
  \end{enumerate}
\end{proposition}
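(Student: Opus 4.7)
My plan is to reduce all three parts to the case of semi-free modules via Lemma~\ref{l:cofibrant-dg-A-modules}\ref{enum:dg-A-cofib-retract-semifree}, and then to check that partial evaluation at a fixed object preserves the requisite cofibrancy along the semi-free filtration. Throughout I will exploit that any evaluation functor $M \mapsto M(B)$ from a module category to $C(K)$ is exact and commutes with all coproducts and filtered colimits, that cofibrations are closed under coproducts and transfinite composition, and that retracts of cofibrant objects are cofibrant.

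For \ref{enum:cofibrant-module-remains-cofibrant-if-evaluate}, a semi-free dg $\mathcal{B}$-module $M$ is built as a sequential colimit $0 = F_0 \subset F_1 \subset \cdots$ whose subquotients are coproducts of shifts $[n]\Yoneda{B'}.$ Evaluating at $B,$ each such free subquotient becomes $\coprod [n]\mathcal{B}(B,B'),$ which is cofibrant in $C(K)$ by the assumption on $\mathcal{B};$ combining Lemma~\ref{l:extension-of-cofibrant-dgAmodules} at each stage with closure of cofibrations under transfinite composition then gives cofibrancy of $M(B).$ Retracts of such modules cover the general case since evaluation commutes with retracts.

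For \ref{enum:cofibrant-bimodule-remains-cofibrant-if-partially-evaluate} I first record that the functor $\Yoneda{A'} \otimes_K (-) \colon C(K) \to C(\mathcal{A})$ is left Quillen. It is left adjoint to evaluation at $A'$ (check via Yoneda and the tensor--hom adjunction in $C(K)$), and evaluation preserves epimorphisms and quasi-isomorphisms, hence both fibrations and trivial fibrations. Consequently, for any cofibrant $L \in C(K)$ the module $\Yoneda{A'} \otimes_K L$ is cofibrant over $\mathcal{A}.$ Applied to the representable bimodule $\Yoneda{(A',B')}$ one gets the identification
\begin{equation*}
\Yoneda{(A',B')}(?,B) \;\cong\; \Yoneda{A'} \otimes_K \mathcal{B}(B',B)
\end{equation*}
(and $\Yoneda{A'}\otimes_K \mathcal{B}(B,B')$ in the $\mathcal{R}=\mathcal{A}\otimes_K\mathcal{B}$ variant), and the second tensor factor is cofibrant by hypothesis. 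The same semi-free filtration argument as in \ref{enum:cofibrant-module-remains-cofibrant-if-evaluate} transports cofibrancy to $X^B$ for an arbitrary cofibrant bimodule $X,$ and the $\mathcal{A}$-h-flatness statement follows by applying Lemma~\ref{l:cofibrant-dg-A-module-is-A-h-flat} pointwise in $B.$

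Part \ref{enum:cofibrant-bimodule-remains-cofibrant-if-evaluate} is then just a composition of the previous two: \ref{enum:cofibrant-bimodule-remains-cofibrant-if-partially-evaluate} gives $X(?,B)$ cofibrant over $\mathcal{A},$ and \ref{enum:cofibrant-module-remains-cofibrant-if-evaluate} applied with $\mathcal{A}$ in place of $\mathcal{B}$ (permissible by the hypothesis on $\mathcal{A}$) then gives $X(A,B)$ cofibrant over $K;$ the $K$-h-flatness follows once more from Lemma~\ref{l:cofibrant-dg-A-module-is-A-h-flat}. No step is particularly hard; the only point that will require genuine care is verifying the left Quillen property of $\Yoneda{A'} \otimes_K (-),$ which pins down the fact that all three parts ultimately rest on the interaction of representables with the closed monoidal structure on $C(K).$
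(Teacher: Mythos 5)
Your proof is correct, and the overall skeleton (reduce to semi-free via Lemma~\ref{l:cofibrant-dg-A-modules}, evaluate the filtration at a fixed $B$, reduce to representables) matches the paper's proof of part~\ref{enum:cofibrant-bimodule-remains-cofibrant-if-partially-evaluate}. The one genuine difference lies in how you handle the key step, showing that $\Yoneda{(A_0,B_0)}^B \cong \Yoneda{A_0} \otimes_K \mathcal{B}(B_0,B)$ is a cofibrant dg $\mathcal{A}$-module. The paper replaces $\mathcal{B}(B_0,B)$ by a semi-free $G$ containing it as a summand, filters $G$, and runs a second explicit transfinite-induction argument to show $\Yoneda{A_0}\otimes_K G$ is cofibrant. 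You instead observe that $\Yoneda{A_0}\otimes_K(-)\colon C(K)\to C(\mathcal{A})$ is left adjoint to evaluation at $A_0,$ that evaluation is right Quillen (it preserves epimorphisms and quasi-isomorphisms pointwise), and therefore $\Yoneda{A_0}\otimes_K(-)$ is left Quillen and sends cofibrant objects to cofibrant objects directly. This is a cleaner and more conceptual argument that also avoids the detour through a semi-free replacement of $\mathcal{B}(B_0,B)$; it's the same adjunction $(\pro^\mathcal{A}_{K_{A_0}}, \res^\mathcal{A}_{K_{A_0}})$ the paper uses elsewhere in Section~\ref{sec:restr-prod}, just deployed here where the paper opts for a hands-on computation. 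A further minor stylistic difference: you prove part~\ref{enum:cofibrant-module-remains-cofibrant-if-evaluate} independently, while the paper derives it from part~\ref{enum:cofibrant-bimodule-remains-cofibrant-if-partially-evaluate} by taking $\mathcal{A}=K$; both are fine.
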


\begin{proof}
  The assertions concerning h-flatness follow from 
  Lemma~\ref{l:cofibrant-dg-A-module-is-A-h-flat}.

  \ref{enum:cofibrant-bimodule-remains-cofibrant-if-partially-evaluate}
  implies
  \ref{enum:cofibrant-module-remains-cofibrant-if-evaluate}:
  Take $\mathcal{A}=K$ and $\mathcal{R}=\mathcal{A} \otimes_K
  \mathcal{B}=\mathcal{B}.$

  \ref{enum:cofibrant-bimodule-remains-cofibrant-if-partially-evaluate}
  and \ref{enum:cofibrant-module-remains-cofibrant-if-evaluate} imply
  \ref{enum:cofibrant-bimodule-remains-cofibrant-if-evaluate}:
  Obvious.

  We need to prove
  \ref{enum:cofibrant-bimodule-remains-cofibrant-if-partially-evaluate}.
  We only consider the case 
  $\mathcal{R}=\mathcal{A}\otimes_K \mathcal{B}^\opp,$ the case
  $\mathcal{R}=\mathcal{A}\otimes_K \mathcal{B}$ follows by
  considering $\mathcal{B}^\opp$ instead of $\mathcal{B}.$

  Any cofibrant dg $\mathcal{A}\otimes_K \mathcal{B}^\opp$-module is a
  summand of a semi-free dg 
  $\mathcal{A} \otimes_K \mathcal{B}^\opp$-module by
  Lemma~\ref{l:cofibrant-dg-A-modules}, and cofibrant dg
  $\mathcal{A}$-modules are stable under summands.
  Hence we can assume that $X$ is a semi-free dg $\mathcal{A}\otimes_K
  \mathcal{B}^\opp$-module. Then there is an increasing filtration $0=F_0 \subset
  F_1 \subset \dots$ of $X$ such that $X= \bigcup_{i \in \DN} F_i$ and
  each quotient $F_{i+1}/F_i$ is a free dg $\mathcal{A} \otimes_K
  \mathcal{B}^\opp$-module. 

  Let $B \in \mathcal{B}.$ Evaluation at $B$ is exact and hence yields an
  increasing filtration $0 = F_0^B \subset F_1^B \subset \dots$ of $X^B$ such that
  $X^B=\bigcup_{i \in \DN}F_i^B.$ 
  An obvious induction using
  Lemma~\ref{l:extension-of-cofibrant-dgAmodules}
  and the fact that
  cofibrations are closed under transfinite compositions
  shows that it is sufficient to show that all 
  $F_{i+1}^B/F_i^B$ are cofibrant dg $\mathcal{A}$-modules.

  Fix $i \in \DN.$
  Then $F_{i+1}/F_i$ is isomorphic to a coproduct of shifts of objects
  $\Yoneda{R}$ for $R \in \mathcal{R}.$
  Evaluation at $B$ yields that
  $F_{i+1}^B/F_i^B$ is isomorphic to a coproduct of shifts of objects
  $\Yoneda{R}^B$ for $R \in \mathcal{R}.$

  Hence it is sufficient to show that $\Yoneda{(A_0,B_0)}^B$ is
  cofibrant for any $(A_0, B_0) \in \mathcal{A} \otimes_K
  \mathcal{B}^\opp.$
  Since $\mathcal{B}(B_0, B)$ is a cofibrant dg $K$-module, it is a
  direct summand of a semi-free dg $K$-module $G.$
  We can write $G$ as the union/colimit of a sequence $0=G_0
  \subset G_1 \subset \dots$ of dg $K$-submodules
  such that all quotients $G_{j+1}/G_j$ are free dg $K$-modules.
  Fix isomorphisms
  $G_{j+1}/G_j \cong \bigoplus_{l \in L^{(j+1)}}[n^{(j+1)}_l]K$ for
  suitable index sets $L^{(j+1)}.$

  For $A \in \mathcal{A}$ we have
  \begin{equation*}
    \Yoneda{(A_0,B_0)}^B(A)= (\mathcal{A}\otimes_K
    \mathcal{B}^\opp)((A,B), (A_0,B_0))
    = \mathcal{A}(A,A_0) \otimes_K \mathcal{B}(B_0, B)
  \end{equation*}
  which is a direct summand of $\mathcal{A}(A, A_0) \otimes_K G.$
  Hence $\Yoneda{(A_0,B_0)}^B$ is a direct summand of
  $\Yoneda{A_0} \otimes_K G = \mathcal{A}(?, A_0) \otimes_K G$
  and it is enough to show that the
  latter is a cofibrant dg $\mathcal{A}$-module.

  Since 
  $G_j \subset G_{j+1}$ splits in 
  graded $K$-modules, $\mathcal{A}(?,A_0)\otimes_K G$ is the
  union/colimit of its submodules
  \begin{equation*}
    0 = \mathcal{A}(?,A_0) \otimes_K G_0 \subset
    \mathcal{A}(?,A_0) \otimes_K G_1 \subset \dots
  \end{equation*}
  and the subquotients are isomorphic to 
  \begin{equation*}
    \mathcal{A}(?,A_0) \otimes_K (G_{j+1}/G_j)
    \cong \mathcal{A}(?,A_0) \otimes_K \bigoplus_{l \in L^{(j+1)}}
      [n^{(j+1)}_l] K =
    \bigoplus_{l \in L^{(j+1)}} [n^{(j+1)}_l] \mathcal{A}(?,A_0);
  \end{equation*}
  these claims can be checked by plugging in $A \in \mathcal{A}.$
  
  Since coproducts of shifts of $\mathcal{A}(?, A_0)=\Yoneda{A_0}$ are
  cofibrant,
  an obvious induction using 
  Lemma~\ref{l:extension-of-cofibrant-dgAmodules}
  (and a (countable) transfinite composition)
  shows that
  $\Yoneda{A_0} \otimes_K G$ is a
  cofibrant dg $\mathcal{A}$-module.
\end{proof}

\subsection{Standard functors and constructions}
\label{sec:stand-funct-constr}

We discuss some standard constructions and refer the reader to 
\cite[Section 6]{Keller-deriving-dg-cat} and
\cite{keller-on-dg-categories-ICM} for more details.

Let $\mathcal{A},$ $\mathcal{B}$ be dg $K$-categories, and let 
$X=\leftidx{_\mathcal{B}}{X}{_\mathcal{A}}$
be an $\mathcal{A} \otimes_K \mathcal{B}^\opp$-module.

\subsubsection{Hom and tensor}
\label{sec:hom-tensor}
This datum gives rise to a pair $(T_X, H_X)$ of adjoint dg $K$-functors  
\begin{equation}
  \label{eq:adj-tensor-hom-calModlevel}
  \xymatrix{
    {\calMod(\mathcal{B})}
    \ar@/^/[r]^{T_X}
    &
    {\calMod(\mathcal{A})}
    \ar@/^/[l]^{H_X}
  }
\end{equation}
where $T_X := (? \otimes_\mathcal{B} X),$ and
$H_X(M)$ is defined by
\begin{equation}
  \label{eq:def-HX}
  (H_X(M))(B):= (\calMod(\mathcal{A}))(X(?, B), M)
\end{equation}
for $B \in \mathcal{B}$ with the obvious action morphisms.

\subsubsection{Dual bimodule}
\label{sec:dual-module}
Following \cite[6.2]{Keller-deriving-dg-cat} we define
the dg $\mathcal{B} \otimes_K \mathcal{A}^\opp$-module $X^\perp=
\leftidx{_\mathcal{A}}{(X^\perp)}{_\mathcal{B}}$ by
\begin{equation}
  \label{eq:def-dual-bimodule}
  X^\perp(B,A):= (\calMod(\mathcal{A}))(X(?,B),\Yoneda{A})
\end{equation}
for $(B,A) \in \mathcal{B} \otimes_K \mathcal{A}^\opp$ with obvious
action morphisms. Observe that there is a canonical
transformation
$\tau: T_{X^\perp} = (? \otimes_\mathcal{A} X^\perp) \ra H_X$ of dg
$K$-functors.

\subsubsection{Left derived tensor product}
\label{sec:left-derived-tensor}

We define the triangulated
functor 
$LT_{X}:= (?
  \otimes_\mathcal{B}^L X)$ to be the composition 
\begin{equation}
  \label{eq:def-derived-tensor-product}
  D(\mathcal{B}) \xsira{p} \mathcal{H}(\mathcal{B})_{cf} \xra{T_X}
  \mathcal{H}(\mathcal{A}) \ra D(\mathcal{A}),
\end{equation}
where the first arrow is the equivalence 
\eqref{eq:homotopy-cat-cofibrants-isom-derived-cat-quasi-inverse}.
Note that $LT_X$ preserves all (small) coproducts.
We call any functor $D(\mathcal{B}) \ra D(\mathcal{A})$ of this form a
tensor functor. A tensor equivalence is a tensor functor that is an equivalence.

\subsubsection{Restriction and extension of scalars}
\label{sec:restr-prod}

Let $F: \mathcal{B} \ra \mathcal{A}$ be a dg $K$-functor of
dg $K$-categories. 
Taking $X=\mathcal{A}$ in
\eqref{eq:adj-tensor-hom-calModlevel} defines the extension of scalars
dg $K$-functor
$F^*:=\pro_\mathcal{B}^\mathcal{A}:= T_{\mathcal{A}}=(? \otimes_\mathcal{B} \mathcal{A})$
and its right adjoint $H_{\mathcal{A}}$ which is canonically
isomorphic to the obvious restriction of scalars dg $K$-functor
$F_*:=\res_\mathcal{B}^\mathcal{A}.$
Obviously $\res_\mathcal{B}^\mathcal{A}$ preserves acyclics and
descends to a triangulated functor
\begin{equation}
  \label{eq:restriction-derived}
  \res_\mathcal{B}^\mathcal{A}:D(\mathcal{A}) \ra
  D(\mathcal{B}).  
\end{equation}
It has $L\pro_\mathcal{B}^\mathcal{A}:=
LT_\mathcal{A}$ as a left adjoint, and this functor preserves compact
objects since its right adjoint commutes with coproducts.
If $F$ is a quasi-equivalence (as defined
below, see \ref{enum:quasi-equi-qiso-homs} and
\ref{enum:quasi-equi-H0-essentially-epi}),
then \eqref{eq:restriction-derived}
is an equivalence and induces an equivalence 
\begin{equation}
  \label{eq:restriction-equi-per-along-quequi}
  \res_\mathcal{B}^\mathcal{A}:\per(\mathcal{A}) \sira \per(\mathcal{B}).    
\end{equation}
This essentially follows from the results explained around
\eqref{eq:perA-equal-compactDA} and the fact that 
$L\pro_\mathcal{B}^\mathcal{A}$ commutes with coproducts and maps
$\Yoneda{B}$ to (an object isomorphic to) $\Yoneda{F(B)}.$

\subsection{Model structure on the category of dg
  \texorpdfstring{$K$}{K}-categories} 
\label{sec:model-struct-categ}

G.~Tabuada defines in
\cite{tabuada-model-structure-on-cat-of-dg-cats} a model structure
on the category 
of small dg categories.
We discuss a small generalization of
this result.

Note that any dg $K$-functor $F: \mathcal{A} \ra \mathcal{B}$ induces
a functor $[F]: [\mathcal{A}] \ra [\mathcal{B}]$ on homotopy categories. 
A dg $K$-functor $F:\mathcal{A} \ra \mathcal{B}$ is a
\define{quasi-equivalence} if 
\begin{enumerate}[label=(qe{\arabic*})]
\item 
  \label{enum:quasi-equi-qiso-homs}
  for all objects $a_1, a_2 \in \mathcal{A},$ the morphism
  $F: \mathcal{A}(a_1, a_2) \ra \mathcal{B}(Fa_1, Fa_2)$
  is a quasi-isomorphism, and
\item 
  \label{enum:quasi-equi-H0-essentially-epi}
  the functor $[F]: [\mathcal{A}] \ra[\mathcal{B}]$ is
  essentially surjective (i.\,e.\ surjective on isoclasses of
  objects).
\end{enumerate}
If \ref{enum:quasi-equi-qiso-homs} holds, then
\ref{enum:quasi-equi-H0-essentially-epi} is equivalent to the condition
that $[F]: [\mathcal{A}] \ra[\mathcal{B}]$ is an equivalence.

Denote by $\dgcat_K$ the category of small
dg $K$-categories: Its objects are small dg $K$-categories, and its morphisms
are dg $K$-functors.

Let $\Fib$ be the class of all morphisms $G:\mathcal{X} \ra
\mathcal{Y}$ in $\dgcat_K$ that 
satisfy the following two conditions:
\begin{enumerate}[label=(Fib{\arabic*})]
\item 
  \label{enum:fib-dgcat-epi-on-homs}
  for all objects $x_1, x_2$ of $\mathcal{X},$ the morphism
  $\mathcal{X}(x_1,x_2) \ra \mathcal{Y}(Gx_1, Gx_2)$ is surjective, and
\item 
  \label{enum:fib-dgcat-lifting-isos-in-homotopy-cat}
  for all objects $x_1$ in $\mathcal{X}$ and each isomorphism $v: Gx_1
  \ra y$ in $[\mathcal{Y}]$ there is (an object $x_2$ in
  $\mathcal{X}$) and an isomorphism $u: x_1 \ra x_2$ in
  $[\mathcal{X}]$ such that $[G](u)=v$ (so $Gx_2=y$).
\end{enumerate}

\begin{theorem}
  [{cf.~\cite{tabuada-model-structure-on-cat-of-dg-cats}}]
  \label{t:dgcatK-cofib-gen-model-cat}
  The category $\dgcat_K$ can be equipped with the structure of a
  cofibrantly generated model category whose 
  weak equivalences are the quasi-equivalences and whose fibrations
  are $\Fib.$
\end{theorem}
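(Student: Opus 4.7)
The plan is to adapt Tabuada's proof \cite{tabuada-model-structure-on-cat-of-dg-cats} for small dg categories over a commutative ring to the present setting of dg categories over the graded commutative dg $\groundring$-algebra $K$. The strategy is to exhibit explicit sets $I$ and $J$ of generating (trivial) cofibrations and then apply Quillen's recognition theorem \cite[Thm.~2.1.19]{hovey-model-categories}. The set $I$ consists of the unique dg $K$-functor $\emptyset \ra \ul{K}$ from the empty dg $K$-category to the one-object dg $K$-category $\ul{K}$ with endomorphism algebra $K$, together with inclusions $\mathcal{S}(n) \hra \mathcal{D}(n)$ of two-object dg $K$-categories where the former has a single generating morphism of degree $n$ between its two objects and the latter freely attaches a boundary for it; these are built directly from the generating cofibrations $S_{n,A} \ra D_{n,A}$ of Theorem~\ref{t:CA-cofib-gen-model-cat}. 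The set $J$ consists of analogous morphisms attaching contractible morphism complexes, together with an inclusion $\ul{K} \hra \mathcal{K}$ into a contractible dg $K$-category with two isomorphic objects used to realise the lifting of isomorphisms in the homotopy category, again in literal analogy with Tabuada.

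Next I would verify the hypotheses of the recognition theorem. Smallness of the domains of $I$ and $J$ in $\dgcat_K$ is automatic since they have only finitely many objects and their morphism spaces are built from finitely many cells over $K$; the class $\mathcal{W}$ of quasi-equivalences is closed under retracts and satisfies two-out-of-three because the same holds for quasi-isomorphisms in $C(K)$ and for equivalences of $\groundring$-linear categories. An elementary diagram chase shows that $J\hy\inj = \Fib$ and $I\hy\inj = \Fib \cap \mathcal{W}$: lifting against $J$ is equivalent to conditions \ref{enum:fib-dgcat-epi-on-homs} and \ref{enum:fib-dgcat-lifting-isos-in-homotopy-cat}, while lifting additionally against the generators in $I$ upgrades surjectivity of the hom-components to their being trivial fibrations in $C(K)$ and forces the functor $[G]$ to be surjective on objects, hence an equivalence once combined with condition \ref{enum:fib-dgcat-epi-on-homs}.

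The main obstacle, as in Tabuada's original argument, is the verification that $J\hy\cell \subset \mathcal{W}$, i.e.\ every transfinite composition of pushouts of morphisms in $J$ is a quasi-equivalence. This requires an explicit analysis of what happens to hom-complexes under such pushouts, carried out object by object in the target dg $K$-category. The key point enabling the passage from $\groundring$ to $K$ is that the morphism complexes attached by the generators in $J$ are contractible and $K$-h-flat, properties that follow from Lemma~\ref{l:cofibrant-dg-A-module-is-A-h-flat} and are propagated through pushouts by Lemma~\ref{l:qisos-between-Khflats-remain-qisos}; consequently every hom-complex of the pushout remains quasi-isomorphic to its counterpart in the source, and the induced functor on homotopy categories remains an equivalence. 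Once this step is granted, the recognition theorem supplies the cofibrantly generated model structure and identifies the fibrations as $\Fib$ by construction.
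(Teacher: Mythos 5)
Your proposal follows exactly the paper's route: the paper's proof is the single sentence that Tabuada's argument generalizes in the obvious way, and your sketch is a sensible filling-in of that generalization using the same generating sets, the same application of Quillen's recognition theorem, and the $K$-h-flatness lemmas from Section~\ref{sec:diff-grad-K-categories-and-their-module-categories} to handle the only genuinely new point (that attaching cells over a non-trivial dg ring still preserves quasi-isomorphisms on hom-complexes). The one place your sketch is slightly too glib is the generator $\ul{K}\hra\mathcal{K}$: a pushout along it adds an object, so "every hom-complex of the pushout remains quasi-isomorphic to its counterpart in the source" is not literally the right statement there and one must also show the new object is isomorphic in $H^0$ to an old one, but this is a standard part of Tabuada's argument and not a gap in the approach.
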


In the following, whenever we use model-theoretic terminology, we
assume that all dg $K$-categories involved are small.

\begin{proof}
  The proof of
  \cite[Thm.~2.1]{tabuada-model-structure-on-cat-of-dg-cats}
  generalizes in the obvious way to this setting.
\end{proof}

Note that any object of $\dgcat_K$ is fibrant.
Moreover, the proof describes the class $\trFib$ of trivial
fibrations (= morphisms that are weak equivalences and fibrations) as
follows.
A morphism
$G:\mathcal{X} \ra \mathcal{Y}$ in $\dgcat_K$ is a trivial fibration
if and only if it is surjective in the following two senses:
\begin{enumerate}[label=(trFib{\arabic*})]
\item 
  \label{enum:surj-on-homs-surj-qiso}
  for all objects $x_1, x_2 \in \mathcal{X},$ the morphism
  $G: \mathcal{X}(x_1, x_2) \ra \mathcal{Y}(Gx_1, Gx_2)$
  is a surjective quasi-isomorphism, and
\item
  \label{enum:surj-epi-on-sets-of-objects}
  $G$ induces a surjection from the set of objects of
  $\mathcal{X}$ onto the set of objects of $\mathcal{Y}.$
\end{enumerate}

A dg $K$-category $\mathcal{F}$ is \define{discrete} if $\mathcal{F}(X,X)=K$
for all $X \in \mathcal{F}$ and $\mathcal{F}(X,Y)=0$ for all $X, Y \in
\mathcal{F}$ with $X\not=Y.$
A dg $K$-category $\mathcal{F}$ is \define{semi-free} (cf.\
\cite[13.4]{drinfeld-dg-quotients}) if it can be represented 
as the union of an increasing sequence of dg $K$-subcategories 
$\mathcal{F}_i$ (where $i \in \DN$)
such that $\mathcal{F}_0$ is a discrete dg $K$-category
and each $\mathcal{F}_i$ (for $i > 0$) as a graded $K$-category (= category
enriched over the symmetric monoidal category of graded $K$-modules) is freely 
generated over $\mathcal{F}_{i-1}$ by a family of homogeneous
morphisms $f_\alpha$ whose differentials $df_\alpha$ are morphisms in
$\mathcal{F}_{i-1}$ (in particular all $\mathcal{F}_i$ have the same objects).

\begin{lemma}
  \label{l:cofibrant-dg-K-categories}
  \rule{1mm}{0mm}
  \begin{enumerate}
  \item 
    \label{enum:dg-K-semifree-Icell-cofib}
    All semi-free dg $K$-categories are 
    cofibrant.
  \item 
    \label{enum:dg-K-cofib-retract-semifree}
    Every cofibrant dg $K$-category 
    is a retract of a semi-free dg $K$-category.
  \end{enumerate}
\end{lemma}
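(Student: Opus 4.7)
The plan is to adapt the proof of Lemma~\ref{l:cofibrant-dg-A-modules} to the present setting of dg $K$-categories, using the explicit description of the generating (trivial) cofibrations underlying Theorem~\ref{t:dgcatK-cofib-gen-model-cat}. These are, by the obvious $K$-linear generalisation of Tabuada's construction, of two types: the unique map $\emptyset \to K$ (viewing $K$ as the one-object dg $K$-category whose endomorphisms are $K$), which adjoins a new object; and, for each $n \in \DZ$, a map that freely adjoins to a given dg $K$-category a new morphism of degree $n$ between two named objects with prescribed boundary already living in the source.

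For part~\ref{enum:dg-K-semifree-Icell-cofib}, first observe that any discrete dg $K$-category $\mathcal{F}_0$ is cofibrant: it is obtained from the initial object $\emptyset$ as a (possibly transfinite) composition of pushouts of $\emptyset \to K$, one for each object. Next, in the definition of a semi-free $\mathcal{F}$, each inclusion $\mathcal{F}_{i-1} \hookrightarrow \mathcal{F}_i$ ($i > 0$) freely adjoins a family of homogeneous morphisms $f_\alpha$ with $df_\alpha \in \mathcal{F}_{i-1}$, and so is a pushout of a coproduct of generating cofibrations of the second type above. Since cofibrations are closed under coproducts, pushouts and transfinite compositions, $\emptyset \hookrightarrow \mathcal{F}_0 \hookrightarrow \bigcup_i \mathcal{F}_i = \mathcal{F}$ is a cofibration; hence $\mathcal{F}$ is cofibrant.

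For part~\ref{enum:dg-K-cofib-retract-semifree}, let $\mathcal{C}$ be a cofibrant dg $K$-category. I will construct, by a straightforward $K$-linear generalisation of \cite[13.4]{drinfeld-dg-quotients}, a trivial fibration $\pi : \mathcal{F} \to \mathcal{C}$ with $\mathcal{F}$ semi-free. Concretely, start $\mathcal{F}_0$ as the discrete dg $K$-category with the same object set as $\mathcal{C}$, and inductively adjoin families of homogeneous morphisms so as to (i) surject onto each morphism complex of $\mathcal{C}$ in every degree and (ii) kill all cohomology classes lying in the kernel of the map to $\mathcal{C}$; the hypothesis that prescribed boundaries already live in the previous stage is arranged by the usual two-step procedure. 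The resulting $\pi$ is a bijection on objects and a surjective quasi-isomorphism on each morphism complex, hence a trivial fibration by the characterisation \ref{enum:surj-on-homs-surj-qiso}, \ref{enum:surj-epi-on-sets-of-objects}. Since $\mathcal{C}$ is cofibrant, $\id_\mathcal{C}$ lifts through $\pi$ to a section $s : \mathcal{C} \to \mathcal{F}$, exhibiting $\mathcal{C}$ as a retract of $\mathcal{F}$.

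The main obstacle is largely bookkeeping: pinning down the precise form of the generating cofibrations for $\dgcat_K$ from the proof of Theorem~\ref{t:dgcatK-cofib-gen-model-cat}, and verifying that Drinfeld's semi-free resolution procedure goes through verbatim over a graded commutative dg base ring $K$. Both are routine $K$-linear analogues of well-documented facts, so no new ideas beyond those in \cite{drinfeld-dg-quotients} and \cite{tabuada-model-structure-on-cat-of-dg-cats} should be required.
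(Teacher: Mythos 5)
Your proof is correct, and the overall shape matches the paper's; the main difference lies in part~\ref{enum:dg-K-semifree-Icell-cofib}. The paper disposes of that part by citing (the $K$-linear version of) \cite[13.6]{drinfeld-dg-quotients}, which directly verifies the left lifting property of a semi-free dg $K$-category against trivial fibrations, using the explicit description of trivial fibrations as levelwise surjective quasi-isomorphisms that are also surjective on objects. You instead argue that a semi-free dg $K$-category is an $I$-cell complex: the discrete base is a coproduct of copies of $\emptyset \to K$, and each step $\mathcal{F}_{i-1} \hookrightarrow \mathcal{F}_i$ is a pushout of a coproduct of the generating cofibrations that freely adjoin a morphism with prescribed cocycle boundary, so the whole inclusion $\emptyset \to \mathcal{F}$ is a transfinite composition of such pushouts. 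This proves something formally stronger (semi-free $\Rightarrow$ $I$-cell, not merely cofibrant) at the cost of pinning down the exact generating cofibrations from Tabuada's construction, which the paper's route avoids by invoking Drinfeld's lifting lemma directly. For part~\ref{enum:dg-K-cofib-retract-semifree} your argument coincides with the paper's: build a semi-free resolution $\mathcal{F} \to \mathcal{C}$ (Drinfeld's inductive procedure, generalised from \cite[Lemma~13.5]{drinfeld-dg-quotients}), note it is a trivial fibration bijective on objects, and lift $\id_{\mathcal{C}}$ through it using cofibrancy of $\mathcal{C}$ to realise $\mathcal{C}$ as a retract of $\mathcal{F}$.
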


\begin{proof}
  \ref{enum:dg-K-semifree-Icell-cofib} 
  A semi-free dg $K$-category
  has the required lifting property with respect to trivial
  fibrations, as follows from (the obvious variation of)
  \cite[13.6]{drinfeld-dg-quotients}.

  \ref{enum:dg-K-cofib-retract-semifree}
  Let $\mathcal{C}$ be a cofibrant dg $K$-category.
  By the dg $K$-version of
  \cite[Lemma~13.5]{drinfeld-dg-quotients}
  there exists a semi-free dg $K$-category $\mathcal{F}$ 
  and a trivial fibration $\mathcal{F} \ra \mathcal{C}$ (which can be
  assumed to be the identity on objects). Since $\mathcal{C}$ is
  cofibrant $\id_\mathcal{C}$ factors as $\mathcal{C} \ra \mathcal{F} \ra
  \mathcal{C}.$
\end{proof}

\begin{definition}
  \label{d:K-flat-dgK-cat}
  A dg $K$-category $\mathcal{A}$ is called \define{$K$-h-flat} if all
  morphisms spaces $\mathcal{A}(A,A'),$ for $A,$ $A' \in \mathcal{A},$
  are $K$-h-flat.
\end{definition}

\begin{lemma}
  \label{l:cofibrant-dg-K-cat-has-cofibrant-morphism-spaces}
  Cofibrant dg $K$-categories have cofibrant morphism spaces. In
  particular, they are $K$-h-flat, by Lemma~\ref{l:cofibrant-dg-A-module-is-A-h-flat}.
\end{lemma}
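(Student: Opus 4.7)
The plan is to use Lemma~\ref{l:cofibrant-dg-K-categories}\ref{enum:dg-K-cofib-retract-semifree} to reduce to the case of a semi-free dg $K$-category. Since the class of cofibrant dg $K$-modules is closed under retracts (any retract of a cofibrant object in a model category is cofibrant), and retracts of dg $K$-categories induce retracts on each morphism space, it suffices to prove that every semi-free dg $K$-category $\mathcal{F}=\bigcup_{i\ge0}\mathcal{F}_i$ has cofibrant morphism spaces. I would prove this by induction on $i$.

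In the base case $\mathcal{F}_0$ is discrete, so each $\mathcal{F}_0(X,Y)$ is either $0$ or $K$, both of which are cofibrant dg $K$-modules (the latter being (a shift of) $\Yoneda{*}$ for the one-object dg $K$-category $\mathcal{K}$, cf.~Theorem~\ref{t:CA-cofib-gen-model-cat}). For the inductive step, assume all morphism spaces of $\mathcal{F}_{i-1}$ are cofibrant dg $K$-modules. Since $\mathcal{F}_i$ is obtained from $\mathcal{F}_{i-1}$ by freely adjoining, as a graded $K$-category, homogeneous morphisms $\{f_\alpha:X_\alpha\to Y_\alpha\}$ of degrees $n_\alpha$ with $df_\alpha\in\mathcal{F}_{i-1}$, for any objects $A,B$ we have, as graded $K$-modules,
\begin{equation*}
  \mathcal{F}_i(A,B) \;=\; \bigoplus_{k\ge 0}\;\bigoplus_{(\alpha_1,\dots,\alpha_k)}
  \mathcal{F}_{i-1}(Y_{\alpha_k},B)\otimes_K[n_{\alpha_k}]K\otimes_K
  \mathcal{F}_{i-1}(Y_{\alpha_{k-1}},X_{\alpha_k})\otimes_K\cdots\otimes_K\mathcal{F}_{i-1}(A,X_{\alpha_1}),
\end{equation*}
where the $k=0$ summand is $\mathcal{F}_{i-1}(A,B)$. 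Filter this by word length: let $W_k(A,B)$ be the submodule of words using at most $k$ generators $f_\alpha$. Because each $df_\alpha$ lies in $\mathcal{F}_{i-1}$, applying the Leibniz rule to a word of length $k$ either keeps the length (differential hitting a factor already in $\mathcal{F}_{i-1}$) or drops it to $k-1$ (differential hitting some $f_{\alpha_j}$). Hence $W_\bullet$ is a filtration by dg $K$-submodules.

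The subquotient $W_k/W_{k-1}$ is, as a dg $K$-module, the direct sum over sequences $(\alpha_1,\dots,\alpha_k)$ of the $k$-fold tensor product written above (with the induced differential from $\mathcal{F}_{i-1}$-factors only, since the $df_\alpha$-contributions go to $W_{k-1}$). Each such tensor product is a tensor product of cofibrant dg $K$-modules (using the induction hypothesis and that $[n_\alpha]K$ is cofibrant), hence cofibrant — this is the standard fact that semi-free dg $K$-modules are closed under $\otimes_K$ (the product filtration of two semi-free filtrations is again semi-free), combined with Lemma~\ref{l:cofibrant-dg-A-modules} and closure of cofibrants under direct sums. Now Lemma~\ref{l:extension-of-cofibrant-dgAmodules} applied repeatedly (and closure of cofibrations under transfinite composition) shows each $W_k(A,B)$ and hence $\mathcal{F}_i(A,B)=\bigcup_k W_k(A,B)$ is cofibrant. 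Taking the union/colimit of the cofibrations $\mathcal{F}_{i-1}(A,B)\hookrightarrow\mathcal{F}_i(A,B)$ yields cofibrancy of $\mathcal{F}(A,B)$. The second assertion of the lemma is then immediate from Lemma~\ref{l:cofibrant-dg-A-module-is-A-h-flat}.

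The main obstacle is the careful bookkeeping for the inductive step: verifying that the word-length filtration is a filtration by subcomplexes (i.e.\ that $df_\alpha\in\mathcal{F}_{i-1}$ really forces the differential on $W_k/W_{k-1}$ to come only from the $\mathcal{F}_{i-1}$-factors), and checking that tensor products of cofibrant dg $K$-modules are cofibrant. The latter is a standard monoidal-model-category fact, but since the projective model structure on $C(K)$ has not been asserted to be monoidal in the text, one should either invoke it directly via semi-freeness or cite a reference (e.g.\ \cite[11.2.6]{fresse-modules-over-operads} in the dg-algebra case, whose argument adapts verbatim).
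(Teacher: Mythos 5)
Your proof is correct, but it takes a genuinely different route from the paper's after the common first step of reducing, via Lemma~\ref{l:cofibrant-dg-K-categories}, to the case of a semi-free dg $K$-category $\mathcal{F}$. The paper proves the stronger claim that each $\mathcal{F}(A,A')$ is itself a \emph{semi-free} dg $K$-module: it exhibits the set of paths in the generating arrows as a homogeneous $K$-basis and verifies a combinatorial criterion for semi-freeness (for every basis element $b$, iterating the ``support of the differential'' operator $\delta$ eventually yields the empty set), again by induction on the stage $i$ of the filtration $(\mathcal{F}_i)$. You instead prove cofibrancy directly via a double induction: over $i$, and within each stage over a word-length filtration $W_\bullet$ of $\mathcal{F}_i(A,B)$ whose subquotients you identify as direct sums of tensor products of $\mathcal{F}_{i-1}$-morphism spaces with shifts of $K$, concluding with Lemma~\ref{l:extension-of-cofibrant-dgAmodules} and closure of cofibrations under transfinite composition. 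The crux of your version is that $\otimes_K$ preserves cofibrant dg $K$-modules, which you correctly flag as not explicitly available in the text; your fix --- semi-free $\otimes_K$ semi-free is semi-free (via the product filtration), then pass to retracts using Lemma~\ref{l:cofibrant-dg-A-modules} --- does work. The paper's route avoids this monoidal input entirely by working with an explicit basis, and yields the stronger semi-freeness conclusion; yours is closer to the standard model-category filtration pattern but requires the extra tensor-product lemma. Both are valid.
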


In the proof we will use the following obvious criterion for semi-freeness
(generalized from \cite[13.1.(2)]{drinfeld-dg-quotients} to our setting).
Let $M$ be a dg $K$-module. 
Then $M$ is semi-free if (and only if) the following
condition is satisfied:
$M$ has a homogeneous $K$-basis $B$ (as a graded $K$-module) with
the following property: for a subset $S \subset B$ let $\delta(S)$
be the smallest subset $T \subset B$ such that $d_M(S)$ is contained
in the $K$-linear span of $T$; then for every $b \in B$ there is an
$n \in \DN$ such that $\delta^n(\{b\})=\emptyset.$

\begin{proof}
  Obviously retracts of dg $K$-categories with cofibrant morphism
  spaces have cofibrant morphism spaces. By
  Lemma~\ref{l:cofibrant-dg-K-categories}
  it is therefore sufficient to show that any semi-free dg
  $K$-category $\mathcal{F}$ has cofibrant morphism spaces.
  We even show that all $\mathcal{F}(A,A')$ are semi-free dg
  $K$-modules (for $A,$ $A' \in \mathcal{F}$). 

  Choose an exhausting increasing filtration 
  $(\mathcal{F}_i)_{i \in \DN}$ of $\mathcal{F}$
  such that $\mathcal{F}_0$ is a discrete dg $K$-category and
  $\mathcal{F}_{i+1}=\mathcal{F}_i\langle P_i\rangle$ 
  (as a graded $K$-category) where $P_i$ is a set of homogeneous
  arrows (= morphisms) such that the morphism $dp$ is a
  cocycle in $\mathcal{F}_i$ for each $p \in P_i.$

  This implies that the underlying graded $K$-category of 
  $\mathcal{F}$ is freely generated by the arrows $\bigcup_{i \in \DN}
  P_i.$ Hence the set $B(A,A')$
  of all paths in these arrows starting at $A$ and ending at $A'$ is a
  homogeneous
  $K$-basis of $\mathcal{F}(A,A').$ We claim that this basis
  satisfies the above criterion ensuring semi-freeness.

  For $i \in \DN$ denote the set of all
  paths in the arrows $\bigcup_{s < i} P_s$ by $B_i,$ and by
  $B_i(X, X')$ the subset of paths that start at $X$ and end
  at $X'$ (for $X,$ $X' \in \mathcal{F}_i$), so
  $B_i = \coprod_{X, X' \in \mathcal{F}_{j}} B_{j}(X, X').$
  Then $B_i(X, X')$ is is a
  homogeneous $K$-basis of 
  $\mathcal{F}_i(X, X').$ 
  We fix $i$ and can assume by induction that
  $B_i(X,X')$ satisfies the above criterion for all $X, X' \in
  \mathcal{F}.$ 
  We need to show that $B_{i+1}(A, A')$ satisfies this criterion.

  Let $j \in \DN.$ Note that $B_{j}$ is stable by composition (=
  concatenation of paths). 
  If $S$ and $T$ are subsets of $B_{j},$
  then
  obviously $\delta(S \cup T) =
  \delta(S) \cup \delta(T).$ Moreover the Leibniz
  rule $d(st)= d(s)t \pm sd(t)$ shows that $\delta(ST) \subset
  \delta(S)T \cup S\delta(T),$ and hence
  $\delta^n(ST) \subset \bigcup_{i=0}^n
  \delta^{n-i}(S)\delta^{i}(T).$
  
  For any $(a:X \ra Y) \in P_i$ we know that $d(a) \subset
  \mathcal{F}_i(X,Y),$ hence $\delta(a) \subset T$ for some finite
  subset of $B_i.$ By induction we know that $\delta^n(T)=\emptyset$
  for $n$ big enough and hence $\delta^{n+1}(\{a\})=\emptyset.$
    
  Any element $b$ of $B_{i+1}$ is a finite product of elements $a$ (in
  $B_i$ or in $P_i$) for which we already know that
  $\delta^n(\{a\})=\emptyset$ for $n\gg 0.$ Induction over the number
  of factors and the above rule then show that
  $\delta^n(\{b\})=\emptyset$ for $n\gg 0.$
  This proves that all $\mathcal{F}(A, A')$ are semi-free dg
  $K$-modules.
\end{proof}

\begin{lemma}
  \label{l:tensor-with-K-h-flat-preserves-qequis}
  Let $G:\mathcal{R} \ra \mathcal{S}$ be a quasi-equivalence of dg
  $K$-categories and let $\mathcal{F}$ be a $K$-h-flat dg $K$-category
  (e.\,g.\ a cofibrant dg $K$-category,
  cf.\ Lemma~\ref{l:cofibrant-dg-K-cat-has-cofibrant-morphism-spaces}).
  Then $G \otimes_K \id_{\mathcal{F}}: \mathcal{R} \otimes_K
  \mathcal{F} \ra \mathcal{S} \otimes_K \mathcal{F}$ is a
  quasi-equivalence.
\end{lemma}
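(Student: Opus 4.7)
The plan is to verify the two conditions \ref{enum:quasi-equi-qiso-homs} and \ref{enum:quasi-equi-H0-essentially-epi} of a quasi-equivalence for the functor $G \otimes_K \id_\mathcal{F}$. Recall that objects of $\mathcal{R} \otimes_K \mathcal{F}$ are pairs $(R, F)$ with $R \in \mathcal{R},$ $F \in \mathcal{F},$ the morphism spaces are the tensor products $(\mathcal{R} \otimes_K \mathcal{F})((R_1,F_1),(R_2,F_2)) = \mathcal{R}(R_1,R_2) \otimes_K \mathcal{F}(F_1,F_2),$ and $G \otimes_K \id_\mathcal{F}$ sends $(R,F)$ to $(GR, F)$ and acts on hom spaces by $G \otimes_K \id_{\mathcal{F}(F_1,F_2)}.$

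For \ref{enum:quasi-equi-qiso-homs}, we must show that for any $(R_1,F_1), (R_2,F_2)$ the map
\begin{equation*}
  G \otimes_K \id_{\mathcal{F}(F_1,F_2)}\colon
  \mathcal{R}(R_1,R_2) \otimes_K \mathcal{F}(F_1,F_2)
  \lra
  \mathcal{S}(GR_1,GR_2) \otimes_K \mathcal{F}(F_1,F_2)
\end{equation*}
is a quasi-isomorphism. Since $G$ is a quasi-equivalence, $G\colon\mathcal{R}(R_1,R_2) \to \mathcal{S}(GR_1,GR_2)$ is a quasi-isomorphism by \ref{enum:quasi-equi-qiso-homs} for $G.$ By assumption $\mathcal{F}$ is $K$-h-flat, so $\mathcal{F}(F_1,F_2)$ is a $K$-h-flat dg $K$-module, and hence $(? \otimes_K \mathcal{F}(F_1,F_2))$ preserves quasi-isomorphisms; this directly gives the claim. (The cofibrant case is covered by Lemma~\ref{l:cofibrant-dg-K-cat-has-cofibrant-morphism-spaces}.)

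For \ref{enum:quasi-equi-H0-essentially-epi}, given any object $(S,F) \in \mathcal{S}\otimes_K \mathcal{F},$ essential surjectivity of $[G]\colon [\mathcal{R}] \to [\mathcal{S}]$ yields some $R \in \mathcal{R}$ together with cocycles $\alpha \in Z^0(\mathcal{S}(S, GR))$ and $\beta \in Z^0(\mathcal{S}(GR, S))$ whose classes in $[\mathcal{S}]$ are inverse, so $\beta\alpha - \id_S = d\gamma$ and $\alpha\beta - \id_{GR} = d\delta$ for some $\gamma \in \mathcal{S}(S,S)^{-1},$ $\delta \in \mathcal{S}(GR,GR)^{-1}.$ Since $\id_F \in \mathcal{F}(F,F)$ is a cocycle of degree zero, the elements $\alpha \otimes \id_F$ and $\beta \otimes \id_F$ lie in $Z^0$ of the respective hom spaces in $\mathcal{S}\otimes_K \mathcal{F},$ and
\begin{equation*}
  (\beta\otimes \id_F)(\alpha\otimes \id_F) - \id_{(S,F)} = (\beta\alpha - \id_S)\otimes \id_F = d(\gamma\otimes \id_F),
\end{equation*}
and similarly for the other composition. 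Hence $[\alpha \otimes \id_F]$ defines an isomorphism $(S,F) \xsira{} (GR, F) = (G\otimes_K\id_\mathcal{F})(R, F)$ in $[\mathcal{S}\otimes_K \mathcal{F}],$ proving essential surjectivity.

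There is no genuine obstacle here: the $K$-h-flatness hypothesis is exactly what makes tensoring with $\mathcal{F}(F_1,F_2)$ preserve quasi-isomorphisms, and essential surjectivity is upgraded from $\mathcal{S}$ to $\mathcal{S}\otimes_K \mathcal{F}$ essentially for free by exploiting that $\id_F$ is a degree zero cocycle, so that any homotopy equivalence $\alpha$ in $\mathcal{S}$ extends to a homotopy equivalence $\alpha \otimes \id_F$ in $\mathcal{S}\otimes_K \mathcal{F}.$
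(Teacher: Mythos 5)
The paper's own proof of this lemma is the single word ``obvious,'' and your argument is exactly the straightforward verification the authors had in mind: $K$-h-flatness of the morphism spaces $\mathcal{F}(F_1,F_2)$ gives condition~(qe1) immediately, and since $\id_F$ is a degree-zero cocycle, any homotopy isomorphism $\alpha$ in $\mathcal{S}$ tensors up to the homotopy isomorphism $\alpha\otimes\id_F$ in $\mathcal{S}\otimes_K\mathcal{F},$ giving~(qe2). Your write-up is correct (including the implicit Koszul-sign check, which vanishes because $\id_F$ has degree zero) and matches the intended argument.
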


\begin{proof}
  This is obvious.
\end{proof}

\section{Smoothness of dg \texorpdfstring{$K$}{K}-categories}
\label{sec:smoothn-dg-K-categories}

We generalize results of \cite[Section~3]{lunts-categorical-resolution}.
Recall that a $K$-h-flat resolution in $\dgcat_K$ is a trivial fibration
$\mathcal{A}' \ra \mathcal{A}$ such that $\mathcal{A}'$ is $K$-h-flat,
and that a cofibrant resolution is a trivial fibration
$\tildew{\mathcal{A}} \ra \mathcal{A}$ such that
$\tildew{\mathcal{A}}$ is cofibrant. 
Any object of $\dgcat_K$ has a cofibrant resolution, and cofibrant
resolutions are $K$-h-flat by 
Lemma~\ref{l:cofibrant-dg-K-cat-has-cofibrant-morphism-spaces}.

\begin{lemma}
  \label{l:lift-cofib-reso-to-flat-reso}
  Let $f:\tildew{\mathcal{A}} \ra \mathcal{A}$ be a quasi-equivalence
  with cofibrant $\tildew{\mathcal{A}}$ 
  and $g:\mathcal{A}' \ra \mathcal{A}$ a trivial fibration. 
  Then there is a quasi-equivalence $h: \tildew{\mathcal{A}} \ra
  \mathcal{A}'$ such that $f=gh.$
  \begin{equation*}
    \xymatrix{
      &
      {\mathcal{A}'} \ar[d]^g \\
      {\tildew{\mathcal{A}}} \ar[r]^f \ar@{..>}[ru]^-{\exists h}&
      {\mathcal{A}}
    }
  \end{equation*}
  In particular, such a lift $h$ exists if $f$ is a
  cofibrant resolution and $g$ is a $K$-h-flat resolution.
\end{lemma}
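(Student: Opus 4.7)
The plan is to apply the lifting axiom of the model structure on $\dgcat_K$ from Theorem~\ref{t:dgcatK-cofib-gen-model-cat}, and then close the argument with the two-out-of-three property for weak equivalences.

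First I would observe that the initial object $\emptyset$ of $\dgcat_K$ (the empty dg $K$-category) maps into everything, and since $\tildew{\mathcal{A}}$ is cofibrant, the unique morphism $\emptyset \ra \tildew{\mathcal{A}}$ is a cofibration. Consider the commutative square
\begin{equation*}
\xymatrix{
\emptyset \ar[r] \ar[d] & \mathcal{A}' \ar[d]^{g} \\
\tildew{\mathcal{A}} \ar[r]^{f} & \mathcal{A}.
}
\end{equation*}
Here the left vertical arrow is a cofibration and the right vertical arrow $g$ is a trivial fibration by hypothesis. The lifting axiom of the model category $\dgcat_K$ therefore produces a morphism $h:\tildew{\mathcal{A}} \ra \mathcal{A}'$ in $\dgcat_K$ with $gh=f$.

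Next I would verify that this $h$ is actually a quasi-equivalence. The morphism $g$ is a trivial fibration, hence in particular a quasi-equivalence; and $f=gh$ is a quasi-equivalence by hypothesis. Since the class of quasi-equivalences is precisely the class of weak equivalences in the model structure (Theorem~\ref{t:dgcatK-cofib-gen-model-cat}), it satisfies the two-out-of-three property, which gives that $h$ is a quasi-equivalence.

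For the final assertion, it suffices to note that a cofibrant resolution $f:\tildew{\mathcal{A}} \ra \mathcal{A}$ is by definition a trivial fibration with cofibrant domain, so in particular it is a quasi-equivalence with $\tildew{\mathcal{A}}$ cofibrant; and a $K$-h-flat resolution $g:\mathcal{A}' \ra \mathcal{A}$ is by definition a trivial fibration. Thus the hypotheses of the first part are satisfied and the lift $h$ exists. I do not anticipate any real obstacle here; the argument is a direct use of the model structure, and the only mild subtlety is remembering that quasi-equivalences are exactly the weak equivalences, so that the two-out-of-three property is available.
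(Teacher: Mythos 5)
Your proof is correct and is exactly what the paper's proof (``This follows directly from the definitions of a model category'') has in mind: use that $\emptyset \to \tildew{\mathcal{A}}$ is a cofibration to lift across the trivial fibration $g$, then invoke two-out-of-three to see the lift is a quasi-equivalence. Nothing further to add.
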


\begin{proof}
  This follows directly from the definitions of a model category.
\end{proof}

If $N$ is 
a dg $\mathcal{A} \otimes_K \mathcal{B}^\opp$-module and $\mathcal{A}'
\ra \mathcal{A}$ and $\mathcal{B}' \ra \mathcal{B}$ are morphisms of
dg $K$-categories, we denote the restriction
of $N$ along
$\mathcal{A}' \otimes_K \mathcal{B}'^\opp \ra \mathcal{A} \otimes_K
\mathcal{B}^\opp$ by
$\leftidx{_{\mathcal{B}'}}{N}{_{\mathcal{A}'}}.$

\begin{lemma}
  \label{l:tfae-bimodule-smoothness}
  Let $\mathcal{A},$ $\mathcal{B}$ be dg $K$-categories, and let $N$
  be a dg $\mathcal{A} \otimes_K \mathcal{B}^\opp$-module. The
  following conditions are equivalent: 
  \begin{enumerate}[label=(Sw{\arabic*})] 
  \item 
    \label{enum:tfae-bimodule-smoothness-all-cofib-reso}
    $\leftidx{_{\tildew{\mathcal{B}}}}{N}{_{\tildew{\mathcal{A}}}} \in 
    \per(\tildew{\mathcal{A}} \otimes_K \tildew{\mathcal{B}}^\opp)$
    whenever
    $\tildew{\mathcal{A}} \ra \mathcal{A}$ and
    $\tildew{\mathcal{B}} \ra \mathcal{B}$ are cofibrant resolutions.
  \item 
    \label{enum:tfae-bimodule-smoothness-two-cofib-reso}
    $\leftidx{_{\tildew{\mathcal{B}}}}{N}{_{\tildew{\mathcal{A}}}} \in 
    \per(\tildew{\mathcal{A}} \otimes_K \tildew{\mathcal{B}}^\opp)$
    for a cofibrant resolution
    $\tildew{\mathcal{A}} \ra \mathcal{A}$ and a cofibrant resolution
    $\tildew{\mathcal{B}} \ra \mathcal{B}.$
  \item 
    \label{enum:tfae-bimodule-smoothness-right-cofib-reso}
    $\leftidx{_{{\mathcal{B}}}}{N}{_{\tildew{\mathcal{A}}}} \in 
    \per(\tildew{\mathcal{A}} \otimes_K {\mathcal{B}}^\opp)$
    for a cofibrant resolution
    $\tildew{\mathcal{A}} \ra \mathcal{A}.$
  \item 
    \label{enum:tfae-bimodule-smoothness-left-cofib-reso}
    $\leftidx{_{\tildew{\mathcal{B}}}}{N}{_{{\mathcal{A}}}} \in 
    \per({\mathcal{A}} \otimes_K \tildew{\mathcal{B}}^\opp)$
    for a cofibrant resolution
    $\tildew{\mathcal{B}} \ra \mathcal{B}.$
  \item 
    \label{enum:tfae-bimodule-smoothness-all-Khflat-reso}
    $\leftidx{_{\mathcal{B}'}}{N}{_{\mathcal{A}'}} \in 
    \per(\mathcal{A}' \otimes_K \mathcal{B}'^\opp)$
    whenever
    $\mathcal{A}' \ra \mathcal{A}$ and
    $\mathcal{B}' \ra \mathcal{B}$ are $K$-h-flat resolutions.
  \item 
    \label{enum:tfae-bimodule-smoothness-two-Khflat-reso}
    $\leftidx{_{\mathcal{B}'}}{N}{_{\mathcal{A}'}} \in 
    \per(\mathcal{A}' \otimes_K \mathcal{B}'^\opp)$
    for a $K$-h-flat resolution
    $\mathcal{A}' \ra \mathcal{A}$ and a $K$-h-flat resolution
    $\mathcal{B}' \ra \mathcal{B}.$
  \item 
    \label{enum:tfae-bimodule-smoothness-right-Khflat-reso}
    $\leftidx{_{{\mathcal{B}}}}{N}{_{\mathcal{A}'}} \in 
    \per(\mathcal{A}' \otimes_K {\mathcal{B}}^\opp)$
    for a $K$-h-flat resolution
    $\mathcal{A}' \ra \mathcal{A}.$
  \item 
    \label{enum:tfae-bimodule-smoothness-left-Khflat-reso}
    $\leftidx{_{\mathcal{B}'}}{N}{_{{\mathcal{A}}}} \in 
    \per({\mathcal{A}} \otimes_K \mathcal{B}'^\opp)$
    for a $K$-h-flat resolution
    $\mathcal{B}' \ra \mathcal{B}.$
  \end{enumerate}
\end{lemma}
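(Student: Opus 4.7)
The plan is to show that all eight conditions are equivalent to a single auxiliary statement $(\ast)$: for some (equivalently any) pair of cofibrant resolutions $\tildew{\mathcal{A}} \to \mathcal{A}$ and $\tildew{\mathcal{B}} \to \mathcal{B}$, the restricted bimodule $\leftidx{_{\tildew{\mathcal{B}}}}{N}{_{\tildew{\mathcal{A}}}}$ lies in $\per(\tildew{\mathcal{A}} \otimes_K \tildew{\mathcal{B}}^{\opp})$. By Lemma~\ref{l:cofibrant-dg-K-cat-has-cofibrant-morphism-spaces} every cofibrant resolution is a $K$-h-flat resolution, so the four cofibrant conditions are special cases of the four $K$-h-flat ones. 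It therefore suffices to treat the $K$-h-flat variants and compare them with $(\ast)$.

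The core step is as follows. Given $K$-h-flat resolutions $\mathcal{A}' \to \mathcal{A}$ and $\mathcal{B}' \to \mathcal{B}$ together with fixed cofibrant resolutions $\tildew{\mathcal{A}} \to \mathcal{A}$ and $\tildew{\mathcal{B}} \to \mathcal{B}$, Lemma~\ref{l:lift-cofib-reso-to-flat-reso} produces quasi-equivalences $\tildew{\mathcal{A}} \to \mathcal{A}'$ and $\tildew{\mathcal{B}} \to \mathcal{B}'$ commuting with the structure maps to $\mathcal{A}$ and $\mathcal{B}$. Factoring
\[
\tildew{\mathcal{A}} \otimes_K \tildew{\mathcal{B}}^{\opp} \lra \mathcal{A}' \otimes_K \tildew{\mathcal{B}}^{\opp} \lra \mathcal{A}' \otimes_K \mathcal{B}'^{\opp},
\]
each arrow is a quasi-equivalence by Lemma~\ref{l:tensor-with-K-h-flat-preserves-qequis}: the first tensors a quasi-equivalence of $\mathcal{A}$-factors with $\tildew{\mathcal{B}}^{\opp}$, which is $K$-h-flat as $\tildew{\mathcal{B}}$ is cofibrant; the second tensors a quasi-equivalence of $\mathcal{B}^{\opp}$-factors with $\mathcal{A}'$, which is $K$-h-flat by assumption. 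By \eqref{eq:restriction-equi-per-along-quequi}, restriction along the composite induces an equivalence $\per(\mathcal{A}' \otimes_K \mathcal{B}'^{\opp}) \sira \per(\tildew{\mathcal{A}} \otimes_K \tildew{\mathcal{B}}^{\opp})$ carrying $\leftidx{_{\mathcal{B}'}}{N}{_{\mathcal{A}'}}$ to $\leftidx{_{\tildew{\mathcal{B}}}}{N}{_{\tildew{\mathcal{A}}}}$. Applied to arbitrary resolutions this directly yields the equivalence of the four two-sided conditions with $(\ast)$.

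For the four one-sided conditions the very same argument works with one factor left unresolved. For instance, to handle condition~\ref{enum:tfae-bimodule-smoothness-right-cofib-reso} one only needs that the map $\tildew{\mathcal{A}} \otimes_K \tildew{\mathcal{B}}^{\opp} \to \tildew{\mathcal{A}} \otimes_K \mathcal{B}^{\opp}$ is a quasi-equivalence; this follows from Lemma~\ref{l:tensor-with-K-h-flat-preserves-qequis} because $\tildew{\mathcal{A}}$ is cofibrant and hence $K$-h-flat, and then restriction identifies $\leftidx{_{\mathcal{B}}}{N}{_{\tildew{\mathcal{A}}}}$ with $\leftidx{_{\tildew{\mathcal{B}}}}{N}{_{\tildew{\mathcal{A}}}}$. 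Conditions~\ref{enum:tfae-bimodule-smoothness-left-cofib-reso}, \ref{enum:tfae-bimodule-smoothness-right-Khflat-reso} and \ref{enum:tfae-bimodule-smoothness-left-Khflat-reso} are treated identically, using either the $K$-h-flatness of the cofibrant resolution, or of the given $K$-h-flat resolution, on the appropriate factor. The only real obstacle is bookkeeping: in every invocation of Lemma~\ref{l:tensor-with-K-h-flat-preserves-qequis} one must verify that the factor held fixed while tensoring is indeed $K$-h-flat, which in each of the eight cases is automatic from either cofibrancy or the explicit hypothesis on the resolution used.
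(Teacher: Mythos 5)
Your proposal is correct and follows essentially the same approach as the paper: both use Lemma~\ref{l:lift-cofib-reso-to-flat-reso} to lift cofibrant resolutions through $K$-h-flat ones, and Lemma~\ref{l:tensor-with-K-h-flat-preserves-qequis} together with \eqref{eq:restriction-equi-per-along-quequi} to transport the perfectness condition along quasi-equivalences of tensor-product categories. The only cosmetic difference is that you route all eight conditions through a single auxiliary statement via a two-step chain that always keeps a $K$-h-flat factor fixed, whereas the paper works with commutative squares and a 3-out-of-2 argument to handle the non-flat factor $\mathcal{B}^{\opp}.$
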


\begin{proof}
  Obviously 
  \ref{enum:tfae-bimodule-smoothness-all-Khflat-reso} $\Rightarrow$ 
  \ref{enum:tfae-bimodule-smoothness-all-cofib-reso} $\Rightarrow$
  \ref{enum:tfae-bimodule-smoothness-two-cofib-reso} $\Rightarrow$
  \ref{enum:tfae-bimodule-smoothness-two-Khflat-reso}, and
  \ref{enum:tfae-bimodule-smoothness-right-cofib-reso} $\Rightarrow$
  \ref{enum:tfae-bimodule-smoothness-right-Khflat-reso}, and
  \ref{enum:tfae-bimodule-smoothness-left-cofib-reso} $\Rightarrow$
  \ref{enum:tfae-bimodule-smoothness-left-Khflat-reso}.

  Let $\tildew{a}: \tildew{\mathcal{A}} \ra \mathcal{A}$ be a cofibrant resolution
  and $a':\mathcal{A}' \ra \mathcal{A}$ a $K$-h-flat resolution.
  Then there is a quasi-equivalence $h: \tildew{\mathcal{A}} \ra \mathcal{A}'$
  such that $a' h=\tildew{a}$ (Lemma~\ref{l:lift-cofib-reso-to-flat-reso}).
  Let $\tildew{b}:\tildew{\mathcal{B}} \ra \mathcal{B}$ be a cofibrant
  resolution.
  In the commutative diagram
  \begin{equation*}
    \xymatrix{
      {\tildew{\mathcal{A}}\otimes_K \tildew{\mathcal{B}}^\opp} \ar[r]
      \ar[d] &
      {\mathcal{A}'\otimes_K \tildew{\mathcal{B}}^\opp} \ar[d] \\
      {\tildew{\mathcal{A}}\otimes_K {\mathcal{B}^\opp}} \ar[r] &
      {\mathcal{A}'\otimes_K {\mathcal{B}^\opp}} 
    }
  \end{equation*}
  all morphisms are quasi-equivalences, by Lemma
  \ref{l:tensor-with-K-h-flat-preserves-qequis} and the
  3-out-of-2-property.
  This shows
  (using equivalence \eqref{eq:restriction-equi-per-along-quequi})
  that 
  \ref{enum:tfae-bimodule-smoothness-all-cofib-reso}
  $\Leftrightarrow$
  \ref{enum:tfae-bimodule-smoothness-right-cofib-reso}
  $\Leftrightarrow$
  \ref{enum:tfae-bimodule-smoothness-right-Khflat-reso}.
  The proof of 
  \ref{enum:tfae-bimodule-smoothness-all-cofib-reso}
  $\Leftrightarrow$
  \ref{enum:tfae-bimodule-smoothness-left-cofib-reso}
  $\Leftrightarrow$
  \ref{enum:tfae-bimodule-smoothness-left-Khflat-reso} is similar.

  Let $b':\mathcal{B}' \ra \mathcal{B}$ be a $K$-h-flat
  resolution. There is
  a quasi-equivalence $l: \tildew{\mathcal{B}} \ra \mathcal{B}'$
  such that $b' l=\tildew{b}$ (Lemma~\ref{l:lift-cofib-reso-to-flat-reso}).
  In the commutative diagram
  \begin{equation*}
    \xymatrix{
      {\tildew{\mathcal{A}}\otimes_K \tildew{\mathcal{B}}^\opp} \ar[r]
      \ar[d] &
      {\mathcal{A}'\otimes_K \tildew{\mathcal{B}}^\opp} \ar[d] \\
      {\tildew{\mathcal{A}}\otimes_K {\mathcal{B}'^\opp}} \ar[r] &
      {\mathcal{A}'\otimes_K {\mathcal{B}'^\opp}} 
    }
  \end{equation*}
  all morphisms are quasi-equivalences, by Lemma
  \ref{l:tensor-with-K-h-flat-preserves-qequis}.
  This proves  
  \ref{enum:tfae-bimodule-smoothness-two-Khflat-reso}
  $\Leftrightarrow$
  \ref{enum:tfae-bimodule-smoothness-all-cofib-reso}
  as well as 
  \ref{enum:tfae-bimodule-smoothness-all-cofib-reso}
  $\Leftrightarrow$
  \ref{enum:tfae-bimodule-smoothness-all-Khflat-reso}.
\end{proof}

\begin{definition}
  \label{d:sweet}
  A dg $\mathcal{A}\otimes_K \mathcal{B}^\opp$-module $N$ is
  \define{\sweet{}} 
  (more precisely \define{$K$-\sweet{}})
  if it satisfies the equivalent conditions
  of Lemma~\ref{l:tfae-bimodule-smoothness}.
  (Note that one can rewrite all these conditions using 
  equality \eqref{eq:perA-equal-compactDA}.)
\end{definition}

\begin{lemma}
  \label{l:bimodule-smoothness-quasi-isomorphism-invariance}
  If $\mathcal{R} \ra \mathcal{A}$ and $\mathcal{S} \ra \mathcal{B}$
  are quasi-equivalences and 
  $N$ is a dg $\mathcal{A} \otimes_K \mathcal{B}^\opp$-module,
  then
  $N$ is $K$-\sweet{}
  if and only if 
  $\leftidx{_{\mathcal{S}}}{N}{_{{\mathcal{R}}}}$ is $K$-\sweet{}.
\end{lemma}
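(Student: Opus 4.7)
The plan is to use criterion \ref{enum:tfae-bimodule-smoothness-all-cofib-reso} of Lemma~\ref{l:tfae-bimodule-smoothness} on both sides of the claimed equivalence, and to bridge the two resulting conditions via the equivalence \eqref{eq:restriction-equi-per-along-quequi} of perfect derived categories induced by a quasi-equivalence of dg $K$-categories.

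First I would choose cofibrant resolutions $\tildew{\mathcal{R}} \ra \mathcal{R}$, $\tildew{\mathcal{S}} \ra \mathcal{S}$, $\tildew{\mathcal{A}} \ra \mathcal{A}$, and $\tildew{\mathcal{B}} \ra \mathcal{B}$ in $\dgcat_K$. The composite $\tildew{\mathcal{R}} \ra \mathcal{R} \ra \mathcal{A}$ is a quasi-equivalence with cofibrant source, so Lemma~\ref{l:lift-cofib-reso-to-flat-reso}, applied to it together with the trivial fibration $\tildew{\mathcal{A}} \ra \mathcal{A}$, produces a quasi-equivalence $h:\tildew{\mathcal{R}} \ra \tildew{\mathcal{A}}$ making the resulting square strictly commute; a mirror construction yields $l:\tildew{\mathcal{S}} \ra \tildew{\mathcal{B}}$. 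Since cofibrant dg $K$-categories are $K$-h-flat by Lemma~\ref{l:cofibrant-dg-K-cat-has-cofibrant-morphism-spaces}, two applications of Lemma~\ref{l:tensor-with-K-h-flat-preserves-qequis} show that the induced functor $h \otimes_K l^{\opp}: \tildew{\mathcal{R}} \otimes_K \tildew{\mathcal{S}}^{\opp} \ra \tildew{\mathcal{A}} \otimes_K \tildew{\mathcal{B}}^{\opp}$ is itself a quasi-equivalence of dg $K$-categories.

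Strict commutativity of the two lifting squares forces $\leftidx{_{\tildew{\mathcal{S}}}}{(\leftidx{_{\mathcal{S}}}{N}{_{\mathcal{R}}})}{_{\tildew{\mathcal{R}}}}$ to agree literally with the restriction of $\leftidx{_{\tildew{\mathcal{B}}}}{N}{_{\tildew{\mathcal{A}}}}$ along $h \otimes_K l^{\opp}$; the former is the bimodule whose membership in $\per(\tildew{\mathcal{R}} \otimes_K \tildew{\mathcal{S}}^{\opp})$ tests $K$-\sweet{}ness of $\leftidx{_{\mathcal{S}}}{N}{_{\mathcal{R}}}$ via \ref{enum:tfae-bimodule-smoothness-all-cofib-reso}, and the latter is the bimodule whose membership in $\per(\tildew{\mathcal{A}} \otimes_K \tildew{\mathcal{B}}^{\opp})$ tests $K$-\sweet{}ness of $N$. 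The conclusion is then immediate from \eqref{eq:restriction-equi-per-along-quequi}: restriction along the quasi-equivalence $h \otimes_K l^{\opp}$ induces an equivalence $\per(\tildew{\mathcal{A}} \otimes_K \tildew{\mathcal{B}}^{\opp}) \sira \per(\tildew{\mathcal{R}} \otimes_K \tildew{\mathcal{S}}^{\opp})$ under which the two bimodules correspond, hence the two perfectness conditions are equivalent. The only point demanding any real care is that Lemma~\ref{l:lift-cofib-reso-to-flat-reso} yields a strictly, and not merely weakly, commutative square; this is essential because restriction of scalars is a strict (not derived) operation, so any homotopy ambiguity would obstruct the literal identification of these two restrictions of $N$.
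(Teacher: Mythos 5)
Your proof is correct and follows essentially the same route as the paper: choose resolutions, lift the given quasi-equivalences along the resolution trivial fibrations via Lemma~\ref{l:lift-cofib-reso-to-flat-reso}, observe (using Lemma~\ref{l:tensor-with-K-h-flat-preserves-qequis} and the $K$-h-flatness of cofibrant dg $K$-categories) that the induced map on tensor categories is a quasi-equivalence, identify the relevant restrictions of $N$ using the strict commutativity of the lift, and conclude via the restriction-equivalence on $\per.$ The only difference is cosmetic: the paper resolves $\mathcal{A}$ and $\mathcal{B}$ by general $K$-h-flat resolutions $\mathcal{A}',\mathcal{B}'$, whereas you specialize to cofibrant resolutions $\tildew{\mathcal{A}},\tildew{\mathcal{B}}$, which are in particular $K$-h-flat, so nothing is lost.
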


\begin{proof}
  Let $\mathcal{A}' \ra \mathcal{A}$ and
  $\mathcal{B}' \ra \mathcal{B}$ be $K$-h-flat resolutions, and let
  $\tildew{\mathcal{R}} \ra \mathcal{R}$  
  and $\tildew{\mathcal{S}} \ra \mathcal{S}$ be cofibrant resolutions.
  Lemma~\ref{l:lift-cofib-reso-to-flat-reso}
  shows that the quasi-equivalences 
  $\tildew{\mathcal{R}} \ra \mathcal{R} \ra \mathcal{A}$ 
  and
  $\tildew{\mathcal{S}} \ra \mathcal{S} \ra \mathcal{B}$ 
  lift to quasi-equivalences $\tildew{\mathcal{R}} \ra \mathcal{A}'$
  and $\tildew{\mathcal{S}} \ra \mathcal{B}'$ respectively. These lifts
  give rise to the commutative diagram 
  \begin{equation*}
    \xymatrix{
      {\tildew{\mathcal{R}}\otimes_K \tildew{\mathcal{S}}^\opp} \ar[r]
      \ar[d] &
      {\mathcal{A}'\otimes_K \tildew{\mathcal{S}}^\opp} \ar[d] \\
      {\tildew{\mathcal{R}}\otimes_K {\mathcal{B}'^\opp}} \ar[r] &
      {\mathcal{A}'\otimes_K {\mathcal{B}'^\opp}} 
    }
  \end{equation*}
  of quasi-equivalences
  (Lemma~\ref{l:tensor-with-K-h-flat-preserves-qequis}).
  Hence $\leftidx{_{\mathcal{B}'}}{N}{_{\mathcal{A}'}} \in 
  \per(\mathcal{A}' \otimes_K \mathcal{B}'^\opp)$ if and only if
  $\leftidx{_{\tildew{\mathcal{S}}}}{N}{_{\tildew{\mathcal{R}}}} \in 
  \per(\tildew{\mathcal{R}} \otimes_K
  \tildew{\mathcal{S}}^\opp).$ This proves the claim.
\end{proof}

By a dg $K$-algebra we mean a dg $K$-category with a unique object,
and we sometimes just refer to the endomorphism space of this object.

\begin{corollary}
  \label{c:bimodule-smoothness-quasi-isomorphism-invariance}
  Let $\mathcal{A}$ be a dg $K$-category and $B$ a dg $K$-algebra.
  Let $N$ be a dg $\mathcal{A} \otimes_K B^\opp$-module. Assume that
  the structure morphism $K \ra B$ is a quasi-isomorphism.
  Then $N$ is $K$-\sweet{} if and only if 
  $N_{\mathcal{A}}:=\res^{\mathcal{A} \otimes_K B^\opp}_{\mathcal{A}}(N) \in
  \per(\mathcal{A})$
  (restriction along $\mathcal{A} \sira \mathcal{A} \otimes_K K
  \ra \mathcal{A} \otimes_K B^\opp$).
\end{corollary}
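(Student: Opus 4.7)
The plan is to reduce to the case where the second factor of the bimodule is $K$ itself, by applying Lemma~\ref{l:bimodule-smoothness-quasi-isomorphism-invariance}. The hypothesis that $K \ra B$ is a quasi-isomorphism means that, viewing the dg $K$-algebras $K$ and $B$ as dg $K$-categories with a single object, the structure morphism becomes a dg $K$-functor which is a quasi-isomorphism on the unique morphism space and trivially surjective on isoclasses in the homotopy category, hence a quasi-equivalence in the sense of \ref{enum:quasi-equi-qiso-homs}--\ref{enum:quasi-equi-H0-essentially-epi}.

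Applying Lemma~\ref{l:bimodule-smoothness-quasi-isomorphism-invariance} to the pair of quasi-equivalences $\id_\mathcal{A}: \mathcal{A} \ra \mathcal{A}$ and $K \ra B$, I obtain that $N$ is $K$-\sweet{} if and only if the restriction of $N$ along $\mathcal{A} \otimes_K K^\opp \ra \mathcal{A} \otimes_K B^\opp$ is $K$-\sweet{}. Under the canonical identification $\mathcal{A} \otimes_K K^\opp = \mathcal{A}$, this restriction is precisely $N_\mathcal{A}$. It therefore suffices to show that $N_\mathcal{A}$, viewed as a dg $\mathcal{A} \otimes_K K^\opp$-module, is $K$-\sweet{} if and only if $N_\mathcal{A} \in \per(\mathcal{A})$.

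For this reformulation I would apply criterion~\ref{enum:tfae-bimodule-smoothness-two-Khflat-reso} of Lemma~\ref{l:tfae-bimodule-smoothness}. Choose any $K$-h-flat resolution $a: \mathcal{A}' \ra \mathcal{A}$, and take $\id_K: K \ra K$ as a $K$-h-flat resolution of $K$; this is legitimate because the only morphism space of $K$, viewed as a dg $K$-category, is $K$ itself, which is trivially $K$-h-flat. Under these choices $K$-\sweet{}ness of $N_\mathcal{A}$ becomes the condition $\res^\mathcal{A}_{\mathcal{A}'}(N_\mathcal{A}) \in \per(\mathcal{A}' \otimes_K K^\opp) = \per(\mathcal{A}')$. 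Finally, applying the restriction equivalence $\res^\mathcal{A}_{\mathcal{A}'}: \per(\mathcal{A}) \sira \per(\mathcal{A}')$ of \eqref{eq:restriction-equi-per-along-quequi}, this condition is equivalent to $N_\mathcal{A} \in \per(\mathcal{A})$.

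No step of this argument poses a genuine obstacle; the corollary is essentially a packaging result. The main subtlety to watch for is the book-keeping of the identification $\mathcal{A} \otimes_K K^\opp = \mathcal{A}$ and the fact that $K$ may serve as its own $K$-h-flat resolution (so that one does not have to replace it by a cofibrant dg $K$-category, which would unnecessarily complicate the argument).
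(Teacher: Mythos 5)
Your proof is correct and matches the paper's approach: the paper's entire proof is to apply Lemma~\ref{l:bimodule-smoothness-quasi-isomorphism-invariance} with $\mathcal{R}=\mathcal{A}$ and $\mathcal{S}=K\ra\mathcal{B}=B$, exactly as you do. The only difference is that you spell out the remaining bookkeeping (that a dg $\mathcal{A}\otimes_K K^\opp$-module is $K$-\sweet{} iff it lies in $\per(\mathcal{A})$, via an $K$-h-flat resolution of $\mathcal{A}$, taking $\id_K$ as the $K$-h-flat resolution of $K$, and \eqref{eq:restriction-equi-per-along-quequi}), which the paper's one-line proof leaves implicit.
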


\begin{proof}
  Apply the lemma to $\mathcal{R}=\mathcal{A}$ and $\mathcal{S}=K \ra
  \mathcal{B}=B.$
\end{proof}

Any ring $R$ can be viewed as an $R$-$R$-bimodule ("diagonal
bimodule"). Similarly, any dg $K$-category $\mathcal{A}$ gives rise to
the dg $\mathcal{A} \otimes_K \mathcal{A}^\opp$-module
($\mathcal{A}$-$\mathcal{A}$-bimodule) $\mathcal{A}$ whose action
morphisms 
\begin{align}
  \label{eq:diagonal-module}
  \mathcal{A}(A'',A''') \otimes_K  \mathcal{A}(A',A'') \otimes_K \mathcal{A}(A,A')
  & \ra \mathcal{A}(A,A'''),\\
  \notag
  f \otimes g \otimes h & \ra f.g.h:= f \comp g \comp h,
\end{align}
are just given by composition (where on the left we formally have to move
$\mathcal{A}(A'', A''')$ as $\mathcal{A}^\opp(A''', A'')$ to the
right).
We call this module the \define{diagonal bimodule}.

\begin{lemma}
  \label{l:tfae-smoothness}
  Let $\mathcal{A}$ be a dg $K$-category. The following conditions are equivalent:
  \begin{enumerate}[label=(Sm{\arabic*})]
  \item 
    \label{enum:tfae-smoothness-all-cofib-reso}
    $\tildew{\mathcal{A}} \in
    \per(\tildew{\mathcal{A}}\otimes_K \tildew{\mathcal{A}}^\opp)$ for every
    cofibrant resolution $\tildew{\mathcal{A}} \ra \mathcal{A}.$
  \item 
    \label{enum:tfae-smoothness-one-cofib-reso}
    $\tildew{\mathcal{A}} \in
    \per(\tildew{\mathcal{A}}\otimes_K \tildew{\mathcal{A}}^\opp)$ for a
    cofibrant resolution $\tildew{\mathcal{A}} \ra \mathcal{A}.$
  \item 
    \label{enum:tfae-smoothness-all-Khflat-reso}
    $\mathcal{A}' \in
    \per(\mathcal{A}'\otimes_K \mathcal{A}'^\opp)$ for every
    $K$-h-flat resolution $\mathcal{A}' \ra \mathcal{A}.$
  \item 
    \label{enum:tfae-smoothness-one-Khflat-reso}
    $\mathcal{A}' \in
    \per(\mathcal{A}'\otimes_K \mathcal{A}'^\opp)$ for a
    $K$-h-flat resolution $\mathcal{A}' \ra \mathcal{A}.$
  \item 
    \label{enum:tfae-smoothness-sweetness}
    The diagonal bimodule $\mathcal{A}$ is $K$-\sweet{}.
  \end{enumerate}
\end{lemma}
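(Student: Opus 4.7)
The plan is to reduce the equivalences to Lemma~\ref{l:tfae-bimodule-smoothness} applied to $\mathcal{B} = \mathcal{A}$ and $N = \mathcal{A}$ (the diagonal bimodule as defined in \eqref{eq:diagonal-module}). The only thing that is not already packaged in that lemma is a reconciliation between the two different ``diagonal bimodules'' appearing here: the diagonal bimodule of a resolution $\mathcal{A}'$ of $\mathcal{A}$ (i.e., $\Delta_{\mathcal{A}'}$ sending $(A,B)$ to $\mathcal{A}'(B,A)$), versus the restriction of the diagonal bimodule of $\mathcal{A}$ along $\mathcal{A}' \otimes_K \mathcal{A}'^\opp \to \mathcal{A} \otimes_K \mathcal{A}^\opp$ (i.e., $\leftidx{_{\mathcal{A}'}}{\mathcal{A}}{_{\mathcal{A}'}}$ sending $(A,B)$ to $\mathcal{A}(fB,fA)$).

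The key observation is the following. For any dg $K$-functor $f: \mathcal{A}' \to \mathcal{A}$, the components $f: \mathcal{A}'(B,A) \to \mathcal{A}(fB,fA)$ assemble into a morphism
\[
\phi_f: \Delta_{\mathcal{A}'} \to \leftidx{_{\mathcal{A}'}}{\mathcal{A}}{_{\mathcal{A}'}}
\]
of dg $\mathcal{A}' \otimes_K \mathcal{A}'^\opp$-modules; compatibility with the two-sided action is precisely the functoriality identity $f(g \cdot a \cdot h) = f(g) \cdot f(a) \cdot f(h)$, and compatibility with differentials holds since $f$ is a dg functor. If moreover $f$ is a quasi-equivalence---which is the case whenever $f$ is a cofibrant resolution or a $K$-h-flat resolution, as trivial fibrations are weak equivalences in $\dgcat_K$ by Theorem~\ref{t:dgcatK-cofib-gen-model-cat}---then condition \ref{enum:quasi-equi-qiso-homs} says each component of $\phi_f$ is a quasi-isomorphism, so $\phi_f$ itself is a bimodule quasi-isomorphism. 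Therefore $\Delta_{\mathcal{A}'}$ and $\leftidx{_{\mathcal{A}'}}{\mathcal{A}}{_{\mathcal{A}'}}$ become isomorphic in $D(\mathcal{A}' \otimes_K \mathcal{A}'^\opp)$, so one lies in $\per(\mathcal{A}' \otimes_K \mathcal{A}'^\opp)$ iff the other does.

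With this in hand, the rest is a matter of matching up conditions. (Sm5) is by definition the assertion that the diagonal bimodule $\mathcal{A}$ satisfies conditions \ref{enum:tfae-bimodule-smoothness-all-cofib-reso}--\ref{enum:tfae-bimodule-smoothness-left-Khflat-reso} of Lemma~\ref{l:tfae-bimodule-smoothness}. Specializing \ref{enum:tfae-bimodule-smoothness-all-cofib-reso} to the symmetric case $\tildew{\mathcal{B}} = \tildew{\mathcal{A}}$ yields: ``for every cofibrant resolution $\tildew{\mathcal{A}} \to \mathcal{A}$, the bimodule $\leftidx{_{\tildew{\mathcal{A}}}}{\mathcal{A}}{_{\tildew{\mathcal{A}}}}$ is perfect,'' which by the key observation is exactly \ref{enum:tfae-smoothness-all-cofib-reso}. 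Hence (Sm5) $\Rightarrow$ (Sm1). For the converse, (Sm1) gives in particular \ref{enum:tfae-bimodule-smoothness-two-cofib-reso} for the specific pair $\tildew{\mathcal{A}} = \tildew{\mathcal{B}}$ (again via the observation), and Lemma~\ref{l:tfae-bimodule-smoothness} then promotes this to all of (Sw1)--(Sw8), i.e.\ (Sm5). The same argument with $K$-h-flat resolutions and condition \ref{enum:tfae-bimodule-smoothness-all-Khflat-reso} gives (Sm3) $\Leftrightarrow$ (Sm5). Finally, (Sm1) $\Rightarrow$ (Sm2) and (Sm3) $\Rightarrow$ (Sm4) are trivial, while the reverse implications (Sm2) $\Rightarrow$ (Sm1) and (Sm4) $\Rightarrow$ (Sm3) are handled exactly as above, using the observation for the chosen resolution and then invoking Lemma~\ref{l:tfae-bimodule-smoothness}.

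The main (and only) obstacle is conceptual rather than technical: one must keep track of the two flavors of diagonal bimodule and verify that $\phi_f$ is a morphism of bimodules, not merely a compatible family of maps on hom spaces. Once that is done, the lemma reduces cleanly to the bimodule smoothness lemma by symmetrizing the two resolutions.
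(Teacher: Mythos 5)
Your proposal is correct and follows essentially the same route as the paper: apply Lemma~\ref{l:tfae-bimodule-smoothness} to the diagonal bimodule $\mathcal{A}$, together with the observation that for any dg $K$-functor $F:\mathcal{R}\to\mathcal{A}$ there is a canonical morphism from the diagonal bimodule $\mathcal{R}$ to the restricted diagonal bimodule $\leftidx{_\mathcal{R}}{\mathcal{A}}{_\mathcal{R}}$, which is a quasi-isomorphism precisely when $F$ induces quasi-isomorphisms on morphism spaces. Your write-up simply spells out the bookkeeping that the paper leaves implicit.
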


\begin{proof}
  This is a consequence of Lemma~\ref{l:tfae-bimodule-smoothness}
  applied to the diagonal bimodule $\mathcal{A},$ and the following observation:
  Any morphism of dg $K$-categories $F:\mathcal{R} \ra \mathcal{A}$
  gives rise to a morphism from the diagonal bimodule $\mathcal{R}$
  to the restriction 
  $\leftidx{_\mathcal{R}}{\mathcal{A}}{_\mathcal{R}}$ of the diagonal
  bimodule $\mathcal{A},$ which is a quasi-isomorphism if and only
  if $F$ induces quasi-isomorphisms on morphism spaces.
\end{proof}

\begin{definition}
  [{cf.\ e.\,g.\ \cite[Def.~2.3]{toen-finitude-homotopique-propre-lisse}}]
  \label{d:smooth}
  A (small) dg $K$-category $\mathcal{A}$ is \define{smooth} 
  (more precisely \define{$K$-smooth}) 
  if it satisfies the equivalent conditions
  of Lemma~\ref{l:tfae-smoothness}.
  (Note that one can rewrite these conditions using 
  equality \eqref{eq:perA-equal-compactDA}.)
\end{definition}

\begin{remark}
  In practice conditions \ref{enum:tfae-smoothness-all-Khflat-reso} and
  \ref{enum:tfae-smoothness-one-Khflat-reso} are useful for (dis)proving smoothness.
  For example a $K$-h-flat dg $K$-category $\mathcal{A}$
  is smooth if and only if
  $\mathcal{A} \in \per(\mathcal{A}\otimes_K \mathcal{A}^\opp).$
  
  More concretely, if a dg $K$-algebra $A$ is free (or semi-free or
  cofibrant or $K$-h-flat) when 
  considered as a dg $K$-module, then $A$ is smooth if and only if $A
  \in \per(A \otimes_K \mathcal{A}).$ 
\end{remark}

\begin{remark}
  \label{rem:smoothness-and-classical-smoothness}
  If $\groundring$ is a field considered as a dg algebra concentrated
  in degree zero, then 
  any dg $\groundring$-module is cofibrant (since it is isomorphic to
  a coproduct of shifts of $\groundring$ and shifts of
  $\Cone(\id_{\groundring})$)
  and hence (Lemma~\ref{l:cofibrant-dg-A-module-is-A-h-flat}))
  $\groundring$-h-flat. In particular any dg
  $k$-category is $\groundring$-h-flat.
  Hence a dg $\groundring$-category $\mathcal{A}$ is smooth if and
  only if
  $\mathcal{A} \in \per(\mathcal{A}\otimes_\groundring \mathcal{A}^\opp).$

  This shows that our definition of smoothness generalizes the usual notion
  of smoothness over a field 
  (see e.\,g.\ \cite[Def.~3.1]{lunts-categorical-resolution}). 
\end{remark}

\begin{examples}
  \label{ex:C-and-CX-and-smoothness}
  Consider $\DC[X]$ as a dg $\DC$-algebra with $X$ of positive degree
  and differential zero. Then we have:
  \begin{enumerate}
  \item 
    \label{enum:CX-C-smooth}
    $\DC[X]$ is $\DC$-smooth.
  \item 
    \label{enum:CXmodXn-not-C-smooth}
    $\DC[X]/(X^n)$ is not $\DC$-smooth for $n \geq 2,$ cf.\ 
    Proposition~\ref{p:homologically-positive-dg-algebra-plus-conditions-not-smooth} below.
  \end{enumerate}
  Assume now that $X$ has positive even degree, so that $\DC[X]$ is
  graded commutative.
  \begin{enumerate}[resume]
  \item 
    \label{enum:CXmodXn-not-CX-smooth}
    $\DC[X]/(X^n)$ is not $\DC[X]$-smooth, for $n \geq 1,$ cf.\ 
    Proposition~\ref{p:smoothness-over-local-graded-finite-homological-dim-algebras}  
    below. Note that $\DC \in \per(\DC \otimes_{\DC[X]} \DC)=\per(\DC),$
    so $\DC[X]$-smoothness of $\DC$ cannot be checked naively (without
    a suitable resolution). 
  \end{enumerate}
\end{examples}

\begin{remark}
  \label{rem:naive-smooth-and-opposite}
  We claim that the opposite of a smooth dg $K$-category is smooth.
  This follows from the following observations.
  If $\tildew{\mathcal{R}} \ra \mathcal{R}$ is a cofibrant resolution,
  then $\tildew{\mathcal{R}}^\opp \ra \mathcal{R}^\opp$ is a cofibrant
  resolution.
  If $\mathcal{A}$ and $\mathcal{B}$ are dg $K$-categories, there is
  an obvious isomorphism $\mathcal{A} \otimes_K \mathcal{B} \sira
  \mathcal{B} \otimes_K \mathcal{A}$ of dg $K$-categories.
  By restriction it induces an
  isomorphism $D(\mathcal{B} \otimes_K 
  \mathcal{A}) \sira D(\mathcal{A} \otimes_K \mathcal{B})$ which of
  course preserves compact objects. For
  $\mathcal{B}=\mathcal{A}^{\opp}$ it sends the
  diagonal bimodule $\mathcal{A}$ to the diagonal 
  bimodule $\mathcal{A}^{\opp}.$
  These statements prove the claim.
\end{remark}

Recall that two dg $K$-categories are quasi-equivalent if
they can be connected by a zig-zag of quasi-equivalences.

\begin{lemma}
  [Invariance of smoothness under quasi-equivalence]
  \label{l:smoothness-and-quasi-equis}
  If $\mathcal{A} \ra \mathcal{B}$ is a quasi-equivalence, then
  $\mathcal{A}$ is smooth if and only if $\mathcal{B}$ is smooth.

  In particular, if two dg $K$-categories are quasi-equivalent, then
  they are either both smooth or both not smooth. 
\end{lemma}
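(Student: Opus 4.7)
My plan is to reduce smoothness of $\mathcal{A}$ and $\mathcal{B}$ to sweetness of their diagonal bimodules via Lemma~\ref{l:tfae-smoothness}\ref{enum:tfae-smoothness-sweetness}, then transfer sweetness across $F$ using Lemma~\ref{l:bimodule-smoothness-quasi-isomorphism-invariance}. Concretely, let $F: \mathcal{A} \ra \mathcal{B}$ be a quasi-equivalence. Applying Lemma~\ref{l:bimodule-smoothness-quasi-isomorphism-invariance} to the diagonal bimodule $\mathcal{B}$ with the quasi-equivalences $F$ on both sides shows that
\begin{equation*}
  \mathcal{B} \in D(\mathcal{B} \otimes_K \mathcal{B}^\opp) \text{ is $K$-\sweet{}}
  \iff
  \leftidx{_{\mathcal{A}}}{\mathcal{B}}{_{\mathcal{A}}} \in D(\mathcal{A} \otimes_K \mathcal{A}^\opp) \text{ is $K$-\sweet{}}.
\end{equation*}

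Next I would compare the restricted diagonal $\leftidx{_{\mathcal{A}}}{\mathcal{B}}{_{\mathcal{A}}}$ with the diagonal $\mathcal{A}$. As was observed in the proof of Lemma~\ref{l:tfae-smoothness}, the functor $F$ induces a canonical morphism of $\mathcal{A}$-$\mathcal{A}$-bimodules $\mathcal{A} \ra \leftidx{_{\mathcal{A}}}{\mathcal{B}}{_{\mathcal{A}}}$, which on $(A, A')$ is the map $F: \mathcal{A}(A, A') \ra \mathcal{B}(FA, FA')$. Since $F$ satisfies \ref{enum:quasi-equi-qiso-homs}, each of these is a quasi-isomorphism, so this is a quasi-isomorphism of bimodules. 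Because quasi-isomorphic bimodules restrict to quasi-isomorphic, hence isomorphic in the derived category, objects of $D(\mathcal{A}' \otimes_K \mathcal{A}'^\opp)$ for any $K$-h-flat resolution $\mathcal{A}' \ra \mathcal{A}$, sweetness depends only on the quasi-isomorphism class of a bimodule; therefore the diagonal bimodule $\mathcal{A}$ is $K$-\sweet{} if and only if $\leftidx{_{\mathcal{A}}}{\mathcal{B}}{_{\mathcal{A}}}$ is $K$-\sweet{}. Combining with the displayed equivalence above and Lemma~\ref{l:tfae-smoothness}\ref{enum:tfae-smoothness-sweetness} applied to both $\mathcal{A}$ and $\mathcal{B}$ gives
\begin{equation*}
  \mathcal{A} \text{ is $K$-smooth}
  \iff \mathcal{A} \text{ is $K$-\sweet{}}
  \iff \leftidx{_{\mathcal{A}}}{\mathcal{B}}{_{\mathcal{A}}} \text{ is $K$-\sweet{}}
  \iff \mathcal{B} \text{ is $K$-\sweet{}}
  \iff \mathcal{B} \text{ is $K$-smooth}.
\end{equation*}
The second assertion then follows by iterating along a zig-zag of quasi-equivalences.

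I do not expect any step to be a real obstacle: the hard work has already been done in Lemmas~\ref{l:tfae-bimodule-smoothness}, \ref{l:bimodule-smoothness-quasi-isomorphism-invariance} and \ref{l:tfae-smoothness}. The only minor subtlety is the tacit use of the fact that a quasi-isomorphism of bimodules remains a quasi-isomorphism after restriction to a $K$-h-flat (or cofibrant) resolution on each side, so that $\per$-membership is preserved; this is immediate because restriction along any dg functor is exact and preserves quasi-isomorphisms.
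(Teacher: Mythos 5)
Your proof is correct and takes essentially the same route as the paper: the paper's one-line proof invokes Lemma~\ref{l:bimodule-smoothness-quasi-isomorphism-invariance} together with the observation at the end of the proof of Lemma~\ref{l:tfae-smoothness} that the morphism of bimodules $\mathcal{A} \ra \leftidx{_{\mathcal{A}}}{\mathcal{B}}{_{\mathcal{A}}}$ induced by $F$ is a quasi-isomorphism precisely because $F$ satisfies \ref{enum:quasi-equi-qiso-homs}. You have simply unpacked both references and made explicit the (correct and implicit) fact that $K$-sweetness depends only on the quasi-isomorphism class of the bimodule, since restriction along a $K$-h-flat resolution preserves quasi-isomorphisms and $\per$ is closed under isomorphisms in the derived category.
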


\begin{proof}
  This follows from
  Lemma~\ref{l:bimodule-smoothness-quasi-isomorphism-invariance}
  (and the observation in the proof of Lemma~\ref{l:tfae-smoothness}).
\end{proof}

\subsection{Invariance of smoothness under dg Morita equivalence}
\label{sec:invariance-smoothness-dg-morita-equivalence}

\begin{definition}
  [{cf.~\cite[3.8]{keller-on-dg-categories-ICM}}]
  \label{d:dg-morita-equivalent}
  Two dg $K$-categories $\mathcal{A},$ $\mathcal{B}$ are 
  \define{dg Morita equivalent} if $D(\mathcal{A})$ and
  $D(\mathcal{B})$ are connected by a zig-zag of tensor equivalences
  (as defined after \eqref{eq:def-derived-tensor-product}).
\end{definition}

The aim of this section is to prove 
Theorem~\ref{t:smoothness-preserved-by-dg-Morita-equivalence} below
which says that smoothness is invariant under dg Moria equivalence.

\begin{lemma}
  \label{l:lifting-tensor-functor-along-triv-fib}
  Let $\mathcal{A}$ be a dg $K$-category and 
  let $b: \tildew{\mathcal{B}} \ra \mathcal{B}$ be a trivial fibration
  in $\dgcat_K.$
  Let $X=\leftidx{_\mathcal{B}}{X}{_\mathcal{A}}$ be a dg
  $\mathcal{A} \otimes_K \mathcal{B}^\opp$-module, and let
  $X'$ be its restriction to a dg
  $\mathcal{A} \otimes_K \tildew{\mathcal{B}}^\opp$-module.
  Then the diagram
  \begin{equation*}
    \xymatrix
    {
      {D(\tildew{\mathcal{B}})} 
      \ar[rr]^-{? \otimes_{\tildew{\mathcal{B}}}^L X'} &&
      {D(\mathcal{A})}\\
      {D({\mathcal{B}})} 
      \ar[u]^{\res^{\mathcal{B}}_{\tildew{\mathcal{B}}}}
      \ar[rru]_-{? \otimes_\mathcal{B}^L X}
    }
  \end{equation*}
  commutes up to a natural isomorphism.
\end{lemma}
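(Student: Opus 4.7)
The plan is in two stages. I first construct a natural isomorphism $\alpha_M : (\res M) \otimes_{\tildew{\mathcal{B}}} X' \sira M \otimes_\mathcal{B} X$ of $C(\mathcal{A})$-valued functors of $M \in C(\mathcal{B})$. Deriving it then yields a natural morphism $\eta_M$ in $D(\mathcal{A})$, which I prove is an isomorphism by a devissage reducing to the representables. The main work is establishing $\alpha_M$, which uses both parts of the trivial-fibration condition on $b$ in an essential way.

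For $\alpha_M$: on the coend summand indexed by $\tilde y \in \tildew{\mathcal{B}}$, namely $(\res M)(\tilde y) \otimes_K X'(-,\tilde y) = M(b\tilde y) \otimes_K X(-,b\tilde y)$, the map is the identity onto the summand of $M \otimes_\mathcal{B} X$ indexed by $b\tilde y$. Surjectivity of $\alpha_M$ follows from the surjectivity of $b$ on objects, \ref{enum:surj-epi-on-sets-of-objects}. For injectivity, every defining relation of the target coend, of the form $m\beta \otimes x = m \otimes \beta x$ for $\beta \in \mathcal{B}(B', B)$, is the image of such a relation for any lift $\tilde\beta \in \tildew{\mathcal{B}}(\tilde y, \tilde z)$ of $\beta$ provided by \ref{enum:surj-on-homs-surj-qiso}; moreover, if $\tilde z_1, \tilde z_2$ both lie above $B$, a lift of $\id_B$ to $\tilde\gamma : \tilde z_1 \to \tilde z_2$ identifies the two source summands $M(B) \otimes_K X(-,B)$ indexed by $\tilde z_1$ and $\tilde z_2$. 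Naturality of $\alpha$ in $M$ is evident.

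For the derived version, fix cofibrant replacement functors $p$ on $C(\mathcal{B})$ and $q$ on $C(\tildew{\mathcal{B}})$. Since $\res$ preserves surjections and quasi-iso\-mor\-phisms, $\res(pM) \to \res M$ is a trivial fibration, so the cofibrant object $q(\res M)$ admits a lift $\ell_M : q(\res M) \to \res(pM)$ (unique up to left homotopy). Define
\[
\eta_M : (\res M) \otimes^L_{\tildew{\mathcal{B}}} X' = q(\res M) \otimes_{\tildew{\mathcal{B}}} X' \xra{\ell_M \otimes \id} \res(pM) \otimes_{\tildew{\mathcal{B}}} X' \xsira{\alpha_{pM}} pM \otimes_\mathcal{B} X = M \otimes^L_\mathcal{B} X
\]
in $D(\mathcal{A})$; the uniqueness of $\ell_M$ up to homotopy makes $\eta_M$ well-defined and natural in $M$. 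Both source and target are triangulated and coproduct-preserving functors of $M$, so the class of $M$ for which $\eta_M$ is an isomorphism forms a localizing triangulated subcategory of $D(\mathcal{B})$ closed under summands. It therefore suffices to check the case $M = \Yoneda{B}$ for $B \in \mathcal{B}$, since such representables generate $D(\mathcal{B})$ as such a subcategory (cf.\ the discussion around \eqref{eq:perA-equal-compactDA}). Pick $\tilde B \in \tildew{\mathcal{B}}$ with $b\tilde B = B$; the map $\Yoneda{\tilde B} \to \res\Yoneda{B}$ whose $\tilde y$-component is the surjective quasi-isomorphism $\tildew{\mathcal{B}}(\tilde y, \tilde B) \to \mathcal{B}(b\tilde y, B)$ induced by $b$ is a trivial fibration and may serve as $q(\res \Yoneda{B})$. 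A direct Yoneda computation (choosing $p\Yoneda{B} = \Yoneda{B}$ and $\ell$ the tautological map) identifies $\eta_{\Yoneda{B}}$ with the identity on $X(-,B)$, completing the proof.
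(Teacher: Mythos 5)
Your proof is correct and follows essentially the same strategy as the paper's. Both rest on two pillars: (i) the underived evaluation map is an isomorphism by virtue of $b$ being surjective on objects and on morphism spaces, and (ii) a devissage reducing the derived statement to representables $\Yoneda{B}$, where one lifts $B$ along $b$ and uses that $\Yoneda{\tilde B} \ra \res\Yoneda{B}$ is a cofibrant resolution. The differences are cosmetic. Your Step~1 asserts $\alpha_M : (\res M)\otimes_{\tildew{\mathcal{B}}} X' \sira M \otimes_{\mathcal{B}} X$ for \emph{every} $M$, whereas the paper proves the single bimodule isomorphism $\mathcal{B}\otimes_{\tildew{\mathcal{B}}} X' \sira X$; the two are equivalent, since $\res M = M \otimes_{\mathcal{B}} \leftidx{_{\tildew{\mathcal{B}}}}{\mathcal{B}}{_{\mathcal{B}}}$ and associativity of $\otimes$ transports one into the other. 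For the derived natural transformation, you build $\eta_M$ by hand from a lift $\ell_M : q(\res M) \ra \res(pM)$ along the trivial fibration $\res(pM) \ra \res M$, while the paper obtains the corresponding $\tau$ from a general associativity transformation $(?\otimes^L_{\tildew{\mathcal{B}}} X')\comp(?\otimes^L_{\mathcal{B}} Y) \ra ?\otimes^L_{\mathcal{B}}(Y\otimes_{\tildew{\mathcal{B}}} X')$ specialized at $Y=\mathcal{B}$ together with $?\otimes^L_{\mathcal{B}}\mathcal{B} \cong \res$; the latter packaging sidesteps the (routine but slightly fussy) checks that $\eta$ is well-defined and natural in $M$, which you only assert. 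Both proofs then conclude with the identical coproduct-closure devissage and the identical computation on $\Yoneda{B}$.
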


\begin{proof}
  \textbf{Step 1:}
  (In this step it is sufficient to assume that $b$ is epimorphic on
  objects and morphisms). 
  We claim that the obvious evaluation morphism
  \begin{equation}
    \label{eq:evaluation-trivial-fib}
    \mathcal{B} \otimes_{\tildew{\mathcal{B}}} X' \ra X
  \end{equation}
  is an isomorphism of dg $\mathcal{A} \otimes_K
  \mathcal{B}^\opp$-modules.
  (This generalizes $R/I \otimes_R M = M$ for $M$ an $R/I$-module.)

  The evaluation of 
  $\mathcal{B} \otimes_{\tildew{\mathcal{B}}} X'$ at $(A, B) \in \mathcal{A} \otimes_K
  \mathcal{B}^\opp$ is (by the definition of the tensor product) the
  cokernel of the obvious morphism 
  \begin{equation*}
    \beta: \bigoplus_{\tildew{B}',\tildew{B}'' \in \tildew{\mathcal{B}}}
    \mathcal{B}(b\tildew{B}'', B) 
    \otimes_K
    \tildew{\mathcal{B}}(\tildew{B}',\tildew{B}'') \otimes_K X(A,b\tildew{B}') \ra 
    \bigoplus_{\tildew{B} \in \tildew{\mathcal{B}}}
    \mathcal{B}(b\tildew{B}, B) \otimes_K X(A,b \tildew{B}).
  \end{equation*}
  The evaluation map from the object on the right to
  $X(A,B)$ factors through the cokernel to a morphism
  \begin{equation*}
    e: (\mathcal{B} \otimes_{\tildew{\mathcal{B}}} X') (A,B) \ra X(A,B).
  \end{equation*}
  We need to show that $e$ is an isomorphism.

  Since $b$ is surjective on objects
  \ref{enum:surj-epi-on-sets-of-objects}, there is an object $\ol{B}
  \in \tildew{B}$ such that $b\ol{B}=B.$ 
  Let $s$ be the composition
  \begin{equation*}
    X(A,B) \ra \mathcal{B}(b\ol{B},B) \otimes_K X(A, b\ol{B})
    \hra 
    \bigoplus_{\tildew{B} \in \tildew{\mathcal{B}}}
    \mathcal{B}(b\tildew{B}, B) \otimes_K X(A,b \tildew{B})
    \sra 
    (\mathcal{B} \otimes_{\tildew{\mathcal{B}}} X') (A,B)
  \end{equation*}
  where the first map is defined by $x \mapsto \id_{B} \otimes x,$ the
  second map is the canonical inclusion and the third map is the
  projection onto the cokernel.
  Then obviously $es=\id.$ Hence $e$ is surjective and it is enough to
  show that $s$ is surjective. Let $\tildew{B} \in
  \tildew{\mathcal{B}}$ and $f \otimes x \in 
  \mathcal{B}(b\tildew{B}, B) \otimes_K X(A,b \tildew{B})$ be a pure
  tensor.
  Since $b$ is surjective on morphism spaces there is an element
  $\ol{f} \in \tildew{\mathcal{B}}(\tildew{B}, \ol{B})$ such that
  $b(\ol{f})=f.$
  Then $\beta$ maps the element
  \begin{equation*}
    \id_B \otimes \ol{f} \otimes x \in 
    \mathcal{B}(b\ol{B}, B) 
    \otimes_K
    \tildew{\mathcal{B}}(\tildew{B},\ol{B}) \otimes_K X(A,b\tildew{B})
  \end{equation*}
  to $f \otimes x - \id_B \otimes fx.$ This implies that $s$ is
  surjective and proves our claim that
  \eqref{eq:evaluation-trivial-fib} is an isomorphism.

  \textbf{Step 2:}
  If $Y$ is a dg $\tildew{\mathcal{B}} \otimes_K
  \mathcal{B}^\opp$-module, there is an obvious natural transformation
  \begin{equation*}
    (? \otimes^L_{\tildew{\mathcal{B}}} X') 
    \comp  (? \otimes^L_{\mathcal{B}} Y) 
    \ra  (? \otimes^L_{\mathcal{B}} (Y
    \otimes_{\tildew{\mathcal{B}}} X'))
  \end{equation*}
  of functors $D(\mathcal{B}) \ra D(\mathcal{A}).$
  Putting
  $Y=\mathcal{B}=\leftidx{_{\mathcal{B}}}{\mathcal{B}}{_{\tildew{\mathcal{B}}}}$ 
  and using the isomorphism
  \eqref{eq:evaluation-trivial-fib}
  we obtain a natural transformation
  \begin{equation*}
    \tau: (? \otimes^L_{\tildew{\mathcal{B}}} X') 
    \comp  (? \otimes^L_{\mathcal{B}} \mathcal{B}) 
    \ra  (? \otimes^L_{\mathcal{B}} X)
  \end{equation*}
  Since obviously
  $(? \otimes^L_{\mathcal{B}} \mathcal{B}) \sira
  \res^{\mathcal{B}}_{\tildew{\mathcal{B}}}$ it is enough to show that
  $\tau$ is an isomorphism.

  Note that $\tau$ is a natural transformation of triangulated
  functors
  that commute with coproducts,
  and recall that $D(\mathcal{B})$ is the localizing subcategory of
  $D(\mathcal{B})$ generated by the objects $\Yoneda{B},$ for $B
  \in \mathcal{B},$ (cf.\ after \eqref{eq:perA-equal-compactDA}).
  Hence to show that $\tau$ is an isomorphism it is sufficient to show
  that $\tau_{\Yoneda{B}}$ is an isomorphism for all $B \in
  \mathcal{B}.$

  Let $B \in \mathcal{B}.$ Since $\Yoneda{B}$ is cofibrant 
  we have $\Yoneda{B} \otimes^L_\mathcal{B} X \cong \Yoneda{B}
  \otimes_{\mathcal{B}} X = X(?, B)$ in $D(\mathcal{A})$
  and
  $\Yoneda{B} \otimes^L_{\mathcal{B}} \mathcal{B} \cong
  \Yoneda{B} \otimes_{\mathcal{B}} \mathcal{B} = \mathcal{B}(b?, B)$
  in $D(\tildew{B}).$
  Since $b$ is surjective on objects
  \ref{enum:surj-epi-on-sets-of-objects}, there is an object $\tildew{B}$ 
  such that $b\tildew{B} = B.$ Then $\Yoneda{\tildew{B}} =
  \tildew{\mathcal{B}}(?, \tildew{B}) \xra{b} \mathcal{B}(b?, B)$ is a cofibrant resolution 
  by \ref{enum:surj-on-homs-surj-qiso} and 
  Theorem~\ref{t:CA-cofib-gen-model-cat}.
  Using this we have
  \begin{equation*}
    (\Yoneda{B} \otimes^L_{\mathcal{B}} \mathcal{B})
    \otimes_{\tildew{\mathcal{B}}}^L X' 
    \cong \Yoneda{\tildew{B}} \otimes_{\tildew{\mathcal{B}}} X' = X'(?,
    \tildew{B})=X(?, b \tildew{B}) = X(?, B) 
  \end{equation*}
  in $D(\mathcal{A}),$ and under this identifications
  $\tau_{\Yoneda{B}}$ is the identity of $X(?,B).$
\end{proof}

\begin{corollary}
  \label{c:lifting-bimodule-equivalences-to-cofibrants}
  Let $\mathcal{A}$ and $\mathcal{B}$ be dg $K$-categories. 
  Assume that $X=\leftidx{_\mathcal{B}}{X}{_\mathcal{A}}$ is a dg
  $\mathcal{A} \otimes_K \mathcal{B}^\opp$-module such that the
  functor 
  $LT_{X}:= (?
  \otimes_\mathcal{B}^L X):
  D(\mathcal{B}) \ra D(\mathcal{A})$ is an equivalence.

  Let $a: \tildew{\mathcal{A}} \ra \mathcal{A}$ and
  $b: \tildew{\mathcal{B}} \ra \mathcal{B}$ be cofibrant resolutions,
  and
  let $\tildew{X}$ be 
  the $\tildew{\mathcal{A}}\otimes_K \tildew{\mathcal{B}}^\opp$-module
  obtained by restriction from $X.$
  Then 
  $LT_{\tildew{X}}:= (?
  \otimes_{\tildew{\mathcal{B}}}^L \tildew{X}):
  D(\tildew{\mathcal{B}}) \ra D(\tildew{\mathcal{A}})$ is an
  equivalence.
\end{corollary}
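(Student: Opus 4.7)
The plan is to apply Lemma~\ref{l:lifting-tensor-functor-along-triv-fib} to handle the cofibrant resolution $b$ on the right, and then descend the target along $a$ by a direct argument that needs no separate lemma. By definition, $b:\tildew{\mathcal{B}}\ra\mathcal{B}$ is a trivial fibration. Let $X'$ denote the restriction of $X$ to a dg $\mathcal{A}\otimes_K\tildew{\mathcal{B}}^\opp$-module. Then Lemma~\ref{l:lifting-tensor-functor-along-triv-fib} provides a natural isomorphism
\begin{equation*}
(?\otimes^L_{\tildew{\mathcal{B}}}X')\circ\res^{\mathcal{B}}_{\tildew{\mathcal{B}}}\cong(?\otimes^L_{\mathcal{B}}X)
\end{equation*}
of functors $D(\mathcal{B})\ra D(\mathcal{A})$. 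Since $b$ is a quasi-equivalence, $\res^{\mathcal{B}}_{\tildew{\mathcal{B}}}:D(\mathcal{B})\sira D(\tildew{\mathcal{B}})$ is an equivalence by the discussion after~\eqref{eq:restriction-derived}. Combined with the hypothesis that $(?\otimes^L_{\mathcal{B}}X)$ is an equivalence, this forces $(?\otimes^L_{\tildew{\mathcal{B}}}X'):D(\tildew{\mathcal{B}})\ra D(\mathcal{A})$ to be an equivalence.

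Next I would compare $(?\otimes^L_{\tildew{\mathcal{B}}}\tildew{X})$ with the composition $\res^{\mathcal{A}}_{\tildew{\mathcal{A}}}\circ(?\otimes^L_{\tildew{\mathcal{B}}}X')$. At the underived level, $\tildew{X}(\tildew{A},\tildew{B})=X'(a\tildew{A},\tildew{B})$ by the very definition of restriction along $a$, so for any dg $\tildew{\mathcal{B}}$-module $P$ the tautological identification
\begin{equation*}
P\otimes_{\tildew{\mathcal{B}}}\tildew{X}=\res^{\mathcal{A}}_{\tildew{\mathcal{A}}}\bigl(P\otimes_{\tildew{\mathcal{B}}}X'\bigr)
\end{equation*}
holds: the coend computing the tensor product is taken in the $\tildew{\mathcal{B}}$-variable, while $\mathcal{A}$ enters only as an indexing parameter. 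Replacing $P$ by a cofibrant resolution in $C(\tildew{\mathcal{B}})$ and using that $\res^{\mathcal{A}}_{\tildew{\mathcal{A}}}$ is exact (and in particular preserves quasi-isomorphisms) promotes this to a natural isomorphism of derived functors $D(\tildew{\mathcal{B}})\ra D(\tildew{\mathcal{A}})$. Since $a$ is a quasi-equivalence, $\res^{\mathcal{A}}_{\tildew{\mathcal{A}}}:D(\mathcal{A})\sira D(\tildew{\mathcal{A}})$ is an equivalence, and composing with the equivalence from the first step yields that $(?\otimes^L_{\tildew{\mathcal{B}}}\tildew{X})$ is itself an equivalence, as desired.

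There is no serious obstacle: Lemma~\ref{l:lifting-tensor-functor-along-triv-fib} has already done the hard work on the $\mathcal{B}$-side, where the nontrivial identification \eqref{eq:evaluation-trivial-fib} is needed to see that $\mathcal{B}\otimes_{\tildew{\mathcal{B}}}X'$ recovers $X$. On the $\mathcal{A}$-side no such identification is required, because restriction of scalars along $a$ does not change the underlying dg $K$-module, only the indexing category, so it commutes with the tensor product on the nose; the only thing worth double-checking is the naturality of this identification in $P$ and its compatibility with the choice of cofibrant resolution, both of which are routine.
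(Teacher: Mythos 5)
Your argument is correct and is essentially the same as the paper's: both decompose the square into one triangle handled by Lemma~\ref{l:lifting-tensor-functor-along-triv-fib} (for the restriction along $b$) and one triangle commuting on the nose (since restriction of scalars along $a$ commutes with $\otimes_{\mathcal{B}}$ resp.\ $\otimes_{\tildew{\mathcal{B}}}$, which acts only in the other variable), and then invoke a two-out-of-three argument using that $\res^{\mathcal{B}}_{\tildew{\mathcal{B}}}$, $\res^{\mathcal{A}}_{\tildew{\mathcal{A}}}$, and $LT_X$ are equivalences. The only cosmetic difference is which diagonal you insert into the square: you use $LT_{X'}\colon D(\tildew{\mathcal{B}}) \ra D(\mathcal{A})$ (restricting $X$ only on the $\mathcal{B}$-side, hence processing $b$ first and $a$ second), whereas the paper uses $LT_{X_{\tildew{\mathcal{A}}}}\colon D(\mathcal{B}) \ra D(\tildew{\mathcal{A}})$ (restricting only on the $\mathcal{A}$-side, hence processing $a$ first and $b$ second).
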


\begin{proof}
  Let 
  $X_{\tildew{\mathcal{A}}}$ be $X$ viewed as an
  $\tildew{\mathcal{A}}\otimes_K \mathcal{B}^\opp$-module
  and consider the following diagram.
  \begin{equation*}
    \xymatrix
    {
      {D(\tildew{\mathcal{B}})} 
      \ar[rr]^-{? \otimes_{\tildew{\mathcal{B}}}^L \tildew{X}} &&
      {D(\tildew{\mathcal{A}})}\\
      {D({\mathcal{B}})} 
      \ar[rr]_-{? \otimes_\mathcal{B}^L {X}} 
      \ar[u]^{\res^{\mathcal{B}}_{\tildew{\mathcal{B}}}}
      \ar[rru]|-{? \otimes_\mathcal{B}^L X_{\tildew{\mathcal{A}}}}
      &&
      {D({\mathcal{A}})}
      \ar[u]_{\res^{\mathcal{A}}_{\tildew{\mathcal{A}}}}
    }
  \end{equation*}
  Its lower right triangle is obviously commutative. Its upper left
  triangle is commutative up to a natural isomorphism by
  Lemma~\ref{l:lifting-tensor-functor-along-triv-fib}.
  The assumptions imply
  (cf.\ \eqref{eq:restriction-derived})
  that both vertical functors and the lower horizontal functor are
  equivalences. Hence the remaining two arrows are equivalences.
\end{proof}

\begin{proposition}
  \label{p:naive-smoothness-preserved-by-dg-Morita-equivalence-reinspection}
  Let $\mathcal{A}$ and $\mathcal{B}$ be cofibrant dg $K$-categories. 
  Let $X'$ be a dg $\mathcal{A} \otimes_K
  \mathcal{B}^\opp$-module such that the functor $LT_{X'}:= (?
  \otimes_\mathcal{B}^L X'): D(\mathcal{B}) \ra D(\mathcal{A})$ is an
  equivalence.
  Then
  $\mathcal{A}$ is smooth if and only if $\mathcal{B}$ is smooth.
\end{proposition}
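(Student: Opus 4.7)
The plan is to use the cofibrancy of $\mathcal{A}$ and $\mathcal{B}$ to reduce smoothness to a statement about the diagonal bimodule, and then transport this statement across a bimodule-level equivalence induced by $X'.$

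Since $\mathcal{A}$ and $\mathcal{B}$ are cofibrant dg $K$-categories, Lemma~\ref{l:cofibrant-dg-K-cat-has-cofibrant-morphism-spaces} gives that their morphism spaces are cofibrant dg $K$-modules, hence (Lemma~\ref{l:cofibrant-dg-A-module-is-A-h-flat}) they are $K$-h-flat. Thus $\id_\mathcal{A}$ and $\id_\mathcal{B}$ are themselves trivial $K$-h-flat resolutions, and condition \ref{enum:tfae-smoothness-one-Khflat-reso} of Lemma~\ref{l:tfae-smoothness} reduces the proposition to showing that the diagonal bimodule $\mathcal{A}$ lies in $\per(\mathcal{A}\otimes_K\mathcal{A}^\opp)$ if and only if the diagonal bimodule $\mathcal{B}$ lies in $\per(\mathcal{B}\otimes_K\mathcal{B}^\opp).$

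To prove this equivalence I would construct a dg $\mathcal{B}\otimes_K\mathcal{A}^\opp$-module $Y'$ such that $LT_{Y'}$ is quasi-inverse to $LT_{X'}$ together with bimodule-level isomorphisms
$\mu\colon Y'\otimes_\mathcal{B}^L X'\;\sira\;\mathcal{A}$ in $D(\mathcal{A}\otimes_K\mathcal{A}^\opp)$ and $\nu\colon X'\otimes_\mathcal{A}^L Y'\;\sira\;\mathcal{B}$ in $D(\mathcal{B}\otimes_K\mathcal{B}^\opp).$
A natural candidate for $Y'$ comes from the dual-bimodule construction \eqref{eq:def-dual-bimodule}: after replacing $X'$ by a cofibrant resolution in $D(\mathcal{A}\otimes_K\mathcal{B}^\opp),$ set $Y'(B,A):=(\calMod(\mathcal{A}))(X'(?,B),\Yoneda{A}),$ which carries a canonical left $\mathcal{B}$-, right $\mathcal{A}$-action. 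The fact that $LT_{X'}$ is fully faithful and essentially surjective, evaluated on the generators $\Yoneda{B}$ and $\Yoneda{A}$ of $D(\mathcal{B})$ and $D(\mathcal{A}),$ then forces $\mu$ and $\nu$ to be quasi-isomorphisms. Throughout, Proposition~\ref{p:cofibrant-bimodule-remains-cofibrant-if-insert-cofib-cat} is crucial because it ensures that cofibrant bimodules over $\mathcal{A}\otimes_K\mathcal{B}^\opp,$ $\mathcal{B}\otimes_K\mathcal{A}^\opp,$ $\mathcal{A}\otimes_K\mathcal{A}^\opp$ and $\mathcal{B}\otimes_K\mathcal{B}^\opp$ remain h-flat in each single variable, so the derived tensor products $Y'\otimes^L_\mathcal{B}X'$ etc.\ are genuine bimodule invariants computed by associative iterated derived tensor products.

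Given $Y',$ $\mu,$ $\nu,$ I would finish by defining the two-sided tensor functor
$\Phi \,:=\, Y'\otimes_\mathcal{B}^L(?)\otimes_\mathcal{B}^L X' : D(\mathcal{B}\otimes_K\mathcal{B}^\opp) \lra D(\mathcal{A}\otimes_K\mathcal{A}^\opp),$
along with $\Psi \,:=\, X'\otimes_\mathcal{A}^L(?)\otimes_\mathcal{A}^L Y'$ in the opposite direction. Both commute with arbitrary coproducts, hence preserve compact (i.\,e.\ perfect, by \eqref{eq:perA-equal-compactDA}) objects; associativity of the derived tensor product together with $\mu$ and $\nu$ yields $\Psi\comp\Phi\cong\id$ and $\Phi\comp\Psi\cong\id,$ so $\Phi$ restricts to an equivalence $\per(\mathcal{B}\otimes_K\mathcal{B}^\opp)\sira\per(\mathcal{A}\otimes_K\mathcal{A}^\opp).$ Finally $\Phi(\mathcal{B})\cong Y'\otimes_\mathcal{B}^L\mathcal{B}\otimes_\mathcal{B}^L X'\cong Y'\otimes_\mathcal{B}^L X'\cong\mathcal{A},$ which transports the perfectness condition between the two diagonal bimodules and concludes the proof. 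The main obstacle will be the middle step: promoting the given derived equivalence $LT_{X'}$ of right-module categories to the bimodule-level isomorphisms $\mu$ and $\nu$ in the two-variable derived categories, since the equivalence a priori lives only on one-sided modules; the cofibrancy of $\mathcal{A}$ and $\mathcal{B}$ is exactly what makes the attendant resolution arguments compatible with the remaining bimodule action via Proposition~\ref{p:cofibrant-bimodule-remains-cofibrant-if-insert-cofib-cat}.
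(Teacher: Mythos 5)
Your proposal is correct and follows essentially the same strategy as the paper's own proof: take a cofibrant resolution of $X'$, form the dual bimodule to obtain the candidate inverse kernel, use cofibrancy (via Lemma~\ref{l:cofibrant-dg-K-cat-has-cofibrant-morphism-spaces} and Proposition~\ref{p:cofibrant-bimodule-remains-cofibrant-if-insert-cofib-cat}) to keep everything h-flat, promote the one-sided equivalence to bimodule-level isomorphisms by testing on representables, and build the two-sided tensor functors $\Phi$ and $\Psi$ that exchange the diagonal bimodules. One small slip: commuting with coproducts does not by itself imply preservation of compacts — but since you also establish that $\Phi$ and $\Psi$ are mutually quasi-inverse equivalences, compact objects are preserved for that reason, so the argument goes through.
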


\begin{proof}
  The main argument of this proof is from
  \cite[Lemma~3.9]{lunts-categorical-resolution}. Some technical
  details are extracted from \cite[Section~6]{Keller-deriving-dg-cat}. 

  By Lemma~\ref{l:tfae-smoothness} we have to show that
  $\mathcal{A} \in \per(\mathcal{A}\otimes_K \mathcal{A}^\opp)$ 
  if and only if
  $\mathcal{B} \in \per(\mathcal{B}\otimes_K \mathcal{B}^\opp).$

  Let $\xi: X\ra X'$ be a cofibrant resolution in $C(\mathcal{A}
  \otimes_K \mathcal{B}^\opp).$
  Let 
  $N$ be a dg $\mathcal{B}$-module and consider the diagram
  \begin{equation}
    \label{eq:replace-by-naive-tensor-product}
    \xymatrix{
      {LT_{X'}(N)} \gar[r] \ar[rrd]_(0.3)\phi &
      {T_{X'}pN} \ar[d] &
      {T_X pN} \ar[l]_-{T_\xi}^\sim \ar[d]^\sim \gar[r] &
      {LT_X N} \\
      &
      {T_{X'}N} &
      {T_X N} \ar[l]^-{T_\xi} 
    }
  \end{equation}
  in $D(\mathcal{A})$ with obvious vertical and horizontal morphisms. The upper horizontal
  arrow is an isomorphism since the cofibrant dg $\mathcal{B}$-module $pN$ 
  is $\mathcal{B}$-h-flat 
  (Lemma~\ref{l:cofibrant-dg-A-module-is-A-h-flat}).
  The right vertical arrow is an isomorphism since $X$ is $\mathcal{B}^\opp$-h-flat
  (obvious variant of
  Prop.~\ref{p:cofibrant-bimodule-remains-cofibrant-if-insert-cofib-cat}, 
  part~\ref{enum:cofibrant-bimodule-remains-cofibrant-if-partially-evaluate}).
  We define $\phi=\phi_N$ to be the indicated composition of these
  isomorphisms.
  In fact this extends to a natural isomorphism
  $\phi: LT_{X'} \sira T_{X}$ of functors $D(\mathcal{B}) \ra
  D(\mathcal{A})$ (where $T_X$ is defined in the obvious way, using
  that $X$ is $\mathcal{B}^\opp$-h-flat).
  In particular $T_X$ is an equivalence.

  For any $B \in \mathcal{B},$ the dg $\mathcal{A}$-module
  $X^B=X(?,B)$ is cofibrant by
  Proposition~\ref{p:cofibrant-bimodule-remains-cofibrant-if-insert-cofib-cat}
  and in particular 
  h-projective (Lemma~\ref{l:cofobj-is-h-proj-dgA}).
  This implies that $H_X$ as defined in
  \eqref{eq:def-HX} maps acyclic dg $\mathcal{A}$-modules to acyclic
  dg $\mathcal{B}$-modules, and hence descends directly to a
  triangulated functor $H_X: D(\mathcal{A}) \ra D(\mathcal{B}).$
  The unit $\varepsilon$ and counit $\eta$ of the adjunction
  \eqref{eq:adj-tensor-hom-calModlevel} hence directly provide an
  adjunction $(T_X, H_X)$ between 
  $T_X: D(\mathcal{B}) \ra D(\mathcal{A})$ and $H_X: D(\mathcal{A})
  \ra D(\mathcal{B}).$ Since $T_X$ is an equivalence, $H_X$ is a
  quasi-inverse.
  Since $T_X$ preserves all coproducts, the same is true for $H_X.$
  
  Recall the definition of the dg $\mathcal{B} \otimes_K
  \mathcal{A}$-module $X^\perp$ from
  \eqref{eq:def-dual-bimodule}
  and that there is a canonical transformation
  $\tau: T_{X^\perp} \ra H_X.$ This morphism provides a natural
  transformation
  $\tildew{\tau}: LT_{X^\perp} \ra H_X$ of triangulated functors,
  defined on an object $M \in D(\mathcal{A})$ as the indicated
  composition in the following commutative diagram.
  \begin{equation}
    \label{eq:def-tilde-tau}
    \xymatrix{
      {LT_{X^\perp} M} \gar[r] \ar[rrd]_(.3){\tildew{\tau}} & 
      {T_{X^\perp}pM} \ar[r]^\tau
      \ar[d] &
      {H_X(pM)} \ar[d]^\sim \\
      & 
      {T_{X^\perp}M} \ar[r]^\tau &
      {H_X(M)}
    }
  \end{equation}
  It is clear that the vertical morphism on the right is an
  isomorphism, and it is easy to check that the upper horizontal arrow
  is an isomorphism if $M=\Yoneda{A},$ for all $A \in \mathcal{A}$;
  hence all $\tildew{\tau}_{\Yoneda{A}}$ are isomorphisms.
  Since both $LT_{X^\perp}$ and $H_X$
  preserve all coproducts, this implies already
  (by the same argument as in Step 2 of the proof of
  Lemma~\ref{l:lifting-tensor-functor-along-triv-fib})
  that $\tildew{\tau}$ is a natural isomorphism (and that the upper
  horizontal arrow in diagram \eqref{eq:def-tilde-tau} is an
  isomorphism for all $M$).
  
  Let $\upsilon :Y \ra X^\perp$ be a cofibrant resolution of $X^\perp$
  in $C(\mathcal{B} \otimes_K \mathcal{A}^\opp).$
  As above 
  (cf.\ \eqref{eq:replace-by-naive-tensor-product})
  we explicitly construct an isomorphism 
  $\psi: LT_{X^\perp} \sira T_Y.$
  
  Note that $\psi \comp \tildew{\tau}\inv: H_X \sira T_Y$ is an
  isomorphism which shows that $T_X$ has a quasi-inverse given by a
  tensor-functor. 
  
  For $N$ a dg $\mathcal{B}$-module consider the following commutative
  diagram in $D(\mathcal{B})$
  which is built from the adjunction morphism $\varepsilon_N,$ from
  \eqref{eq:def-tilde-tau} and the analog of 
  \eqref{eq:replace-by-naive-tensor-product}.
  \begin{equation*}
    \xymatrix{
      {N} \ar[r]^-{\varepsilon_N}_-\sim &
      {H_XT_X N} &
      {T_{X^\perp}T_X N} \ar[l]_-{\tau} &
      {T_{Y}T_X N} \ar[l]_-{T_\upsilon} \ar@{..>}@/_1cm/[ll]_-{\tau\comp T_\upsilon}^-\sim \\
      &
      {H_XpT_X N} \ar[u]^-\sim &
      {T_{X^\perp}pT_X N} \ar[l]_-{\tau}^-\sim \gar[d] \ar[u] &
      {T_{Y}pT_X N} \ar[l]_-{T_\upsilon}^-\sim \ar[u]_-\sim \\
      &&
      {LT_{X^\perp}T_X N}
      \ar[uul]^(.3){\tildew{\tau}}
      \ar[uur]_(.3){\psi}
    }
  \end{equation*}
  It implies that the dotted composition $\tau \comp T_\upsilon:T_YT_XN \ra
  H_XT_XN$ in the upper row is (as indicated) an isomorphism.
  This is important for the following reason: If $N$ has additionally
  a left dg $\mathcal{R}$-module structure (i.\,e.\ it is an
  dg $\mathcal{B}\otimes_K \mathcal{R}^\opp$-module) then all
  morphisms in the upper row are morphisms of dg $\mathcal{B} \otimes_K
  \mathcal{R}^\opp$-modules and in fact isomorphisms in
  $D(\mathcal{B} \otimes_K \mathcal{R}^\opp)$ since we can test this
  by plugging in $R \in \mathcal{R}.$ 
  (A priori the entries in the lower row have no left dg
  $\mathcal{R}$-module structure.)

  We apply this to $\mathcal{R}= \mathcal{B}$ and the diagonal
  bimodule $\mathcal{B}$ and obtain 
  isomorphisms in $D(\mathcal{B} \otimes_K \mathcal{B}^\opp)$ (or
  quasi-isomorphisms in $C(\mathcal{B} \otimes_K \mathcal{B}^\opp)$ 
  or $\mathcal{H}(\mathcal{B} \otimes_K \mathcal{B}^\opp)$)
  \begin{equation}
    \label{eq:B-per-X-otimes-Y}
    \xymatrix{
      {\mathcal{B}} \ar[r]^-{\varepsilon_\mathcal{B}}_-\sim 
      &      
      {H_XT_X \mathcal{B}} 
      &
      {T_{Y}T_X \mathcal{B}} \ar[l]_-{\tau\comp T_\upsilon}^-\sim
      \gar[r] &
      {\mathcal{B} \otimes_\mathcal{B} X \otimes_\mathcal{A} Y}
      \gar[r] &
      {X \otimes_\mathcal{A} Y}
    }    
  \end{equation}
  Similar we obtain for $M$ a dg $\mathcal{A}$-module the
  following diagram in $D(\mathcal{A}).$
  \begin{equation*}
    \xymatrix{
      {M} &
      {T_XH_X M} 
      \ar[l]_-{\eta_M}^-\sim &
      {T_X T_{X^\perp} M} \ar[l]_-{T_X\tau} &
      {T_XT_{Y} M} \ar[l]_-{T_XT_\upsilon}
      \ar@{..>}@/_1cm/[ll]_-{T_X\tau\comp T_XT_\upsilon}^-\sim \\ 
      &
      {T_XH_Xp M} \ar[u]^-\sim &
      {T_XT_{X^\perp}p M} \ar[l]_-{T_X\tau}^-\sim \gar[d] \ar[u] &
      {T_X T_{Y}p M} \ar[l]_-{T_XT_\upsilon}^-\sim \ar[u]_-\sim \\
      &&
      {T_X LT_{X^\perp} M}
      \ar[uul]^(.3){T_X\tildew{\tau}}
      \ar[uur]_(.3){T_X\psi}
    }
  \end{equation*}
  The upper row is again compatible with any available left dg module
  structure on $M.$ Applied to the diagonal bimodule
  $\mathcal{A}$ we obtain an isomorphism 
  \begin{equation}
    \label{eq:A-per-Y-otimes-X}
    \xymatrix{
      {\mathcal{A}} &
      {T_XH_X \mathcal{A}} 
      \ar[l]_-{\eta_\mathcal{A}}^-\sim &&
      {T_XT_{Y} \mathcal{A}} \ar[ll]_-{T_X\tau\comp
        T_XT_\upsilon}^-\sim \gar[r] &
      {\mathcal{A}\otimes_\mathcal{A} Y \otimes_\mathcal{B} X} \gar[r]
      &
      {Y \otimes_\mathcal{B} X}
    }
  \end{equation}
  in $D(\mathcal{A}\otimes_K \mathcal{A}^\opp).$

  The dg $K$-functor
  \begin{equation*}
    \leftidx{_{Y}}{\Delta}{_X}(?):= Y\otimes_\mathcal{B} ?
    \otimes_\mathcal{B} X: 
    \calMod(\mathcal{B}\otimes_K \mathcal{B}^\opp) 
    \ra
    \calMod(\mathcal{A}\otimes_K \mathcal{A}^\opp) 
  \end{equation*}
  is the composition of the two dg $K$-functors
  $(Y\otimes_\mathcal{B}?)$ and $(?\otimes_\mathcal{B} X)$;
  it preserves acyclic modules since $Y$ is $\mathcal{B}$-flat and $X$
  is $\mathcal{B}^\opp$-flat
  (Prop.~\ref{p:cofibrant-bimodule-remains-cofibrant-if-insert-cofib-cat}, 
  part~\ref{enum:cofibrant-bimodule-remains-cofibrant-if-partially-evaluate}).
  Hence it directly descends to a triangulated functor
  \begin{equation*}
    \leftidx{_Y}{\Delta}{_X} = Y\otimes_\mathcal{B} ?
    \otimes_\mathcal{B} X: 
    D(\mathcal{B}\otimes_K \mathcal{B}^\opp) 
    \ra
    D(\mathcal{A}\otimes_K \mathcal{A}^\opp).
  \end{equation*}

  Similarly, we define a functor
  \begin{equation*}
    \leftidx{_X}{\Delta}{_Y}=
    X\otimes_\mathcal{A} ?
    \otimes_\mathcal{A} Y: 
    D(\mathcal{A}\otimes_K \mathcal{A}^\opp)
    \ra
    D(\mathcal{B}\otimes_K \mathcal{B}^\opp). 
  \end{equation*}

  We claim that $\leftidx{_Y}{\Delta}{_X}$ and
  $\leftidx{_X}{\Delta}{_Y}$ are quasi-inverse to each other.
  This follows from \eqref{eq:B-per-X-otimes-Y} and
  \eqref{eq:A-per-Y-otimes-X} but let us include the details:
  The functor $\leftidx{_X}{\Delta}{_Y} \comp
  \leftidx{_Y}{\Delta}{_X}$ coincides with the obvious composition
  \begin{equation*}
    D(\mathcal{B}\otimes_K \mathcal{B}^\opp)
    \xra{X\otimes_{\mathcal{A}} Y \otimes_{\mathcal{B}} ?}
    D(\mathcal{B}\otimes_K \mathcal{B}^\opp)
    \xra{? \otimes_\mathcal{B} X \otimes_\mathcal{A} Y}
    D(\mathcal{B}\otimes_K \mathcal{B}^\opp).
  \end{equation*}
  The second functor is (canonically isomorphic to) the functor
  $LT_{X\otimes_\mathcal{A} Y}.$ The morphisms in 
  \eqref{eq:B-per-X-otimes-Y} are quasi-isomorphism when considered in
  $C(\mathcal{B}\otimes_K \mathcal{B}^\opp).$ Hence they induce an
  isomorphism between $LT_{X\otimes_\mathcal{A} Y}$ and
  $LT_{\mathcal{B}}=\id$
  (use Lemma~\ref{l:cofibrant-dg-A-module-is-A-h-flat}).
  A similar reasoning applies to the first functor, and hence
  $\leftidx{_X}{\Delta}{_Y} \comp \leftidx{_Y}{\Delta}{_X} \cong \id.$
  Similarly, 
  \eqref{eq:A-per-Y-otimes-X} implies 
  $\leftidx{_Y}{\Delta}{_X} \comp \leftidx{_X}{\Delta}{_Y} \cong \id.$

  The mutually quasi-inverse equivalences 
  $\leftidx{_Y}{\Delta}{_X}$ and $\leftidx{_X}{\Delta}{_Y}$ preserve
  compact objects.
  Using 
  \eqref{eq:A-per-Y-otimes-X} again we have
    $\leftidx{_Y}{\Delta}{_X} (\mathcal{B}) =
    Y \otimes_\mathcal{B} \mathcal{B} \otimes_\mathcal{B} X
    =
    Y \otimes_\mathcal{B} X
    \cong
    \mathcal{A},$
  and similarly 
  $\leftidx{_X}{\Delta}{_Y} (\mathcal{A}) \cong \mathcal{B}.$
  This implies that the diagonal bimodule $\mathcal{A}$ is compact if
  and only if the diagonal bimodule $\mathcal{B}$ is compact.
\end{proof}

\begin{theorem}
  \label{t:smoothness-preserved-by-dg-Morita-equivalence}
  Let $\mathcal{A}$ and $\mathcal{B}$ be dg $K$-categories.
  If $\mathcal{A}$ and $\mathcal{B}$ are dg Morita equivalent, then
  $\mathcal{A}$ is smooth if and only if $\mathcal{B}$ is smooth. 
\end{theorem}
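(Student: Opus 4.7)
The plan is to reduce the statement about arbitrary zig-zags of tensor equivalences to the special case already treated in Proposition~\ref{p:naive-smoothness-preserved-by-dg-Morita-equivalence-reinspection}, and then bridge the cofibrancy gap via Corollary~\ref{c:lifting-bimodule-equivalences-to-cofibrants} and Lemma~\ref{l:smoothness-and-quasi-equis}.

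First, I would observe that the relation ``both smooth or both non-smooth'' is an equivalence relation on dg $K$-categories, so it suffices, by induction on the length of the zig-zag defining dg Morita equivalence, to prove the theorem in the case where there is a single dg $\mathcal{A}\otimes_K \mathcal{B}^{\opp}$-module $X$ such that $LT_X=(?\otimes_{\mathcal{B}}^L X): D(\mathcal{B}) \to D(\mathcal{A})$ is an equivalence (each step of the zig-zag is such a tensor equivalence, possibly read in either direction, but either reading gives an equivalence $D(\mathcal{B})\simeq D(\mathcal{A})$ implemented by a tensor functor, which is exactly what is needed at each induction step).

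Next, to handle the single-step case without the cofibrancy hypothesis on $\mathcal{A}$ and $\mathcal{B}$, I would choose cofibrant resolutions $a:\tilde{\mathcal{A}}\to \mathcal{A}$ and $b:\tilde{\mathcal{B}}\to \mathcal{B}$ (which exist and are $K$-h-flat by Lemma~\ref{l:cofibrant-dg-K-cat-has-cofibrant-morphism-spaces}), and consider the restriction $\tilde{X}$ of $X$ along $\tilde{\mathcal{A}}\otimes_K \tilde{\mathcal{B}}^{\opp} \to \mathcal{A}\otimes_K \mathcal{B}^{\opp}$. By Corollary~\ref{c:lifting-bimodule-equivalences-to-cofibrants}, the induced functor $LT_{\tilde X}: D(\tilde{\mathcal{B}}) \to D(\tilde{\mathcal{A}})$ is again an equivalence. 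Since $\tilde{\mathcal{A}}$ and $\tilde{\mathcal{B}}$ are cofibrant, Proposition~\ref{p:naive-smoothness-preserved-by-dg-Morita-equivalence-reinspection} applies and yields that $\tilde{\mathcal{A}}$ is smooth if and only if $\tilde{\mathcal{B}}$ is smooth.

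Finally, I would invoke Lemma~\ref{l:smoothness-and-quasi-equis} (invariance of smoothness under quasi-equivalence) applied to the quasi-equivalences $a$ and $b$ to conclude that $\mathcal{A}$ is smooth iff $\tilde{\mathcal{A}}$ is smooth, and $\mathcal{B}$ is smooth iff $\tilde{\mathcal{B}}$ is smooth; combined with the previous step this yields the theorem. I do not anticipate any real obstacles here: all substantive analytical work has already been done in Proposition~\ref{p:naive-smoothness-preserved-by-dg-Morita-equivalence-reinspection} and Corollary~\ref{c:lifting-bimodule-equivalences-to-cofibrants}; the only care needed is the (entirely formal) induction on the zig-zag length and the observation that a tensor equivalence read in either direction still transports smoothness, which follows from the symmetric conclusion of the proposition.
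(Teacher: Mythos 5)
Your proposal is correct and follows essentially the same route as the paper's own proof: reduce to a single tensor equivalence, lift it to a tensor equivalence between cofibrant resolutions via Corollary~\ref{c:lifting-bimodule-equivalences-to-cofibrants}, apply Proposition~\ref{p:naive-smoothness-preserved-by-dg-Morita-equivalence-reinspection}, and transfer smoothness back along the cofibrant resolutions using Lemma~\ref{l:smoothness-and-quasi-equis}. Your remarks about the zig-zag induction and the symmetry of the conclusion simply make explicit what the paper leaves implicit.
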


\begin{proof}
  It is enough to show the claim if there is a tensor equivalence
  $D(\mathcal{B}) \sira D(\mathcal{A}).$
  Let $\tildew{\mathcal{A}} \ra \mathcal{A}$ and $\tildew{B} \ra
  \mathcal{B}$ be cofibrant resolutions. By 
  Corollary~\ref{c:lifting-bimodule-equivalences-to-cofibrants} we can
  lift our tensor equivalence to a tensor equivalence
  $D(\tildew{\mathcal{B}}) \sira D(\tildew{\mathcal{A}}).$
  Now the result follows from
  Proposition~\ref{p:naive-smoothness-preserved-by-dg-Morita-equivalence-reinspection}
  and
  Lemma~\ref{l:smoothness-and-quasi-equis}
\end{proof}

\begin{corollary}
  \label{c:smoothness-and-restriction}
  Let $\mathcal{B} \ra \mathcal{A}$ be a morphism in $\dgcat_K.$ If
  $\res^\mathcal{A}_\mathcal{B}: D(\mathcal{A}) \ra D(\mathcal{B})$ is
  an equivalence, then $\mathcal{A}$ is smooth if and only if
  $\mathcal{B}$ is smooth.
\end{corollary}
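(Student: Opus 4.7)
The plan is to identify the restriction functor with (the right adjoint of) a tensor functor, conclude that $\mathcal{A}$ and $\mathcal{B}$ are dg Morita equivalent, and then invoke Theorem~\ref{t:smoothness-preserved-by-dg-Morita-equivalence}. No new technical work should be needed.

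Concretely, let $F: \mathcal{B} \ra \mathcal{A}$ be the given morphism. As explained in Section~\ref{sec:restr-prod}, viewing $\mathcal{A}$ as a dg $\mathcal{A} \otimes_K \mathcal{B}^\opp$-module via $F$, we obtain an adjoint pair of triangulated functors
\begin{equation*}
  L\pro_\mathcal{B}^\mathcal{A} = LT_\mathcal{A} : D(\mathcal{B}) \lra D(\mathcal{A}),
  \qquad
  \res_\mathcal{B}^\mathcal{A} : D(\mathcal{A}) \lra D(\mathcal{B}),
\end{equation*}
with $L\pro_\mathcal{B}^\mathcal{A}$ a left adjoint to $\res_\mathcal{B}^\mathcal{A}$. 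By hypothesis the right adjoint $\res_\mathcal{B}^\mathcal{A}$ is an equivalence, so by the standard adjunction principle its left adjoint $L\pro_\mathcal{B}^\mathcal{A}$ is also an equivalence (and quasi-inverse to it). Since $L\pro_\mathcal{B}^\mathcal{A} = LT_\mathcal{A}$ is a tensor functor in the sense defined after~\eqref{eq:def-derived-tensor-product}, this gives a tensor equivalence between $D(\mathcal{B})$ and $D(\mathcal{A})$.

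Hence $\mathcal{A}$ and $\mathcal{B}$ are dg Morita equivalent in the sense of Definition~\ref{d:dg-morita-equivalent}, and Theorem~\ref{t:smoothness-preserved-by-dg-Morita-equivalence} yields that $\mathcal{A}$ is smooth if and only if $\mathcal{B}$ is smooth. The only conceptual point to verify is the first one, namely that restriction is genuinely the right adjoint of the derived extension-of-scalars functor $LT_\mathcal{A}$; this is already recorded in Section~\ref{sec:restr-prod} and uses only that $\res_\mathcal{B}^\mathcal{A}$ preserves acyclics so that the adjunction from \eqref{eq:adj-tensor-hom-calModlevel} descends to derived categories. I do not anticipate any real obstacle.
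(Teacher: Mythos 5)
Your argument is correct and follows essentially the same strategy as the paper's proof: exhibit a tensor equivalence between $D(\mathcal{A})$ and $D(\mathcal{B})$ via the bimodule $\mathcal{A}$ and invoke Theorem~\ref{t:smoothness-preserved-by-dg-Morita-equivalence}. The paper is marginally more direct — it observes that $\res^\mathcal{A}_\mathcal{B}$ \emph{itself} is the tensor functor $LT_X$ for $X=\mathcal{A}$ viewed as a dg $\mathcal{B}\otimes_K\mathcal{A}^\opp$-module, so one need not pass to the left adjoint $L\pro_\mathcal{B}^\mathcal{A}$ nor invoke the fact that an adjoint of an equivalence is an equivalence.
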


\begin{proof}
  Let $X$ be $\mathcal{A}$ viewed as a dg $\mathcal{B} \otimes_K
  \mathcal{A}^\opp$-module.
  Then $T_X=(? \otimes_\mathcal{A} X) \sira
  \res^\mathcal{A}_\mathcal{B},$ and hence $LT_X \sira
  \res^\mathcal{A}_\mathcal{B}$ as functors
  $D(\mathcal{A}) \ra D(\mathcal{B}).$
\end{proof}

\begin{corollary}
  \label{c:test-smoothness-on-classical-generator}
  Let $\mathcal{A}$ be a dg $K$-category. Assume that $\mathcal{A}$ is
  triangulated in the sense that it is
  pretriangulated and that its homotopy category $[\mathcal{A}]$ is
  Karoubian (= idempotent complete).
  Assume that there is an object $E \in \mathcal{A}$ such that $E$ is
  a classical generator of $[\mathcal{A}].$ 
  Let $\mathcal{A}(E):= \mathcal{A}(E,E)$ 
  be the dg $K$-algebra of endomorphisms of $E.$
  Then 
  \begin{equation*}
    \mathcal{A}(E,?): [\mathcal{A}] \sira \per(\mathcal{A}(E))
  \end{equation*}
  is a triangulated equivalence, and moreover
  $\mathcal{A}$ is $K$-smooth if and only if 
  $\mathcal{A}(E)$ is $K$-smooth.   
\end{corollary}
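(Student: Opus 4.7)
The plan is to reduce the corollary to Corollary~\ref{c:smoothness-and-restriction} applied to the inclusion $j: \mathcal{A}(E) \hra \mathcal{A}$ of the full dg $K$-subcategory on the single object $E.$ Concretely, I will show that the restriction $\res^\mathcal{A}_{\mathcal{A}(E)}: D(\mathcal{A}) \ra D(\mathcal{A}(E))$ is a triangulated equivalence; the smoothness equivalence will then follow immediately, and the triangulated equivalence $\mathcal{A}(E,?): [\mathcal{A}] \sira \per(\mathcal{A}(E))$ will arise by restricting this equivalence to compact objects and identifying $[\mathcal{A}]$ with $\per(\mathcal{A})$ via Yoneda.

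First I would verify the standard fact that Yoneda induces a triangulated equivalence $[\mathcal{A}] \sira \per(\mathcal{A}).$ Representables $\Yoneda{A}$ are cofibrant and hence h-projective (Lemma~\ref{l:cofobj-is-h-proj-dgA}), so by \eqref{eq:h-projective-and-morphisms-DA} the Yoneda functor descends to a fully faithful triangulated functor $[\mathcal{A}] \ra D(\mathcal{A}).$ Its essential image is closed under shifts and cones because $\mathcal{A}$ is pretriangulated, and closed under summands because $[\mathcal{A}]$ is Karoubian; being a thick triangulated subcategory of $D(\mathcal{A})$ that contains every $\Yoneda{A},$ it must equal $\per(\mathcal{A}).$ Since $E$ classically generates $[\mathcal{A}],$ the module $\Yoneda{E}$ classically generates $\per(\mathcal{A}) = D(\mathcal{A})^c$; as the compact objects generate $D(\mathcal{A})$ as a localizing subcategory (cf.\ \eqref{eq:perA-equal-compactDA} and the paragraph around it), $\Yoneda{E}$ is in fact a compact generator of $D(\mathcal{A}).$ The diagonal module $\Yoneda{E}^{\mathcal{A}(E)} = \mathcal{A}(E)$ is of course a compact generator of $D(\mathcal{A}(E)).$

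Next I would prove that $\res = \res^\mathcal{A}_{\mathcal{A}(E)}$ is an equivalence. It commutes with coproducts, and so does its left adjoint $L\pro = L\pro^\mathcal{A}_{\mathcal{A}(E)}$ (Section~\ref{sec:restr-prod}). From $\res(\Yoneda{E}^\mathcal{A}) = \mathcal{A}(E,E) = \Yoneda{E}^{\mathcal{A}(E)}$ and $L\pro(\Yoneda{E}^{\mathcal{A}(E)}) \cong \Yoneda{E}^\mathcal{A}$ one reads off that the unit $\id \ra \res \comp L\pro$ and the counit $L\pro \comp \res \ra \id$ are isomorphisms on the compact generators $\Yoneda{E}^{\mathcal{A}(E)}$ and $\Yoneda{E}^\mathcal{A}$ respectively. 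The full subcategories of objects on which these natural transformations are invertible are triangulated and stable under coproducts, hence localizing; containing a compact generator, they exhaust $D(\mathcal{A}(E))$ and $D(\mathcal{A}),$ so $\res$ is an equivalence.

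The conclusion is then formal: Corollary~\ref{c:smoothness-and-restriction} yields that $\mathcal{A}$ is $K$-smooth if and only if $\mathcal{A}(E)$ is $K$-smooth. Because an equivalence of triangulated categories automatically preserves compact objects, $\res$ restricts to an equivalence $\per(\mathcal{A}) \sira \per(\mathcal{A}(E));$ composing with the Yoneda equivalence $[\mathcal{A}] \sira \per(\mathcal{A})$ one obtains the promised equivalence $[\mathcal{A}] \sira \per(\mathcal{A}(E))$ that on objects sends $X$ to $\Yoneda{X} \mapsto \mathcal{A}(E,X).$ The main obstacle is the argument in the previous paragraph that $\res$ is an equivalence; once $\Yoneda{E}$ is known to be a compact generator on both sides, the devissage with unit and counit is the only step that is not already encoded in the preceding results of the section.
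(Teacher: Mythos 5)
Your proposal is correct and follows essentially the same strategy as the paper: both proofs reduce the claim to Corollary~\ref{c:smoothness-and-restriction} by showing that $\res^\mathcal{A}_{\mathcal{A}(E)}: D(\mathcal{A}) \ra D(\mathcal{A}(E))$ is an equivalence, using the Yoneda equivalence $[\mathcal{A}] \sira \per(\mathcal{A})$ and the hypothesis that $E$ classically generates. The only material difference is in how that equivalence is established: the paper first identifies $\res$ with the composite of the Yoneda equivalence and $\mathcal{A}(E,?)$ to get $\per(\mathcal{A}) \sira \per(\mathcal{A}(E))$, and then invokes a reference (Lemma~2.12 of \cite{lunts-categorical-resolution} / the discussion around \eqref{eq:perA-equal-compactDA}) to promote this to an equivalence of unbounded derived categories, whereas you prove the $D$-level equivalence directly by a unit/counit d\'evissage over the compact generators $\Yoneda{E}$ on each side. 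Your version is a bit more self-contained, at the cost of redoing an argument the paper outsources; both are valid and the devissage you carry out is precisely the content of the cited lemma.
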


\begin{proof}
  Let $\mathcal{E} \subset \mathcal{A}$ be the full dg $K$-subcategory
  whose unique object is $E.$ 
  The upper horizontal functor in the commutative diagram
  \begin{equation*}
    \xymatrix{
      {[\mathcal{A}]} \ar[r] \ar[rd]_-{\mathcal{A}(E,?)}&
      {\per(\mathcal{A})} \ar[d]^-{\res^\mathcal{A}_\mathcal{E}}\\
      & {\per(\mathcal{E})}
    }
  \end{equation*}
  is induced by the Yoneda embedding. It is an equivalence since
  $\mathcal{A}$ is triangulated.
  Since $E$ is a classical generator of $[\mathcal{A}] \sira
  \per(\mathcal{A}),$ both non-horizontal functors in the above
  diagram are equivalences. 
  It follows (cf. explanations around \eqref{eq:perA-equal-compactDA},
  or \cite[Lemma~2.12]{lunts-categorical-resolution})
  that
  $\res^\mathcal{A}_\mathcal{E}: D(\mathcal{A}) \ra D(\mathcal{E})$ is
  an equivalence.
  Now use Corollary~\ref{c:smoothness-and-restriction}.
\end{proof}

\subsection{Directed dg \texorpdfstring{$K$}{K}-categories}
\label{sec:extensions-dg-K-categoreis}

Our aim is to prove
Theorem~\ref{t:diagonal-smooth-bimod-smooth-iff-extension-smooth}
below which generalizes and strengthens \cite[Prop.~3.11]{lunts-categorical-resolution}.

Let $\mathcal{A}$ and $\mathcal{B}$ be two small dg $K$-categories and let
$N=\leftidx{_\mathcal{A}}{N}{_\mathcal{B}}$ be a dg
$\mathcal{B}\otimes_K \mathcal{A}^\opp$-module. Let $\mathcal{E},$
symbolically denoted
\begin{equation*}
  \mathcal{E}=
  \begin{bmatrix}
    {\mathcal{B}} & {0}\\
    {N} & {\mathcal{A}}
  \end{bmatrix},
\end{equation*}
be the following dg $K$-category: 
Its objects are the disjoint union of the objects of $\mathcal{A}$ and
$\mathcal{B},$ and its morphisms are given by
\begin{align*}
  \mathcal{E}(A,A') & = \mathcal{A}(A,A'), &
  \mathcal{E}(B,A') & = N(B,A'),\\
  \mathcal{E}(A,B') & = 0, &
  \mathcal{E}(B,B') & = \mathcal{B}(B,B'),
\end{align*}
for objects $A, A' \in \Obj \mathcal{A} \subset \Obj \mathcal{E}$ and $B, B' \in
\Obj \mathcal{B} \subset \Obj \mathcal{E},$
and units and compositions are obvious 
(e.\,g.\ for $A \in \mathcal{A}$
and $B, B' \in \mathcal{B}$ composition is given by the action morphism
\begin{equation*}
  \mathcal{E}(B', A) \otimes_K \mathcal{E}(B,B')
  = N(B', A) \otimes_K \mathcal{B}(B,B') \ra N(B,A) = \mathcal{E}(B, A)
\end{equation*}
of the dg $\mathcal{B}$-module $N$).

\begin{remark}
  \label{rem:directed-dg-K-category}
  Conversely, if $\mathcal{E}$ is a small dg $K$-category such that we can
  split the set of objects into two disjoint subsets, giving rise to
  full subcategories $\mathcal{A}$ and $\mathcal{B},$ such that
  $\mathcal{E}(\mathcal{A}, \mathcal{B})=0,$ then 
  $\mathcal{E}= \tzmat{\mathcal{B}}{0}{N}{\mathcal{A}}$
  for $N:=\mathcal{E}|_{\mathcal{B}\otimes_K \mathcal{A}^\opp}$ the indicated
  restriction of the diagonal bimodule $\mathcal{E}.$
\end{remark}

\begin{remark}
  The quiver picture of $\mathcal{E}$ is $\mathcal{B}
  \xra{\leftidx{_\mathcal{A}}{N}{_\mathcal{B}}} \mathcal{A}.$
\end{remark}

If $S$ is a dg $\mathcal{E}$-module we can restrict it along the
obvious inclusions $\mathcal{A} \subset \mathcal{E}$ 
and $\mathcal{B} \subset \mathcal{E}$ and obtain a dg $\mathcal{A}$-module $S|_\mathcal{A}$ and a
dg $\mathcal{B}$-module $S|_\mathcal{B}.$ 
Furthermore the action morphisms
$S(A) \otimes_K \mathcal{E}(B,A) \ra S(B),$ for $A \in \mathcal{A}$
and $B \in \mathcal{B},$
induce a morphism
\begin{equation*}
  \phi_S: S|_\mathcal{A} \otimes_\mathcal{A} N \ra S|_{\mathcal{B}}
\end{equation*}
in $C(\mathcal{B}).$
In this manner we see that a dg $\mathcal{E}$-module $S$ is the same
as a triple
$(S_\mathcal{A}, S_\mathcal{B},
\phi:S_\mathcal{A} \otimes_\mathcal{A} N \ra S_\mathcal{B})$ where
$S_\mathcal{A}$ is a dg $\mathcal{A}$-module, $S_\mathcal{B}$ is a dg
$\mathcal{B}$-module and $\phi$ is a morphism in $C(\mathcal{B}).$ We
describe such a dg $\mathcal{E}$-module symbolically as
$S=\mathovalbox{
  \xymatrix{
    {S_\mathcal{B}} &
    {S_\mathcal{A}.} \ar[l]_-{\phi}
  }
}$

Morphisms of $f:S \ra S'$ of dg $\mathcal{E}$-modules are in this
description pairs $(f_\mathcal{B}, f_\mathcal{A})$ where
$f_\mathcal{B}: S_\mathcal{B} \ra S'_\mathcal{B}$ and
$f_\mathcal{A}: S_\mathcal{A} \ra S'_\mathcal{A}$ are morphisms
of dg $\mathcal{B}$- and $\mathcal{A}$-modules respectively such that
$f_{\mathcal{B}} \comp \phi_S = \phi_{S'} \comp (f_{\mathcal{A}}
\otimes_{\mathcal{A}} \id_N).$ We denote such a morphism
symbolically as $f=\mathovalbox{
  \begin{smallmatrix}
    f_\mathcal{B} &
    f_\mathcal{A} 
  \end{smallmatrix}
}.$

\begin{lemma}
  \label{l:restriction-to-target-of-N-preserves-compacts}
  Let $a:\mathcal{A} \subset \mathcal{E}$ be the obvious inclusion.
  The functor $a_*=\res^\mathcal{E}_\mathcal{A}: D(\mathcal{E}) \ra
  D(\mathcal{A})$ maps compact objects to compact objects.
\end{lemma}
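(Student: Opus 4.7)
The plan is to use the characterization $\per(\mathcal{E}) = D(\mathcal{E})^c$ from \eqref{eq:perA-equal-compactDA}, together with the fact (established just after \eqref{eq:perA-equal-compactDA}) that $\per(\mathcal{E})$ is the smallest strict full triangulated subcategory of $D(\mathcal{E})$ containing the representables $\Yoneda{E}$ for $E \in \Obj \mathcal{E}$ and closed under direct summands. Since the full subcategory
\begin{equation*}
  \{ X \in D(\mathcal{E}) \mid a_*(X) \in \per(\mathcal{A}) \}
\end{equation*}
of $D(\mathcal{E})$ is obviously strict, full, triangulated, and closed under summands (using that $a_*$ is triangulated and $\per(\mathcal{A})$ is thick), it suffices to verify that $a_*(\Yoneda{E}) \in \per(\mathcal{A})$ for every object $E$ of $\mathcal{E}$.

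There are two cases, corresponding to the decomposition $\Obj \mathcal{E} = \Obj \mathcal{A} \sqcup \Obj \mathcal{B}$. If $E = A \in \Obj \mathcal{A}$, then by the definition of the morphism spaces of $\mathcal{E}$ we have $\mathcal{E}(A', A) = \mathcal{A}(A', A)$ for all $A' \in \mathcal{A}$, so $a_*(\Yoneda{A}) = \mathcal{E}(?, A)|_{\mathcal{A}}$ is canonically isomorphic in $C(\mathcal{A})$ to the representable dg $\mathcal{A}$-module $\Yoneda{A}$, which lies in $\per(\mathcal{A})$. If $E = B \in \Obj \mathcal{B}$, then the key observation is that $\mathcal{E}(A', B) = 0$ for all $A' \in \mathcal{A}$ (this is the ``directedness'' of $\mathcal{E}$ recorded in Remark~\ref{rem:directed-dg-K-category}), so $a_*(\Yoneda{B}) = 0 \in \per(\mathcal{A})$.

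Since both types of generators restrict into $\per(\mathcal{A})$, the argument above shows that $a_*$ maps all of $\per(\mathcal{E})$ into $\per(\mathcal{A})$, which by \eqref{eq:perA-equal-compactDA} is exactly the desired statement about compact objects. There is no real obstacle here: the entire content of the lemma is the vanishing $\mathcal{E}(\mathcal{A}, \mathcal{B}) = 0$, which makes the restriction of the ``new'' generators $\Yoneda{B}$ trivial and the restriction of the ``old'' generators $\Yoneda{A}$ tautologically perfect.
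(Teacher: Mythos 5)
Your proof is correct, but it takes a genuinely different route from the paper's one-line argument. The paper observes that $a_* = \res^{\mathcal{E}}_{\mathcal{A}}$ has a right adjoint $D(\mathcal{A}) \ra D(\mathcal{E})$, $U \mapsto \mathovalbox{\xymatrix{ {0} & {U} \ar[l] }}$ (coinduction, which collapses to this simple form precisely because $\mathcal{E}(\mathcal{A},\mathcal{B})=0$), and then invokes the standard abstract fact that a functor whose right adjoint preserves all coproducts preserves compact objects. Your proof instead works with the generator characterization $\per(\mathcal{E}) = D(\mathcal{E})^c$: you check directly that $a_*(\Yoneda{A}) = \Yoneda{A}$ and $a_*(\Yoneda{B}) = 0$ for $A \in \mathcal{A}$, $B \in \mathcal{B}$, and then use thickness of $\per(\mathcal{A})$. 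Both proofs are short and both hinge on the same vanishing $\mathcal{E}(\mathcal{A},\mathcal{B})=0$; the paper's version packages it as ``the right adjoint is evaluation at $\mathcal{A}$ with zero over $\mathcal{B}$, hence coproduct-preserving,'' whereas yours makes the role of that vanishing explicit on the level of generators, which is arguably a bit more transparent at the cost of repeating the Neeman-style bookkeeping about thick subcategories that the adjoint criterion hides.
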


\begin{proof}
  This functor has a right adjoint 
  $D(\mathcal{A}) \ra D(\mathcal{E}),$ defined by
  $U \mapsto 
    \mathovalbox{
      \xymatrix{
        {0} &
        {U} \ar[l]
      }
    },$
  which preserves all coproducts. This implies the statement.
\end{proof}

The inclusion $b:\mathcal{B} \subset \mathcal{E}$ defines the dg
$K$-functor
$\pro_\mathcal{B}^\mathcal{E}:= (? \otimes_\mathcal{B} \mathcal{E}): 
\calMod(\mathcal{B}) \ra \calMod(\mathcal{E}),$ given by
\begin{equation}
  \label{eq:production-BE}
  \pro_\mathcal{B}^\mathcal{E} (V)
  =
  \mathovalbox{
    \xymatrix{
      {V} &
      {0} \ar[l]
    }
  }.
\end{equation}
It preserves acyclics and descends to a triangulated functor
$b^*:=\pro_\mathcal{B}^\mathcal{E}: D(\mathcal{B}) \ra D(\mathcal{E}).$
This functor has the right adjoint functor
$b_*:=\res^\mathcal{E}_\mathcal{B}:D(\mathcal{E}) \ra D(\mathcal{B}),$
mapping $S$ as above to $S_\mathcal{B}.$

\begin{lemma}
  \label{l:prod-from-source-cpt-implies-cpt}
  Let $V \in D(\mathcal{B}).$ If 
  $b^*(V)=\pro_\mathcal{B}^\mathcal{E} (V)$ is compact in
  $D(\mathcal{E}),$ then $V$ is compact in $D(\mathcal{B}).$
\end{lemma}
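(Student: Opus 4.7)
The plan is to exploit the fact that the unit of the adjunction $(b^*, b_*)\colon D(\mathcal{B}) \rightleftarrows D(\mathcal{E})$ is an isomorphism; once this is in hand, compactness of $V$ follows by a routine diagram chase. Both $\pro_\mathcal{B}^\mathcal{E}$ and $\res^\mathcal{E}_\mathcal{B}$ preserve acyclics (the former transparently from~\eqref{eq:production-BE}, the latter because it is just restriction), so the chain-level adjunction descends verbatim to the derived adjunction.

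To see that the unit is an isomorphism, I invoke~\eqref{eq:production-BE}: one has $b^*V = \mathovalbox{\xymatrix{V & 0 \ar[l]}}$, so $b_*b^*V = V$ on the nose and the unit $V \to b_*b^*V$ is $\id_V$. Furthermore $b^*$ commutes with coproducts, either because it is a left adjoint, or directly because coproducts of triples $(0, V_i, 0)$ are computed componentwise.

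With these two facts in place, the compactness of $V$ drops out. Given any family $(W_i)_{i\in I}$ in $D(\mathcal{B})$, one chains
\begin{align*}
  \Hom_{D(\mathcal{B})}(V, \coprod_i W_i)
  &\cong \Hom_{D(\mathcal{B})}(V, b_*b^* \coprod_i W_i) \\
  &\cong \Hom_{D(\mathcal{E})}(b^*V, b^* \coprod_i W_i) \\
  &\cong \Hom_{D(\mathcal{E})}(b^*V, \coprod_i b^* W_i) \\
  &\cong \coprod_i \Hom_{D(\mathcal{E})}(b^*V, b^*W_i) \\
  &\cong \coprod_i \Hom_{D(\mathcal{B})}(V, b_*b^*W_i) \\
  &\cong \coprod_i \Hom_{D(\mathcal{B})}(V, W_i),
\end{align*}
using in turn the unit isomorphism, the adjunction, coproduct-preservation of $b^*$, compactness of $b^*V$, the adjunction again, and the unit.

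There is essentially no real obstacle: the whole argument is formal once one notices that $b^*$ is the \emph{underived} extension of scalars $\pro_\mathcal{B}^\mathcal{E}$ and that~\eqref{eq:production-BE} presents $b^*V$ in the particularly simple form $(0, V, 0)$. The only conceptual input is the observation that the unit of $(b^*, b_*)$ is the identity, so that one can cleanly transport compactness from $D(\mathcal{E})$ down to $D(\mathcal{B})$.
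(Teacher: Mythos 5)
Your proof is correct and follows the same route as the paper: observe that the unit $\id \to b_*b^*$ is an isomorphism (hence $b^*$ is fully faithful), combine this with the fact that $b^*$ preserves coproducts and that $b^*V$ is compact, and conclude by the standard chain of adjunction isomorphisms. You have simply written out the diagram chase that the paper leaves implicit in the words ``this implies the statement.''
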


\begin{proof}
  Obviously $b^*: D(\mathcal{B}) \ra D(\mathcal{E})$
  commutes with all coproducts. The unit $\id \ra b_* b^*$ of the
  adjunction is an isomorphism, so $b^*$ is fully faithful.
  This implies that statement.
\end{proof}

\subsection{Smoothness of directed dg \texorpdfstring{$K$}{K}-categories}
\label{sec:smoothn-direct-dg}

The following is the quiver picture of
$\mathcal{E}\otimes_K \mathcal{E}^\opp,$ where we additionally have
drawn the quiver of $\mathcal{E}$ on the horizontal axis and that of
$\mathcal{E}^\opp$ on the vertical axis 
(note that the $\mathcal{B} \otimes_K
\mathcal{A}$-module $N$ can be viewed as a $\mathcal{A}^\opp\otimes_K
\mathcal{B}$-module $N^\opp$):
\begin{equation}
  \label{eq:quiver-of-E-Eopp}
  \xymatrix@=0.4cm{
    {\mathcal{B}^\opp} &&
    {\mathcal{B} \otimes_K \mathcal{B}^\opp} 
    \ar[rr]^-{N \otimes_K \mathcal{B}^\opp} && 
    {\mathcal{A} \otimes_K \mathcal{B}^\opp} \\ \\
    {\mathcal{A}^\opp} \ar[uu]^-{N^\opp} &&
    {\mathcal{B} \otimes_K \mathcal{A}^\opp} 
    \ar[uu]^-{\mathcal{B} \otimes_K N^\opp}
    \ar[rruu]|-{N \otimes_K N^\opp}
    \ar[rr]_-{N \otimes_K \mathcal{A}^\opp} && 
    {\mathcal{A} \otimes_K \mathcal{A}^\opp} 
    \ar[uu]_-{\mathcal{A} \otimes_K N^\opp}
    \\
    &&
    {\mathcal{B}}
    \ar[rr]^{N} &&
    {\mathcal{A}}
  }
\end{equation}
We can describe 
dg $\mathcal{E}\otimes_K \mathcal{E}^\opp$-modules in a
similar way as explained above for dg $\mathcal{E}$-modules:
Let $M$ be such a module (we always view it
implicitly as a bimodule). Restriction along the four morphisms of dg
$K$-categories 
$\mathcal{B} \otimes_K \mathcal{B}^\opp \ra \mathcal{E} \otimes_K \mathcal{E}^\opp,$
$\mathcal{A} \otimes_K \mathcal{B}^\opp \ra \mathcal{E} \otimes_K
\mathcal{E}^\opp,$
$\mathcal{B} \otimes_K \mathcal{A}^\opp \ra \mathcal{E} \otimes_K \mathcal{E}^\opp,$
$\mathcal{A} \otimes_K \mathcal{A}^\opp \ra \mathcal{E} \otimes_K \mathcal{E}^\opp$
gives rise to the dg modules
\begin{equation*}
  \leftidx{_\mathcal{B}}{M}{_\mathcal{B}}:=
  \leftidx{_\mathcal{B}^\mathcal{E}}{\res}{_\mathcal{B}^\mathcal{E}}(M), \quad
  \leftidx{_\mathcal{B}}{M}{_\mathcal{A}}:=
  \leftidx{_\mathcal{B}^\mathcal{E}}{\res}{_\mathcal{A}^\mathcal{E}}(M), \quad
  \leftidx{_\mathcal{A}}{M}{_\mathcal{B}}:=
  \leftidx{_\mathcal{A}^\mathcal{E}}{\res}{_\mathcal{B}^\mathcal{E}}(M), \quad
  \leftidx{_\mathcal{A}}{M}{_\mathcal{A}}:=
  \leftidx{_\mathcal{A}^\mathcal{E}}{\res}{_\mathcal{A}^\mathcal{E}}(M).
\end{equation*}
Furthermore the action morphisms of $N$ from the right and from
the left give rise to closed degree zero morphisms
\begin{align*}
  \leftidx{_\mathcal{B}}{\theta}{_{\mathcal{A}\mathcal{B}}}:(\leftidx{_\mathcal{B}}{M}{_\mathcal{A}})
  \otimes_\mathcal{A} N & \ra \leftidx{_\mathcal{B}}{M}{_\mathcal{B}},&
  \leftidx{_\mathcal{A}}{\theta}{_{\mathcal{A}\mathcal{B}}}:(\leftidx{_\mathcal{A}}{M}{_\mathcal{A}})
  \otimes_\mathcal{A} N & \ra \leftidx{_\mathcal{A}}{M}{_\mathcal{B}},\\
  \leftidx{_{\mathcal{B}\mathcal{A}}}{\theta}{_{\mathcal{A}}}:N\otimes_\mathcal{B} (\leftidx{_\mathcal{B}}{M}{_\mathcal{A}})
  & \ra \leftidx{_\mathcal{A}}{M}{_\mathcal{A}},&
  \leftidx{_{\mathcal{B}\mathcal{A}}}{\theta}{_{\mathcal{B}}}:N\otimes_\mathcal{B} (\leftidx{_\mathcal{B}}{M}{_\mathcal{B}})
  & \ra \leftidx{_\mathcal{A}}{M}{_\mathcal{B}}
\end{align*}
of suitable dg modules, for example the last morphism is a morphism
in $C(\mathcal{B} \otimes_K \mathcal{A}^\opp).$
These morphisms fit in the commutative
diagram (since the $N$-left action and the $N$-right action commute)
\begin{equation}
  \label{eq:theta-compatibility-E-bimodule}
  \xymatrix{
    {N \otimes_\mathcal{B} (\leftidx{_\mathcal{B}}{M}{_\mathcal{B}})}
    \ar[d]^-{\leftidx{_{\mathcal{B}\mathcal{A}}}{\theta}{_{\mathcal{B}}}}
    & &
    {N \otimes_\mathcal{B} (\leftidx{_\mathcal{B}}{M}{_\mathcal{A}})
      \otimes_\mathcal{A} N}
    \ar[ll]_-{\id\otimes (\leftidx{_\mathcal{B}}{\theta}{_{\mathcal{A}\mathcal{B}}})}
    \ar[d]^-{(\leftidx{_{\mathcal{B}\mathcal{A}}}{\theta}{_{\mathcal{A}}})\otimes \id}
    \\
    {\leftidx{_\mathcal{A}}{M}{_\mathcal{B}}}
    & &
    {(\leftidx{_\mathcal{A}}{M}{_\mathcal{A}}) \otimes_\mathcal{A} N.}
    \ar[ll]_-{\leftidx{_\mathcal{A}}{\theta}{_{\mathcal{A}\mathcal{B}}}}
  }
\end{equation}
We conclude that a dg $\mathcal{E}\otimes_\mathcal{E}^\opp$-module $M$
is (equivalent to) the datum 
\begin{equation*}
  \leftidx{_\mathcal{B}}{M}{_\mathcal{A}},
  \leftidx{_\mathcal{A}}{M}{_\mathcal{A}},
  \leftidx{_\mathcal{B}}{M}{_\mathcal{B}},
  \leftidx{_\mathcal{A}}{M}{_\mathcal{B}},
  \leftidx{_\mathcal{B}}{\theta}{_{\mathcal{A}\mathcal{B}}},
  \leftidx{_\mathcal{A}}{\theta}{_{\mathcal{A}\mathcal{B}}},
  \leftidx{_{\mathcal{B}\mathcal{A}}}{\theta}{_{\mathcal{A}}},
  \leftidx{_{\mathcal{B}\mathcal{A}}}{\theta}{_{\mathcal{B}}},
\end{equation*}
of dg bimodules and closed degree zero morphisms as above such that
\eqref{eq:theta-compatibility-E-bimodule} commutes.
It is convenient to describe such a bimodule $M$ symbolically by the diagram
\begin{equation*}
  \mathovalbox{
    \xymatrix{
      {\leftidx{_\mathcal{B}}{M}{_\mathcal{B}}}
      \ar[d]^-{\leftidx{_{\mathcal{B}\mathcal{A}}}{\theta}{_{\mathcal{B}}}}
      &
      {\leftidx{_\mathcal{B}}{M}{_\mathcal{A}}}
      \ar[l]_-{\leftidx{_\mathcal{B}}{\theta}{_{\mathcal{A}\mathcal{B}}}}
      \ar[d]^-{\leftidx{_{\mathcal{B}\mathcal{A}}}{\theta}{_{\mathcal{A}}}}
      \\
      {\leftidx{_\mathcal{A}}{M}{_\mathcal{B}}}
      &
      {\leftidx{_\mathcal{A}}{M}{_\mathcal{A}}.}
      \ar[l]_-{\leftidx{_\mathcal{A}}{\theta}{_{\mathcal{A}\mathcal{B}}}}
    }
  }
\end{equation*}

The diagonal bimodule $\mathcal{E}$ (cf.\ \eqref{eq:diagonal-module})
is given by the diagram
\begin{equation}
  \label{eq:E-diagonal}
  \mathcal{E} = \mathovalbox{
    \xymatrix@=\myxysize{
      {\mathcal{B}} \ar[d]^\id
      &
      {0} \ar[l] \ar[d] 
      \\
      {N}
      &
      {\mathcal{A}.} \ar[l]_\id 
    }
  }
\end{equation}
where we identify $\mathcal{A} \otimes_{\mathcal{A}} N=N$ and
$N \otimes_{\mathcal{B}} \mathcal{B}=N.$

We described in \eqref{eq:production-BE}
an extension of scalars functor. Similarly, we have
extension of scalars functors for bimodules, e.\,g.\ the morphism
$\mathcal{A}\otimes_K \mathcal{A}^\opp \ra \mathcal{E} \otimes_K
\mathcal{E}^\opp$ induces the extensions of scalars functor
\begin{equation*}
  \leftidx{_\mathcal{A}^{\mathcal{E}}}{\pro}{_\mathcal{A}^\mathcal{E}}:
  \calMod(\mathcal{A}\otimes_K \mathcal{A}^\opp) 
  \ra
  \calMod(\mathcal{E}\otimes_K \mathcal{E}^\opp).
\end{equation*}
A computation shows that it maps a dg $\mathcal{A} \otimes_K
\mathcal{A}^\opp$-module $X$
to
\begin{equation}
  \label{eq:prod-AA-EE}
  (\leftidx{_\mathcal{A}^{\mathcal{E}}}{\pro}{_\mathcal{A}^\mathcal{E}})(X) =
  \mathovalbox{
    \xymatrix@=\myxysize{
      {0} \ar[d] 
      &
      {0} \ar[l] \ar[d]
      \\
      {X\otimes_\mathcal{A} N}
      &
      {X.} \ar[l]_-{\id}
    }
  }
\end{equation}
In particular we can apply this to the diagonal
$\mathcal{A}\otimes_K \mathcal{A}^\opp$-module $\mathcal{A}$ and obtain
\begin{equation}
  \label{eq:prod-AA-EE-A}
  (\leftidx{_\mathcal{A}^{\mathcal{E}}}{\pro}{_\mathcal{A}^\mathcal{E}})(\mathcal{A}) =
  \mathovalbox{
    \xymatrix@=\myxysize{
      {0} \ar[d] &
      {0} \ar[l] \ar[d]\\
      {N}
      &
      {\mathcal{A}.} \ar[l]_-\id 
    }
  }
\end{equation}

Similarly, we can induce along $\mathcal{B} \otimes_K
\mathcal{B}^\opp \ra \mathcal{E} \otimes_K \mathcal{E}^\opp$ and
obtain for the diagonal bimodule $\mathcal{B}$ that
\begin{equation}
  \label{eq:prod-BB-EE-B}
  (\leftidx{_\mathcal{B}^{\mathcal{E}}}{\pro}{_\mathcal{B}^\mathcal{E}})(\mathcal{B}) =
  \mathovalbox{
    \xymatrix@=\myxysize{
      {\mathcal{B}} \ar[d]^-\id 
      &
      {0} \ar[l] \ar[d] 
      \\
      {N}
      &
      {0.} \ar[l] 
    }
  }
\end{equation}

Similarly, extension of scalars along
$\mathcal{B} \otimes_K \mathcal{A}^\opp \ra \mathcal{E} \otimes_K
\mathcal{E}^\opp$ applied to $N$ gives
\begin{equation}
  \label{eq:prod-BA-EE-N}
  (\leftidx{_\mathcal{A}^{\mathcal{E}}}{\pro}{_\mathcal{B}^\mathcal{E}})(N) =
  \mathovalbox{
    \xymatrix@=\myxysize{
      {0} \ar[d] &
      {0} \ar[l] \ar[d] \\
      {N}
      &
      {0.} \ar[l] 
    }
  }
\end{equation}

Now we can prove the following generalization and strengthening of
\cite[Prop.~3.11]{lunts-categorical-resolution}.

\begin{theorem}
  \label{t:diagonal-smooth-bimod-smooth-iff-extension-smooth}
  Let $\mathcal{A}$ and $\mathcal{B}$ be dg $K$-categories, let $N$
  be a dg $\mathcal{B}\otimes_K \mathcal{A}^\opp$-module, and let
  \begin{equation*}
    \mathcal{E}=
    \begin{bmatrix}
      {\mathcal{B}} & {0}\\
      {N} & {\mathcal{A}.}
    \end{bmatrix}
  \end{equation*}
  The following conditions are equivalent:
  \begin{enumerate}[label=(E{\arabic*})]
  \item 
    \label{enum:diagonal-smooth-bimod-cpt}
    $\mathcal{A}$ and $\mathcal{B}$ are $K$-smooth and $N$ is
    $K$-\sweet{} (see Definition~\ref{d:sweet}).
  \item 
    \label{enum:extension-smooth}
    $\mathcal{E}$ is $K$-smooth.
  \end{enumerate}
\end{theorem}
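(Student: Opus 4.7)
The plan is to establish a Mayer--Vietoris type short exact sequence relating the diagonal $\mathcal{E}$-bimodule to extensions of the diagonal $\mathcal{A}$-, $\mathcal{B}$-bimodules and of $N$, then to play off extension of scalars preserving compacts (for \ref{enum:diagonal-smooth-bimod-cpt}$\Rightarrow$\ref{enum:extension-smooth}) against suitable restrictions preserving compacts (for the converse). First I would fix cofibrant resolutions $\tildew{\mathcal{A}}\to\mathcal{A}$, $\tildew{\mathcal{B}}\to\mathcal{B}$ in $\dgcat_K$ together with a cofibrant resolution $\tildew{N}\to N$ in $C(\tildew{\mathcal{B}}\otimes_K \tildew{\mathcal{A}}^\opp)$, assemble them into $\tildew{\mathcal{E}}=\bigl[\begin{smallmatrix}\tildew{\mathcal{B}}&0\\ \tildew{N}&\tildew{\mathcal{A}}\end{smallmatrix}\bigr]$, and check that $\tildew{\mathcal{E}}\to\mathcal{E}$ is a $K$-h-flat resolution; by Lemmas~\ref{l:tfae-smoothness} and~\ref{l:tfae-bimodule-smoothness} both sides of the equivalence may be tested using the tilded objects. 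Using the explicit descriptions in \eqref{eq:prod-AA-EE-A}, \eqref{eq:prod-BB-EE-B}, \eqref{eq:prod-BA-EE-N} and \eqref{eq:E-diagonal}, one verifies quadrant-by-quadrant that
\[
0 \to \pro(\tildew{N}) \to \pro(\tildew{\mathcal{A}})\oplus\pro(\tildew{\mathcal{B}}) \to \tildew{\mathcal{E}} \to 0
\]
is a short exact sequence in $C(\tildew{\mathcal{E}}\otimes_K\tildew{\mathcal{E}}^\opp)$: at $(B,B)$ and $(A,A)$ one of the middle summands vanishes and the sequence is trivially exact, at $(A,B)$ everything is zero, and at $(B,A)$ it reads $\tildew{N}\xrightarrow{(1,-1)}\tildew{N}\oplus\tildew{N}\xrightarrow{+}\tildew{N}$. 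This gives a distinguished triangle in $D(\tildew{\mathcal{E}}\otimes_K\tildew{\mathcal{E}}^\opp)$.

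For \ref{enum:diagonal-smooth-bimod-cpt}$\Rightarrow$\ref{enum:extension-smooth}, each of the three relevant extensions of scalars $L\pro$ preserves compact objects since its right adjoint is the corresponding restriction and hence preserves coproducts. The cofibrancy set-up in the previous paragraph together with Proposition~\ref{p:cofibrant-bimodule-remains-cofibrant-if-insert-cofib-cat} ensures that the classical $\pro$ agrees with $L\pro$ on the diagonal bimodules and on $\tildew{N}$; hence the hypothesis places the three outer terms of the triangle in $\per(\tildew{\mathcal{E}}\otimes_K\tildew{\mathcal{E}}^\opp)$, and thickness of $\per$ under triangles forces the fourth term $\tildew{\mathcal{E}}$ to lie there as well.

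For the converse, I would first show that $\mathcal{A}$ is smooth via a bimodule analogue of Lemma~\ref{l:restriction-to-target-of-N-preserves-compacts}: the restriction $D(\tildew{\mathcal{E}}\otimes_K\tildew{\mathcal{E}}^\opp)\to D(\tildew{\mathcal{A}}\otimes_K\tildew{\mathcal{A}}^\opp)$ sending $M$ to $\leftidx{_\mathcal{A}}M_\mathcal{A}$ has right adjoint the explicit extension-by-zero $X\mapsto(0,0,0,X)$ (placed in the bottom-right quadrant, all $\theta$ maps zero), which preserves coproducts; applied to the diagonal $\tildew{\mathcal{E}}$-bimodule it yields the diagonal $\tildew{\mathcal{A}}$-bimodule. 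Smoothness of $\mathcal{B}$ is obtained by running the same argument on the opposite $\mathcal{E}^\opp=\bigl[\begin{smallmatrix}\mathcal{A}^\opp&0\\ N^\opp&\mathcal{B}^\opp\end{smallmatrix}\bigr]$, in which $\mathcal{B}^\opp$ now occupies the target position, invoking Remark~\ref{rem:naive-smooth-and-opposite} to pass back and forth between $\mathcal{E}$ and $\mathcal{E}^\opp$. With smoothness of $\mathcal{A}$ and $\mathcal{B}$ in hand, compactness of $\tildew{\mathcal{E}}$ together with the triangle forces $\pro(\tildew{N})\in\per(\tildew{\mathcal{E}}\otimes_K\tildew{\mathcal{E}}^\opp)$. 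The functor $L\pro:D(\tildew{\mathcal{B}}\otimes_K\tildew{\mathcal{A}}^\opp)\to D(\tildew{\mathcal{E}}\otimes_K\tildew{\mathcal{E}}^\opp)$ is fully faithful, as one checks on the representables $\Yoneda{(B_0,A_0)}$ that $\leftidx{_\mathcal{A}}{L\pro(\Yoneda{(B_0,A_0)})}{_\mathcal{B}}\cong\Yoneda{(B_0,A_0)}$, and the argument of Lemma~\ref{l:prod-from-source-cpt-implies-cpt} then descends compactness of $L\pro(\tildew{N})=\pro(\tildew{N})$ to compactness of $\tildew{N}$ in $D(\tildew{\mathcal{B}}\otimes_K\tildew{\mathcal{A}}^\opp)$, so $N$ is $K$-sweet.

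The main technical obstacle is the h-flatness bookkeeping required to identify the classical $\pro$ terms appearing in the Mayer--Vietoris sequence with the corresponding derived $L\pro$ terms to which the preservation-of-compacts arguments apply; once that comparison is controlled, the rest of the proof is adjoint chasing and the opposite-category reindexing for $\mathcal{B}$.
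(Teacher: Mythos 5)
Your reduction to the cofibrant case, the Mayer--Vietoris short exact sequence of bimodules, the direction~\ref{enum:diagonal-smooth-bimod-cpt}$\Rightarrow$\ref{enum:extension-smooth}, and the extraction of $N$ via Lemma~\ref{l:prod-from-source-cpt-implies-cpt} all match the paper's argument closely. The gap is in the step showing that~\ref{enum:extension-smooth} implies $\mathcal{A}$ (and $\mathcal{B}$) is $K$-smooth.

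You claim that the restriction $\res\colon D(\tildew{\mathcal{E}}\otimes_K\tildew{\mathcal{E}}^\opp)\to D(\tildew{\mathcal{A}}\otimes_K\tildew{\mathcal{A}}^\opp)$ has right adjoint the naive extension by zero $X\mapsto(0,0,0,X)$. This is false, because $\mathcal{A}\otimes_K\mathcal{A}^\opp$ is not a sink in $\mathcal{E}\otimes_K\mathcal{E}^\opp$: the quiver picture~\eqref{eq:quiver-of-E-Eopp} has an arrow $\mathcal{A}\otimes_K N^\opp$ out of $\mathcal{A}\otimes_K\mathcal{A}^\opp$ into $\mathcal{A}\otimes_K\mathcal{B}^\opp$. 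Concretely, for a morphism $M\to(0,0,0,X)$ of $\mathcal{E}\otimes_K\mathcal{E}^\opp$-modules, compatibility with $\leftidx{_{\mathcal{B}\mathcal{A}}}{\theta}{_{\mathcal{A}}}\colon N\otimes_\mathcal{B}(\leftidx{_\mathcal{B}}{M}{_\mathcal{A}})\to\leftidx{_\mathcal{A}}{M}{_\mathcal{A}}$ forces the composite $N\otimes_\mathcal{B}(\leftidx{_\mathcal{B}}{M}{_\mathcal{A}})\to\leftidx{_\mathcal{A}}{M}{_\mathcal{A}}\to X$ to vanish, a genuine constraint whenever $\leftidx{_\mathcal{B}}{M}{_\mathcal{A}}\neq 0$. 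So your candidate functor computes $\Hom$ only against a proper subspace and is not the right adjoint. In fact the restriction to $\mathcal{A}\otimes_K\mathcal{A}^\opp$ does not preserve compacts in general: $\Yoneda{(A_0,B_1)}$ is compact in $D(\mathcal{E}\otimes_K\mathcal{E}^\opp)$, but its restriction is $\Yoneda{A_0}\otimes_K N(B_1,?)$, which fails to be compact already for $\mathcal{A}=\mathcal{B}=K=k$ and $N$ an infinite-dimensional vector space. The same obstruction hits your opposite-category argument for $\mathcal{B}$.

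The paper sidesteps this by a two-step descent. The unique sink in $\mathcal{E}\otimes_K\mathcal{E}^\opp$ is $\mathcal{F}=\mathcal{A}\otimes_K\mathcal{B}^\opp$, and one sets $\mathcal{U}$ to be its complement. Since $\leftidx{_\mathcal{B}}{\mathcal{E}}{_\mathcal{A}}=0$, the diagonal bimodule satisfies $\mathcal{E}=u^*u_*\mathcal{E}$ where $u^*=\pro_\mathcal{U}^{\mathcal{E}\otimes_K\mathcal{E}^\opp}$ is the fully faithful extension from the source $\mathcal{U}$; Lemma~\ref{l:prod-from-source-cpt-implies-cpt} then yields that $u_*\mathcal{E}$ is compact in $D(\mathcal{U})$. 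Inside $\mathcal{U}$, both $\mathcal{A}\otimes_K\mathcal{A}^\opp$ and $\mathcal{B}\otimes_K\mathcal{B}^\opp$ \emph{are} sinks (the only outgoing arrows landed in $\mathcal{F}$, now removed), so Lemma~\ref{l:restriction-to-target-of-N-preserves-compacts} applies to the restrictions from $\mathcal{U}$ and gives compactness of the diagonal $\mathcal{A}$- and $\mathcal{B}$-bimodules. Inserting this intermediate passage through $\mathcal{U}$ repairs your argument; the rest of your proof is sound.
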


\begin{remark}
  \label{rem:dg-algebras-and-bimodule-versus-general-setting}
  If $A$ and $B$ are dg $K$-algebras and $N$ is a dg $B \otimes_K
  A^\opp$-module, then we can view $A$ and $B$ as dg $K$-categories
  and form the dg $K$-category $\mathcal{E}= 
  \Big[\begin{smallmatrix}
    B & 0\\
    N & A
  \end{smallmatrix}\Big]
  $
  with two objects as above.
  On the other hand we can consider the obvious dg $K$-algebra $E:=
  \Big[\begin{smallmatrix}
    B & 0\\
    N & A
  \end{smallmatrix}\Big]
  .$
  Then the obvious dg $K$-equivalence $\calMod(E) \sira
  \calMod(\mathcal{E})$ is given by an $\mathcal{E} \otimes_K
  E^\opp$-bimodule, and hence $E$ and $\mathcal{E}$ are dg Morita
  equivalent.
  In particular 
  $E$ is $K$-smooth if and only if $\mathcal{E}$ is $K$-smooth
  (Theorem~\ref{t:smoothness-preserved-by-dg-Morita-equivalence}).

  This remark obviously generalizes to dg K-categories/algebras of the
  form
  $
  \Big[\begin{smallmatrix}
    B & M\\
    N & A
  \end{smallmatrix}\Big]
  ,$ 
  where $M$ is a dg $A \otimes_K B^\opp$-module, and also to bigger
  matrix categories/algebras.
\end{remark}

\begin{example}
  \label{ex:kVk-smooth-iff-V-findim}
  Assume that $\groundring$ is a field,
  and let $V$ be a (dg) $\groundring$-module. Then 
  the
  dg $\groundring$-algebra
  $
  \Big[\begin{smallmatrix}
    \groundring & 0\\
    V & \groundring
  \end{smallmatrix}\Big]
  $
  is $\groundring$-smooth if and only if $V$ is finite dimensional.
\end{example}

\begin{example}
  \label{ex:C-on-diagonal-not-smooth}
  Consider $\DC[X]$ as a dg $\DC$-algebra with $X$ of positive
  even degree.
  Then the dg $\DC[X]$-algebra
  $
  \Big[\begin{smallmatrix}
    \DC & 0\\
    \DC & \DC[X]
  \end{smallmatrix}\Big]
  $
  is not $\DC[X]$-smooth since $\DC$ is not $\DC[X]$-smooth, see
  Example~\ref{ex:C-and-CX-and-smoothness} \ref{enum:CXmodXn-not-CX-smooth}.
\end{example}

\begin{proof}
  \textbf{Step 1:}
  Reduction to the case that $\mathcal{A}$ and $\mathcal{B}$ are
  cofibrant dg $K$-categories and that
  $N$ is a cofibrant dg $\mathcal{A} \otimes_K
  \mathcal{B}^\opp$-module.
  
  Let ${\tildew{\mathcal{A}}} \ra \mathcal{A}$ and ${\tildew{\mathcal{B}}}
  \ra \mathcal{B}$ be cofibrant resolutions 
  and let
  ${\tildew{N}} \ra 
  \leftidx{_{{\tildew{\mathcal{A}}}}}{N}{_{{\tildew{\mathcal{B}}}}}$ 
  be a cofibrant resolution (in $C({\tildew{\mathcal{B}}}\otimes_K
  {\tildew{\mathcal{A}}}^\opp)$) of the restriction 
  $\leftidx{_{{\tildew{\mathcal{A}}}}}{N}{_{{\tildew{\mathcal{B}}}}}:=
  (\leftidx{^\mathcal{A}_{{\tildew{\mathcal{A}}}}}{\res}{^\mathcal{B}_{{\tildew{\mathcal{B}}}}})(N)$ 
  of $N.$
  Note that ${\tildew{N}} \sira
  \leftidx{_{{\tildew{\mathcal{A}}}}}{N}{_{{\tildew{\mathcal{B}}}}}$
  in
  $D({\tildew{\mathcal{B}}} \otimes_K
    {\tildew{\mathcal{A}}}^\opp).$
  Then \ref{enum:diagonal-smooth-bimod-cpt} is by definition
  equivalent to 
  \begin{equation*}
    \text{${\tildew{\mathcal{A}}} \in \per({\tildew{\mathcal{A}}}
      \otimes_K {\tildew{\mathcal{A}}}^\opp),$ 
    ${\tildew{\mathcal{B}}} \in \per({\tildew{\mathcal{B}}} \otimes_K
    {\tildew{\mathcal{B}}}^\opp),$ 
    and 
    ${\tildew{N}} \in \per({\tildew{\mathcal{B}}} \otimes_K
    {\tildew{\mathcal{A}}}^\opp),$}
  \end{equation*}
  which is equivalent to $\tildew{\mathcal{A}},$ $\tildew{\mathcal{B}}$
  being $K$-smooth and $\tildew{N}$ being $K$-\sweet{}.

  Let 
  \begin{equation*}
    \mathcal{E}'=
    \begin{bmatrix}
      {{\tildew{\mathcal{B}}}} & 0\\
      {{\tildew{N}}} & {{\tildew{\mathcal{A}}}}
    \end{bmatrix}.
  \end{equation*}
  The obvious morphism $\mathcal{E}' \ra \mathcal{E}$ of dg
  $K$-categories is a 
  trivial fibration 
  (check the conditions \ref{enum:surj-on-homs-surj-qiso} and
  \ref{enum:surj-epi-on-sets-of-objects} using 
  the description of the trivial fibrations in
  Theorem~\ref{t:CA-cofib-gen-model-cat}), and even a
  $K$-h-flat resolution since $\mathcal{E}'$ is $K$-h-flat 
  (use Lemma~\ref{l:cofibrant-dg-K-cat-has-cofibrant-morphism-spaces} and
  Proposition~\ref{p:cofibrant-bimodule-remains-cofibrant-if-insert-cofib-cat},
  part~
  \ref{enum:cofibrant-bimodule-remains-cofibrant-if-evaluate}).
  Hence 
  \ref{enum:extension-smooth}
  is by definition equivalent to
  $\mathcal{E}' \in \per(\mathcal{E}' \otimes_K \mathcal{E}'^\opp),$
  i.\,e.\ to $K$-smoothness of $\mathcal{E}'.$

  \textbf{Step 2:}
  Assume $\mathcal{A}$ and $\mathcal{B}$ are cofibrant dg $K$-categories and that
  $N$ is a cofibrant dg $\mathcal{A} \otimes_K
  \mathcal{B}^\opp$-module. Let
  $\mathcal{E} = \tzmat{\mathcal{B}}{0}{N}{\mathcal{A}}.$
  By Step 1 it is enough to prove that \ref{enum:diagonal-smooth-bimod-cpt}
  and \ref{enum:extension-smooth} are equivalent under these additional
  assumptions.

  Since $N$ is a cofibrant dg $\mathcal{B} \otimes_K \mathcal{A}^\opp$-module,
  the functor
  $\leftidx{_\mathcal{A}^{\mathcal{E}}}{\pro}{_\mathcal{A}^\mathcal{E}}$
  preserves acyclics 
  (use \eqref{eq:prod-AA-EE} and the fact that $N$ is
  $\mathcal{A}^\opp$-h-flat
  by
  Prop.~\ref{p:cofibrant-bimodule-remains-cofibrant-if-insert-cofib-cat},
  part~\ref{enum:cofibrant-bimodule-remains-cofibrant-if-partially-evaluate}
  (and Lemma~\ref{l:cofibrant-dg-K-cat-has-cofibrant-morphism-spaces}))
  and hence trivially descends (without taking cofibrant/choosing h-projective resolutions) 
  to a functor between the derived categories denoted by the same symbol.
  The analog statement holds for
  $\leftidx{_\mathcal{B}^{\mathcal{E}}}{\pro}{_\mathcal{B}^\mathcal{E}}$ (similar proof) and for
  $\leftidx{_\mathcal{A}^{\mathcal{E}}}{\pro}{_\mathcal{B}^\mathcal{E}}$ (obvious from
  the fact that it maps $Z$ to $\mathovalbox{\tzquadmat 00Z0},$ as in
  \eqref{eq:prod-BA-EE-N} for $Z=N$).
  These extension of scalars functors preserve compact
  objects since restriction, their right adjoints, obviously commute
  with all coproducts.

  The short exact sequence
  \begin{equation*}
    0 \ra
    \mathovalbox{
      \xymatrix@=\myxysize{
        {0} \ar[d] &
        {0} \ar[l] \ar[d]\\
        {N}
        &
        {0.} \ar[l]
      }
    }
    \xra{
      \begin{bmatrix}
        \mathovalbox{
          \tzquadmat 0010
        } \\
        \mathovalbox{\tzquadmat 00{-1}0} 
      \end{bmatrix}
    }
    \mathovalbox{
      \xymatrix@=\myxysize{
        {0} \ar[d] &
        {0} \ar[l] \ar[d]\\
        {N}
        &
        {\mathcal{A}.} \ar[l]_-\id 
      }
    }
    \oplus
    \mathovalbox{
      \xymatrix@=\myxysize{
        {\mathcal{B}} \ar[d]^-\id 
        &
        {0} \ar[l] \ar[d] 
        \\
        {N}
        &
        {0.} \ar[l] 
      }
    }
    \xra{
      \begin{bmatrix}
        \mathovalbox{\tzquadmat 0011}, &
        \mathovalbox{\tzquadmat 1010} 
      \end{bmatrix}
    }
    \mathovalbox{
      \xymatrix@=\myxysize{
        {\mathcal{B}} \ar[d]^-\id 
        &
        {0} \ar[l] \ar[d] 
        \\
        {N}
        &
        {\mathcal{A}.} \ar[l]_-{\id} 
      }
    }
    \ra
    0
  \end{equation*}
  in $C(\mathcal{E}\otimes_K \mathcal{E}^\opp)$
  gives rise 
  (cf.\ \eqref{eq:prod-AA-EE-A},
  \eqref{eq:prod-BB-EE-B},
  \eqref{eq:prod-BA-EE-N}, and
  \eqref{eq:E-diagonal}) to the triangle
  \begin{equation}
    \label{eq:prod-triangle-diagonal-E}
    (\leftidx{_\mathcal{A}^{\mathcal{E}}}{\pro}{_\mathcal{B}^\mathcal{E}})(N) 
    \ra
    (\leftidx{_\mathcal{A}^{\mathcal{E}}}{\pro}{_\mathcal{A}^\mathcal{E}})(\mathcal{A})\oplus
    (\leftidx{_\mathcal{B}^{\mathcal{E}}}{\pro}{_\mathcal{B}^\mathcal{E}})(\mathcal{B})
    \ra
    \mathcal{E} \ra 
    [1](\leftidx{_\mathcal{A}^{\mathcal{E}}}{\pro}{_\mathcal{B}^\mathcal{E}})_(N)
  \end{equation}
  in $D(\mathcal{E}\otimes_K \mathcal{E}^\opp).$

  If
  \ref{enum:diagonal-smooth-bimod-cpt}
  holds, the first two corners of the triangle
  \eqref{eq:prod-triangle-diagonal-E} are compact,
  hence the third
  corner $\mathcal{E}$ is compact. This proves
  \ref{enum:extension-smooth}.

  Conversely assume that
  \ref{enum:extension-smooth}
  holds, i.\,e.\ the diagonal bimodule $\mathcal{E}$ is compact in 
  $D(\mathcal{E}\otimes_K \mathcal{E}^\opp).$

  We first prove that $\mathcal{B}
  \in \per(\mathcal{B}\otimes_K \mathcal{B}^\opp).$
  Let $\mathcal{F}:= \mathcal{A} \otimes_K \mathcal{B}^\opp \subset
  \mathcal{E}\otimes_K \mathcal{E}^\opp$ be the full dg
  $K$-subcategory (the upper right corner in the quiver picture
  \eqref{eq:quiver-of-E-Eopp}), and let 
  $\mathcal{U}$ be its complement. Then there are no non-zero morphisms in
  $\mathcal{E}\otimes_K \mathcal{E}^\opp$ from $\mathcal{F}$ to
  $\mathcal{U},$ hence 
  we are in the situation of 
  Section~\ref{sec:extensions-dg-K-categoreis}, cf.\
  Remark~\ref{rem:directed-dg-K-category}.
  In particular we have the functor (cf.\ \eqref{eq:production-BE})
  \begin{equation*}
    u^*:= \pro_\mathcal{U}^{\mathcal{E} \otimes_K \mathcal{E}^\opp}:
    D(\mathcal{U}) \ra D(\mathcal{E}\otimes_K \mathcal{E}^\opp) 
  \end{equation*}
  and its right adjoint functor 
  $u_*=\res_\mathcal{U}^{\mathcal{E} \otimes_K \mathcal{E}^\opp}.$ 
  Since the diagonal bimodule $\mathcal{E}$ 
  (cf.\ \eqref{eq:E-diagonal}) "has support in $\mathcal{U}$" it
  satisfies
  $\mathcal{E}=u^*u_*\mathcal{E},$ and
  Lemma~\ref{l:prod-from-source-cpt-implies-cpt}
  shows that
  $u_*\mathcal{E} \in D(\mathcal{U})$ is compact.

  Now consider the full dg $K$-subcategory $i: \mathcal{B} \otimes_K
  \mathcal{B}^\opp \subset \mathcal{U}$ (the upper left corner in the quiver picture
  \eqref{eq:quiver-of-E-Eopp}).
  There are no non-zero morphisms from 
  $\mathcal{B} \otimes_K \mathcal{B}^\opp$ to
  its complement 
  in $\mathcal{U},$ hence we are again in the
  situation of
  Section~\ref{sec:extensions-dg-K-categoreis}, and have the functor
  $i_*:=\res^\mathcal{U}_{\mathcal{B} \otimes_K \mathcal{B}^\opp}:
  D(\mathcal{U}) \ra D(\mathcal{B}\otimes_K \mathcal{B}^\opp).$
  Since 
  $i_*u_*\mathcal{E} \in D(\mathcal{B}\otimes_K \mathcal{B}^\opp)$ is
  the diagonal bimodule $\mathcal{B}$ and compact by
  Lemma~\ref{l:restriction-to-target-of-N-preserves-compacts}
  we obtain $\mathcal{B} \in \per(\mathcal{B}\otimes_K \mathcal{B}^\opp).$

  Similarly, the inclusion $\mathcal{A} \otimes_K \mathcal{A}^\opp \hra
  \mathcal{U}$ yields 
  $\mathcal{A} \in \per(\mathcal{A}\otimes_K \mathcal{A}^\opp).$
  Then in the triangle \eqref{eq:prod-triangle-diagonal-E} the objects
  $(\leftidx{_\mathcal{A}^{\mathcal{E}}}{\pro}{_\mathcal{A}^\mathcal{E}})(\mathcal{A})
  \oplus
  (\leftidx{_\mathcal{B}^{\mathcal{E}}}{\pro}{_\mathcal{B}^\mathcal{E}})(\mathcal{B})$
  and $\mathcal{E}$ are compact, hence
  $(\leftidx{_\mathcal{A}^{\mathcal{E}}}{\pro}{_\mathcal{B}^\mathcal{E}})(N)$ is compact.
  If we denote the inclusion $\mathcal{B} \otimes_K \mathcal{A}^\opp \hra
  \mathcal{E}\otimes_K \mathcal{E}^\opp$ by $v$ we have
  $v^*=\leftidx{_\mathcal{A}^{\mathcal{E}}}{\pro}{_\mathcal{B}^\mathcal{E}}.$
  Hence compactness of $v^*(N)$ implies compactness of $N$ in
  $D(\mathcal{B}\otimes_K \mathcal{A}^\opp)$ by
  Lemma~\ref{l:prod-from-source-cpt-implies-cpt}, i.\,e.\
  $N \in \per(\mathcal{B}\otimes_K \mathcal{A}^\opp).$
  This proves \ref{enum:diagonal-smooth-bimod-cpt}.
\end{proof}

\subsection{Quillen adjunction for dg categories}
\label{sec:quillen-adjunction-dg-categories}

Let $R \ra S$ be a morphism of graded commutative dg algebras.
As in Section~\ref{sec:model-struct-categ} we can consider the
categories $\dgcat_R$ and $\dgcat_S$ of small dg $R$- and dg
$S$-categories.
There are obvious extension and restriction of scalars functors
\begin{equation*}
  \xymatrix{
    {\dgcat_R}
    \ar@/^/[rr]^{(? \otimes_R S)}
    &&
    {\dgcat_S}
    \ar@/^/[ll]^{\res_R^S}
  }
\end{equation*}
and we have an obvious adjunction $((? \otimes_R S), \res_R^S, \phi).$
We sometimes write
$\mathcal{A}_S:= \mathcal{A} \otimes_R S,$ if $\mathcal{A}$ is a dg
$R$-category.

\begin{proposition}
  \label{p:base-change-dg-cats-quillen-adj-equi}
  The adjunction $((? \otimes_R S), \res_R^S, \phi)$ is a Quillen adjunction
  (where $\dgcat_R$ and $\dgcat_S$ are equipped with the model
  structure of Theorem~\ref{t:dgcatK-cofib-gen-model-cat}).

  It is a Quillen equivalence if and only if $R \ra S$ is a
  quasi-isomorphism.
\end{proposition}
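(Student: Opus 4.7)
My plan is to split the proof into two parts: first verify that $((?\otimes_R S),\res_R^S)$ is always a Quillen adjunction, then characterize when it is a Quillen equivalence. For the adjunction part, I shall show that the right adjoint $\res_R^S$ preserves both fibrations and trivial fibrations. Since every dg category is fibrant, these classes are cut out by conditions \ref{enum:fib-dgcat-epi-on-homs}, \ref{enum:fib-dgcat-lifting-isos-in-homotopy-cat} and \ref{enum:surj-on-homs-surj-qiso}, \ref{enum:surj-epi-on-sets-of-objects} of Theorem~\ref{t:dgcatK-cofib-gen-model-cat}; all four conditions refer only to the set of objects, the underlying complexes of morphism spaces, and the homotopy category, none of which are altered by restriction of scalars along $R\to S$ (cf.\ Remark~\ref{rem:dependence-on-K}). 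So all four are manifestly stable under $\res_R^S$.

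For the Quillen equivalence part, since every object of $\dgcat_S$ is fibrant, the usual criterion reduces to: $\res_R^S$ reflects quasi-equivalences (between all objects), and for every cofibrant $\mathcal{A}\in\dgcat_R$ the unit $\eta_\mathcal{A}\colon\mathcal{A}\to\res_R^S(\mathcal{A}\otimes_R S)$ is a quasi-equivalence.

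Assume first that $R\to S$ is a quasi-isomorphism. Reflection of quasi-equivalences is immediate from the same observation as above: both defining conditions of a quasi-equivalence (quasi-isomorphism on morphism spaces and essential surjectivity on homotopy categories) are invariants of the underlying complex-of-abelian-groups structure. For the unit, note that $\eta_\mathcal{A}$ is the identity on objects, and on morphism spaces is the canonical map $\mathcal{A}(A,A')\to\mathcal{A}(A,A')\otimes_R S$. By Lemma~\ref{l:cofibrant-dg-K-cat-has-cofibrant-morphism-spaces}, each $\mathcal{A}(A,A')$ is $R$-h-flat, so Lemma~\ref{l:qisos-between-Khflats-remain-qisos} (applied after rewriting the map as $\mathcal{A}(A,A')\otimes_R R\to\mathcal{A}(A,A')\otimes_R S$) shows it is a quasi-isomorphism.

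For the converse, suppose the adjunction is a Quillen equivalence. Let $\mathcal{R}$ be the dg $R$-category with a single object and endomorphism algebra $R$. Being discrete (hence trivially semi-free), $\mathcal{R}$ is cofibrant by Lemma~\ref{l:cofibrant-dg-K-categories}\ref{enum:dg-K-semifree-Icell-cofib}. The unit $\eta_\mathcal{R}$ is the evident one-object dg functor from $\mathcal{R}$ to the dg $R$-category with endomorphism algebra $R\otimes_R S=S$, and its morphism-space component is precisely the structure morphism $R\to S$; the Quillen equivalence condition forces this to be a quasi-isomorphism. I anticipate no serious obstacle: the substantive content is assembling the explicit fibration and trivial-fibration criteria of Theorem~\ref{t:dgcatK-cofib-gen-model-cat} with the $R$-h-flatness of cofibrant dg categories (Lemma~\ref{l:cofibrant-dg-K-cat-has-cofibrant-morphism-spaces}), and choosing the right test object $\mathcal{R}$ for the converse (whose cofibrancy is immediate).
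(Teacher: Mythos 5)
Your proof is correct and follows essentially the same route as the paper. The main cosmetic difference is in the Quillen adjunction step: you verify that $\res_R^S$ preserves fibrations and trivial fibrations directly from the explicit criteria \ref{enum:fib-dgcat-epi-on-homs}, \ref{enum:fib-dgcat-lifting-isos-in-homotopy-cat}, \ref{enum:surj-on-homs-surj-qiso} and \ref{enum:surj-epi-on-sets-of-objects}, whereas the paper checks that $(?\otimes_R S)$ carries the generating (trivial) cofibrations of $\dgcat_R$ onto those of $\dgcat_S$ and cites \cite[Lemma~2.1.20]{hovey-model-categories}; these are dual formulations of the same condition and equally valid. For the forward direction of the Quillen equivalence you use the criterion that the right adjoint reflects weak equivalences between fibrant objects and the unit on cofibrant objects is a weak equivalence; the paper packages the same content as a 2-out-of-3 argument for the factorization of $\phi(f)$ through $\res_R^S(\mathcal{A}\otimes_R S),$ but the underlying facts (the unit is a quasi-equivalence when $\mathcal{A}$ is $R$-h-flat, and $\res_R^S$ reflects quasi-equivalences) coincide. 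Your choice of test object $R$ for the converse is also the paper's.

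One small correction: Lemma~\ref{l:qisos-between-Khflats-remain-qisos} does not apply as you invoke it, since it requires \emph{both} ends of the quasi-isomorphism to be $K$-h-flat; the map $R\to S$ has h-flat source, but you have no control over $S.$ What you actually need is simply the defining (equivalent) characterization of $R$-h-flatness: since $\mathcal{A}(A,A')$ is $R$-h-flat, the functor $\mathcal{A}(A,A')\otimes_R(?)$ preserves quasi-isomorphisms, and applying this to $R\to S$ yields the required quasi-isomorphism directly. This is precisely what the paper does (it observes the first map in \eqref{eq:phi-of-f} satisfies \ref{enum:quasi-equi-qiso-homs} since $\mathcal{A}$ is $R$-h-flat). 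The fix does not affect the structure of your argument.
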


\begin{proof}
  From the proof of Theorem~\ref{t:dgcatK-cofib-gen-model-cat} one obtains
  explicitly a set $I_R$ (resp.\ $I_S$) of generating cofibrations and
  a set $J_R$ (resp.\ $J_S$) of generating trivial cofibrations for
  the model structure on $\dgcat_R$ (resp.\ $\dgcat_S$).
  Then obviously the image of $I_R$ (resp.\ $J_R$) under
  $(? \otimes_R S)$ is precisely $I_S$ (resp.\ $J_S$),
  and
  the first statement follows from
  \cite[Lemma~2.1.20]{hovey-model-categories}.

  To prove the second statement, assume that $((? \otimes_R S), \res_R^S,
  \phi)$ is a Quillen equivalence.  
  Note that $R \in \dgcat_R$ is (semi-free and) cofibrant and that any
  object of $\dgcat_S,$ for example $S,$ is fibrant. 
  Since $\id: R \otimes_R S \ra S$ is a quasi-equivalence, the
  corresponding morphism $\phi(\id): R \ra \res_R^S(S)$ is a
  quasi-equivalence. This just means that $R \ra S$ is a
  quasi-isomorphism.

  Conversely, assume that $R \ra S$ is a quasi-isomorphism.
  Since any cofibrant dg $R$-category is $R$-h-flat 
  (Lemma~\ref{l:cofibrant-dg-K-cat-has-cofibrant-morphism-spaces})
  and any dg
  $S$-category is fibrant, it is enough to show the following claim:
  Let $\mathcal{A}$ be an $R$-h-flat dg $R$-category and $\mathcal{B}$
  a dg $S$-category. Then a morphism $f: \mathcal{A}\otimes_R S \ra
  \mathcal{B}$ is a quasi-equivalence if and only if $\phi(f):
  \mathcal{A} \ra \res^S_R(\mathcal{B})$ is a quasi-equivalence.

  Given $f: \mathcal{A}\otimes_R S \ra \mathcal{B},$ the morphism
  $\phi(f)$ is the obvious composition 
  \begin{equation}
    \label{eq:phi-of-f}
    \mathcal{A} 
    \ra
    \res^S_R (\mathcal{A} \otimes_R S) \xra{\res^S_R(f)} \res^S_R(\mathcal{B}).
  \end{equation}
  The first morphism of this composition 
  can be viewed as the morphism
  $\mathcal{A} \otimes_R R \ra
  \mathcal{A} \otimes_R S$
  obtained from $R \ra S$; it satisfies 
  \ref{enum:quasi-equi-H0-essentially-epi} for trivial reasons and
  \ref{enum:quasi-equi-qiso-homs} since $\mathcal{A}$ is $R$-h-flat.
  The second morphism is a quasi-equivalence if and only if $f$ is a
  quasi-equivalence. Hence the 2-out-of-3-property proves our claim.
\end{proof}

\subsection{Smoothness and base change}
\label{sec:smoothn-base-change}

We continue the discussion in Section
\ref{sec:quillen-adjunction-dg-categories} and keep the assumptions there.

Let $Q:\dgcat_R \ra \dgcat_R$ be a fixed cofibrant replacement functor.
If $\mathcal{A}$ is a dg $R$-category, then $Q(\mathcal{A})$ is
cofibrant and we have a trivial fibration $Q(\mathcal{A}) \ra
\mathcal{A}$ (= a cofibrant resolution) which is natural in $\mathcal{A}.$

The map $\mathcal{A} \mapsto Q(\mathcal{A})_S$ defines a
functor $\dgcat_R \ra \dgcat_S.$
It maps weak equivalences to weak equivalences 
and hence induces the following functor between homotopy categories,
\begin{align*}
  (? \otimes_R^L S): \HoMC(\dgcat_R) & \ra \HoMC(\dgcat_S),\\
  \mathcal{A} & \mapsto 
  \mathcal{A} \otimes_R^L S = Q(\mathcal{A})_S = Q(\mathcal{A})\otimes_R S.
\end{align*}

\begin{remark}
  \label{rem:how-to-compute-base-change}
  We explain how $R$-h-flatness may help when computing $\mathcal{A} \otimes_R^L S.$
  Let $\mathcal{A}' \ra \mathcal{A}$ be an $R$-h-flat resolution, for
  example a cofibrant resolution
  (Lemma~\ref{l:cofibrant-dg-K-cat-has-cofibrant-morphism-spaces}).
  Then
  $Q(\mathcal{A}) \ra \mathcal{A}$ factors as a quasi-equivalence
  $Q(\mathcal{A}) \ra \mathcal{A}'$ followed by $\mathcal{A}' \ra \mathcal{A}$
  (Lemma~\ref{l:lift-cofib-reso-to-flat-reso}). 
  Consider the commutative diagram
  \begin{equation*}
    \xymatrix{
      {\mathcal{A}} \gar[d] 
      &
      {\mathcal{A}'} \ar[r] \ar[l] &
      {\mathcal{A}'_S} \ar@{}[r]|-{=} & 
      {\mathcal{A}' \otimes_R S} \\
      {\mathcal{A}} &
      {Q(\mathcal{A})} \ar[r] \ar[l] \ar[u] &
      {Q(\mathcal{A})_S} \ar@{}[r]|-{=} \ar[u] & 
      {Q(\mathcal{A}) \otimes_R S} \\
      {R} \ar[u] \ar@{}[r]|-{=} &
      {R} \ar[u] \ar[r] &
      {S} \ar[u]
    }
  \end{equation*}
  whose lower three vertical arrows are merely symbolical.
  Note that $Q(\mathcal{A})_S \ra \mathcal{A}'_S$ is a
  quasi-equivalence
  (use
  Lemma~\ref{l:qisos-between-Khflats-remain-qisos}). Hence
  $R$-h-flat-resolutions are enough for 
  computing the base change $Q(\mathcal{A})_S$ up to
  quasi-equivalence. 
  Similarly, if $S$
  is $R$-h-flat, then we can replace $\mathcal{A}'$ by $\mathcal{A}$ in
  the above diagram (and $\mathcal{A}' \ra \mathcal{A}$ by
  $\id_\mathcal{A}$) and obtain a quasi-equivalence 
  $Q(\mathcal{A})_S \ra \mathcal{A}_S$
  (Lemma~\ref{l:tensor-with-K-h-flat-preserves-qequis}). 
  These observations can be used for testing $S$-smoothness of
  $Q(\mathcal{A})_S$ (Lemma~\ref{l:smoothness-and-quasi-equis}).
\end{remark}

\begin{theorem}
  \label{t:smoothness-and-base-change}
  Let $R \ra S$ be a morphism of graded commutative dg
  algebras and let $\mathcal{A}$ be a (small) 
  dg $R$-category.
  \begin{enumerate}[label=(BC{\arabic*})]
  \item 
    (Smoothness and base change)
    \label{enum:smoothness-and-base-change-preserved}
    If $\mathcal{A}$ is
    $R$-smooth, then $Q(\mathcal{A})_S$ is $S$-smooth.
  \end{enumerate}
  Now assume that $R \ra S$ is a quasi-isomorphism.
  \begin{enumerate}[resume,label=(BC{\arabic*})]
  \item 
    \label{enum:smoothness-and-qiso-base-change-prod}
    $\mathcal{A}$ is $R$-smooth if and only if $Q(\mathcal{A})_S$
    is $S$-smooth. 
  \item 
    \label{enum:smoothness-and-qiso-base-change-res}
    If $\mathcal{B}$ is a (small) 
    dg $S$-category, then
    $\mathcal{B}$ is $S$-smooth if and only if 
    $\res^S_R(\mathcal{B})$ 
    is $R$-smooth. 
  \end{enumerate}
\end{theorem}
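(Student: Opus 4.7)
The plan is to prove \ref{enum:smoothness-and-base-change-preserved} by a derived extension-of-scalars argument, strengthen this to \ref{enum:smoothness-and-qiso-base-change-prod} when $R\to S$ is a quasi-isomorphism by noting that the same map of enveloping categories becomes a quasi-equivalence, and then deduce \ref{enum:smoothness-and-qiso-base-change-res} via the Quillen equivalence of Proposition~\ref{p:base-change-dg-cats-quillen-adj-equi}.

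For \ref{enum:smoothness-and-base-change-preserved}, write $\tilde{\mathcal{A}}:=Q(\mathcal{A})$, $\tilde{\mathcal{A}}^e:=\tilde{\mathcal{A}}\otimes_R\tilde{\mathcal{A}}^\opp$, and $\mathcal{A}':=\tilde{\mathcal{A}}_S$. Associativity of the tensor product gives the canonical identification $\mathcal{A}'\otimes_S(\mathcal{A}')^\opp\cong\tilde{\mathcal{A}}^e\otimes_R S$ of dg $S$-categories, under which the diagonal bimodule of $\mathcal{A}'$ corresponds to $\tilde{\mathcal{A}}\otimes_R S$. I then consider the dg $R$-functor $F:\tilde{\mathcal{A}}^e\to\res^S_R((\mathcal{A}')^e)$ given on morphism spaces by $x\otimes y\mapsto(x\otimes 1)\otimes(y\otimes 1)$; its restriction $F_*$ preserves coproducts, so the left adjoint $LF^*$ preserves compact objects and hence $\per$. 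By $R$-smoothness and Lemma~\ref{l:tfae-smoothness}, $\tilde{\mathcal{A}}\in\per(\tilde{\mathcal{A}}^e)$, so $LF^*\tilde{\mathcal{A}}\in\per((\mathcal{A}')^e)$. To compute $LF^*\tilde{\mathcal{A}}$ I pick a cofibrant resolution $P\to\tilde{\mathcal{A}}$ in $C(\tilde{\mathcal{A}}^e)$: the morphism spaces of $\tilde{\mathcal{A}}^e$ are cofibrant dg $R$-modules (as tensor products of cofibrants), so Proposition~\ref{p:cofibrant-bimodule-remains-cofibrant-if-insert-cofib-cat}\ref{enum:cofibrant-bimodule-remains-cofibrant-if-evaluate} makes the components $P(a,b)$ and $\tilde{\mathcal{A}}(b,a)$ cofibrant, hence $R$-h-flat, and Lemma~\ref{l:qisos-between-Khflats-remain-qisos} then gives a componentwise quasi-isomorphism $LF^*\tilde{\mathcal{A}}=P\otimes_R S\to\tilde{\mathcal{A}}\otimes_R S=\mathcal{A}'$. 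Thus $\mathcal{A}'\in\per((\mathcal{A}')^e)$, and since $\mathcal{A}'$ is $S$-h-flat, Lemma~\ref{l:tfae-smoothness} delivers $S$-smoothness of $Q(\mathcal{A})_S$.

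For \ref{enum:smoothness-and-qiso-base-change-prod} I would obtain both directions simultaneously by upgrading $F$ to a quasi-equivalence. Indeed, $F$ acts on each morphism space as the canonical map $M\to M\otimes_R S$ for $M:=\tilde{\mathcal{A}}(a,a')\otimes_R\tilde{\mathcal{A}}(b',b)$; since $M$ is $R$-h-flat as a tensor product of cofibrants, tensoring $M$ with the quasi-isomorphism $R\to S$ over $R$ yields a quasi-isomorphism $M\to M\otimes_R S$. Hence $F$ is a quasi-equivalence, so by \eqref{eq:restriction-equi-per-along-quequi} the restriction $F_*$ is an equivalence of derived categories preserving $\per$. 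Composing with the canonical equivalence $D((\mathcal{A}')^e)\cong D(\res^S_R((\mathcal{A}')^e))$ of Remark~\ref{rem:dependence-on-K}, the diagonal of $\mathcal{A}'$ is identified with $\tilde{\mathcal{A}}\otimes_R S$, which is componentwise quasi-isomorphic to the diagonal $\tilde{\mathcal{A}}$ of $\tilde{\mathcal{A}}^e$ via $x\mapsto x\otimes 1$ (once more by Lemma~\ref{l:qisos-between-Khflats-remain-qisos}). Therefore $\tilde{\mathcal{A}}\in\per(\tilde{\mathcal{A}}^e)$ if and only if $\mathcal{A}'\in\per((\mathcal{A}')^e)$, which is the desired biconditional.

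For \ref{enum:smoothness-and-qiso-base-change-res}, I apply \ref{enum:smoothness-and-qiso-base-change-prod} to $\mathcal{A}:=\res^S_R\mathcal{B}$, obtaining that $\res^S_R\mathcal{B}$ is $R$-smooth if and only if $Q(\res^S_R\mathcal{B})_S$ is $S$-smooth. By Proposition~\ref{p:base-change-dg-cats-quillen-adj-equi} the adjunction $((?)\otimes_R S,\res^S_R)$ is a Quillen equivalence, so its derived counit $Q(\res^S_R\mathcal{B})_S\to\mathcal{B}$ is a quasi-equivalence in $\dgcat_S$, and Lemma~\ref{l:smoothness-and-quasi-equis} transports $S$-smoothness across it, proving the claim. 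The main technical obstacle throughout is bookkeeping: carefully matching diagonal bimodules under the successive associativity identifications and the equivalences supplied by restriction/extension of scalars and Remark~\ref{rem:dependence-on-K}; once this is done, everything reduces to componentwise applications of $R$-h-flatness.
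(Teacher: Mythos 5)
Your argument is essentially the paper's: the extension-of-scalars functor along $\tilde{\mathcal{A}}^e\to\res^S_R((\tilde{\mathcal{A}}_S)^e)$ preserves compacts (giving \ref{enum:smoothness-and-base-change-preserved}), becomes an equivalence sending diagonal to diagonal when $R\to S$ is a quasi-isomorphism (giving \ref{enum:smoothness-and-qiso-base-change-prod}), and \ref{enum:smoothness-and-qiso-base-change-res} then follows from the Quillen equivalence of Proposition~\ref{p:base-change-dg-cats-quillen-adj-equi}. The only nit is bookkeeping: the cofibrancy of $\tilde{\mathcal{A}}(b,a)$ should be cited to Lemma~\ref{l:cofibrant-dg-K-cat-has-cofibrant-morphism-spaces} rather than to Proposition~\ref{p:cofibrant-bimodule-remains-cofibrant-if-insert-cofib-cat}\ref{enum:cofibrant-bimodule-remains-cofibrant-if-evaluate}, and the assertion that the morphism spaces of $\tilde{\mathcal{A}}^e$ are themselves cofibrant is not what that proposition requires (only that $\tilde{\mathcal{A}}$ have cofibrant morphism spaces).
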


\begin{proof}
  We need some preparations. We abbreviate $\res:= \res^S_R.$

  \textbf{Step 1:}
    Let $\mathcal{T}$ be a dg $R$-category. 
    The adjunction morphism
    \begin{equation}
      \label{eq:adjunction-T-to-res-TS}
      \mathcal{T} \ra \res (\mathcal{T}_S)
    \end{equation}
    is a morphism in $\dgcat_R$ and gives rise to the functor
    \begin{equation}
      \label{eq:smoothness-and-base-change-CT-CTS}
      \pro_\mathcal{T}^{\mathcal{T}_S}:=\pro_\mathcal{T}^{\res
        \mathcal{T}_S}: C(\mathcal{T}) \ra  
      C(\res(\mathcal{T}_S)) = C(\mathcal{T}_S)
    \end{equation}
    where the equality is the canonical identification explained in
    Remark~\ref{rem:dependence-on-K}.
    Explicitly, a dg $\mathcal{T}$-module $M$ is mapped to the dg
    $\mathcal{T}_S$-module $M_S:= \pro_{\mathcal{T}}^{\mathcal{T}_S}(M)$
    which is given by 
    \begin{equation*}
      M_S(T) = M(T) \otimes_R S
    \end{equation*}
    at $T \in \mathcal{T}_S$ and has the obvious action morphisms.
    On the level of derived categories we obtain the functor
    \begin{equation}
      \label{eq:smoothness-and-base-change-DT-DTS}
      L\pro_\mathcal{T}^{\mathcal{T}_S}: D(\mathcal{T}) \ra 
      D(\res \mathcal{T}_S) = D(\mathcal{T}_S),
    \end{equation}
    mapping $M$ to $p(M)_S,$
    which preserves compact objects
    (as explained in Section~\ref{sec:restr-prod}).

    \textbf{Step 2:}
    Let $\mathcal{A},$ $\mathcal{B}$ be dg $R$-categories.
    Then 
    \eqref{eq:smoothness-and-base-change-CT-CTS}
    applied to $\mathcal{T}=\mathcal{A} \otimes_R \mathcal{B}^\opp$ yields a
    functor
    \begin{equation*}
      \pro_{\mathcal{A}\otimes_R
        \mathcal{B}^\opp}^{\mathcal{A}_S\otimes_S
        (\mathcal{B}_S)^\opp}: C(\mathcal{A}\otimes_R \mathcal{B}^\opp) \ra 
      C(\mathcal{A}_S \otimes_S (\mathcal{B}_S)^\opp).
    \end{equation*}
    Explicitly, let $X$ be a dg $\mathcal{A} \otimes_R
    \mathcal{B}^\opp$ module. Then 
    $X_S$
    is
    given on $(A,B) \in
    \mathcal{A}_S \otimes_S (\mathcal{B}_S)^\opp$ by
    \begin{equation*}
      X_S(A,B) = X(A,B) \otimes_R S
    \end{equation*}
    with obvious action morphisms.
    In particular, for $\mathcal{A}=\mathcal{B}$ this shows that the
    diagonal bimodule $\mathcal{A}$ is mapped to the diagonal
    bimodule $\mathcal{A}_S.$
    We need a similar statement on the level of derived categories.

    \textbf{Step 3:}
    Let $\mathcal{A},$ $\mathcal{B}$ and $X$ be as above, but assume
    in addition that $\mathcal{A}$ and $\mathcal{B}$ have cofibrant
    morphism spaces and
    that
    $X(A,B)$ is $R$-h-flat for all $A \in \mathcal{A},$ $B \in
    \mathcal{B}^\opp.$
    Let $\gamma: p(X) \ra X$ be a cofibrant resolution
    of $X$ in
    $C(\mathcal{A}\otimes_R \mathcal{B}^\opp).$ 
    We claim that
    $p(X)_S \ra X_S$ is a quasi-isomorphism.     
    By 
    Proposition~\ref{p:cofibrant-bimodule-remains-cofibrant-if-insert-cofib-cat}
    all $p(X)(A,B)$ are cofibrant dg $R$-modules and in particular
    $R$-h-flat.
    Hence, by Lemma~\ref{l:qisos-between-Khflats-remain-qisos},
    \begin{equation*}
      (p(X)(A,B))\otimes_R S \xra{\gamma(A,B) \otimes_R \id_S}
      X(A,B) \otimes_R S
    \end{equation*}
    is a quasi-isomorphism for all $A \in \mathcal{A},$ $B \in
    \mathcal{B},$ proving our claim.
    Hence
    \begin{equation*}
      L\pro_{\mathcal{A}\otimes_R
        \mathcal{B}^\opp}^{\mathcal{A}_S\otimes_S
        (\mathcal{B}_S)^\opp}(X) = p(X)_S \sira X_S
    \end{equation*}
    in $D(\mathcal{A}_S \otimes_S (\mathcal{B}_S)^\opp).$
    
\textbf{Step 4:}
    Assume that $\tildew{\mathcal{A}}$ is a cofibrant dg $R$-category.
    Then $\tildew{\mathcal{A}}$ has cofibrant and $K$-h-flat morphism spaces
    (Lemma~\ref{l:cofibrant-dg-K-cat-has-cofibrant-morphism-spaces}),
    so we can apply
    Step 3
    to the diagonal bimodule $X:= \tildew{\mathcal{A}}.$ 
    This implies that
    \begin{equation}
      \label{eq:smoothness-and-base-change-DAAopp-DASAoppS}
      L\pro_{\tildew{\mathcal{A}}\otimes_R
        \tildew{\mathcal{A}}^\opp}^{\tildew{\mathcal{A}}_S\otimes_S
        (\tildew{\mathcal{A}}_S)^\opp}: 
      D(\tildew{\mathcal{A}}\otimes_R \tildew{\mathcal{A}}^\opp) \ra 
      D(\tildew{\mathcal{A}}_S \otimes_S (\tildew{\mathcal{A}}_S)^\opp)
    \end{equation}
    maps the diagonal bimodule $\tildew{\mathcal{A}}$ to an object isomorphic
    to the diagonal bimodule $\tildew{\mathcal{A}}_S.$

  Now we can prove our claims.

  \ref{enum:smoothness-and-base-change-preserved}:
  Assume that $\mathcal{A}$ is $R$-smooth. Then
  by definition the diagonal bimodule $Q(\mathcal{A})$ is compact.
  Hence from 
  Step 4
  with
  $\tildew{\mathcal{A}} = Q(\mathcal{A})$ (and the
  fact that the functor
  in \eqref{eq:smoothness-and-base-change-DAAopp-DASAoppS} preserves
  compact objects) we see that the diagonal bimodule
  $Q(\mathcal{A})_S$ is compact. 
  This is equivalent to $Q(\mathcal{A})_S$ being
  $S$-smooth (note that $Q(\mathcal{A})_S$ is a cofibrant dg
  $S$-category, as follows from
  Proposition~\ref{p:base-change-dg-cats-quillen-adj-equi} and the fact that
  any left Quillen functor preserves cofibrant objects).

  Assume now that $R \ra S$ is a quasi-isomorphism. 

  \ref{enum:smoothness-and-qiso-base-change-prod}:
  If $\mathcal{T}$ is an $R$-h-flat dg $R$-category,
  \eqref{eq:adjunction-T-to-res-TS} is a quasi-equivalence 
  (cf.\ the explanation below \eqref{eq:phi-of-f})
  and 
  \eqref{eq:smoothness-and-base-change-DT-DTS} is an equivalence
  (cf.\ before \eqref{eq:restriction-equi-per-along-quequi}).
  The cofibrant dg $R$-category $Q(\mathcal{A})$ is $R$-h-flat
  (Lemma~\ref{l:cofibrant-dg-K-cat-has-cofibrant-morphism-spaces}).
  But then also $Q(\mathcal{A}) \otimes_R Q(\mathcal{A})$ is
  $R$-h-flat, and hence
  \eqref{eq:smoothness-and-base-change-DAAopp-DASAoppS} for
  $\tildew{\mathcal{A}}=Q(\mathcal{A})$ is an equivalence mapping the
  diagonal bimodule $Q(\mathcal{A})$ to (an object isomorphic to)
  the diagonal bimodule $Q(\mathcal{A})_S.$ 
  This means that $\mathcal{A}$ is smooth if and only if
  $Q(\mathcal{A})_S$ is smooth.

  \ref{enum:smoothness-and-qiso-base-change-res}:
  Let $\mathcal{B}$ be a dg $S$ category and consider the cofibrant resolution 
  $Q(\res(\mathcal{B})) \ra \res(\mathcal{B}).$
  Then $Q(\res(\mathcal{B}))_S \ra \mathcal{B}$ is a 
  quasi-equivalence, since 
  $((? \otimes_R S), \res_R^S, \phi)$ is a Quillen equivalence 
  (Prop.~\ref{p:base-change-dg-cats-quillen-adj-equi}).
  (It is even a cofibrant resolution; use
  \ref{enum:surj-on-homs-surj-qiso} and \ref{enum:surj-epi-on-sets-of-objects}.)
  Now use
  Lemma~\ref{l:smoothness-and-quasi-equis}
  and \ref{enum:smoothness-and-qiso-base-change-prod}.
\end{proof}

\subsection{Locally perfect categories and smoothness}
\label{sec:locally-perf-categories-and-smoothness}

We extend some presumably well-known results
(cf.\ e.\,g.\ part of the proof of \cite[Lemma~4.1]{keller-CY}, or
\cite[Prop.~3.4]{shklyarov-serre-duality-cpt-smooth-arXiv})
to the dg $K$-setting.
This section may be skipped: only
Corollary~\ref{c:smooth-over-field-and-perfect} 
is used later on in one of the two proofs of
Proposition~\ref{p:homologically-positive-dg-algebra-plus-conditions-not-smooth}.

\begin{definition}
  [{cf.\ \cite[Def.~2.4]{toen-vaquie-moduli-objects-dg-cats}}]
  \label{d:locally-perfect}
  A dg $K$-category $\mathcal{A}$ is 
  \textbf{locally $K$-perfect} (or \textbf{locally $K$-proper}) if
  $\mathcal{A}(A,A')$ is a
  compact dg $K$-module (i.\,e.\ in $\per(K)$) for
  all $A, A' \in \mathcal{A}.$
\end{definition}

It is easy to show that local $K$-perfectness is invariant under
quasi-equivalences.

\begin{definition}
  \label{def:K-proper}
  Let $\mathcal{A}$ be dg $K$-category.  
  A dg $\mathcal{A}$-module $M$ is called
  \define{locally $K$-perfect} (or \define{locally $K$-proper}) if $M(A)$ is
  compact when considered as an
  object of $D(K),$ for all $A \in \mathcal{A}.$
  The full subcategory of $D(\mathcal{A})$ consisting of
  locally $K$-perfect dg $\mathcal{A}$-modules is denoted by
  $D_{lp}(\mathcal{A}).$ 
\end{definition}

Clearly, local $K$-perfectness of dg $\mathcal{A}$-modules is
invariant under isomorphisms in $D(\mathcal{A}).$

\begin{lemma}
  \label{l:K-proper-objects-and-perfect-category-versus-properness}
  Let $\mathcal{A}$ be a dg $K$-category. 
  Then $\mathcal{A}$ is locally $K$-perfect
  if and only if $\per(\mathcal{A}) \subset D_{lp}(\mathcal{A}).$
\end{lemma}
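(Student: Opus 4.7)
The plan is to show both directions via the characterization of $\per(\mathcal{A})$ as a thick subcategory generated by the representables $\Yoneda{A}$.

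First I would observe that, for each $A \in \mathcal{A}$, evaluation at $A$ defines an exact $K$-linear functor $C(\mathcal{A}) \to C(K)$ that preserves acyclic objects (acyclicity in $C(\mathcal{A})$ is defined pointwise) and commutes with shifts and cones; it therefore descends to a triangulated functor $\ev_A : D(\mathcal{A}) \to D(K)$. Since $\per(K) \subset D(K)$ is a strict thick triangulated subcategory, each preimage $\ev_A^{-1}(\per(K))$ is a strict thick triangulated subcategory of $D(\mathcal{A})$, and so is the intersection
\begin{equation*}
D_{lp}(\mathcal{A}) = \bigcap_{A \in \mathcal{A}} \ev_A^{-1}(\per(K)).
\end{equation*}

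For the ``only if'' direction, suppose $\mathcal{A}$ is locally $K$-perfect. Then for every $A' \in \mathcal{A}$ and every $A \in \mathcal{A}$,
\begin{equation*}
\Yoneda{A'}(A) = \mathcal{A}(A, A') \in \per(K),
\end{equation*}
so $\Yoneda{A'} \in D_{lp}(\mathcal{A})$. Since $\per(\mathcal{A})$ is, by its definition (recalled before \eqref{eq:perA-equal-compactDA}), the smallest strict full triangulated subcategory of $D(\mathcal{A})$ containing the representables $\Yoneda{A'}$ and closed under summands, it is contained in the thick triangulated subcategory $D_{lp}(\mathcal{A})$.

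For the converse, assume $\per(\mathcal{A}) \subset D_{lp}(\mathcal{A})$. Each $\Yoneda{A'}$ lies in $\per(\mathcal{A})$, hence in $D_{lp}(\mathcal{A})$, which by definition means $\mathcal{A}(A, A') = \Yoneda{A'}(A) \in \per(K)$ for all $A, A' \in \mathcal{A}$; this is exactly local $K$-perfectness of $\mathcal{A}$. There is no serious obstacle here: the only thing to verify carefully is the thickness of $D_{lp}(\mathcal{A})$, and this is immediate from the pointwise nature of the triangulated structure on $D(\mathcal{A})$ together with thickness of $\per(K) \subset D(K)$.
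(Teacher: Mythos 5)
Your proof is correct and follows essentially the same route as the paper's: both arguments rest on observing that $D_{lp}(\mathcal{A})$ is a strict full triangulated subcategory of $D(\mathcal{A})$ closed under summands, so that it contains $\per(\mathcal{A})$ if and only if it contains all the representables $\Yoneda{A'}$, which is precisely local $K$-perfectness. The formulation via the evaluation functors $\ev_A$ and the intersection of preimages is a slightly more explicit packaging of the thickness claim, but the underlying argument is identical.
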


\begin{proof}
  In general, $D_{lp}(\mathcal{A})$ is a strict full triangulated
  subcategory of $D(\mathcal{A})$ and closed under summands.
  Hence it contains $\per(\mathcal{A})$ if and only if it contains all
  $\Yoneda{A},$ for $A \in \mathcal{A}$;
  this condition just means that $\mathcal{A}$ is locally $K$-perfect.
\end{proof}

\begin{lemma}
  \label{l:restriction-along-quequi-and-lp-modules}
  If $F: \mathcal{B} \ra \mathcal{A}$ is a
  quasi-equivalence of dg $K$-categories, 
  then restriction along $F$ induces an equivalence
  $\res^\mathcal{A}_\mathcal{B}:D_{lp}(\mathcal{A}) \ra
  D_{lp}(\mathcal{B}).$
\end{lemma}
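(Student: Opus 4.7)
The plan is to leverage the already-established equivalence $\res^\mathcal{A}_\mathcal{B}: D(\mathcal{A}) \sira D(\mathcal{B})$ (from \eqref{eq:restriction-derived} and the discussion around \eqref{eq:restriction-equi-per-along-quequi}), and then to show that it preserves and reflects local $K$-perfectness. Preservation is immediate: if $M \in D_{lp}(\mathcal{A})$ and $B \in \mathcal{B}$, then $(\res^\mathcal{A}_\mathcal{B} M)(B) = M(F(B))$ is compact in $D(K)$ by assumption on $M$. Reflection will give both the essential surjectivity of the restricted functor onto $D_{lp}(\mathcal{B})$ (given $N \in D_{lp}(\mathcal{B})$, take a preimage $M \in D(\mathcal{A})$ under the ambient equivalence and conclude $M \in D_{lp}(\mathcal{A})$) and, together with preservation, the desired statement, while fully faithfulness is inherited from the ambient equivalence.

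The key point is therefore reflection: if $M \in D(\mathcal{A})$ satisfies $\res^\mathcal{A}_\mathcal{B} M \in D_{lp}(\mathcal{B})$, then $M \in D_{lp}(\mathcal{A})$. Fix $A \in \mathcal{A}$. By the essential surjectivity condition \ref{enum:quasi-equi-H0-essentially-epi} of the quasi-equivalence $F$, there is an object $B \in \mathcal{B}$ together with an isomorphism $A \cong F(B)$ in $[\mathcal{A}]$; choose a representative pair $f \in Z^0 \mathcal{A}(A, F(B))$ and $g \in Z^0 \mathcal{A}(F(B), A)$ together with degree $-1$ morphisms $h_1, h_2$ witnessing $gf - \id_A = d h_1$ and $fg - \id_{F(B)} = d h_2$. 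Applying $M$ (a dg functor of the morphism spaces) produces closed degree zero morphisms between the dg $K$-modules $M(A)$ and $M(F(B))$ which are mutually inverse up to the coboundaries coming from $M(h_1), M(h_2)$; hence they are mutually homotopy-inverse, so $M(A) \cong M(F(B)) = (\res^\mathcal{A}_\mathcal{B} M)(B)$ in $D(K)$. Since the right-hand side is compact by hypothesis, so is $M(A)$.

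No serious obstacle is expected. The only slightly delicate verification is the passage from an isomorphism in $[\mathcal{A}]$ to a homotopy equivalence of evaluations $M(A) \simeq M(F(B))$, which is precisely the argument above; alternatively one may package it via the enriched Yoneda identification $M(A) = (\calMod(\mathcal{A}))(\Yoneda{A}, M)$ from \eqref{eq:yoneda-dgA} together with the observation that the Yoneda embedding descends to a fully faithful functor $[\mathcal{A}] \ra \mathcal{H}(\mathcal{A})$ (by the $H^0$ of \eqref{eq:yoneda-dgA}), transporting $A \cong F(B)$ in $[\mathcal{A}]$ to $\Yoneda{A} \cong \Yoneda{F(B)}$ in $\mathcal{H}(\mathcal{A})$.
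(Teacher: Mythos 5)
Your proposal is correct and follows essentially the same route as the paper: both use the ambient equivalence $\res^\mathcal{A}_\mathcal{B}\colon D(\mathcal{A})\sira D(\mathcal{B})$, note that preservation of local $K$-perfectness is immediate, and reduce reflection to the observation that essential surjectivity of $[F]$ yields $A\cong F(B)$ in $[\mathcal{A}]$, which $M$ (being a dg functor, hence inducing a functor on homotopy categories) transports to an isomorphism $M(A)\cong M(F(B))$ in $D(K)$. Your explicit choice of $f,g,h_1,h_2$ just spells out what the paper compresses into ``application of $[M]$.''
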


\begin{proof}
  We know from Section~\ref{sec:restr-prod} that 
  $\res^\mathcal{A}_\mathcal{B}:D(\mathcal{A}) \sira
  D(\mathcal{B})$ is an equivalence.
  If $M$ is a locally $K$-perfect dg $\mathcal{A}$-module, then
  obviously $\res^\mathcal{A}_\mathcal{B}(M)$ is locally $K$-perfect.
  Since local $K$-perfectness is invariant under isomorphisms in
  $D(\mathcal{B})$ it is enough to show that the converse is also true.
  
  Let $A \in \mathcal{A}.$ Then there is an object $B \in \mathcal{B}$
  such that $A$ and $F(B)$ are isomorphic in $[\mathcal{A}].$
  Application of $[M]: [\mathcal{A}^\opp] \ra
  \mathcal{H}(\mathcal{A})$
  shows that $M(A)$ and $M(F(B))$ are isomorphic in
  $\mathcal{H}(\mathcal{A})$ and a fortiori in $D(\mathcal{A}).$ 
  Hence if $\res^\mathcal{A}_\mathcal{B}(M)$ is locally $K$-perfect,
  then $M(F(B))$ and hence $M(A)$ are compact in $D(K).$ This implies
  that $M$ is locally $K$-perfect.
\end{proof}

\begin{proposition}
  \label{p:K-proper-objects-and-perfect-category-for-smooth-and-flat}
  Let $\mathcal{A}$ be a dg $K$-category. If $\mathcal{A}$ is
  $K$-smooth, then 
  $D_{lp}(\mathcal{A}) \subset \per(\mathcal{A}).$
\end{proposition}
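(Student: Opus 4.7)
The strategy is to use the tautological isomorphism $M \cong M \otimes^L_\mathcal{A} \mathcal{A}$ with the diagonal bimodule, exploit $K$-smoothness to view this diagonal bimodule as a retract of an iterated extension of representable bimodules in $D(\mathcal{A}\otimes_K \mathcal{A}^\opp)$, and check that $M$ tensored with each representable bimodule lies in $\per(\mathcal{A})$ (because each value of $M$ lies in $\per(K)$). A standard triangulated-subcategory argument then propagates perfectness from the representables to the diagonal.

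First I would reduce to the case where $\mathcal{A}$ itself is $K$-h-flat. Membership in $D_{lp}$, membership in $\per(\mathcal{A})$, and $K$-smoothness are all invariant under restriction along a quasi-equivalence, by Lemmas~\ref{l:smoothness-and-quasi-equis} and~\ref{l:restriction-along-quequi-and-lp-modules} together with~\eqref{eq:restriction-equi-per-along-quequi}, so I may replace $\mathcal{A}$ by a $K$-h-flat resolution. Lemma~\ref{l:tfae-smoothness} then tells me that the diagonal bimodule $\mathcal{A}$ sits in $\per(\mathcal{A}\otimes_K \mathcal{A}^\opp)$. Pick $M \in D_{lp}(\mathcal{A})$ and fix a cofibrant (hence $\mathcal{A}$-h-flat) resolution $pM \to M$ (Lemmas~\ref{l:cofobj-is-h-proj-dgA}, \ref{l:cofibrant-dg-A-module-is-A-h-flat}). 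The $\mathcal{A}$-h-flatness of $pM$ implies that
\begin{equation*}
  G := pM \otimes_\mathcal{A} (?) \colon C(\mathcal{A}\otimes_K \mathcal{A}^\opp) \to C(\mathcal{A})
\end{equation*}
preserves quasi-isomorphisms in the bimodule variable (and, being a tensor product, preserves cones), so descends to a triangulated, coproduct-preserving functor $G\colon D(\mathcal{A}\otimes_K \mathcal{A}^\opp) \to D(\mathcal{A})$.

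Next I would compute $G$ on the generators of $\per(\mathcal{A}\otimes_K \mathcal{A}^\opp)$. On the diagonal bimodule, the usual identification yields $G(\mathcal{A}) = pM \cong M$ in $D(\mathcal{A})$. On a representable bimodule $\Yoneda{(A_0,A_1)} = \mathcal{A}(?,A_0) \otimes_K \mathcal{A}(A_1,?)$, a Yoneda-type coend computation (using the co-Yoneda identification $\int^{B'} pM(B') \otimes_K \mathcal{A}(A_1,B') = pM(A_1)$) produces
\begin{equation*}
  G(\Yoneda{(A_0,A_1)}) \;\cong\; pM(A_1) \otimes_K \Yoneda{A_0}.
\end{equation*}
Now $pM(A_1) \cong M(A_1)$ lies in $\per(K)$ by local $K$-perfectness of $M$; and since $\mathcal{A}$ is $K$-h-flat, $\Yoneda{A_0}$ is objectwise $K$-h-flat, so $(?)\otimes_K \Yoneda{A_0} \colon D(K) \to D(\mathcal{A})$ is a well-defined triangulated functor which sends $K$ to $\Yoneda{A_0} \in \per(\mathcal{A})$ and consequently carries $\per(K)$ into $\per(\mathcal{A})$. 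Therefore $G(\Yoneda{(A_0,A_1)}) \in \per(\mathcal{A})$.

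Finally, the preimage category $\mathcal{C} := \{X \in D(\mathcal{A}\otimes_K \mathcal{A}^\opp) : G(X) \in \per(\mathcal{A})\}$ is a strict full triangulated subcategory of $D(\mathcal{A}\otimes_K \mathcal{A}^\opp)$ closed under summands, since $G$ is triangulated and $\per(\mathcal{A})$ enjoys these closure properties in $D(\mathcal{A})$. By the previous step it contains every representable bimodule, so by the defining property of $\per(\mathcal{A}\otimes_K \mathcal{A}^\opp)$ as the smallest such subcategory, $\mathcal{C}$ contains the whole perfect derived category, and in particular the diagonal bimodule $\mathcal{A}$. Hence $M \cong G(\mathcal{A}) \in \per(\mathcal{A})$, as desired. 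The only genuine technical care is ensuring that the two tensor functors $G$ and $(?)\otimes_K \Yoneda{A_0}$ descend correctly to the derived level, and both appeals to h-flatness needed for this are secured by the opening reduction to a $K$-h-flat $\mathcal{A}$.
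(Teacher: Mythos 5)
Your proof is correct and follows essentially the same route as the paper's: both arguments reduce to a $K$-h-flat $\mathcal{A}$, form a derived tensor functor $M\otimes_\mathcal{A}^L(?) : D(\mathcal{A}\otimes_K\mathcal{A}^\opp)\to D(\mathcal{A})$, compute it on representable bimodules to get $M(A_1)\otimes_K\Yoneda{A_0}\in\per(\mathcal{A})$ using local $K$-perfectness of $M$ and $K$-h-flatness of $\mathcal{A}$, propagate perfectness by the triangulated-subcategory argument, and then feed in the diagonal bimodule (which is perfect by $K$-smoothness). The only difference is cosmetic: you resolve $M$ to the $\mathcal{A}$-h-flat $pM$ so that $G=pM\otimes_\mathcal{A}(?)$ already preserves quasi-isomorphisms in the bimodule variable, whereas the paper resolves the bimodule argument and then identifies $LF_{p(N)}(\mathcal{A})\cong N$ at the end — your version is slightly tidier on this last step.
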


\begin{proof}
  By 
  Section~\ref{sec:restr-prod} 
  and 
  Lemma~\ref{l:restriction-along-quequi-and-lp-modules} 
  we know that
  restriction 
  along a $K$-h-flat
  resolution $\tildew{\mathcal{A}} \ra 
  \mathcal{A}$ induces an equivalence
  $\res_{\tildew{\mathcal{A}}}^{\mathcal{A}}:
  D(\mathcal{A}) \ra D(\tildew{\mathcal{A}})$
  identifying $\per(\mathcal{A})$
  with $\per(\tildew{\mathcal{A}})$ 
  and $D_{lp}(\mathcal{A})$ with $D_{lp}(\tildew{\mathcal{A}}).$
  Hence it is enough to prove the claim under the additional
  assumption that $\mathcal{A}$ is $K$-h-flat.
  
  We start with some preparations.
  If $A$ is an object of a dg $K$-category $\mathcal{A},$ there is a
  unique dg $K$-functor $K \ra \mathcal{A}$ (where $K$ is viewed as a dg $K$-algebra)
  mapping the unique object of $K$ to $A.$ We
  denote this functor by $K=K_A \ra 
  \mathcal{A}$ to indicate its dependence on $A.$

  Let $\mathcal{A},$ $\mathcal{B}$ be dg $K$-categories and let $M$
  be a dg $\mathcal{A}$-module.
  Consider the dg $K$-functor
  \begin{align*}
    F_M:\calMod(\mathcal{B}\otimes_K \mathcal{A}^\opp) & \ra
    \calMod(\mathcal{B}),\\
    X & \mapsto M \otimes_\mathcal{A} X,
  \end{align*}
  and let $LF_M:D(\mathcal{B}\otimes_K \mathcal{A}^\opp) \ra
  D(\mathcal{B}),$ $X \mapsto M \otimes_\mathcal{A} p(X),$ be its left
  derived functor.

  If $X$ is a cofibrant dg $\mathcal{B} \otimes_K
  \mathcal{A}^\opp$-module, then $p(X) \ra X$ 
  is a quasi-isomorphism between cofibrant objects 
  in $C(\mathcal{B} \otimes_K \mathcal{A}^\opp)$ 
  and an isomorphism in $\mathcal{H}(\mathcal{B} \otimes_K
  \mathcal{A}^\opp),$ hence 
  $LF_M(X) \sira F_M(X)$ in $D(\mathcal{B}).$

  Let $B \in \mathcal{B},$ $A \in \mathcal{A}^\opp.$ 
  It is easy to see that
  \begin{equation*}
    F_M(\Yoneda{(B,A)}) = (M(A))\otimes_K\Yoneda{B}=
    (\res^\mathcal{A}_{K_A}(M)) \otimes_K \Yoneda{B}
  \end{equation*}
  in $C(\mathcal{B}).$ Since $\Yoneda{(B,A)}$ is cofibrant we have
  \begin{equation}
    \label{eq:LFM-resKYonedaB}
    LF_M(\Yoneda{(B,A)}) \sira
    (\res^\mathcal{A}_{K_A}(M)) \otimes_K \Yoneda{B}
  \end{equation}
  in $D(\mathcal{B}).$ 
  
  Let $\pro_{K_B}^{\mathcal{B}}: \calMod(K) \ra
  \calMod(\mathcal{B})$ be the extension of scalars functor along $K=K_B
  \ra \mathcal{B}.$ Obviously $\pro_{K_B}^\mathcal{B}(N)=N\otimes_K
  \Yoneda{B}.$
  Assume now that $\mathcal{B}$ is $K$-h-flat. Then 
  $\pro_{K_B}^\mathcal{B}$ preserves quasi-isomorphism. 
  Hence $p(\res^\mathcal{A}_{K_A}(M)) \ra \res^\mathcal{A}_{K_A}(M)$
  yields an isomorphism
  \begin{equation}
    \label{eq:LproKB-resKYonedaB}
    L\pro_{K_B}^{\mathcal{B}} (\res^\mathcal{A}_{K_A}(M)) \sira
    (\res^\mathcal{A}_{K_A}(M)) \otimes_K \Yoneda{B}    
  \end{equation}
  in $D(\mathcal{B}).$

  Assume in addition that $M \in D_{lp}(\mathcal{A}).$ Since
  $L\pro_{K_B}^\mathcal{B}$ preserves compact objects,
  \eqref{eq:LFM-resKYonedaB} and \eqref{eq:LproKB-resKYonedaB} show
  that
  $LF_M(\Yoneda{(B,A)}) \in \per(\mathcal{B}).$

  This implies that $LF_M$ induces a functor
  \begin{equation*}
    LF_M:\per(\mathcal{B}\otimes_K \mathcal{A}^\opp) \ra
    \per(\mathcal{B})
  \end{equation*}
  (under the assumptions that $\mathcal{B}$ is $K$-h-flat and $M$ is
  locally $K$-perfect). 

  Now assume that $\mathcal{A}$ is $K$-h-flat and $K$-smooth.
  Then $\mathcal{A} \in \per(\mathcal{A}\otimes_K \mathcal{A}^\opp)$
  by Lemma~\ref{l:tfae-smoothness}. 
  The above arguments applied to $\mathcal{B}=\mathcal{A}$ show:
  If $M$ is a locally $K$-perfect dg $\mathcal{A}$-module, then
  \begin{equation*}
    LF_M(\mathcal{A}) \in \per(\mathcal{A}).
  \end{equation*}

  We now prove $D_{lp}(\mathcal{A}) \subset
  \per(\mathcal{A}).$
  Let $N \in D_{lp}(\mathcal{A}).$ 
  Then $p(N)$ is locally $K$-perfect and we obtain
  \begin{equation*}
    \per(\mathcal{A}) \ni LF_{p(N)}(\mathcal{A}) 
    = p(N) \otimes_\mathcal{A} p(\mathcal{A})
  \end{equation*}
  Since $p(\mathcal{A}) \ra \mathcal{A}$ is a quasi-isomorphism
  and $p(N)$ 
  is $\mathcal{A}$-h-flat
  (Lemma~\ref{l:cofibrant-dg-A-module-is-A-h-flat})
  we obtain
  \begin{equation*}
    p(N) \otimes_\mathcal{A} p(\mathcal{A})
    \sira
    p(N) \otimes_\mathcal{A} \mathcal{A} = p(N)
    \sira N
  \end{equation*}
  in $D(\mathcal{A}).$
  This shows $N \in \per(\mathcal{A}).$
\end{proof}

\begin{corollary}
  \label{c:K-proper-objects-and-perfect-category-for-smooth-and-proper-and-flat}
  Let $\mathcal{A}$ be a dg $K$-category that is 
  $K$-smooth and locally $K$-perfect. Then
  \begin{equation*}
    D_{lp}(\mathcal{A}) = \per(\mathcal{A}).
  \end{equation*}
\end{corollary}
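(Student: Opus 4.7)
The corollary follows by combining the two inclusions established in the immediately preceding results, one in each direction.

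The plan is as follows. First I would invoke Lemma~\ref{l:K-proper-objects-and-perfect-category-versus-properness}: since $\mathcal{A}$ is assumed locally $K$-perfect, this lemma gives the inclusion $\per(\mathcal{A}) \subset D_{lp}(\mathcal{A})$. Concretely, the representable modules $\Yoneda{A}$ lie in $D_{lp}(\mathcal{A})$ by the local perfectness hypothesis, and $D_{lp}(\mathcal{A})$ is a strict full triangulated subcategory closed under summands, so it contains the category generated by the $\Yoneda{A}$ in this sense, which is $\per(\mathcal{A})$.

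Next I would invoke Proposition~\ref{p:K-proper-objects-and-perfect-category-for-smooth-and-flat} for the reverse inclusion: since $\mathcal{A}$ is $K$-smooth, we have $D_{lp}(\mathcal{A}) \subset \per(\mathcal{A})$.

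Combining these two inclusions yields the desired equality $D_{lp}(\mathcal{A}) = \per(\mathcal{A})$. There is no real obstacle here, as all the substantive content has been absorbed into the two results cited: the smoothness hypothesis does the nontrivial work of forcing locally perfect modules to be perfect (via the compactness of the diagonal bimodule and the tensor functor argument in the proof of Proposition~\ref{p:K-proper-objects-and-perfect-category-for-smooth-and-flat}), while the local perfectness hypothesis contributes only the trivial inclusion.
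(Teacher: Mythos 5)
Your proof is correct and matches the paper's own: the paper likewise cites Proposition~\ref{p:K-proper-objects-and-perfect-category-for-smooth-and-flat} for the inclusion $D_{lp}(\mathcal{A}) \subset \per(\mathcal{A})$ and Lemma~\ref{l:K-proper-objects-and-perfect-category-versus-properness} for the reverse inclusion, then combines them.
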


\begin{proof}
  Follows from
  Proposition~\ref{p:K-proper-objects-and-perfect-category-for-smooth-and-flat}
  and
  Lemma~\ref{l:K-proper-objects-and-perfect-category-versus-properness}.
\end{proof}

\begin{corollary}
  [{cf.~\cite[Prop.~3.4]{shklyarov-serre-duality-cpt-smooth-arXiv}}]
  \label{c:smooth-over-field-and-perfect}
  Let $A$ be a dg $\groundring$-algebra over a field $\groundring,$ and let $M$ be a dg $A$-module.
  \begin{enumerate}
  \item 
    \label{enum:smooth-fin-dim-then-perfect}
    If $A$ is $\groundring$-smooth, then $\dim_\groundring H(M)<\infty$ implies $M \in
    \per(A).$
  \item 
    If $A$ is $\groundring$-smooth and $H(A)$ is finite dimensional, then
    $\dim_\groundring H(M) < \infty$ if and only if $M \in \per(A).$
  \end{enumerate}
\end{corollary}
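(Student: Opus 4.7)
The plan is to reduce both parts to the results just established, using the single observation that over a field $\groundring$, a dg $\groundring$-module $N$ is compact in $D(\groundring)$ (equivalently, in $\per(\groundring)$) if and only if $\dim_\groundring H(N) < \infty$. This is standard: since $\groundring$ is a field, $N$ is quasi-isomorphic to its cohomology $H(N)$ viewed as a dg module with zero differential, and a graded $\groundring$-vector space (with zero differential) is a perfect complex precisely when it is finite-dimensional. In the notation of Definition~\ref{def:K-proper}, applied to $\mathcal{A}=A$ viewed as a dg $\groundring$-category with one object $\ast$, the category $D_{lp}(A)$ consists exactly of those $M$ with $\dim_\groundring H(M(\ast)) = \dim_\groundring H(M) < \infty$.

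For part (a), I assume $A$ is $\groundring$-smooth. By the preceding observation, the hypothesis $\dim_\groundring H(M) < \infty$ means precisely that $M \in D_{lp}(A)$. Then Proposition~\ref{p:K-proper-objects-and-perfect-category-for-smooth-and-flat} (the inclusion $D_{lp}(\mathcal{A}) \subset \per(\mathcal{A})$ for $K$-smooth $\mathcal{A}$, applied with $K = \groundring$ and $\mathcal{A} = A$) yields $M \in \per(A)$.

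For part (b), the "if" direction is exactly (a). For the converse, the hypothesis $\dim_\groundring H(A) < \infty$ says that $A$ is locally $\groundring$-perfect as a dg $\groundring$-category. Then Lemma~\ref{l:K-proper-objects-and-perfect-category-versus-properness} gives $\per(A) \subset D_{lp}(A)$, so any $M \in \per(A)$ satisfies $\dim_\groundring H(M) < \infty$. (Alternatively, and more conceptually, one combines both parts by invoking Corollary~\ref{c:K-proper-objects-and-perfect-category-for-smooth-and-proper-and-flat} directly, which under both hypotheses yields $D_{lp}(A) = \per(A)$.)

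There is essentially no obstacle here: the entire argument is a direct specialization of the previously established inclusions $D_{lp}(\mathcal{A}) \subset \per(\mathcal{A})$ (smoothness) and $\per(\mathcal{A}) \subset D_{lp}(\mathcal{A})$ (local perfectness) to the one-object case $\mathcal{A} = A$ over $K = \groundring$, together with the identification of compact objects in $D(\groundring)$ with finite-dimensional cohomology. Hence the proof reduces to a single paragraph once the dictionary between $D_{lp}$ and finite-dimensional cohomology is noted.
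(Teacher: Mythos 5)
Your proof is correct and follows essentially the same route as the paper: translate finite-dimensional cohomology into membership in $D_{lp}(A)$ (respectively local $\groundring$-perfectness of $A$), then invoke Proposition~\ref{p:K-proper-objects-and-perfect-category-for-smooth-and-flat} for part (a) and Corollary~\ref{c:K-proper-objects-and-perfect-category-for-smooth-and-proper-and-flat} (or equivalently Lemma~\ref{l:K-proper-objects-and-perfect-category-versus-properness}) for part (b).
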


\begin{proof}
  Obviously, $M \in D_{lp}(A)$ if and only if $M \in \per(\groundring)$ if and
  only if $H(M)$ is finite dimensional.
  Similarly, $A$ is locally perfect over $\groundring$ if and only if $H(A)$ is
  finite dimensional.
  Now use
  Proposition~\ref{p:K-proper-objects-and-perfect-category-for-smooth-and-flat}
  and
  Corollary~\ref{c:K-proper-objects-and-perfect-category-for-smooth-and-proper-and-flat}.
\end{proof}

\subsection{Smoothness criteria}
\label{sec:smoothness-criteria}

In this section $\groundring$ will be a field (viewed as a dg ring
concentrated in degree zero).
Our aim is to prove the two smoothness criteria provided by
Propositions~\ref{p:homologically-positive-dg-algebra-plus-conditions-not-smooth}
and
\ref{p:smoothness-over-local-graded-finite-homological-dim-algebras} below.
The latter proposition will be essential for
Section~\ref{sec:smoothn-equiv-derived-cats}.

We need some preparations for the proof of the first criterion.

\begin{lemma}
  [{\cite[Lemma~9.5]{ELO-defo-theory-I}}]
  \label{l:cohomology-positive-qiso-to-positive}
  Let $\groundring$ be a field and $A$ a dg $\groundring$-algebra. Assume
  that
  $H^0(A)=\groundring$ and $H^i(A)=0$ for all $i<0.$ 
  Then there is a dg $\groundring$-subalgebra $U$ of $A$ such that
  $U^0=\groundring,$ $U^i=0$ for all $i<0,$ and the inclusion $U
  \hra A$ is a quasi-isomorphism.
\end{lemma}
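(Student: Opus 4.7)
My plan is to construct $U$ explicitly as a graded subspace of $A$ that is closed under multiplication and the differential, and then to verify the cohomology by direct computation. The construction will agree with $A$ in degrees $\geq 2$, be $\groundring\cdot 1_A$ in degree zero, and require one delicate choice in degree one.

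First I would set $U^i = 0$ for $i<0$, $U^0 = \groundring\cdot 1_A$, and $U^i = A^i$ for $i\geq 2$. The only freedom is $U^1$. Taking $U^1 = A^1$ naively would force $B^1(U)=d(U^0)=0$, so that $H^1(U) = Z^1(A)$ would typically be strictly larger than $H^1(A)=Z^1(A)/d(A^0)$; so $U^1$ must be shrunk. The two conditions I want it to satisfy are $U^1 \cap Z^1(A)$ is a linear complement of $d(A^0)$ in $Z^1(A)$ (to fix $H^1$) and $d(U^1)=d(A^1)$ (to preserve $H^2$).

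Concretely, since $\groundring$ is a field I can pick a $\groundring$-linear section $V\subset Z^1(A)$ of the projection $Z^1(A)\twoheadrightarrow H^1(A)$, so that $Z^1(A)=d(A^0)\oplus V$, and a $\groundring$-linear complement $T\subset A^1$ of $Z^1(A)$, so that $A^1 = Z^1(A)\oplus T$ and $d\colon T\to A^2$ is injective with image $d(A^1)$. I set $U^1 := V\oplus T$.

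Checking that $U$ is a dg subalgebra is routine: multiplication by $\groundring\cdot 1_A$ is scalar multiplication, and for $i,j\geq 1$ one has $U^i\cdot U^j \subset A^{i+j}=U^{i+j}$ since $i+j\geq 2$; the differential maps $U^0$ to $0\subset U^1$, $U^1$ into $A^2=U^2$, and $U^i$ into $A^{i+1}=U^{i+1}$ for $i\geq 2$. For the cohomology: $H^i(U)=0$ for $i<0$; $H^0(U)=\groundring$, which maps isomorphically to $H^0(A)=\groundring$; $Z^1(U)=V$ and $B^1(U)=0$, so $H^1(U)=V\xrightarrow{\sim} H^1(A)$ by the choice of $V$; $H^2(U) = Z^2(A)/d(U^1) = Z^2(A)/d(T) = Z^2(A)/d(A^1) = H^2(A)$; and for $i\geq 3$ one has $Z^i(U)=Z^i(A)$ and $B^i(U)=d(A^{i-1})=B^i(A)$, giving $H^i(U)=H^i(A)$. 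Thus $U\hra A$ is a quasi-isomorphism with the required properties.

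I do not foresee a serious obstacle: the argument is essentially a linear-algebra splitting, made available by the field hypothesis on $\groundring$, plus the observation that a degree-zero subalgebra equal to $\groundring\cdot 1_A$ imposes no multiplicative constraints beyond scalar multiplication, so the subspace $U^1$ can be chosen freely within $A^1$.
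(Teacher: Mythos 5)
Your construction is essentially identical to the paper's: the paper picks $C\subset Z^1(A)$ mapping isomorphically onto $H^1(A)$ and $B\subset A^1$ on which $d$ is injective onto $d(A^1)$, and sets $U^1:=C\oplus B$, $U^0:=\groundring$, $U^i:=A^i$ for $i>1$ — exactly your $V$, $T$, and $U$. Your verification is correct, merely more detailed than the paper's, which leaves the cohomology check implicit.
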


\begin{proof}
  Let $C \subset Z^1(A)$ be a linear subspace
  such that the restriction of $Z^1(A) \ra H^1(A)$ to $C$ is an isomorphism,
  and let $B \subset A^1$ be a linear subspace such that $d:A^1 \ra
  A^2$ induces an isomorphism $B \sira d(A^1).$ 
  Define $U^i:=0$ for $i<0,$ $U^0:=\groundring,$ $U^1:= C\oplus B,$ $U^i:= A^i$
  for $i>1,$ and take $U:=\bigoplus U^i.$
\end{proof}

If $N=\bigoplus_{i \in \DZ} N^i$ is a graded abelian group which is
bounded 
and nonzero, we define
its amplitude
by
\begin{equation*}
  \ampl(N):= 
  \max\{i \in \DZ \mid N^i \not=0\}
  -\min\{i \in \DZ \mid N^i\not=0\}.
\end{equation*}

\begin{lemma}
  \label{l:cohomology-range-of-perfects-over-pos-dg-algs}
  Let $\groundring$ be a field
  and let $A$ be a dg $\groundring$-algebra such that $H^i(A)=0$ for all $i<0$ and
  $H^0(A)=\groundring.$
  Assume that $H(A)$ is bounded above and let $m \in \DN$ be maximal
  such that $H^m(A)\not=0$; assume that $m>0$ (i.\,e. $\groundring\not=H(A)$).
  Let $M \in \per(A)$ with $M\not\cong 0.$ Then 
  $H(M)$ is bounded and nonzero and
  \begin{equation*}
    \ampl(H(M)) \geq m.
  \end{equation*}
  Moreover, $\dim_\groundring H^{\op{top}}(M) \geq \dim_\groundring H^m(A),$ 
  where $H^{\op{top}}(M)$ is the highest non-vanishing cohomology of $M.$
  In particular, $M$ has at least two nonvanishing cohomology
  groups.
\end{lemma}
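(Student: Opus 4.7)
The plan is to first reduce to a convenient representative of $A$, then pass to a minimal semi-free resolution, and finally extract the cohomology bounds by induction.

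First, I would apply Lemma~\ref{l:cohomology-positive-qiso-to-positive} to replace $A$ by a quasi-isomorphic dg subalgebra $U$ with $U^0 = \groundring$ and $U^{<0} = 0$ (so $U$ is nonnegatively graded with its augmentation $\varepsilon: U \twoheadrightarrow \groundring$ available, and $H(U)$ still has top degree $m$). Restriction of scalars identifies $\per(A) \simeq \per(U)$, so it suffices to prove the statement with $A$ replaced by $U$. For a nonzero $M \in \per(U)$ I would build a minimal semi-free resolution $\tilde M = V \otimes_\groundring U$, where $V$ is a graded $\groundring$-vector space equipped with an increasing filtration $0 = V_{-1} \subset V_0 \subset \dots \subset V_N = V$ satisfying $d(V_i) \subset V_{i-1} \otimes U^{\geq 1}$. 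Perfectness forces $V$ to be finite-dimensional, and the fact that $M \neq 0$ in $D(U)$ forces $V \neq 0$, in particular $V_0 \neq 0$.

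The first assertions (that $H(M)$ is bounded and nonzero) then drop out of the spectral sequence of the filtration $F_i := V_i \otimes U$ on $\tilde M$: on each associated graded piece $\gr_i \tilde M = (V_i/V_{i-1}) \otimes U$, the twist $d(V_i) \subset V_{i-1}\otimes U^{\geq 1} \subset F_{i-1}$ vanishes modulo $F_{i-1}$, leaving only the untwisted $\id \otimes d_U$; hence $\gr_i \tilde M$ is a free dg $U$-module with $H(\gr_i \tilde M) = (V_i/V_{i-1}) \otimes H(U)$. The $E^1$-page $V \otimes H(U)$ is concentrated in cohomological degrees $[\min V,\, \max V + m]$, so the finite spectral sequence exhibits $H(M) = H(\tilde M)$ as bounded; nonvanishing follows since $V \neq 0$ and the $V_0 \otimes H(U)$ stratum injects into $H(M)$ via the free submodule $V_0 \otimes U \hookrightarrow \tilde M$.

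For the amplitude bound $\ampl(H(M)) \geq m$ and the top dimension bound $\dim H^{\text{top}}(M) \geq \dim H^m(U)$, I would argue by induction on the filtration length $N$. The base case $N=0$ has $V = V_0$ and $d|_V = 0$, so $\tilde M$ is a free dg $U$-module: $H(\tilde M) = V \otimes H(U)$ has amplitude exactly $m$ and top cohomology $V^b \otimes H^m(U)$ of dimension $\dim V^b \cdot \dim H^m(U) \geq \dim H^m(U)$. For the inductive step, the short exact sequence $V_{N-1} \otimes U \hookrightarrow \tilde M \twoheadrightarrow (V/V_{N-1}) \otimes U$ expresses $\tilde M$ as an extension of a free dg module by a minimal semi-free module of shorter filtration, and combining the LES in cohomology with the induction hypothesis on $V_{N-1} \otimes U$ yields the bound on $H^{\text{top}}(\tilde M)$. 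The main obstacle, and the place requiring the most care, is the case in which both $V_{N-1}$ and $V/V_{N-1}$ contribute to the top degree $b$: a nonzero connecting homomorphism $\partial$ in the LES could in principle annihilate part of the top stratum at degree $b+m$, but in that case an equal amount of cohomology is forced to survive at degree $b+m-1$, keeping $\dim H^{\text{top}}(M) \geq \dim H^m(U) = \dim H^m(A)$; tracking this compensation carefully is where the bulk of the argument lies.
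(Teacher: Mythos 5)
Your overall strategy — replace $A$ by a positively graded representative $U$ via Lemma~\ref{l:cohomology-positive-qiso-to-positive}, pass to a resolution filtered by free pieces, and induct on the filtration length — is the same as the paper's, but the technical setup you chose creates a real gap in the inductive step.

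The paper invokes \cite[Thm.~1]{OSdiss-perfect-dg-acs} to obtain a finite filtration $0=F_0\subset\cdots\subset F_n=M$ whose quotients are \emph{single} shifts $[l_i]A$ with the crucial extra property that the shifts are weakly decreasing, $l_1\geq\cdots\geq l_n$. This monotonicity means the bottom shift $[l_1]A$ (lowest starting degree) comes first and the quotient $[l_n]A$ (highest starting degree) comes last, so at each inductive step the sub $F_{n-1}$ has top cohomology in degree $-l_{n-1}+m\leq -l_n+m$. This is exactly what kills the term $H^{-l_n+m+1}(F_{n-1})$ in the long exact sequence and forces $H^{-l_n+m}(M)\twoheadrightarrow H^m(A)$ with no connecting map to worry about. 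Your filtration $V_0\subset\cdots\subset V_N$ satisfies only the minimality condition $d(V_i)\subset V_{i-1}\otimes U^{\geq 1}$, which imposes no ordering whatsoever between the degree ranges of $V_{N-1}$ and $V/V_{N-1}$.

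The gap shows up precisely where you flag the ``obstacle.'' Your stated inductive claim (amplitude $\geq m$ and $\dim H^{\text{top}}\geq\dim H^m(U)$) is too weak to drive the long exact sequence: to control the boundary terms you must also know \emph{where} the top cohomology of $V_{N-1}\otimes U$ lives, namely at degree $(\max\deg V_{N-1})+m$, and that nothing survives above it. Moreover, you discuss only the case where $V_{N-1}$ and $V/V_{N-1}$ share the same top degree $b$ (a case that, with the strengthened inductive claim, is actually easy: $H^{b+m+1}(V_{N-1}\otimes U)=0$ makes $H^{b+m}(\tilde M)\to H^{b+m}((V/V_{N-1})\otimes U)$ surjective, no ``compensation'' needed). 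The genuinely problematic case is when $\max\deg V_{N-1}$ is strictly larger than $\max\deg(V/V_{N-1})$: there the long exact sequence gives $H^{\text{top}}(\tilde M)=\operatorname{coker}(\partial)$ for the connecting map $\partial\colon H^{\text{top}-1}(Q)\to H^{\text{top}}(S)$, and you would need to show $\partial$ cannot kill too much, which you neither address nor rule out. (The paper's monotonicity makes this case impossible.) Your final sentence — ``tracking this compensation carefully is where the bulk of the argument lies'' — is essentially an admission that the proof is unfinished at exactly the point where it matters. In addition, the existence of a minimal semi-free resolution with $V$ finite-dimensional is asserted but would need justification; this is precisely the content of the cited result that the paper leans on.
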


\begin{proof}
  Let $U \subset A$ be as in
  Lemma~\ref{l:cohomology-positive-qiso-to-positive}.
  Then $\res_U^A$ induces an equivalence $\per(A) \sira \per(U)$
  preserving cohomology. Hence by replacing $A$ by $U$ we can assume
  in addition that $A^i=0$ 
  for all $i<0$ and that $A^0=\groundring.$ 

  Let $M \in \per(A),$ $M \not\cong 0.$
  Since $A$ is positively graded with $A^0=\groundring\subset A$ as a dg
  subalgebra, we can assume that $M$ has a
  finite filtration $0=F_0 \subset F_1 \subset \dots \subset F_n=M$ in
  $C(A)$ such that $F_i/F_{i-1} \cong [l_i] A$ for suitable $l_1 \geq
  l_2 \geq \dots \geq l_n$
  (this follows from \cite[Thm.~1]{OSdiss-perfect-dg-acs};
  see Rem.~9 there for a picture). 

  Obviously $M^i=0$ for $i < -l_1,$ and $1 \in A^0=([l_1]A)^{-l_1}
  \cong {F_1}^{-l_1} \subset M^{-l_1}$ is a cocycle in $M$ that defines
  an nonzero element of $H^{-l_1}(M).$ 

  We prove by induction on the length $n \geq 1$ of the filtration
  that there is a surjection
  $H^{-l_n+m}(M) \ra H^m(A) \not= 0$ and 
  that $H^{-l_n+m+i}(M)$ vanishes for $i>0.$
  For $n=1$ this is obviously true.
  Let $n \geq 2$ and assume that the claim is true for $n-1.$
  Consider the short exact sequence
  \begin{equation*}
    0 \ra F_{n-1} \hra M \sra [l_n]A \ra 0.
  \end{equation*}
  For $i>0$ we have obviously $H^{-l_n+m+i}([l_n]A)=H^{m+i}(A)=0$ and
  by induction 
  $H^{-l_n+m+i}(F_{n-1})=
  H^{-l_{n-1}+m+(i+l_{n-1}-l_n)}(F_{n-1})=0$ since $l_{n-1}-l_n\geq 0.$
  The long exact cohomology sequence then proves that
  $H^{-l_n+m+i}(M)=0$ for $i>0$ and that 
  \begin{equation*}
    H^{-l_n+m}(M) \ra H^{-l_n+m}([l_n]A)= H^m(A)\not= 0
  \end{equation*}
  is surjective.

  We have proved that the lowest (resp.~highest) cohomology of $M$
  lives in degree $-l_1$ (resp.~$-l_n+m$). Hence
  $\ampl(H(M))= -l_n+m+l_1 \geq m.$
\end{proof}

\begin{proposition}
  \label{p:homologically-positive-dg-algebra-plus-conditions-not-smooth}
  Let $\groundring$ be a field and let $A$ be a dg $\groundring$-algebra such that $H^i(A)=0$
  for all $i<0,$ 
  $H^0(A)=\groundring,$ and $H(A)$ is bounded above.
  Then $A$ is $\groundring$-smooth if and only if $\groundring=H(A).$
\end{proposition}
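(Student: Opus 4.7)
I would prove the two implications separately, with the easy direction first and then reducing the hard direction to the two lemmas set up in the preceding subsection.

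\textbf{Easy direction ($\Leftarrow$).} Assume $H(A)=\groundring$. The unit $\eta\colon \groundring \to A$ is a morphism of dg $\groundring$-algebras, and by hypothesis $H(\eta)\colon\groundring \to H^0(A)=\groundring$ is the identity while $H^i(A)=0$ for $i\neq 0$. Hence $\eta$ is a quasi-isomorphism, and by Lemma~\ref{l:smoothness-and-quasi-equis} (applied to the singleton dg $\groundring$-categories $\groundring$ and $A$) smoothness of $A$ is equivalent to smoothness of $\groundring$. The latter is trivial since $\groundring \in \per(\groundring\otimes_\groundring \groundring^\opp)=\per(\groundring)$.

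\textbf{Hard direction ($\Rightarrow$).} Assume $A$ is $\groundring$-smooth. First, by Lemma~\ref{l:cohomology-positive-qiso-to-positive} we choose a dg $\groundring$-subalgebra $U\subset A$ with $U^i=0$ for $i<0$, $U^0=\groundring$, and such that the inclusion $U\hra A$ is a quasi-isomorphism. By Lemma~\ref{l:smoothness-and-quasi-equis}, $U$ is $\groundring$-smooth, $H(U)=H(A)$ is bounded above, $H^0(U)=\groundring$, and $H^i(U)=0$ for $i<0$. It suffices to show $H(U)=\groundring$.

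Because $U^i=0$ for $i<0$, the projection $\pi\colon U\sra U^0=\groundring$ is a morphism of dg $\groundring$-algebras; it gives $\groundring$ the structure of a dg $U$-module with $H(\groundring)=\groundring$, which is finite-dimensional. Since $U$ is $\groundring$-smooth, Corollary~\ref{c:smooth-over-field-and-perfect}\ref{enum:smooth-fin-dim-then-perfect} yields $\groundring\in\per(U)$.

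Now suppose for contradiction that $H(U)\neq \groundring$. Since $H^0(U)=\groundring$, $H^i(U)=0$ for $i<0$, and $H(U)$ is bounded above, there exists a maximal integer $m$ with $H^m(U)\neq 0$, and necessarily $m>0$. The hypotheses of Lemma~\ref{l:cohomology-range-of-perfects-over-pos-dg-algs} are satisfied for $U$ and this $m$. Applied to the nonzero object $\groundring\in\per(U)$, the lemma gives $\ampl(H(\groundring))\geq m>0$, which contradicts $\ampl(H(\groundring))=0$. Therefore $H(U)=\groundring$, and consequently $H(A)=\groundring$.

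\textbf{Main obstacle.} The only delicate point is justifying that $\groundring$ carries a dg $U$-module structure, which is why the preliminary replacement $A\leadsto U$ from Lemma~\ref{l:cohomology-positive-qiso-to-positive} is essential: without strict vanishing of $U$ in negative degrees the augmentation would fail to be a dg algebra map. Once this is secured, the proof is a clean combination of Corollary~\ref{c:smooth-over-field-and-perfect}\ref{enum:smooth-fin-dim-then-perfect} with the amplitude estimate of Lemma~\ref{l:cohomology-range-of-perfects-over-pos-dg-algs}.
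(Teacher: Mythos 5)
Your argument is correct and reproduces the paper's first proof essentially verbatim: reduce via Lemma~\ref{l:cohomology-positive-qiso-to-positive} to the positively graded case, feed the augmentation module $\groundring$ into Corollary~\ref{c:smooth-over-field-and-perfect}\ref{enum:smooth-fin-dim-then-perfect}, and derive a contradiction from the amplitude bound of Lemma~\ref{l:cohomology-range-of-perfects-over-pos-dg-algs}. The paper also records a second, shorter proof that avoids the augmentation module entirely by applying the amplitude lemma directly to $A\otimes_\groundring A^\opp$ (whose top nonzero cohomology sits in degree $2r$) and the diagonal bimodule $A$, giving the contradiction $r=\ampl(H(A))\geq 2r$.
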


\begin{proof}
  Recall that smoothness is invariant under quasi-isomorphisms of dg
  $\groundring$-algebras (Lemma~\ref{l:smoothness-and-quasi-equis}).
  If $\groundring=H(A)$ then $\groundring \ra A$ is a quasi-isomorphism and hence $A$ is
  $\groundring$-smooth.

  Assume that $\groundring\not= H(A).$ We give two proofs showing that $A$ is
  not $\groundring$-smooth.

  First proof:
  Lemma
  \ref{l:cohomology-positive-qiso-to-positive} shows that we can
  assume in addition that $A^i=0$ for all $i<0$ and that $A^0=\groundring.$
  Then it is obvious that $A$ has a one dimensional
  "augmentation module" $\groundring.$ 
  If $A$ is $\groundring$-smooth, 
  part~\ref{enum:smooth-fin-dim-then-perfect} of
  Corollary~\ref{c:smooth-over-field-and-perfect} implies that $\groundring
  \in \per(A).$ This contradicts
  Lemma~\ref{l:cohomology-range-of-perfects-over-pos-dg-algs}.

  Second (easier) proof:
  Since we work over a field, smoothness of $A$ is equivalent to $A$
  being in $\per(A\otimes_\groundring A^\opp).$
  Let $r \in \DZ$ be maximal such that $H^r(A)\not=0.$ By assumption
  $0 < r < \infty.$
  Since $H(A\otimes_\groundring A^\opp) \cong H(A)
  \otimes_\groundring H(A^\opp)$ (at least as graded $\groundring$-modules), 
  $A\otimes_\groundring A^\opp$ satisfies the assumptions of
  Lemma~\ref{l:cohomology-range-of-perfects-over-pos-dg-algs}, and
  $m=2r$ is the degree of the highest nonzero cohomology group of
  $A\otimes_\groundring A^\opp.$ 
  The assumption $A \in \per(A\otimes_\groundring A^\opp)$ 
  yields the contradiction $r=\ampl(H(A)) \geq 2r.$ Hence $A$ is not
  $\groundring$-smooth.
\end{proof}

Before we can give the proof of the second criterion, 
Proposition~\ref{p:smoothness-over-local-graded-finite-homological-dim-algebras},
we explain some preparatory results and introduce some notation.
Let 
\begin{equation*}
  M= (\ldots \ra M_i \xra{f_i} M_{i+1} \ra \dots)
\end{equation*}
be a complex in $C(\groundring),$ i.\,e.\ a complex of complexes in
$\groundring$-vector spaces. We associate with $M$ a double complex 
$\Dbl(M)$ 
(of $\groundring$-vector spaces)
defined as follows: 
\begin{align*}
  \Dbl(M)^{ij} & := M_i^j,\\
  d': \Dbl(M)^{ij} & \ra \Dbl(M)^{i+1,j}, & m & \mapsto f_i(m),\\
  d'': \Dbl(M)^{ij} & \ra \Dbl(M)^{i,j+1}, & m & \mapsto (-1)^{i}d_{M_i}(m).
\end{align*}
Let $(N, d', d'')$ be a double complex. 
For $l \in \DZ$ let $F^lN$ be the subcomplex such that $(F^lN)^{ij}=0$
if $i < l$ and $(F^lN)^{ij}=N^{ij}$ if $i \geq l.$
The $F^lN$ define a decreasing filtration $\dots \supset F^lN \supset F^{l+1}N
\supset \dots$ on $N.$
We define the total complex 
$\Tot(N)$ associated to $N,$ a complex of vector spaces, by
\begin{align*}
  \Tot(N)^{n} & := \bigoplus_{i+j=n} N^{ij},\\
  d: \Tot(N)^{n} & \ra \Tot(N)^{n+1},\\
  N^{ij} \ni m& \mapsto d'(m)+d''(m) \in N^{i+1,j}\oplus N^{i,j+1}
  \subset \Tot(N)^{n+1}.
\end{align*}
It has an induced filtration.
We define $\tot:= \Tot \comp \Dbl.$

Let $K$ be a dg $\groundring$-algebra. 
Given a complex $M$ in $C(K),$ i.\,e.\ a complex of dg $K$-modules,
then each "column"
$\Dbl(M)^{i*}$ of $\Dbl(M)$ is obviously a graded $K$-module 
(and
differential and $K$-module structure are related by $d''(mk)=d''(m) k +
(-1)^{i+j} m d_K(k)$ for $m \in M^j_i$ and $k \in K$) 
and $d':\Dbl(M)^{i*}\ra \Dbl(M)^{i+1,*}$ is
$K$-linear. It follows that $\tot(M)$ becomes a dg $K$-module which is
equipped with a decreasing filtration by dg $K$-submodules $F^l\tot(M).$
Moreover it is clear that $\Dbl$ and $\Tot$ are functorial.

\begin{lemma}
  \label{l:minimal-graded-projective-resolution-and-cofibrant-resolution}
  Let $\groundring$ be a field and $K$ a graded $\groundring$-algebra
  such that $K^i=0$ for $i<0,$ $K^0=\groundring.$
  Let $M$ be a graded (right) $K$-module which is bounded below,
  i.\,e.\ there is $m \in \DZ$ such that $M^i=0$ for all $i<m.$
  Then $M$
  has a "minimal" graded free resolution, i.\,e.\ there is a complex
  \begin{equation*}
    P= (\ldots \ra P_i \xra{p_i} P_{i+1} \ra \ldots \ra P_{-1} \xra{p_{-1}}
    P_0 \ra 0 \ra \ldots)
  \end{equation*}
  of graded free $K$-modules together with a quasi-isomorphism 
  $P \ra M,$ given by $p_0: P_0 \ra M$ such that
  \begin{enumerate}
  \item 
    \label{enum:mgp-and-cof-resolution-cofibres}
    the obvious morphism $\tot(P) \ra M$ (induced by $p_0$) is a
    cofibrant resolution of $M$
    in $C(K)$ 
    and is bounded below by $m,$ i.\,e.\ $\tot(P)^i=0$ for $i<m$
    (here we view $M=(M,d_M=0)$ and all $(P_i, d_{P_i}=0)$ as dg $K$-modules, where
    $K=(K,d_K=0)$);
  \item 
    \label{enum:mgp-and-cof-resolution-tensor-augm-vanishes}
    the differential in
    the complex $P\otimes_K \groundring$ in $C(\groundring)$ vanishes
    (and in particular the differential in $\tot(P)\otimes_K
    \groundring=\tot(P\otimes_K \groundring)$ vanishes); 
  \item 
    \label{enum:mgp-and-cof-resolution-noetherian}
    if $K$ is a (right) Noetherian ring and $M$ is a finitely
    generated $K$-module, then
    all $P_i$ are finitely generated $K$-modules;
  \item 
    \label{enum:mgp-and-cof-resolution-proj-dim}
    if $M$ has finite projective dimension $s$ as a $K$-module,
    then $P_i=0$ for all $i < -s.$
  \end{enumerate}
  In particular, if $K$ is Noetherian and $M$ is finitely generated
  and of finite projective dimension, then
  $\dim_\groundring H(M \otimes^L_{K} \groundring) < \infty,$ where 
  $M \otimes^L_{K} \groundring \in D(\groundring)$ is obtained by extension of scalars along
  the obvious (augmentation) morphism $K \ra \groundring$ of dg $\groundring$-algebras.
\end{lemma}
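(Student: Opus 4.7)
The plan is to build $P$ inductively by a graded Nakayama argument and then verify the listed properties in order. For any bounded-below graded (right) $K$-module $N,$ the graded Nakayama lemma applies (valid since $K^i=0$ for $i<0,$ $K^0=\groundring,$ and $N$ is bounded below), so any homogeneous lift to $N$ of a basis of the graded $\groundring$-vector space $N\otimes_K\groundring=N/NK^{>0}$ generates $N$ as a $K$-module. Let $F(N)$ denote the free graded $K$-module on such a lift, with its tautological surjection $F(N)\twoheadrightarrow N$ (an isomorphism after $\otimes_K\groundring$). I would set $P_0:=F(M)$ with $p_0:P_0\twoheadrightarrow M,$ and for $i\geq 1$ recursively put $P_{-i}:=F(\ker p_{-i+1})$ with $p_{-i}:P_{-i}\twoheadrightarrow\ker p_{-i+1}\hookrightarrow P_{-i+1};$ each kernel is bounded below as a submodule of a bounded-below module, so this produces an exact complex $\cdots\to P_{-1}\to P_0\to M\to 0.$

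Next I would dispatch (1) and (2). Property (2) is immediate: $p_{-i}(P_{-i})=\ker p_{-i+1}$ embeds into $P_{-i+1}$ as the kernel of a surjection $P_{-i+1}\twoheadrightarrow\ker p_{-i+2}$ (or $\twoheadrightarrow M$ when $i=1$) that becomes an isomorphism after $\otimes_K\groundring,$ so $p_{-i}\otimes\id_\groundring=0.$ For the cofibrancy claim in (1), I would filter $\tot(P)$ by the exhaustive increasing chain $\Phi_n\subset\tot(P)$ of sub-dg-$K$-modules built from the columns $P_{-n},P_{-n+1},\ldots,P_0;$ each quotient $\Phi_n/\Phi_{n-1}\cong[n]P_{-n}$ is a shift of a graded free $K$-module viewed as a dg $K$-module with zero differential, hence a free dg $K$-module. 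Therefore $\tot(P)$ is semi-free and thus cofibrant by Lemma~\ref{l:cofibrant-dg-A-modules}. That $\tot(P)\to M$ is a quasi-isomorphism follows from exactness of $\cdots\to P_{-1}^j\to P_0^j\to M^j\to 0$ in each internal degree $j$ together with vanishing of the vertical differentials, via the standard filtration argument. For the bound $\tot(P)^n=0$ when $n<m,$ I would prove inductively that the generators of $P_{-i}$ sit in internal degrees $\geq m+i:$ in the lowest nonzero degree $d$ of $\ker p_{-i+1},$ minimality gives $P_{-i}^d\isomoto(\ker p_{-i+1})^d,$ which forces $\ker p_{-i}$ to vanish in degree $d$ and so bumps the lower bound up by at least one at each step; substituting into $\tot(P)^n=\bigoplus_{i\leq 0}P_i^{n-i}$ then yields the claim.

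For (3), Noetherianness plus finite generation of $M$ gives $\dim_\groundring(M\otimes_K\groundring)<\infty,$ so $P_0$ is finitely generated, and Noetherianness then propagates finite generation through each successive kernel to every $P_i.$ For (4), if $M$ has projective dimension $s,$ then the $s$-th syzygy $\Omega^sM:=\ker p_{-s+1}$ is projective, so the surjection $P_{-s}\twoheadrightarrow\Omega^sM$ splits as $P_{-s}=\Omega^sM\oplus K'$ with $K'\otimes_K\groundring=0;$ graded Nakayama then forces $K'=0,$ whence $\ker p_{-s}=0$ and $P_i=0$ for all $i<-s.$ The final assertion drops out: by (2), the differential on $\tot(P)\otimes_K\groundring=\tot(P\otimes_K\groundring)$ vanishes, so $H(M\otimes^L_K\groundring)\cong\bigoplus_{-s\leq i\leq 0}P_i\otimes_K\groundring$ as graded $\groundring$-vector spaces, which is a finite direct sum of finite-dimensional graded vector spaces by (3) and (4). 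The main technical obstacle is the degree-shift bookkeeping underlying the bound in (1): without tracking how minimality pushes each syzygy's lower bound upward by at least one, $\tot(P)$ could a priori pick up contributions below $m$ from generators of deep $P_{-i}$'s sitting at low internal degrees.
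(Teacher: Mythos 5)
Your proposal is correct and follows essentially the same route as the paper: build $P_0$ from a free cover of $M\otimes_K\groundring$ via graded Nakayama, iterate on kernels, observe the reduced maps $\bar p_i$ vanish for $i\le -1,$ check $\tot(P)$ is semi-free, track lower bounds on syzygies for the degree estimate, and reduce the final assertion to finiteness of $\tot(P\otimes_K\groundring).$ The only cosmetic difference is in part~\ref{enum:mgp-and-cof-resolution-proj-dim}, where the paper cites Bourbaki while you give the underlying split-exact-sequence-plus-Nakayama argument directly; this is the same content written out.
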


\begin{proof}
  We sometimes write $\ol{N}=N \otimes_K \groundring$ if $N$ is a graded
  $K$-module, and similarly for morphisms. 

  Since $P_0:=\ol{M}\otimes_{\groundring} K$ is a graded projective $K$-module, the
  obvious morphism $P_0 \ra \ol{M}$ factors through
  $M \ra \ol{M}$ to a morphism $p_0:P_0 \ra M.$
  Note that $\ol{p_0}:\ol{P_0} \ra \ol{M}$ is an 
  isomorphism; this implies in particular that $\ol{\Kokern p_0}=0,$
  hence $\Kokern p_0=0$ (since $\Kokern p_0$ is bounded below), and
  $p_0$ is surjective. Apply this method now 
  to the kernel of $p_0.$ By 
  induction we obtain a graded projective resolution
  \begin{equation*}
    \ldots \ra P_i \xra{p_i} P_{i+1} \ra \ldots \ra P_{-1} \xra{p_{-1}} P_0
    \xra{p_0} M \ra 0
  \end{equation*}
  of $M,$ such that all $\ol{p_i}$ for $i \leq -1$ vanish.
  This shows
  \ref{enum:mgp-and-cof-resolution-tensor-augm-vanishes}.
  Since the "vertical" differential $d''$ in $\Tot(P)$ vanishes it is
  obvious that $\tot(P) \ra M$ is a surjective quasi-isomorphism.
  There is an obvious filtration on $\tot(P)$ showing that $\tot(P)$
  is semi-free as a dg $K$-module and and hence cofibrant. 
  By construction $P_0$ is generated in degrees $\geq m$ and 
  $p_0: P_0 \ra M$ is an isomorphism in degrees $\leq
  m$; hence its kernel is generated in degrees $\geq m+1.$ So
  $P_{-1}$ is generated in degrees $\geq m+1$ and $p_{-1}$
  induces an isomorphism onto the kernel of $p_0$ in degrees $\leq
  m+1.$ By induction $P_i$ is generated in degrees $\geq m-i.$
  This implies that $\tot(P)$ is generated in degrees $\geq m.$
  These arguments show
  \ref{enum:mgp-and-cof-resolution-cofibres}.
  Claim \ref{enum:mgp-and-cof-resolution-noetherian} is obvious.

  Assume that $M$ has projective dimension $s.$
  Then $p_{-s}$ induces an
  isomorphism from $P_{-s}$ onto the kernel of $p_{-s+1}$
  (see 
  \cite[X.\S8.7, Prop.~8 (and
  Cor.~2)]{bourbaki-algebre-chap-10-algebre-homologique}). 
  Hence 
  $P_i=0$ for all $i<-s,$ proving
  \ref{enum:mgp-and-cof-resolution-proj-dim}.

  Assume that $K$ is Noetherian and $M$ is finitely generated and of finite projective
  dimension.
  Then $M \otimes_K^L \groundring$ is isomorphic 
  to the dg $\groundring$-module $\tot(P) \otimes_K \groundring=\tot(P \otimes_K \groundring)$
  with vanishing differential and finite (total) dimension.
\end{proof}

\begin{lemma}
  \label{l:acyclic-iff-derived-reduction-acyclic}
  Let $\groundring$ be a field and $K$ a graded $\groundring$-algebra
  such that $K^i=0$ for $i<0,$ $K^0=\groundring.$ We view $K$ as a dg
  ($\groundring$-)algebra with differential $d_K=0.$

  Let $M$ be a dg $K$-module that (or whose cohomology) is
  concentrated in degrees $\geq m$ for some $m \in \DZ.$
  Then 
  $H(M \otimes_K^L \groundring)$ is concentrated in degrees $\geq m,$ and
  $M \otimes_K^L \groundring$ is acyclic if and only if $M$ is acyclic.
\end{lemma}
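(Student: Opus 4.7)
The plan is to deduce both assertions from the existence of a suitable semi-free (hence cofibrant) resolution $P \to M$ with $P^i = 0$ for $i < m$, combined with the short exact sequence $0 \to K_{>0} \to K \to \groundring \to 0$ of dg $K$-modules coming from the augmentation $K \to K/K_{>0} = \groundring$, where $K_{>0} = \bigoplus_{i\geq 1} K^i$.

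First I would construct such a $P$: given a dg $K$-module $N$ with $H^i(N) = 0$ for $i < n$, build a semi-free resolution $P \to N$ with $P^i = 0$ for $i < n$ by induction. Since $K$ is concentrated in non-negative degrees, a free summand $[a]K$ has lowest non-zero entry in degree $-a$, so we only use summands $[a]K$ with $a \leq -n$. At stage one, add generators corresponding to cocycle representatives of a basis of $H^i(N)$ in each degree $i \geq n$; this makes $H(\phi_1)$ surjective and an isomorphism in degree $n$. At subsequent stages, kill elements of $\ker H(\phi_j)$: each such class is represented by a cocycle $w$ in degree $d \geq n+1$ (since $H(\phi_j)$ is an iso in degree $n$), and $\phi_j(w) = d_N(v)$ with $v$ of degree $d-1 \geq n$, so the new generator can be placed in degree $d-1 \geq n$.

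Granted this construction, the first assertion is immediate: $P$ is $K$-h-flat by Lemma~\ref{l:cofibrant-dg-A-module-is-A-h-flat}, so $M \otimes_K^L \groundring \simeq P \otimes_K \groundring = P/PK_{>0}$, and the latter inherits the degree bound $(\,\cdot\,)^i = 0$ for $i < m$ since $K_{>0}$ sits in strictly positive degrees. For the second assertion, the "if" direction is tautological. For "only if", the short exact sequence yields the triangle
\begin{equation*}
M \otimes_K^L K_{>0} \to M \to M \otimes_K^L \groundring \to [1](M \otimes_K^L K_{>0})
\end{equation*}
in $D(K)$; acyclicity of the third term gives $M \simeq M \otimes_K^L K_{>0}$. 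Now I would induct: assuming $H^i(M) = 0$ for $i < m + n$, take a concentrated cofibrant resolution $P \to M$ with $P^i = 0$ for $i < m+n$; then $M \otimes_K^L K_{>0} \simeq P \otimes_K K_{>0} = P \cdot K_{>0}$, and any element $p \cdot k \in PK_{>0}$ has degree $|p|+|k| \geq (m+n)+1$, so $H^i(M) = 0$ for $i < m+n+1$. Iterating over $n$ yields $H(M) = 0$.

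The main obstacle is the construction of the concentrated semi-free resolution, which requires simultaneous control on the cohomology of the partial resolutions and on the degrees of the newly added generators. Once that is settled, the cohomology bound and the triangle-plus-induction argument for acyclicity are formal.
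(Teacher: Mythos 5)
Your proof is correct, and it takes a genuinely different route from the paper's. For the degree bound on $H(M\otimes^L_K\groundring)$, you build a semi-free resolution $P\to M$ with $P^i=0$ for $i<m$ by direct cell attachment over the dg algebra $K$; the paper instead uses the triangle $B(M)\to Z(M)\to M$ (available because $d_K=0$), invokes Lemma~\ref{l:minimal-graded-projective-resolution-and-cofibrant-resolution} to produce minimal graded free resolutions of the bounded-below graded modules $B(M)$ and $Z(M)$, and takes the cone of the induced map $e$ between them. The divergence is sharpest in the converse direction: the paper exploits minimality of those resolutions, observing that if $M\otimes^L_K\groundring$ is acyclic then $e\otimes_K\id_\groundring$ is a quasi-isomorphism between complexes with zero differential, hence an isomorphism, so $e$ itself is an isomorphism of bounded-below graded free $K$-modules by a Nakayama-type argument, and therefore $M\simeq\Cone(e)$ is acyclic. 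You instead tensor $M$ against the short exact sequence $K_{>0}\to K\to\groundring$ to conclude $M\simeq M\otimes^L_K K_{>0}$, and then bootstrap the degree bound on $H(M)$ upward by one in each iteration. Your route avoids the Nakayama step and works directly with the cohomological degree bound without first truncating $M$; the paper's route avoids your inductive cell attachment by re-using Lemma~\ref{l:minimal-graded-projective-resolution-and-cofibrant-resolution}, which it needs elsewhere anyway. One detail that deserves spelling out in your construction: you need $H^m(\phi_j)$ to remain an isomorphism (not merely surjective) at every stage, so that all new kernel classes lie in degrees $\geq m+1$ and hence all new generators in degrees $\geq m$. This does hold -- because $P_j^{m-1}=0$ by the inductive degree bound, $P_j$ has zero differential in its lowest degree, and the elements $d(x)$ for new degree-$m$ generators represent linearly independent nonzero classes, so they cannot create new degree-$m$ cocycles -- but it is exactly the ``simultaneous control'' you flag as the main obstacle and should not be waved away.
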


\begin{proof}
  If $M$ is acyclic it is clear that $M \otimes_K^L \groundring$ is acyclic.

  If the cohomology of $M$ is concentrated in degrees $\geq m,$
  replace $M$ by the dg $K$-submodule defined as follows
  (cf.\ Lemma~\ref{l:cohomology-positive-qiso-to-positive}):
  It is zero in all degrees $<m,$ coincides with $M$ in all degrees
  $>m,$ and in degree $m$ it is the direct sum of a subspace of $Z(M)^m$ that goes 
  isomorphically onto $H^m(M)$ and a subspace of $M^m$ that goes
  isomorphically onto $B^{m+1}(M).$

  Hence we can assume without loss of generality that $M$ is
  concentrated in degrees $\geq m.$ Since $d_K=0$ the morphism $d_M:M
  \ra [1]M$ is ($K$-linear and hence) a morphism in $C(K).$ The short
  exact sequence $Z(M) \hra M \xsra{d_M} [1]B(M)$ in $C(K)$ yields a
  triangle in $D(K)$ and by rotation a triangle
  \begin{equation*}
    B(M) \ra Z(M) \ra M \xra{d_M} [1]B(M)
  \end{equation*}
  for some morphism $B(M) \ra Z(M)$ in $D(K).$
  Since $B(M)$ and $Z(M)$ have vanishing differential and are
  concentrated in degrees $\geq m+1$ and $\geq m$ respectively,
  Lemma~\ref{l:minimal-graded-projective-resolution-and-cofibrant-resolution} 
  yields cofibrant resolutions $P \ra B(M)$ and $Q \ra Z(M)$ in
  $C(K)$ such that
  $P$ and $Q$ are graded free as $K$-modules, concentrated in
  degrees
  $\geq m+1$ and $\geq m$ respectively, and such that the differential
  of $P \otimes_K \groundring$ and of $Q \otimes_K \groundring$ vanishes.
  Replacing the first two terms of the above triangle by these
  cofibrant resolutions yields a triangle
  \begin{equation*}
    P \ra Q \ra M \ra [1]P. 
  \end{equation*}
  Since $P$ is h-projective (even cofibrant) we can assume that the
  morphism $P \ra Q$ is represented by a morphism $e: P \ra Q$ in
  $C(K).$ Note that $\Cone(e)$ is h-projective and concentrated in
  degrees $\geq m.$ Hence there is a quasi-isomorphism $\Cone(e) \ra
  M.$
  This implies that $\Cone(e\otimes_K \id_\groundring)=\Cone(e)\otimes_K \groundring \cong
  M \otimes_K^L \groundring$ has cohomology concentrated in degrees $\geq m.$

  If $M \otimes_K^L \groundring$ is acyclic, then 
  $e \otimes_K \id_\groundring: P\otimes_K \groundring \ra Q \otimes_K
  \groundring$ is a
  quasi-isomorphism, and an isomorphism since the differentials of 
  $P\otimes_K \groundring$ and $Q \otimes_K \groundring$ vanish. A morphism $f$ of bounded
  below graded free $K$-modules is an isomorphism if and only if
  $f\otimes_K \id_\groundring$ is an isomorphism. This implies that $e:P \ra
  Q$ is an isomorphism, and hence $\Cone(e) \cong M$ is acyclic.
\end{proof}

\begin{proposition}
  \label{p:smoothness-over-local-graded-finite-homological-dim-algebras}
  Let $\groundring$ be a field and $K$ a graded
  commutative graded $\groundring$-algebra
  such that $K^i=0$ for $i<0,$ $K^0=\groundring.$ We view $K$ as a (graded
  commutative) dg $\groundring$-algebra with differential $d_K=0.$
  Assume that $K$ is a (right) Noetherian ring of finite global
  dimension.
  Let $A$ be a dg $K$-algebra such that $H(A)$ is a finitely generated
  $H(K)$-module
  (of course $H(K)=K$) 
  satisfying $H(A)^i=0$ for $i<0$
  and $H^0(A)=\groundring.$ 
  Furthermore we assume that the structure morphism $K \ra A$ induces
  a monomorphism $K^1=H^1(K) \hra H^1(A)$ on the first cohomology
  groups (this is the case for example if $K^1$ vanishes).
  
  Then $A$ is $K$-smooth if and only if (the structure morphism) $K
  \ra A$ is a quasi-isomorphism.
\end{proposition}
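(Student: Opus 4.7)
The plan is to dispatch one direction trivially and handle the other by passing to $\bar A := A \otimes_K^L \groundring$ and applying Proposition~\ref{p:homologically-positive-dg-algebra-plus-conditions-not-smooth}. If $K \to A$ is a quasi-isomorphism, then $A$ is quasi-equivalent to $K,$ which is tautologically $K$-smooth (its diagonal bimodule coincides with the free module $K$ over $K \otimes_K K = K$), so Lemma~\ref{l:smoothness-and-quasi-equis} gives $K$-smoothness of $A.$ For the converse, assume $A$ is $K$-smooth and let $C = \cone(K \to A)$ in $D(K),$ fitting into a triangle $K \to A \to C \to [1]K.$ Using $H^i(K) = K^i$ (since $d_K = 0$), the vanishings $K^i = 0 = H^i(A)$ for $i < 0,$ the isomorphism $K^0 = \groundring \isomoto H^0(A) = \groundring,$ and the injectivity of $K^1 \hookrightarrow H^1(A),$ the long exact cohomology sequence of the triangle forces $H^i(C) = 0$ for every $i \leq 0;$ thus $H(C)$ is concentrated in degrees $\geq 1.$

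Now set $\bar A := A \otimes_K^L \groundring$ and $\bar C := C \otimes_K^L \groundring.$ Theorem~\ref{t:smoothness-and-base-change}\ref{enum:smoothness-and-base-change-preserved} makes $\bar A$ into a $\groundring$-smooth dg $\groundring$-algebra. Lemma~\ref{l:acyclic-iff-derived-reduction-acyclic} applied to $A$ shows $H^i(\bar A) = 0$ for $i < 0,$ and applied to $C$ shows $H^i(\bar C) = 0$ for $i \leq 0.$ Applying $? \otimes_K^L \groundring$ to the cone triangle produces a triangle $\groundring \to \bar A \to \bar C \to [1]\groundring$ in $D(\groundring)$ whose long exact sequence, combined with the vanishing of $H^i(\bar C)$ for $i \leq 0,$ yields $H^0(\bar A) = \groundring$ and isomorphisms $H^i(\bar A) \isomoto H^i(\bar C)$ for $i \geq 1.$

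It remains to verify that $H(\bar A)$ is bounded above. Because $H(A)$ is a finitely generated graded $K$-module and $K$ is Noetherian of finite global dimension, Lemma~\ref{l:minimal-graded-projective-resolution-and-cofibrant-resolution} yields a finite minimal graded free resolution of $H(A)$ with finitely generated terms, so $\Tor_K^*(H(A), \groundring)$ is finite-dimensional over $\groundring.$ The standard change-of-rings spectral sequence
\begin{equation*}
  E_2^{p,q} = \Tor_K^{-p}(H(A), \groundring)^q \Rightarrow H^{p+q}(\bar A),
\end{equation*}
obtained by filtering $A' \otimes_K Q$ by $Q$-degree for a $K$-h-flat resolution $A' \to A$ and a graded free resolution $Q \to \groundring,$ then forces $H(\bar A)$ to be finite-dimensional, in particular bounded above. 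Hence $\bar A$ meets all the hypotheses of Proposition~\ref{p:homologically-positive-dg-algebra-plus-conditions-not-smooth}, giving $H(\bar A) = \groundring.$ Combined with $H^i(\bar A) \cong H^i(\bar C)$ for $i \geq 1,$ this gives $H(\bar C) = 0;$ Lemma~\ref{l:acyclic-iff-derived-reduction-acyclic} applied to $C$ then yields $H(C) = 0,$ i.e., $K \to A$ is a quasi-isomorphism. The hardest step will be setting up the change-of-rings spectral sequence with its two interlocking gradings (cohomological degree plus internal $K$-grading) and confirming its convergence; everything else is routine long-exact-sequence bookkeeping with the cited lemmas.
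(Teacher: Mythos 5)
Your proposal is correct and follows essentially the same route as the paper: reduce to the field case via $\bar A = A\otimes_K^L\groundring$ and apply Proposition~\ref{p:homologically-positive-dg-algebra-plus-conditions-not-smooth}, with the boundedness of $H(\bar A)$ established by the spectral sequence whose $E_2$-page is $\Tor^K(H(A),\groundring)$ (the paper phrases this as the column filtration on $\tot(A\otimes_K P)$ for a minimal resolution $P\to\groundring$, but it is the same spectral sequence and the same use of Lemma~\ref{l:minimal-graded-projective-resolution-and-cofibrant-resolution}). The cone triangle, the use of Lemma~\ref{l:acyclic-iff-derived-reduction-acyclic}, and the invocation of Theorem~\ref{t:smoothness-and-base-change} all match the paper's argument.
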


\begin{proof}
  If $K \ra A$ is a quasi-isomorphism then $A$ is obviously $K$-smooth
  (Lemma~\ref{l:smoothness-and-quasi-equis}).
  
  Consider $K \ra A$ as a morphism of dg $K$-modules, and fit it into
  a triangle
  \begin{equation*}
    K \ra A \ra Q \ra [1]K
  \end{equation*}
  in $D(K).$ Since $K \ra A$ induces isomorphisms on
  cohomology in all degrees $\leq 0$ and a monomorphism in degree one,
  $H(Q)$ is concentrated in degrees $\geq 1.$
  Extension of scalars along $K \sra K/K^{> 0}=\groundring$
  yields the triangle
  \begin{equation*}
    \groundring \ra A\otimes_K^L \groundring  \ra Q\otimes_K^L \groundring  \ra [1]\groundring.
  \end{equation*}
  Lemma~\ref{l:acyclic-iff-derived-reduction-acyclic} and the long
  exact cohomology sequence show that $H^i(A\otimes_K^L \groundring)$ vanishes
  for $i < 0$ and that $\groundring \sira H^0(A \otimes_K^L \groundring)$ 
  canonically; moreover
  $\groundring \sira H(A \otimes^L_K \groundring)$
  if and only if $K \ra A$ is a quasi-isomorphism.

  The dg $K$-version of
  \cite[Lemma~13.5]{drinfeld-dg-quotients}
  and 
  Lemma~\ref{l:cofibrant-dg-K-categories}
  yield a cofibrant resolution $\tildew{A} \ra A$ where $\tildew{A}$
  is a (semi-free and) cofibrant dg $K$-algebra $\tildew{A}.$
  Let $B:=\tildew{A} \otimes_K \groundring.$

  Since $\tildew{A} \ra A$ can be viewed as a cofibrant resolution in
  $C(K)$ by Lemma
  \ref{l:cofibrant-dg-K-cat-has-cofibrant-morphism-spaces}, we
  obtain $B \cong A\otimes^L_K \groundring$ in $D(\groundring).$ 
  From the above we obtain $H(B)^i=0$ for $i <0$ and $H^0(B)=\groundring$;
  moreover $\groundring=H(B)$ if and only if $K \ra A$ is a quasi-isomorphism.
  
  \textbf{Claim:} $H(B)$ is bounded above (even finite dimensional as
  a $\groundring$-vector space).

  Assuming this claim, we proceed as follows.
  Let $A$ be $K$-smooth. Then $Q(A)\otimes_K \groundring$ is
  $\groundring$-smooth by Theorem~\ref{t:smoothness-and-base-change}, 
  part \ref{enum:smoothness-and-base-change-preserved},
  and the same is true for $B$ by 
  Remark~\ref{rem:how-to-compute-base-change}.
  Now Proposition~\ref{p:homologically-positive-dg-algebra-plus-conditions-not-smooth} 
  shows that $\groundring=H(B).$ Hence $K \ra A$ is a quasi-isomorphism.

  \textbf{Proof of the claim:}
  We prove that $H(A\otimes^L_K \groundring)$
  has finite $\groundring$-dimension. 
  Let $P$ be a "minimal" graded free resolution of the graded left
  $K$-module $M=\groundring$ as provided by
  Lemma~\ref{l:minimal-graded-projective-resolution-and-cofibrant-resolution}
  (note that $K$ is graded commutative).
  By assumption on $K$ we know that $P_{i}=0$ for $i \ll 0$ and that
  all $P_i$ are finitely generated $K$-modules.
  Then $A\otimes_K^L \groundring \cong A \otimes_K \tot(P) = \tot(A\otimes_K P)$
  in $D(\groundring).$ Recall the decreasing filtration $F^l\tot(A\otimes_K P)$
  of the dg $\groundring$-module $\tot(A\otimes_K P).$ It is finite in our case.
  It gives rise to a spectral sequence $\{E_r^{ij}\}$ converging to
  $E_\infty^{ij}=\gr^i(H^{i+j}(\tot(A\otimes_K P))).$ Hence it is
  sufficient to prove that some page of the spectral sequence is
  finite dimensional.

  Note that $P_i$ is a graded free $K$-module, hence it is isomorphic
  to a direct sum of (finitely 
  many) shifts of $K$
  as a dg $K$-module; hence $A\otimes_K P_i$ is isomorphic to 
  a direct sum of shifts of $A$ (as a dg $A$-module), and
  the $E_1$-page of our spectral sequence is given by
  \begin{align*}
    E_1^{ij} 
    & = H^{i+j}(\gr^i \tot(A\otimes_K P)) = H^{i+j}([-i]A
    \otimes_K P_i) \\
    & = H^j(A\otimes_K P_i)= H^j(A) \otimes_K P_i,
  \end{align*}
  with differential $d_1:E_1^{ij} \ra E_1^{i+1,j}$ equal to 
  \begin{equation*}
    \id_{H^j(A)}\otimes p_i: 
    H^j(A) \otimes_K P_i \ra 
    H^j(A) \otimes_K P_{i+1}. 
  \end{equation*}

  Let $H(A) \otimes_K P$ be the complex in $C(H(A))$ obtained from the
  complex $P$ in $C(K)$ by extension of scalars along $K=H(K) \ra H(A).$
  The vertical differential  $d''$ of its double complex
  $\Dbl(H(A) \otimes_K P)$ vanishes since $d_{H(A)}=0$ and $d_{P_i}=0.$
  If we forget it we have $E_1 = \Dbl(H(A) \otimes_K P).$
  The vanishing of $d''$ then implies that
  $E_2=H(E_1)$ has the same dimension as 
  $H(\tot(H(A)\otimes_K P)).$ Note that
  $\tot(H(A)\otimes_K P)= H(A) \otimes_K \tot(P) \cong H(A)
  \otimes_K^L \groundring.$ 
  Lemma~\ref{l:minimal-graded-projective-resolution-and-cofibrant-resolution}, 
  now applied to $M=H(A)$ (which is finitely generated and of finite
  projective dimension as a $K$-module), shows that
  $\dim_\groundring H(H(A) \otimes^L_{K} \groundring) < \infty.$
  Hence $E_2$ is finite dimensional.
\end{proof}

\section{Smoothness of equivariant derived categories}
\label{sec:smoothn-equiv-derived-cats}

\subsection{Sheaves of dg modules over sheaves of dg algebras}
\label{sec:sheaves-dg-modules-over-dg-algebras}

Let $X$ be a topological space. 
We work with sheaves of modules over a fixed field $\groundring$ 
(up to and including Section~\ref{sec:inj-model-structure-on-sheaf-CA}
it could be a commutative ring; later on it will be $\DR$) on $X.$
Let $\mathcal{A}=\mathcal{A}_X$ be a sheaf of dg ($\groundring$-)algebras on $X.$
We denote by $\calMod(\mathcal{A})$ the
following dg ($\groundring$-)category: Objects are dg (right)
$\mathcal{A}$-modules (= sheaves of dg modules over the sheaf
$\mathcal{A}$ of dg algebras), morphisms are $\mathcal{A}$-linear (and not
necessarily compatible with the differentials), a morphism has
degree $n$ if it raises the degree by $n,$ and the differentials on
morphism spaces are defined in the usual way.

We define the abelian category $C(\mathcal{A}),$ the (triangulated)
homotopy category 
$\mathcal{H}(\mathcal{A}):=[\calMod(\mathcal{A})]$
and the (triangulated) derived
category $D(\mathcal{A})$ of dg $\mathcal{A}$-modules in the obvious
way. Quasi-isomorphisms in $C(\mathcal{A})$ or
$\mathcal{H}(\mathcal{A})$ are defined in the obvious way.

For example, the constant sheaf $\ul{\groundring}=\ul{\groundring}_X$
on $X$ with stalk $\groundring$ is a sheaf of dg algebras
and $D(\ul{\groundring}_X)$ is the usual (unbounded) derived category
of sheaves of $\groundring$-vector spaces on $X.$ We denote this
category sometimes by $D(X).$ 

\subsubsection{Injective model structure on \texorpdfstring{$C(\mathcal{A})$}{C(A)}}
\label{sec:inj-model-structure-on-sheaf-CA}

The following result is presumably known but we could not find a good
reference (cf.\ \cite{beke-sheafifiable-homotopy-model-categories} 
and \cite{hovey-model-cat-str-chain-cmplxs-sheaves-arxiv}).

\begin{proposition}
  \label{p:model-structure-sheaf-C-A}
  There is a cofibrantly generated model structure on
  $C(\mathcal{A})$ such that the weak equivalences are the
  quasi-isomorphisms and the cofibrations are the monomorphisms.
  
  We call it the \define{injective} model structure on $C(\mathcal{A}).$
\end{proposition}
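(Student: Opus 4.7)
The plan is to apply J.~Smith's recognition theorem for cofibrantly generated model structures on locally presentable categories, in the sheaf-theoretic form due to Beke~\cite{beke-sheafifiable-homotopy-model-categories}. Concretely, given a locally presentable category $\mathcal{C}$, a class $W$ of morphisms, and a set $I$ of morphisms, a model structure with weak equivalences $W$ and generating cofibrations $I$ exists provided $W$ satisfies the two-out-of-three and retract properties, $W$ is accessible as a subcategory of $\mathcal{C}^{\to}$, the class $\cof(I)\cap W$ is closed under pushouts and transfinite composition, and every $I$-injective morphism lies in $W$.

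First I will verify that $C(\mathcal{A})$ is a Grothendieck abelian category: it has all small limits and colimits computed at the level of sheaves of graded $\mathcal{A}^{\#}$-modules, filtered colimits are exact (inherited from the sheaf category), and a generator is given by the set of objects $[n]j_{U!}(\mathcal{A}|_U)$ together with their cones $\Cone(\id_{[n]j_{U!}(\mathcal{A}|_U)})$, where $U$ ranges over the opens of $X$ and $n\in\DZ$. Hence $C(\mathcal{A})$ is locally presentable. By the bounded-cofibration argument standard in the Grothendieck abelian setting (cf.\ Beke loc.\ cit.), there is a regular cardinal $\kappa$ and a set $I$ of monomorphisms between $\kappa$-presentable objects whose saturation $\cof(I)$ is the class of all monomorphisms in $C(\mathcal{A})$.

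I next verify the remaining hypotheses of Smith's theorem with $W$ the class of quasi-isomorphisms. Two-out-of-three and closure under retracts are clear. For accessibility, the cohomology sheaf functors $\mathcal{H}^{n}$ preserve filtered colimits, these being exact in any Grothendieck abelian category; hence $W$ is the preimage under an accessible functor of the accessible class of families of isomorphisms of sheaves of $\groundring$-vector spaces, and is itself accessible. The class $\cof(I)\cap W$ is closed under pushouts (a pushout of a monomorphism along an arbitrary map is a monomorphism with isomorphic cokernel, so cohomology is preserved) and under transfinite composition (filtered colimits of monic quasi-isomorphisms are again monic quasi-isomorphisms by exactness of filtered colimits).

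Finally, I verify $I\text{-inj}\subseteq W$: a morphism $p:M\ra N$ with the right lifting property against every monomorphism is a split epimorphism (lift $0\hra N$ against $p$), so $M\cong N\oplus K$ with $K=\ker p$, and it suffices to show $K$ is acyclic. The object $K$ inherits the property that $0\ra K$ has RLP against every monomorphism; applied to the canonical mono $K\hra\Cone(\id_K)$, this produces a retraction whose off-diagonal component is precisely a contracting homotopy $s:K\ra K$ of degree $-1$ satisfying $d_K s+sd_K=\id_K$, so $K$ is contractible and hence acyclic. Smith's theorem then delivers the claimed cofibrantly generated model structure. The main technical hurdle is the accessibility of $W$ and the production of the set $I$ generating all monomorphisms; both rely on the Grothendieck-abelian structure of $C(\mathcal{A})$ and standard cardinal arithmetic within such a category.
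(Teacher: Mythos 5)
Your proposal follows the same overall strategy as the paper: verify the hypotheses of Smith's recognition theorem in Beke's sheaf-theoretic form, with $C(\mathcal{A})$ a Grothendieck category so that the monomorphisms are generated by a set and the class of quasi-isomorphisms is accessible via the cohomology functor. The verification of the generating set, the accessibility of $W$, and the closure of $\Mono\cap W$ under pushouts and transfinite composition all parallel the paper's argument (which cites the specific Propositions 1.12, 1.15, 1.18 in Beke; your version is correct but somewhat more informal on the accessibility step).

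The one genuinely different ingredient is your verification that $\Mono\hy\inj\subset W$. The paper checks this stalkwise: given $\phi\colon M\to N$ with the right lifting property against all monomorphisms, it shows $\phi_x$ is a quasi-isomorphism at each $x\in X$ by solving explicit lifting problems against $0\to S_{n,U}$ and $S_{n,U}\hra D_{n,U}$. Your argument is global and more conceptual: $\phi$ is a split epimorphism (lift $0\hra N$), so $M\cong N\oplus K$ with $K=\ker\phi$; then $K\to 0$ inherits the right lifting property against all monomorphisms (you wrote ``$0\to K$'' here, which is a slip---it is the morphism $K\to 0$ that has the RLP, equivalently $K$ is injective with respect to the monomorphisms); applying this to the monomorphism $K\hra\Cone(\id_K)$ yields a retraction whose off-diagonal component is a contracting homotopy, so $K$ is contractible and hence acyclic, whence $\phi$ is a quasi-isomorphism. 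This is a clean, self-contained argument that avoids any stalk computation, and is in fact perhaps preferable: it makes no use of the specific structure of sheaves and would work verbatim in any Grothendieck abelian dg setting once the other hypotheses are established. The paper's stalkwise argument, by contrast, needs the representing objects $S_{n,U}$, $D_{n,U}$ and the computation of morphisms out of them---which the paper needs anyway to establish that $C(\mathcal{A})$ is Grothendieck, so it is not a real extra cost. Both routes are valid.
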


\begin{proof}
  Our proof essentially coincides with the proof of
  \cite[Prop.~3.13]{beke-sheafifiable-homotopy-model-categories}
  and is based on the result
  \cite[Thm.~1.7]{beke-sheafifiable-homotopy-model-categories}
  by J.~Smith. We
  therefore need to check the assumptions and conditions c0-c3 there,
  for the 
  category $C(\mathcal{A}),$ $\mathcal{W}$ the class of
  quasi-isomorphisms in $C(\mathcal{A}),$ and $I$ a suitable set of
  morphisms to be found. 
  
  If $j: U \hra X$ is the inclusion of an open subset, define
  $S_{n,U}:=[n] j_!j^*\mathcal{A}$ and $D_{n,U}:=\Cone(\id_{S_{n,U}}).$
  Then, for $M \in C(\mathcal{A}),$ we have canonical isomorphisms
  \begin{align}
    \label{eq:SnU-and-cycles}
    (C(\mathcal{A}))(S_{n,U}, M) & \sira Z^{-n}(M(U)),\\
    \label{eq:DnU-and-sections}
    (C(\mathcal{A}))(D_{n,U}, M) & \sira M^{-n-1}(U).
  \end{align} 
  The second equality implies that $C(\mathcal{A})$ is a Grothendieck
  category (cf.\ the proof of \cite[Thm.~18.1.6]{KS-cat-sh}), and in
  particular locally presentable
  (\cite[Prop.~3.10]{beke-sheafifiable-homotopy-model-categories}).
  In the same way $C(H(\mathcal{A}))$ is locally presentable.

  Let $H: C(\mathcal{A}) \ra C(H(\mathcal{A}))$ be the cohomology
  functor. It follows as in the proof
  of \cite[Prop.~3.13]{beke-sheafifiable-homotopy-model-categories},
  using
  \cite[Prop.~1.15 and
  1.18]{beke-sheafifiable-homotopy-model-categories},
  that c3 is satisfied.

  Let $\Mono$ be the class of monomorphisms in $C(\mathcal{A}).$
  Then \cite[Prop.~1.12]{beke-sheafifiable-homotopy-model-categories}
  provides a set $I \subset \Mono$ such that $\Mono = I\hy\cof.$
  It is clear that $\Mono \cap \mathcal{W}$ is closed under pushouts
  and transfinite compositions (cf.\ proof of
  \cite[Cor.~1.7]{hovey-model-cat-str-chain-cmplxs-sheaves-arxiv}). This shows c2.

  Condition c0 is obvious, so we are left to show c1, i.\,e.\
  $I\hy\inj \subset \mathcal{W}.$
  Since $I\hy\inj=(I\hy\cof)\hy\inj=\Mono\hy\inj$ we need to show
  that $\Mono\hy\inj \subset \mathcal{W}.$
  Let $(\phi: M \ra N) \in \Mono\hy\inj.$ 
  Let $x \in X.$ 
  We need to show that $\phi_x: M_x \ra N_x$ is a quasi-isomorphism.
  Any element of $Z^{-n}(N_x)$ comes from some $f \in
  Z^{-n}(N(U)),$ for some open subset $U \subset X$ containing $x.$ 
  This element corresponds (use \eqref{eq:SnU-and-cycles}) to the
  lower horizontal morphism in the commutative diagram
  \begin{equation*}
    \xymatrix{
      {0} \ar[r] \ar[d] &
      {M} \ar[d]^-\phi \\
      {S_{n,U}} \ar[r]^-f \ar@{..>}[ur]|h&
      {N}
    }
  \end{equation*}
  which admits the indicated lift $h.$ This shows that $\phi_x$ is
  surjective on cocycles.
  
  Now assume that an element of $Z^{-n}(M_x)$ becomes a coboundary in
  $N_x.$ Then there are an open subset $U \subset X$ containing $x$ and
  elements $f \in Z^{-n}(M(U))$ and $g \in N^{-n-1}(U)$ such that $f_x$ is the
  given cocycle and $\phi(f)= d_N(g).$ If we interpret $g$ and $f$ as
  morphisms using
  \eqref{eq:SnU-and-cycles} and
  \eqref{eq:DnU-and-sections} we obtain a commutative diagram
  \begin{equation*}
    \xymatrix{
      {S_{n,U}} \ar[r]^-f \ar[d] &
      {M} \ar[d]^-\phi \\
      {D_{n,U}} \ar[r]^-g \ar@{..>}[ur]|h &
      {N}
    }
  \end{equation*}
  whose left vertical morphism is the obvious monomorphisms to the
  cone; then there is a lift $h$ as indicated showing that the
  cocycle $f$
  is already a coboundary.

  Now we can apply
  \cite[Thm.~1.7]{beke-sheafifiable-homotopy-model-categories}.
\end{proof}

A dg $\mathcal{A}$-module $I$ is called h-injective
if all morphisms $N \ra I$ in $C(\mathcal{A})$ with acyclic $N$
are homotopic to zero, 
i.\,e.\ $(\mathcal{H}(\mathcal{A}))(N, I)=0.$
The arguments dual to those used in the proof
of Lemma~\ref{l:cofobj-is-h-proj-dgA}
show that 
fibrant objects in $C(\mathcal{A})$ are h-injective.

Denote by $\mathcal{H}(\mathcal{A})_{\hinj}$ 
(resp.~$\calMod(\mathcal{A})_{\hinj}$)
the full subcategory of
$\mathcal{H}(\mathcal{A})$ 
(resp.~$\calMod(\mathcal{A})$)
consisting of h-injective dg $\mathcal{A}$-modules.
Standard arguments show that
the canonical  functor
\begin{equation}
  \label{eq:homotopy-cat-h-injectives-isom-derived-cat-dg-sheaves-version}
  \mathcal{H}(\mathcal{A})_{\hinj} \sira D(\mathcal{A})
\end{equation}
is a triangulated equivalence. 
Since $\mathcal{H}(\mathcal{A})_{\hinj}
:=[\calMod(\mathcal{A})_{\hinj}]$
this means that $\calMod(\mathcal{A})_{\hinj}$ is a dg enhancement of $D(\mathcal{A}).$

In Section~\ref{sec:proj-model-structure-dg-A-modules} a (P) resolution
of an object $X$ was defined to be a trivial fibration $C \ra X$ with
$C$ having property (P).
All objects in the (projective) model categories considered there were fibrant.
Now in the (injective) model category $C(\mathcal{A}),$ all objects
are cofibrant, so it is convenient to extend this definition and to say that a
(P) resolution of an object $X$ is a trivial cofibration $X \ra F$ with
$F$ having property (P). Even if not mentioned we hope that it
is always clear from the context which object is resolved.

We fix for any any dg $\mathcal{A}$-module $N$ a fibrant (and hence
h-injective) resolution, i.\,e.\ 
a monomorphic quasi-isomorphism $N \hra \iota(N)$ in $C(\mathcal{A})$ with $\iota(N)$
fibrant (and hence h-injective). Then $N \mapsto \iota(N)$ extends to a functor
\begin{equation}
  \label{eq:homotopy-cat-h-injectives-isom-derived-cat-dg-sheaves-version-quasi-inverse}
  \iota:D(\mathcal{A}) \ra \mathcal{H}(\mathcal{A})_{\hinj}
\end{equation}
which is
quasi-inverse to 
\eqref{eq:homotopy-cat-h-injectives-isom-derived-cat-dg-sheaves-version}.
We will use $\iota$ for (right-)deriving certain functors.

\subsubsection{Extension and restriction }
\label{sec:production-and-restriction}

The structure morphism $\ul{\groundring} \ra \mathcal{A}$ gives rise to dg
functors $\res:=\res^\mathcal{A}_{\ul{\groundring}}$ and $\pro:=\pro^\mathcal{A}_{\ul{\groundring}},$
\begin{equation}
  \label{eq:prod-res-adjunction-sheaves-dg-algs}
  \xymatrix{
    {\calMod(\ul{\groundring})} \ar@/^0.3pc/[r]^{\pro} &
    \ar@/^0.3pc/[l]^{\res} 
    {\calMod(\mathcal{A}),} 
  }
\end{equation}
and there is an adjunction $(\pro, \res)$ given by the obvious isomorphisms.

The following assumption on the structure morphism will be satisfied
in our main applications.
\begin{equation}
  \label{eq:k-to-A-qiso}
  \tag{Str-Qiso}
  \text{\bf{The morphism $\ul{\groundring} \ra \mathcal{A}$ is a quasi-isomorphism.}}
\end{equation}

\begin{lemma}
  \label{l:qiso-sheaves-dg-algebras-equiv-derived}
  Assume that \eqref{eq:k-to-A-qiso} is satisfied. Then the adjunction
  \eqref{eq:prod-res-adjunction-sheaves-dg-algs} induces quasi-inverse
  equivalences
  \begin{equation*}
    \label{eq:prod-res-adjunction-sheaves-dg-algs-derived-equiv-if-qiso}
    \xymatrix{
      {D(\ul{\groundring})} \ar@/^0.3pc/[r]^{\pro} \ar@{}[r]|\sim &
      \ar@/^0.3pc/[l]^{\res} 
      {D(\mathcal{A})} 
    }
  \end{equation*}
  of triangulated categories.
\end{lemma}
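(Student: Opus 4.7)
The plan is to verify that, because $\groundring$ is a field, both $\pro$ and $\res$ already preserve quasi-isomorphisms under the hypothesis, so the adjunction descends directly to the derived level without any need to left-derive $\pro$, and then to check that the induced unit and counit are isomorphisms.

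First, $\res$ preserves quasi-isomorphisms (and acyclics), because the cohomology of a dg $\mathcal{A}$-module coincides with its cohomology when regarded merely as a dg $\ul{\groundring}$-module. In fact $\res$ is conservative: a morphism in $C(\mathcal{A})$ is a quasi-isomorphism if and only if its image in $C(\ul{\groundring})$ is. Next, $\pro(N) = N \otimes_{\ul{\groundring}} \mathcal{A}$ also preserves quasi-isomorphisms. Indeed, since stalks commute with sheaf tensor products, the stalk at $x$ is $N_x \otimes_{\groundring} \mathcal{A}_x$; as $\groundring$ is a field, the Künneth formula implies that every complex of $\groundring$-modules is h-flat, so tensoring with $\mathcal{A}_x$ preserves quasi-isomorphisms stalkwise, and hence $\pro$ preserves quasi-isomorphisms. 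Consequently the sheaf-level adjunction $(\pro,\res)$ descends directly to an adjunction of triangulated functors between $D(\ul{\groundring})$ and $D(\mathcal{A})$.

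To conclude, we check that the unit and counit of this derived adjunction are isomorphisms. The unit $\eta_N : N \ra N \otimes_{\ul{\groundring}} \mathcal{A}$ is obtained from the structure morphism $\ul{\groundring} \ra \mathcal{A}$ by tensoring with $N$. The same stalkwise Künneth argument shows that this is a quasi-isomorphism as soon as $\ul{\groundring} \ra \mathcal{A}$ is one, which is exactly \eqref{eq:k-to-A-qiso}. For the counit $\varepsilon_M : \res(M) \otimes_{\ul{\groundring}} \mathcal{A} \ra M$, the triangle identity $\res(\varepsilon_M) \circ \eta_{\res M} = \id_{\res M}$ combined with the fact that $\eta_{\res M}$ is a quasi-isomorphism forces $\res(\varepsilon_M)$ to be a quasi-isomorphism; conservativity of $\res$ then yields that $\varepsilon_M$ itself is one.

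There is no real obstacle: the whole argument is driven by the stalkwise Künneth formula over the field $\groundring$, which is precisely what removes the need to pass to h-flat or cofibrant resolutions and makes $\pro$ exact in the relevant sense. The bookkeeping is just the standard descent of an adjunction from a category of complexes to its derived category, together with the triangle identity trick to deduce invertibility of the counit from invertibility of the unit plus conservativity of the right adjoint.
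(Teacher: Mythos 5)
Your proof is correct and follows essentially the same route as the paper: both arguments use the stalkwise observation that tensoring over the field $\groundring$ preserves quasi-isomorphisms to show the unit is a quasi-isomorphism, then invoke the triangle identity together with conservativity of $\res$ to conclude the counit is one too, after noting that both functors preserve acyclics and thus descend. Your exposition is just slightly more explicit about why $\pro$ and $\res$ preserve quasi-isomorphisms.
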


\begin{proof}
  Let $M \in \calMod(\ul{\groundring}).$ Since we work over a field
  the dg functor $(M\otimes_{\ul{\groundring}}?)$
  preserves (acyclics and) quasi-isomorphisms (test on the stalks).
  If we apply this functor to the quasi-isomorphism
  $\ul{\groundring} \ra \mathcal{A}$ we see that the
  adjunction morphism 
  $\varepsilon_M: M \ra \res(M \otimes_{\ul{\groundring}} \mathcal{A})$
  is a quasi-isomorphism. 
  
  Similarly, for $N \in \calMod(\mathcal{A}),$ the adjunction morphism
  $\delta_N: (\res N)\otimes_{\ul{\groundring}} \mathcal{A} \ra N$ is a
  quasi-isomorphism:
  This is the case if and only if $\res(\delta_N)$ is a
  quasi-isomorphism; consider
  \begin{equation*}
    \res N \xra{\varepsilon_{\res N}} \res((\res N) \otimes_{\ul{\groundring}}
    \mathcal{A}) \xra{\res(\delta_N)} \res N;
  \end{equation*}
  the composition is $\id_{\res N},$ and the first morphism is a
  quasi-isomorphism as observed above; hence $\res(\delta_N)$ is a
  quasi-isomorphism.

  Note that $\pro$ (we work over a field) and
  $\res$ both preserve acyclics. Hence these two functors descend to
  triangulated functors between
  $D(\ul{\groundring})$ and $D(\mathcal{A})$ which are adjoint and
  quasi-inverse to each other by what we observed above.
\end{proof}

\begin{remark}
  \label{rem:model-structure-dg-sheaves-quillen-adjunction}
  Assume that \eqref{eq:k-to-A-qiso} is satisfied. 
  One may ask whether $(\pro, \res)$ defines a Quillen equivalence
  between $C(\ul{\groundring})$ and $C(\mathcal{A})$
  if we equip each of these categories with the
  injective model structure from
  Proposition~\ref{p:model-structure-sheaf-C-A}.
  One can use 
  \cite[Thm.~11.1.13]{fresse-modules-over-operads}
  to transfer the injective model structure on $C(\ul{\groundring})$
  to a model structure on $\mathcal{A}$ such that we obtain a Quillen
  equivalence. For this model structure on $C(\mathcal{A}),$ the weak
  equivalences are the quasi-isomorphisms, and the cofibrations are
  contained in the monomorphisms. We did not check whether we have
  equality there.
\end{remark}

\subsubsection{Standard functors for a decomposition into an open and a closed subspace}
\label{sec:stand-funct-open-and-closed-subspace}

We continue the above discussion by providing some preparations for
the proof of
Theorem~\ref{t:X-G-smooth-iff-all-orbits-G-smooth} below.

Let $i:F \hra X$ be a closed embedding and $j:U \hra X$ the
complementary open embedding. 
Let $\mathcal{A}_F:=i^*\mathcal{A}$ and
$\mathcal{A}_U:=j^*\mathcal{A}.$
Since the obvious dg functors $i^*,$ $i_{\as}:=i_*=i_!$ and $j_!,$ $j^{\as}:=j^!=j^*$ between
$\calMod(\mathcal{A}_F),$
$\calMod(\mathcal{A}_X),$ and
$\calMod(\mathcal{A}_U)$
preserve acyclics they induce the horizontal functors in the
following diagram:
\begin{equation*}
  \xymatrix@=45pt{
    {D(\ul{\groundring}_F)} \ar[r]|{i_{\as}} &
    \ar@/_1pc/[l]_{i^*} 
    {D(\ul{\groundring}_X)} 
    \ar[r]|{j^{\as}} 
    & 
    \ar@/_1pc/[l]_{j_!} 
    {D(\ul{\groundring}_U)}
    \\
    {D(\mathcal{A}_F)} 
    \ar[u]^-{\res^{\mathcal{A}_F}_{\ul{\groundring}_F}}
    \ar[r]|{i_{\as}} &
    \ar@/_1pc/[l]_{i^*} 
    {D(\mathcal{A}_X)} 
    \ar[u]^-{\res^{\mathcal{A}_X}_{\ul{\groundring}_X}}
    \ar[r]|{j^{\as}} 
    & 
    \ar@/_1pc/[l]_{j_!} 
    {D(\mathcal{A}_U)}
    \ar[u]^-{\res^{\mathcal{A}_U}_{\ul{\groundring}_U}}
  }
\end{equation*}
The vertical functors are the obvious restriction functors (which
preserve acyclics and hence descend trivially to the derived
categories) along the respective structure morphisms as described in
Section~\ref{sec:production-and-restriction}.
In this diagram the four obvious squares commute, and we have
adjunctions $(i^*, i_{\as})$ and $(j_!, j^{\as}).$

Since $i:F \ra X$ is a closed embedding we have the dg functor
$i^!:\calMod(\mathcal{A}_X) \ra \calMod(\mathcal{A}_F).$ It preserves
h-injectives since its left adjoint $i_\as$ preserves acyclics.
We define $Ri^!$ to be the composition
\begin{equation*}
  Ri^!: D(\mathcal{A}_X) 
  \xra{\iota} \mathcal{H}(\mathcal{A})_{\hinj} 
  \xra{i^!} \mathcal{H}(\mathcal{A}_F)_{\hinj} 
  \ra D(\mathcal{A}_F).
\end{equation*}
Then we have an adjunction $(i_{\as}, Ri^!).$ Similarly, we define
$Rj_*:D(\mathcal{A}_U) \ra D(\mathcal{A}_X)$
and obtain an adjunction $(j^{\as}, Rj_*),$ and we can do the same with
$\ul{\groundring}$ instead of $\mathcal{A}.$

Now we assume that \eqref{eq:k-to-A-qiso} is satisfied. 
Then also $\ul{\groundring}_F \ra \mathcal{A}_F$ and $\ul{\groundring}_U \ra
\mathcal{A}_U$ are quasi-isomorphisms.
We obtain the diagram
\begin{equation}
  \label{eq:six-functors-dg-modules-over-dg-sheaf-of-algs}
  \xymatrix@=45pt{
    {D(\ul{\groundring}_F)} \ar[r]|{i_{\as}} &
    \ar@/_1pc/[l]_{i^*} 
    \ar@/^1pc/[l]^{Ri^!} 
    {D(\ul{\groundring}_X)} 
    \ar[r]|{j^{\as}} 
    & 
    \ar@/_1pc/[l]_{j_!} 
    \ar@/^1pc/[l]^{Rj_*} 
    {D(\ul{\groundring}_U)}
    \\
    {D(\mathcal{A}_F)} 
    \ar[u]^-{\res^{\mathcal{A}_F}_{\ul{\groundring}_F}}_-{\sim}
    \ar[r]|{i_{\as}} &
    \ar@/_1pc/[l]_{i^*} 
    \ar@/^1pc/[l]^{Ri^!} 
    {D(\mathcal{A}_X)} 
    \ar[u]^-{\res^{\mathcal{A}_X}_{\ul{\groundring}_X}}_-{\sim}
    \ar[r]|{j^{\as}} 
    & 
    \ar@/_1pc/[l]_{j_!} 
    \ar@/^1pc/[l]^{Rj_*} 
    {D(\mathcal{A}_U).}
    \ar[u]^-{\res^{\mathcal{A}_U}_{\ul{\groundring}_U}}_-{\sim}
  }
\end{equation}
The vertical arrows are equivalences 
(Lemma~\ref{l:qiso-sheaves-dg-algebras-equiv-derived}), and the square
with horizontal 
sides $Ri^!$ (resp.\ $Rj_*$) commutes (up to natural isomorphisms)
since $\res$ preserves h-injectives (its left adjoint $\pro$ preserves acyclics).

\begin{lemma}
  \label{l:suitable-h-injective-lifts}
  Assume that \eqref{eq:k-to-A-qiso} is satisfied. 
  \begin{enumerate}
  \item 
    \label{enum:resolve-on-closed-by-res-fibrant}
    Let $\mathcal{F} \in C(\ul{\groundring}_F)$ be an object. Then there is a
    fibrant object
    $\mathcal{F}' \in C(\mathcal{A}_F)$ 
    and a monomorphic quasi-isomorphism $\mathcal{F} \ra
    \res(\mathcal{F}')$ in $C(\ul{\groundring}_F).$ 
    This morphism remains a monomorphic quasi-isomorphism under $i_{\as}.$
    Moreover, $i_{\as} \mathcal{F}'$ is h-injective (and fibrant) and
    $\res(i_{\as}\mathcal{F}') \cong 
    i_{\as} \res (\mathcal{F}')$ in $C(\ul{\groundring}_F).$
  \item 
    \label{enum:resolve-extension-by-zero-from-open-by-res-fibrant}
    Let $\mathcal{U} \in C(\ul{\groundring}_U)$ be given. Then there is
    $\mathcal{V} \in C(\mathcal{A}_X)$ fibrant
    together with a monomorphic quasi-isomorphism $j_! (\mathcal{U}) \hra
    \res(\mathcal{V})$ and 
    a fibrant resolution 
    $\mathcal{U} \otimes_{\ul{\groundring}_U} \mathcal{A}_U 
    \hra j^{\as}(\mathcal{V})$ in $C(\mathcal{A}_U)$
    Moreover, $\mathcal{U} \sira j^{\as} j_! \mathcal{U} \hra j^{\as}
    \res(\mathcal{V}) \cong \res j^{\as}(\mathcal{V})$ is a monomorphic
    quasi-isomorphism in $C(\ul{\groundring}_U).$
  \end{enumerate}
\end{lemma}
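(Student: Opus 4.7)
The plan is to construct $\mathcal{F}'$ and $\mathcal{V}$ by a two-step procedure: first extend scalars from $\ul{\groundring}$ to $\mathcal{A}$ (which is harmless by \eqref{eq:k-to-A-qiso}, as in the proof of Lemma~\ref{l:qiso-sheaves-dg-algebras-equiv-derived}), then take a fibrant resolution in the injective model structure of Proposition~\ref{p:model-structure-sheaf-C-A}. The compatibilities with $i_{\as}$ and $j^{\as}$ will follow from the exactness of the functors $i^*, i_{\as}, j_!, j^{\as}$ and the obvious fact that these commute with $\res$ at the level of underlying sheaves of $\ul{\groundring}$-modules.

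For part~\ref{enum:resolve-on-closed-by-res-fibrant}, put $\mathcal{G} := \mathcal{F} \otimes_{\ul{\groundring}_F} \mathcal{A}_F$ and choose a fibrant resolution $\mathcal{G} \hra \mathcal{F}'$ in $C(\mathcal{A}_F)$. Define $\mathcal{F} \ra \res(\mathcal{F}')$ as the composition of the unit $\mathcal{F} \ra \res(\mathcal{G})$, $f \mapsto f \otimes 1$, with $\res$ applied to $\mathcal{G} \hra \mathcal{F}'$. Both factors are monomorphic quasi-isomorphisms: the unit is a quasi-isomorphism by the stalkwise argument from the proof of Lemma~\ref{l:qiso-sheaves-dg-algebras-equiv-derived}, and is a stalkwise split monomorphism because $\groundring \hra \mathcal{A}_{F,x}^0$ is a split inclusion of $\groundring$-vector spaces; the second factor is a monomorphic quasi-isomorphism since $\res$ is exact. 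The functor $i_{\as}$ is exact, so it preserves both monomorphisms and quasi-isomorphisms. For the fibrancy of $i_{\as}\mathcal{F}'$, observe that $i^* = i^{-1}$ is exact and preserves quasi-isomorphisms, hence preserves trivial cofibrations in the injective model structure, so its right adjoint $i_{\as}$ preserves fibrant objects. Finally, $\res(i_{\as}\mathcal{F}') \cong i_{\as}\res(\mathcal{F}')$ is the tautological identification of underlying sheaves.

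For part~\ref{enum:resolve-extension-by-zero-from-open-by-res-fibrant}, set $\mathcal{W} := \mathcal{U} \otimes_{\ul{\groundring}_U} \mathcal{A}_U$ and choose a fibrant resolution $j_!(\mathcal{W}) \hra \mathcal{V}$ in $C(\mathcal{A}_X)$. Factor the desired morphism as
\begin{equation*}
  j_!(\mathcal{U}) \hra j_!\res(\mathcal{W}) = \res j_!(\mathcal{W}) \hra \res(\mathcal{V}),
\end{equation*}
where the middle identification is the trivial compatibility of $j_!$ with $\res$; each factor is a monomorphic quasi-isomorphism, using that $j_!$ is exact and preserves quasi-isomorphisms together with the corresponding assertions for the unit of part~\ref{enum:resolve-on-closed-by-res-fibrant}. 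The fibrant resolution $\mathcal{W} \hra j^{\as}(\mathcal{V})$ is obtained by applying $j^{\as}$ to $j_!(\mathcal{W}) \hra \mathcal{V}$ and using $j^{\as}j_! = \id$. Its target is fibrant because $j_!$ preserves trivial cofibrations (being exact and preserving quasi-isomorphisms), so its right adjoint $j^{\as}$ preserves fibrant objects. The concluding chain $\mathcal{U} \sira j^{\as}j_!\mathcal{U} \hra j^{\as}\res(\mathcal{V}) \cong \res j^{\as}(\mathcal{V})$ is then a monomorphic quasi-isomorphism by assembling the above.

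The main obstacle is not conceptual but bookkeeping: one has to patiently verify the compatibilities between the adjoint pairs $(i^*, i_{\as})$ and $(j_!, j^{\as})$, the extension/restriction functors along $\ul{\groundring}_? \ra \mathcal{A}_?$, and the injective model structure. The crucial technical input throughout is that $i_{\as}$ and $j^{\as}$ preserve fibrancy, which reduces via adjunction to the elementary fact that $i^*$ and $j_!$ are exact and preserve quasi-isomorphisms, hence preserve trivial cofibrations in the sense of Proposition~\ref{p:model-structure-sheaf-C-A}.
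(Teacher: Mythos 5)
Your proof is correct and takes essentially the same route as the paper: extend scalars along $\ul{\groundring}\ra\mathcal{A}$, take a fibrant resolution in the injective model structure, and use the fact that the exact, quasi-isomorphism-preserving left adjoints $i^*$ and $j_!$ preserve trivial cofibrations, so that $i_{\as}$ and $j^{\as}$ preserve fibrant objects. The only cosmetic difference is in part~(b), where you resolve $j_!(\mathcal{U}\otimes_{\ul{\groundring}_U}\mathcal{A}_U)$ while the paper resolves $j_!(\mathcal{U})\otimes_{\ul{\groundring}_X}\mathcal{A}_X$; these are canonically isomorphic by the projection formula for $j_!$, so the two constructions coincide.
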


\begin{proof}
  \ref{enum:resolve-on-closed-by-res-fibrant}:
  The object $\pro(\mathcal{F})=\mathcal{F} \otimes_{\ul{\groundring}_F}
  \mathcal{A}_F$ has a fibrant resolution 
  $\mathcal{F}\otimes_{\ul{\groundring}_F} \mathcal{A}_F \ra
  \mathcal{F}'.$ Apply $\res$ (which preserves monomorphism and
  quasi-isomorphisms) and use the monomorphic quasi-isomorphism
  $\varepsilon_\mathcal{F}: \mathcal{F} \ra \res
  (\mathcal{F} \otimes_{\ul{\groundring}_F} \mathcal{A}_F)$ (test on
  stalks: on the stalk at $x \in X$ this morphism is given by applying
  the exact functor
  $(\mathcal{F}_x \otimes_{\ul{\groundring}_{F,x}} ?)$ 
  to $\groundring=\ul{\groundring}_{F,x} \ra 
  \mathcal{A}_x$; the latter morphism is a quasi-isomorphisms and
  injective since $\groundring=Z^0(\groundring)=H^0(\groundring)$).

  Obviously, $i_{\as}$ preserves monomorphisms and quasi-isomorphisms.
  Note that $i_{\as}$ preserves h-injectives, since its left adjoint $i^*$ preserves
  acyclics (since $i^*$ preserves monomorphic
  quasi-isomorphisms (= trivial cofibrations), $i_{\as}$ preserves fibrations). It is clear
  that $\res$ and $i_{\as}$ commute.

  \ref{enum:resolve-extension-by-zero-from-open-by-res-fibrant}
  The first statement is proved as above, using a fibrant resolution
  $j_!(\mathcal{U}) \otimes_{\ul{\groundring}_X}
  \mathcal{A}_X \hra \mathcal{V}.$
  The left adjoint $j_!$ 
  of $j^{\as}$ preserves trivial
  cofibrations, hence $j^{\as}$ preserves fibrations. In particular
  $j^{\as}(\mathcal{V})$ is fibrant. If we apply $j^{\as}$ to the cofibrant resolution
  $j_!(\mathcal{U}) \otimes_{\ul{\groundring}_X}
  \mathcal{A}_X \hra \mathcal{V}$ we obtain a monomorphic
  quasi-isomorphism
  $\mathcal{U} \otimes_{\ul{\groundring}_U}
  \mathcal{A}_U \hra j^{\as}(\mathcal{V}).$
  The last statement is clear since again $j^{\as}$ preserves trivial
  cofibrations (= monomorphic quasi-isomorphisms).
\end{proof}

\subsection{Refined enhancements of equivariant derived categories}
\label{sec:enhancem-equiv-deriv-cats}

Let $G$ be a connected complex affine algebraic group
and $X$ a complex $G$-variety.
We usually equip $G$ and $X$ with the classical topology.
In the following, our field $\groundring$ will be $\DR.$

Let $p:EG \ra BG=EG/G$ be a universal $G$-principal fiber bundle such 
that $BG$ is an $\infty$-dimensional manifold in the sense of
\cite[12.2]{BL} or a smooth (paracompact) manifold of finite
dimension, and such that $EG$ is $\infty$-acyclic. Such a bundle exists
by \cite[12.4.2.a]{BL}, and we can and will assume that $EG$ is
open-locally pre-$\infty$-acyclic (as defined in
the Appendix \ref{sec:derived-fibered-base}) and that $BG$ is locally
contractible. There is a  
sheaf $\Omega_{BG}$ of graded commutative dg ($\DR$-)algebras (the de Rham
sheaf) on $BG$ such that the obvious morphism $\ul{\DR}_{BG} \ra
\Omega_{BG}$ is a quasi-isomorphism and each $\Omega_{BG}^i$ is soft
and acyclic (\cite[12.2.3]{BL}).

Let $X_G:= EG \times_G X$ be the quotient of $EG \times X$ by the
diagonal $G$-action. 
Since $EG \times X \ra X$ is an $\infty$-acyclic resolution,
the bounded constructible $G$-equivariant derived category
$D^b_{G,c}(X)$ can be viewed as a full 
subcategory of $D(X_G)$ \cite[2.9.9]{BL}: it consists of those
objects whose pullback to $EG \times X$ is isomorphic to the pullback of an
object of $D^b_c(X).$ 
Let 
\begin{equation*}
  c: X_G \ra BG=\point_G  
\end{equation*}
be the obvious morphism. 
Since the obvious morphism $\ul{\DR}_{X_G} \ra c^*(\Omega_{BG})$
is a quasi-isomorphism,
Lemma~\ref{l:qiso-sheaves-dg-algebras-equiv-derived} shows that
$D(X_G)$ and $D(c^*(\Omega_{BG}))$ are equivalent as triangulated
categories. Hence we can view $D^b_{G,c}(X)$ as a full subcategory of
$D(c^*(\Omega_{BG})).$
Recall that $\calMod(c^*(\Omega_{BG}))_{\hinj}$ is a dg enhancement of
$D(c^*(\Omega_{BG})).$
Let $\calMod_G(c^*(\Omega_{BG}))_{\hinj}$ be the full subcategory of 
$\calMod(c^*(\Omega_{BG}))_{\hinj}$ consisting of objects that are
isomorphic (in $D(c^*(\Omega_{BG}))$) to an object of $D^b_{G,c}(X).$
Then 
$\calMod_G(c^*(\Omega_{BG}))_{\hinj}$ is a dg enhancement of
$D^b_{G,c}(X).$
Note that $\calMod(c^*(\Omega_{BG}))$ is a dg
$\Gamma(\Omega_{BG})$-category in the obvious way
(and that $\Gamma(\Omega_{BG})$ is graded commutative), and therefore
$\calMod_G(c^*(\Omega_{BG}))_{\hinj}$ is a dg
$\Gamma(\Omega_{BG})$-category.

\begin{definition}
  \label{d:smoothness-DbGc}
  We say that $X$
  is \define{(homologically) $G$-smooth} (or more precisely
  \define{$D^b_{G,c}$-smooth}),
  if 
  $\calMod_G(c^*(\Omega_{BG}))_{\hinj}$ is a smooth\footnote{
    Here we implicitly replace
    $\calMod_G(c^*(\Omega_{BG}))_{\hinj}$ by a 
    dg $\Gamma(\Omega_{BG})$-equivalent
    dg $\Gamma(\Omega_{BG})$-subcategory which is small.
  }
  dg $\Gamma(\Omega_{BG})$-category.
\end{definition}


The following lemma explains that $G$-smoothness can be tested on the
dg endomorphisms of a classical generator.

\begin{lemma}
  \label{l:test-smoothness-on-generator}
  Assume that there is an object $E \in
  \calMod_G(c^*(\Omega_{BG}))_{\hinj}$ such that $E$ is a classical
  generator of $[\calMod_G(c^*(\Omega_{BG}))_{\hinj}] \sira
  D^b_{G,c}(X).$ 
  Then $X$ is $G$-smooth if and only if 
  $(\calMod(c^*(\Omega_{BG})))(E)$
  is
  $\Gamma(\Omega_{BG})$-smooth.
\end{lemma}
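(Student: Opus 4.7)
The plan is to apply Corollary~\ref{c:test-smoothness-on-classical-generator} to $\mathcal{A} := \calMod_G(c^*(\Omega_{BG}))_{\hinj}$, viewed as a dg $\Gamma(\Omega_{BG})$-category (after passing to a small dg $\Gamma(\Omega_{BG})$-equivalent subcategory containing $E$, as indicated in the footnote to Definition~\ref{d:smoothness-DbGc}), with $K = \Gamma(\Omega_{BG})$. Since then $\mathcal{A}(E,E) = (\calMod(c^*(\Omega_{BG})))(E,E)$ as dg $\Gamma(\Omega_{BG})$-algebras, that corollary directly identifies $\Gamma(\Omega_{BG})$-smoothness of $\mathcal{A}$ — which is by definition $G$-smoothness of $X$ — with $\Gamma(\Omega_{BG})$-smoothness of $(\calMod(c^*(\Omega_{BG})))(E)$.

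To invoke Corollary~\ref{c:test-smoothness-on-classical-generator} I must verify three hypotheses: that $E$ is a classical generator of $[\mathcal{A}]$ (assumed), that $\mathcal{A}$ is pretriangulated, and that $[\mathcal{A}]$ is Karoubian. The key ingredient for the last two is the triangulated equivalence $[\mathcal{A}] \simeq D^b_{G,c}(X)$ obtained by restricting the equivalence \eqref{eq:homotopy-cat-h-injectives-isom-derived-cat-dg-sheaves-version}. Pretriangulatedness follows from the observation that shifts and mapping cones in $C(c^*(\Omega_{BG}))$ of morphisms between h-injective dg modules are again h-injective, together with the fact that $D^b_{G,c}(X)$ is a strict full triangulated subcategory of $D(c^*(\Omega_{BG}))$, so membership in $\mathcal{A}$ is preserved (up to isomorphism) under these operations. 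Karoubianness of $D^b_{G,c}(X)$ is a standard fact for bounded constructible equivariant derived categories.

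The main obstacle is purely administrative: arranging a small dg $\Gamma(\Omega_{BG})$-subcategory of $\calMod_G(c^*(\Omega_{BG}))_{\hinj}$ that contains $E$, is strictly (not merely up to isomorphism) closed under shifts and mapping cones inherited from $\calMod(c^*(\Omega_{BG}))$, and remains dg $\Gamma(\Omega_{BG})$-equivalent to the ambient $\calMod_G(c^*(\Omega_{BG}))_{\hinj}$. By Lemma~\ref{l:smoothness-and-quasi-equis}, smoothness is invariant under quasi-equivalence, so any such choice is equally good for testing $G$-smoothness. Once the set-theoretic bookkeeping is handled, Corollary~\ref{c:test-smoothness-on-classical-generator} delivers the claim without further work.
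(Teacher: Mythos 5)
Your argument is correct and follows the same route as the paper: verify the hypotheses of Corollary~\ref{c:test-smoothness-on-classical-generator} (pretriangulatedness of the h-injective enhancement, Karoubianness of $D^b_{G,c}(X)$, smallness housekeeping) and apply it. The paper justifies Karoubianness via the existence of a bounded t-structure, citing \cite{le-chen-karoubi-trcat-bdd-t-str}, which you may wish to make explicit in place of ``standard fact,'' but the substance is identical.
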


\begin{proof}
  Note that $D^b_{G,c}(X)$ is Karoubian since it has a bounded
  t-structure
  \cite{le-chen-karoubi-trcat-bdd-t-str}. Then the result follows from
  Corollary~\ref{c:test-smoothness-on-classical-generator}.
\end{proof}

\begin{remark}
  \label{rem:independent-of-choice}
  We show that Definition~\ref{d:smoothness-DbGc} does not depend
  on the choice of $p: EG \ra BG.$
  Note that in the case of interest
  to us
  (where $G$ has
  only finitely many orbits in $X$ and all stabilizers are connected),
  independence of $p$ will also be a consequence of 
  Theorems~\ref{t:X-G-smooth-iff-all-orbits-G-smooth}
  and \ref{t:one-orbit-case-smoothness-and-quisos}
  (and
  Propositions~\ref{p:smoothness-one-orbit-case}
  and \ref{p:smoothness-one-orbit-case-algebraic-groups}). In
  general one may argue as follows.

  Let $p'': E''G \ra B''G$ be another choice. Consider $E'G:= EG
  \times E''G$ with the diagonal $G$-action and let $p':E'G \ra
  B'G$ be the quotient map. Then $B'G$ is an
  ($\infty$-dimensional) manifold in a natural way, and $p'$ is
  another possible choice.
  It is enough to show that $X$ is $G$-smooth with respect to $p$
  if and only if it is $G$-smooth with respect to $p'.$

  Consider the commutative diagram
  \begin{equation*}
    \xymatrix{
      {X'_G:=E'G \times_G X} \ar[r]^-{c'} \ar[d]^-{\pi} & 
      {B'G} \ar[d]^-{\gamma} \\
      {X_G:=EG \times_G X} \ar[r]^-{c} & 
      {BG.}
    }
  \end{equation*}
  Pull-back of differential forms 
  \cite[12.2.6]{BL}
  defines a morphism
  $\gamma^*(\Omega_{BG}) \ra \Omega_{B'G}$ and a
  quasi-isomorphism $\Gamma(\Omega_{BG}) \ra
  \Gamma(\Omega_{B'G}).$
  We also obtain 
  quasi-isomorphisms
  $\ul{\DR}_{X'_G} \ra \pi^*c^*(\Omega_{BG}) =
  c'^*\gamma^*(\Omega_{BG}) \ra c'^*(\Omega_{B'G}).$
  Lemma~\ref{l:qiso-sheaves-dg-algebras-equiv-derived}
  implies that
  $\res:=\res^{c'^*(\Omega_{B'G})}_{c'^*\gamma^*(\Omega_{BG})}:
  D(c'^*(\Omega_{B'G})) \ra D(c'^*\gamma^*(\Omega_{BG}))$ is an
  equivalence.
  
  Let $D^b_{G,c}(X;c^*(\Omega_{BG}))$ (resp.\
  $D^b_{G,c}(X;c'^*(\Omega_{B'G}))$) denote
  $D^b_{G,c}(X)$ viewed as a full subcategory of
  $D(c^*(\Omega_{BG}))$ (resp.\ $D(c'^*(\Omega_{B'G}))$).
  Let $D^b_{G,c}(X;c'^*\gamma^*(\Omega_{BG}))$ be the essential
  image of $D^b_{G,c}(X;c'^*(\Omega_{B'G}))$ under the functor
  $\res.$
  We obtain equivalences 
  \begin{equation*}
    D^b_{G,c}(X;c'^*(\Omega_{B'G}))
    \xra{\res}
    D^b_{G,c}(X;c'^*\gamma^*(\Omega_{BG}))
    \xla{\pi^*} 
    D^b_{G,c}(X;c^*(\Omega_{BG}))
  \end{equation*}
  where $\pi^*$ is induced from
  $\pi^*: D(c^*(\Omega_{BG})) \ra
  D(c'^*\gamma^*(\Omega_{BG}))$ and is an equivalence by 
  Lemma~\ref{l:qiso-sheaves-dg-algebras-equiv-derived} and the
  fact that $E''G$ is $\infty$-acyclic (use \cite[Prop.~61]{OSdiss-equi-mathz}).

  Let $\calMod_G(c'^*\gamma^*(\Omega_{BG}))_{\hinj}$ be the full
  subcategory of  
  $\calMod(c'^*\gamma^*(\Omega_{BG}))_{\hinj}$ consisting of objects that are
  isomorphic (in $D(c'^*\gamma^*(\Omega_{BG}))$) to an object of
  $D^b_{G,c}(X;c'^*\gamma^*(\Omega_{BG})).$
  Then 
  $\calMod_G(c'^*\gamma^*(\Omega_{BG}))_{\hinj}$ is a dg enhancement of
  $D^b_{G,c}(X;c'^*\gamma^*(\Omega_{BG}))$
  and a dg
  $\Gamma(\Omega_{BG})$-category.

  Using the quasi-isomorphism 
  $\Gamma(\Omega_{BG}) \ra
  \Gamma(\Omega_{B'G})$
  and
  Theorem~\ref{t:smoothness-and-base-change}.\ref{enum:smoothness-and-qiso-base-change-res}
  we see that
  $X$ is $G$-smooth with respect to $p'$ if and only if
  $\calMod_G(c'^*(\Omega_{B'G}))_{\hinj}$ is a smooth dg
  $\Gamma(\Omega_{BG})$-category.
  By Lemma~\ref{l:smoothness-and-quasi-equis}
  it is hence enough to show 
  that the dg $\Gamma(\Omega_{BG})$-categories 
  $\calMod_G(c'^*(\Omega_{B'G}))_{\hinj},$ 
  $\calMod_G(c'^*\gamma^*(\Omega_{BG}))_{\hinj}$
  and
  $\calMod_G(c^*(\Omega_{BG}))_{\hinj}$
  are connected by a zig-zag of quasi-equivalences.
 
  Consider the auxiliary dg $\Gamma(\Omega_{BG})$-category
  $\mathcal{C}$ whose objects 
  are triples $(L,J, \delta)$ where $L \in
  \calMod_G(c'^*(\Omega_{B'G}))_{\hinj},$ $J \in
  \calMod_G(c'^*\gamma^*(\Omega_{BG}))_{\hinj},$ and $\delta:
  \res(L) \ra J$ is a closed degree zero morphism in
  $\calMod_G(c'^*\gamma^*(\Omega_{BG}))$ that becomes an
  isomorphism in $D(c'^*\gamma^*(\Omega_{BG})).$ Morphisms
  between such triples are defined as in the proof of the
  proposition in section "Morphism
  oriented \v{C}ech enhancement" in
  \cite{valery-olaf-matfak-semi-orth-decomp}. Similarly,
  we define a dg $\Gamma(\Omega_{BG})$-category $\mathcal{D}$
  whose objects 
  are triples $(I,J, \epsilon)$ where $I \in
  \calMod_G(c^*(\Omega_{BG}))_{\hinj},$ $J \in
  \calMod_G(c'^*\gamma^*(\Omega_{BG}))_{\hinj},$ and $\epsilon:
  \pi^*(I) \ra J$ is a closed degree zero morphism in
  $\calMod_G(c'^*\gamma^*(\Omega_{BG}))$ that becomes an
  isomorphism in $D(c'^*\gamma^*(\Omega_{BG})).$
  We leave it to the reader to check that the obvious projection
  functors
  \begin{equation*}
    \calMod_G(c'^*(\Omega_{B'G}))_{\hinj}
    \la
    \mathcal{C} 
    \ra
    \calMod_G(c'^*\gamma^*(\Omega_{BG}))_{\hinj}
    \la
    \mathcal{D}
    \ra
    \calMod_G(c^*(\Omega_{BG}))_{\hinj}
  \end{equation*}
  are quasi-equivalences of $\Gamma(\Omega_{BG})$-categories. 
\end{remark}

\subsection{Smoothness of homogeneous spaces}
\label{sec:smoothness-homogeneous-spaces}

Let $G$ be a connected complex affine algebraic group and let $H
\subset G$ be a closed subgroup. We discuss the smoothness of the
$G$-variety $X=G/H.$ 
Let $p:EG \ra BG$ be as above.
Then $EG \ra EG/H$ is a universal $H$-principal fiber
bundle, and we define $BH:= EG/H.$ 
Then 
\begin{equation*}
  X_G = EG \times_G G/H = (EG \times_G G)/H =EG/H=BH
\end{equation*}
canonically, so $c: X_G =BH \ra BG.$
Since $c$ is a locally trivial bundle with fiber $G/H$ we can equip 
$BH$ with the structure of a (possibly $\infty$-dimensional) manifold.
In particular this defines the de Rham sheaf $\Omega_{BH}$ on $BH.$
Pullback of differential forms \cite[12.2.6]{BL}
yields a morphism
\begin{equation*}
  \Gamma(\Omega_{BG}) \ra \Gamma(\Omega_{BH})
\end{equation*}
of (graded commutative) dg $\DR$-algebras. Taking cohomology we get
the usual morphism 
\begin{equation*}
  H_G(\point) \ra H_G(G/H)=H_H(\point)
\end{equation*}
on equivariant cohomology (since $BG$ and $BH$ are locally contractible and
paracompact, sheaf cohomology coincides with singular cohomology).

\begin{theorem}
  \label{t:one-orbit-case-smoothness-and-quisos}
  Keep the above assumptions and assume in addition that
  $H$ is connected. The following conditions are equivalent:
  \begin{enumerate}
  \item 
    \label{enum:GmodH-G-smooth}
    $X=G/H$ is $G$-smooth.
  \item 
    \label{enum:smooth-via-pullback-forms}
    $\Gamma(\Omega_{X_G})=\Gamma(\Omega_{BH})$ is
    $\Gamma(\Omega_{BG})$-smooth.
  \item 
    \label{enum:pullback-forms-qiso}
    $\Gamma(\Omega_{BG}) \ra \Gamma(\Omega_{BH})$ is a quasi-isomorphism.
  \item 
    \label{enum:iso-on-equiv-coho}
    $H_G(\point) \ra H_G(G/H)=H_H(\point)$ is an isomorphism.
  \item 
    \label{enum:structure-morph-in-endos-erzeuger-qiso}
    If $E$ is an h-injective dg $c^*(\Omega_{BG})$-module that is
    isomorphic to $\Omega_{X_G}$ in $D(c^*(\Omega_{BG})),$
    the structure morphism $\Gamma(\Omega_{BG}) \ra
    (\calMod(c^*(\Omega_{BG})))(E)$
    is a quasi-isomorphism.
  \end{enumerate}
  (More equivalent conditions can be found in
  Propositions~\ref{p:smoothness-one-orbit-case}
  and \ref{p:smoothness-one-orbit-case-algebraic-groups}.)
\end{theorem}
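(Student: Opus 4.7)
The strategy is to cluster the five conditions around (c), which I take as the central assertion. The equivalence (c) $\Leftrightarrow$ (d) is immediate by taking $H^*$ of the structure morphism $\Gamma(\Omega_{BG}) \to \Gamma(\Omega_{BH})$, since $BG$ and $BH$ are paracompact and locally contractible, the components $\Omega^i$ are soft, and thus these de Rham complexes compute $H_G(\point)$ and $H_H(\point)$ respectively. The equivalence (c) $\Leftrightarrow$ (e) will follow from identifying the endomorphism dg $\Gamma(\Omega_{BG})$-algebra $(\calMod(c^*(\Omega_{BG})))(E)$ with $\Gamma(\Omega_{BH})$ compatibly with the structure morphisms: for an h-injective replacement $E$ of the regular module $c^*(\Omega_{BG})$ (which is quasi-isomorphic to $\Omega_{X_G}$ via pullback of differential forms), the endomorphisms compute $R\Gamma(X_G, c^*(\Omega_{BG})) \simeq R\Gamma(X_G, \Omega_{X_G}) = \Gamma(\Omega_{BH})$, and the structure morphism corresponds to the pullback $c^*$.

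For (a) $\Leftrightarrow$ (b) I would invoke Lemma~\ref{l:test-smoothness-on-generator}. The key point is that an h-injective replacement $E$ of $\Omega_{X_G}$ is a classical generator of $D^b_{G,c}(G/H) \cong [\calMod_G(c^*(\Omega_{BG}))_{\hinj}]$: since $H$ is connected, every $G$-equivariant local system on $G/H$ is trivial, and $D^b_{G,c}(G/H)$ is classically generated by the constant sheaf $\underline{\DR}_{G/H}$. The lemma then translates $G$-smoothness of $G/H$ into $\Gamma(\Omega_{BG})$-smoothness of $(\calMod(c^*(\Omega_{BG})))(E)$, which by the endomorphism identification above is precisely (b).

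The substantial implication is (b) $\Rightarrow$ (c), for which I would apply Proposition~\ref{p:smoothness-over-local-graded-finite-homological-dim-algebras}. Set $K := H_G(\point)$: since the maximal compact of the connected group $G$ is a compact connected Lie group, $K$ is a polynomial $\DR$-algebra in strictly positive even-degree generators, hence Noetherian, of finite global dimension, with $K^0 = \DR$ and $K^1 = 0$. Using Theorem~\ref{t:smoothness-and-base-change} (parts \ref{enum:smoothness-and-qiso-base-change-prod} and \ref{enum:smoothness-and-qiso-base-change-res}) together with the formality of the equivariant de Rham complex (a zig-zag of graded commutative dg algebra quasi-isomorphisms between $K$ and $\Gamma(\Omega_{BG})$), the $\Gamma(\Omega_{BG})$-smoothness of $\Gamma(\Omega_{BH})$ is equivalent to $K$-smoothness of a dg $K$-algebra $A$ with $H^*(A) = H_H(\point)$ and compatible structure morphism. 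The remaining hypotheses of the proposition are satisfied: $H_H(\point)$ is finitely generated over $H_G(\point)$ because the fibre of $BH \to BG$ is the compact manifold $K/K_H$ (where $K_H$ is a maximal compact of $H$), forcing the Serre spectral sequence of this fibration to produce a finitely generated module over $K$; and trivially $H^0(A) = \DR$, $H^{<0}(A) = 0$. The proposition then yields that $K$-smoothness of $A$ forces $K \to A$ to be a quasi-isomorphism, and this transports back to (c). The reverse implication (c) $\Rightarrow$ (b) follows trivially from Lemma~\ref{l:smoothness-and-quasi-equis}.

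The main obstacle will be the base-change and formality bookkeeping needed to replace $\Gamma(\Omega_{BH})$ as a dg $\Gamma(\Omega_{BG})$-algebra by a suitable dg $H_G(\point)$-algebra so that Proposition~\ref{p:smoothness-over-local-graded-finite-homological-dim-algebras} applies and its conclusion can be pulled back. This requires a functorial formality statement for $\Gamma(\Omega_{BG})$, a compatible choice of dg $K$-algebra model resolving the given morphism, and verification of finite generation of $H_H(\point)$ over $H_G(\point)$. The remaining steps — classical generation of $D^b_{G,c}(G/H)$ by $\underline{\DR}_{G/H}$, the endomorphism identification, and the cohomological translation (c) $\Leftrightarrow$ (d) — are routine, though compatibility of structure morphisms under the identifications requires care.
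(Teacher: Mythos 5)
Your overall strategy matches the paper's: (c) $\Leftrightarrow$ (d) by passing to cohomology, (a) $\Leftrightarrow$ (b) via Lemma~\ref{l:test-smoothness-on-generator} using that $\Omega_{X_G}$ classically generates because $H$ is connected, (b) $\Leftrightarrow$ (c) via a quasi-isomorphism $K=H_G(\point)\to\Gamma(\Omega_{BG})$, base change (Theorem~\ref{t:smoothness-and-base-change}), and Proposition~\ref{p:smoothness-over-local-graded-finite-homological-dim-algebras}. The clustering around (c) and the choice of supporting lemmas are the same.

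However, there is a genuine gap in the step you call the "endomorphism identification" and describe as routine. You assert that $\mathcal{M}(E)$ "computes" $\Gamma(\Omega_{X_G})=\Gamma(\Omega_{BH})$, which is a statement about cohomology. What the argument actually needs --- both for (c) $\Leftrightarrow$ (e) and for passing from Lemma~\ref{l:test-smoothness-on-generator} to condition (b) --- is a zig-zag of quasi-isomorphisms of dg $\Gamma(\Omega_{BG})$-algebras (or a dg Morita equivalence) between $\mathcal{M}(E)=(\calMod(c^*(\Omega_{BG})))(E)$ and $\Gamma(\Omega_{X_G})$, compatible with the two structure morphisms from $\Gamma(\Omega_{BG}).$ There is no direct dg-algebra map in either direction: $\Omega_{X_G}$ is not h-injective, and the quasi-isomorphism $\varepsilon:\Omega_{X_G}\to E$ in $C(c^*(\Omega_{BG}))$ does not induce a morphism of endomorphism algebras. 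The paper resolves this by introducing the bimodule $\mathcal{M}(\Omega_{X_G},E)$, with the actions
$(?\circ\varepsilon):\mathcal{M}(E)\to\mathcal{M}(\Omega_{X_G},E)$
and $(\varepsilon\circ?)\circ\lambda:\Gamma(\Omega_{X_G})\to\mathcal{M}(\Omega_{X_G},E)$,
and proves both to be quasi-isomorphisms of dg $\Gamma(\Omega_{BG})$-modules. This bimodule is also exactly what realizes the dg Morita equivalence needed to transfer $\Gamma(\Omega_{BG})$-smoothness from $\mathcal{M}(E)$ to $\Gamma(\Omega_{X_G})$ via Theorem~\ref{t:smoothness-preserved-by-dg-Morita-equivalence} or Lemma~\ref{l:smoothness-and-quasi-equis}. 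This construction is the technical heart of (a) $\Leftrightarrow$ (b) $\Leftrightarrow$ (e) and needs to be supplied; a cohomology isomorphism alone does not suffice.

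Two smaller remarks. First, since $K=H_G(\point)$ is a polynomial algebra on even-degree generators and $\Gamma(\Omega_{BG})$ is graded commutative, one chooses cocycle lifts of generators to obtain a \emph{direct} quasi-isomorphism $K\to\Gamma(\Omega_{BG})$; no zig-zag and no general formality statement is needed. Second, your verification of finite generation of $H_H(\point)$ over $H_G(\point)$ via the Serre spectral sequence of the fibration with compact fibre is a valid alternative to the paper's Weyl-group-invariants argument in Proposition~\ref{p:smoothness-one-orbit-case}.
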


\begin{proof}
  The equivalence of the two conditions 
  \ref{enum:pullback-forms-qiso} and
  \ref{enum:iso-on-equiv-coho}
  is obvious.

  Define the dg $\Gamma(\Omega_{BG})$ category
  $\mathcal{M}:=\calMod(c^*(\Omega_{BG})).$ 
  Since $H$ is connected the constant sheaf $\ul{\DR}_{X_G}$ on $X_G$
  is a classical 
  generator of $D^b_{G,c}(X)$ considered as a subcategory of $D(X_G)$ 
  (use \cite[Induction equivalence~2.6.3 and Prop.~2.7.2]{BL}).
  Hence $c^*(\Omega_{BG})$) is a classical
  generator of $D^b_{G,c}(X)$ considered as a subcategory of 
  $D(c^*(\Omega_{BG})).$ 
  Let $E$ be an h-injective dg $c^*(\Omega_{BG})$-module that is
  isomorphic to $\Omega_{X_G}$ in $D(c^*(\Omega_{BG})).$ Then there
  is a quasi-isomorphism $\varepsilon: \Omega_{X_G} \ra E$ in
  $C(c^*(\Omega_{BG})).$
  Then $X=G/H$ is $G$-smooth if and only if
  $\mathcal{M}(E)$ is $\Gamma(\Omega_{BG})$-smooth,
  by Lemma~\ref{l:test-smoothness-on-generator}.

  Pullback of differential forms 
  along $c: X_G \ra {BG}$
  defines a 
  (monomorphic) quasi-isomorphism
  of sheaves of dg algebras
  \begin{equation*}
    c^\sharp:c^*(\Omega_{BG}) \ra \Omega_{X_G}
  \end{equation*}
  since both sheaves are resolutions of $\ul{\DR}_{X_G}.$
  
  Consider the dg $\mathcal{M}(\Omega_{X_G})
  \otimes_{\Gamma(\Omega_{BG})} \mathcal{M}(E)^\opp$-module
  $\mathcal{M}(\Omega_{X_G},E),$ 
  \begin{align*}
    \mathcal{M}(E) \curvearrowright 
    \mathcal{M}(\Omega_{X_G},E) \curvearrowleft 
    \mathcal{M}(\Omega_{X_G}).
  \end{align*}
  Restriction along the morphism of dg
  $\Gamma(\Omega_{BG})$-algebras
  \begin{equation*}
    \lambda: \Gamma(\Omega_{X_G}) \ra \mathcal{M}(\Omega_{X_G}),
    \quad \omega \mapsto (\omega \wedge ?),
  \end{equation*}
  turns
  $\mathcal{M}(\Omega_{X_G},E)$ into
  an dg $\Gamma(\Omega_{X_G}) \otimes_{\Gamma(\Omega_{BG})}
  \mathcal{M}(E)^\opp$-module, 
  \begin{align*}
    \mathcal{M}(E) \curvearrowright 
    \mathcal{M}(\Omega_{X_G},E) \curvearrowleft 
    \Gamma(\Omega_{X_G}).
  \end{align*}
  Note that $\varepsilon \in Z^0(\mathcal{M}(\Omega_{X_G},E)).$
  We claim that the action maps
  \begin{align}
    \label{eq:left-action}
    (? \comp \varepsilon): \mathcal{M}(E) 
    & \ra \mathcal{M}(\Omega_{X_G},E) \quad \text{and}\\ 
    \label{eq:right-action}
    (\varepsilon \comp ?) \comp \lambda: \Gamma(\Omega_{X_G}) 
    & \ra \mathcal{M}(\Omega_{X_G},E)
  \end{align}
  are quasi-isomorphisms of
  dg $\Gamma(\Omega_{BG})$-modules. 
  (The idea of the proof of this claim is taken from
  \cite{soergel-private-note-2010}.)
  This is obvious for
  \eqref{eq:left-action} since
  $E$ is h-injective and
  $\varepsilon$ is an isomorphism in the derived category. 
  The action map 
  \eqref{eq:right-action}
  appears as the upper horizontal composition in the
  following diagram in $C(\Gamma(\Omega_{BG}))$:
  \begin{equation*}
    \xymatrix{
      {\Gamma(\Omega_{X_G})}
      \ar[r]^-{\lambda} 
      \gar[d] &
      {\mathcal{M}(\Omega_{X_G},\Omega_{X_G})}
      \ar[r]^-{\varepsilon \comp ?} &
      {\mathcal{M}(\Omega_{X_G},E)}
      \ar[dd]^-{? \comp c^\sharp} \\
      {\Gamma(\Omega_{X_G})}
      \ar[r]^-{\Gamma(\varepsilon)} 
      \ar[d]^-{\can}_-{\sim} &
      {\Gamma(E)}
      \ar[d]^-{\can}_-{\sim} \\
      {\mathcal{M}(c^*(\Omega_{BG}),\Omega_{X_G})} 
      \ar[r]^-{\varepsilon \comp ?} & 
      {\mathcal{M}(c^*(\Omega_{BG}),E)} 
      \gar[r] & 
      {\mathcal{M}(c^*(\Omega_{BG}),E)} 
    }
  \end{equation*}
  Note first that
  this diagram is commutative: For $\omega \in \Gamma(\Omega_{X_G})$ we
  have
  \begin{equation*}
    (\varepsilon \comp \lambda(\omega)\comp c^\sharp)(1) =
    \varepsilon(\omega)= ((\varepsilon \comp \can(\omega))(1).
  \end{equation*}
  Since $E$ is h-injective and $c^\sharp$ is a quasi-isomorphism, the
  right vertical morphism $(? \comp c^\sharp)$ is a quasi-isomorphism.
  So we have
  to show that $\Gamma(\varepsilon)$ is a quasi-isomorphism.
  It is enough to show that 
  $\Gamma(\res (\varepsilon)): \Gamma(\res(\Omega_{X_G})) \ra \Gamma(\res(E))$ is
  a quasi-isomorphism, where
  $\res=\res^{c^*(\Omega_{BG})}_{\ul{\DR}_{X_G}}.$

  The obvious map $\ul{\DR}_{X_G} \ra \res(\Omega_{X_G})$ and its
  composition
  $\ul{\DR}_{X_G} \ra \res(E)$ with $\res(\varepsilon)$ are
  quasi-isomorphisms.
  Since $\Omega_{X_G} =\res(\Omega_{X_G})$ is soft and $X_G$
  is paracompact,
  $\Gamma(\res(\Omega_{X_G}))$ computes the sheaf cohomology 
  $H({X_G}; \ul{\DR}_{X_G}),$
  and so does $\Gamma(\res(E))$ (note that $\res(E)$ is h-injective since
  the left adjoint of $\res$ preserves acyclics).
  This shows that $\Gamma(\varepsilon)$ is a quasi-isomorphism
  and
  proves that
  \eqref{eq:right-action} is a quasi-isomorphism.

  The fact that \eqref{eq:left-action} and \eqref{eq:right-action} are
  quasi-isomorphisms has the following two consequences:
  Firstly,
  the conditions \ref{enum:pullback-forms-qiso} 
  and
  \ref{enum:structure-morph-in-endos-erzeuger-qiso}
  are equivalent.
  Secondly,
  $\Gamma(\Omega_{X_G})$ is
  $\Gamma(\Omega_{BG})$-smooth if and only if 
  $\mathcal{M}(E)$ is $\Gamma(\Omega_{BG})$-smooth: this follows from
  \cite[Lemma~2.14]{lunts-categorical-resolution} and
  Lemma~\ref{l:smoothness-and-quasi-equis} (alternatively,
  it is easy
  to see that $\mathcal{M}(E)$ and $\Gamma(\Omega_{X_G})$ are dg
  Morita equivalent, and then one can use 
  Theorem~\ref{t:smoothness-preserved-by-dg-Morita-equivalence}).
  Hence \ref{enum:GmodH-G-smooth} and
  \ref{enum:smooth-via-pullback-forms} are equivalent.

  It remains to show that \ref{enum:smooth-via-pullback-forms}
  and \ref{enum:pullback-forms-qiso} are equivalent.
  We know that $K:=H(\Gamma(\Omega_{BG}))=H_G(\point)$ is a polynomial
  ring over $\DR$ in finitely many variables of positive even degrees
  (cf.\ proof of Proposition~\ref{p:smoothness-one-orbit-case} below).
  Since $\Gamma(\Omega_{BG})$ is graded commutative there is a
  quasi-isomorphism of dg ($\DR$-)algebras $K
  \ra \Gamma(\Omega_{BG})$ inducing the identity on cohomology.
  By Theorem~\ref{t:smoothness-and-base-change}, 
  part~\ref{enum:smoothness-and-qiso-base-change-res},
  $\Gamma(\Omega_{BG})$-smoothness of  
  $\Gamma(\Omega_{X_G})=\Gamma(\Omega_{BH})$
  is equivalent to
  $K$-smoothness of $\Gamma(\Omega_{BH}).$
  This latter condition is
  equivalent to $K \ra \Gamma(\Omega_{BH})$ being a quasi-isomorphism,
  by 
  Proposition~\ref{p:smoothness-over-local-graded-finite-homological-dim-algebras}
  (the assumptions there are satisfied by the proof of Proposition~\ref{p:smoothness-one-orbit-case}), hence to $\Gamma(\Omega_{BG})
  \ra \Gamma(\Omega_{BH})$ being a quasi-isomorphism.
\end{proof}

\subsection{Reduction to homogeneous spaces}
\label{sec:reduct-homog-spac}

\begin{theorem}
  \label{t:X-G-smooth-iff-all-orbits-G-smooth}
  Let $G$ be a connected complex affine algebraic group
  and $X$ a complex $G$-variety.
  Assume that $X$ consists of finitely many $G$-orbits and that 
  all stabilizers (in $G$ of points in $X$) are 
  connected. The following conditions are equivalent:
  \begin{enumerate}
  \item 
    $X$ is $G$-smooth.
  \item 
    All $G$-orbits in $X$ are $G$-smooth.
  \end{enumerate}
\end{theorem}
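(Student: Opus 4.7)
The plan is to argue by induction on the number of $G$-orbits in $X$, the single-orbit case being tautological. For the inductive step, let $U\subset X$ be an open $G$-orbit with closed $G$-stable complement $F$; both inherit finitely many orbits with connected stabilizers and have strictly fewer orbits, so the inductive hypothesis applies to each. Working inside the dg enhancement $\calMod_G(c^*(\Omega_{BG}))_{\hinj}$ and using the recollement for sheaves of dg modules from diagram~\eqref{eq:six-functors-dg-modules-over-dg-sheaf-of-algs} together with Lemma~\ref{l:suitable-h-injective-lifts}, I would first pick classical generators $\mathcal{F}$ of the enhancement of $D^b_{G,c}(F)$ and $\mathcal{U}$ of the enhancement of $D^b_{G,c}(U)$, then form h-injective representatives of $i_{\as}\mathcal{F}$ and $j_!\mathcal{U}$ and take their direct sum $E$. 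The recollement triangle $j_!j^{\as}\to\id\to i_{\as}i^{\as}$ implies that $E$ is a classical generator of $D^b_{G,c}(X)$, and because $j^{\as}i_{\as}=0$ the representatives can be arranged so that
\[
A \;:=\; \mathcal{E}(E,E) \;=\; \begin{bmatrix} S & 0 \\ N & R \end{bmatrix}
\]
is block lower-triangular as a dg $\Gamma(\Omega_{BG})$-algebra, with $S=\mathcal{E}(i_{\as}\mathcal{F})$, $R=\mathcal{E}(j_!\mathcal{U})$, and $N=\mathcal{E}(i_{\as}\mathcal{F},j_!\mathcal{U})$.

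Next I would identify, via restriction to full dg subcategories, $S$ and $R$ as dg Morita representatives for the enhancements of $D^b_{G,c}(F)$ and $D^b_{G,c}(U)$ respectively. Corollary~\ref{c:test-smoothness-on-classical-generator} and Theorem~\ref{t:smoothness-preserved-by-dg-Morita-equivalence} then translate $\Gamma(\Omega_{BG})$-smoothness of $S$ (resp.\ $R$) into $G$-smoothness of $F$ (resp.\ $U$), and the same corollary identifies $G$-smoothness of $X$ with $\Gamma(\Omega_{BG})$-smoothness of $A$. Applying Theorem~\ref{t:diagonal-smooth-bimod-smooth-iff-extension-smooth} (via Remark~\ref{rem:dg-algebras-and-bimodule-versus-general-setting}) to the triangular algebra $A$ reduces the assertion to the dg statement: $A$ is $\Gamma(\Omega_{BG})$-smooth iff $R,S$ are $\Gamma(\Omega_{BG})$-smooth and $N$ is $\Gamma(\Omega_{BG})$-\sweet{}. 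The forward direction of the theorem is then immediate from the $S,R$ part and the inductive hypothesis.

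The main obstacle is the reverse direction, which additionally requires that $N$ be automatically $\Gamma(\Omega_{BG})$-\sweet{} (the sweetness is \emph{not} supplied by the inductive hypothesis on the orbits). Here I would exploit the specific recollement origin of $N$: using adjunctions $(i^{\as},i_{\as})$ and $(j_!,j^{\as})$ and the fact that both $i_{\as}\mathcal{F}$ and a cofibrant model of $j_!\mathcal{U}$ are compact objects of $D(c^*(\Omega_{BG}))$ sitting in $D^b_{G,c}(X)$, I would express $N$ (up to quasi-isomorphism of bimodules) as a hom between compact objects of $\per(A)$ and then transfer compactness from $\per(A)$ into $\per(S\otimes^L_{\Gamma(\Omega_{BG})}R^\opp)$. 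The delicate technical step is carrying out this transfer in the relative setting over $\Gamma(\Omega_{BG})$: a direct appeal to Corollary~\ref{c:bimodule-smoothness-quasi-isomorphism-invariance} is blocked since neither $\Gamma(\Omega_{BG})\to S$ nor $\Gamma(\Omega_{BG})\to R$ is a quasi-isomorphism, so one must instead argue via the filtration of the diagonal $A$-bimodule coming from the block-triangular decomposition (essentially reading triangle~\eqref{eq:prod-triangle-diagonal-E} in conjunction with Lemma~\ref{l:prod-from-source-cpt-implies-cpt}) to extract the required perfectness of $N$ as an $S\otimes^L_{\Gamma(\Omega_{BG})}R^\opp$-module.
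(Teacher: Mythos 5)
Your overall architecture — stratify $X$ into an open orbit $U$ and closed complement $F$, use the recollement and Lemma~\ref{l:suitable-h-injective-lifts} to build a block-triangular endomorphism dg algebra $A = \tzmat{S}{0}{N}{R}$, apply Theorem~\ref{t:diagonal-smooth-bimod-smooth-iff-extension-smooth}, and induct — is exactly the paper's. But the reverse implication contains a genuine error. You write that ``a direct appeal to Corollary~\ref{c:bimodule-smoothness-quasi-isomorphism-invariance} is blocked since neither $\Gamma(\Omega_{BG})\to S$ nor $\Gamma(\Omega_{BG})\to R$ is a quasi-isomorphism,'' and then gesture at an alternative via triangle~\eqref{eq:prod-triangle-diagonal-E} and Lemma~\ref{l:prod-from-source-cpt-implies-cpt}. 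Both halves of this are wrong. First, $\Gamma(\Omega_{BG})\to R$ \emph{is} a quasi-isomorphism in the reverse direction: $R$ is (quasi-isomorphic to) the endomorphism dg algebra of the generator of $D^b_{G,c}(U)$ for $U$ a single $G$-orbit, and the equivalences~\ref{enum:GmodH-G-smooth}$\Leftrightarrow$\ref{enum:structure-morph-in-endos-erzeuger-qiso} of Theorem~\ref{t:one-orbit-case-smoothness-and-quisos} say precisely that $G$-smoothness of the orbit $U$ is equivalent to $\Gamma(\Omega_{BG}) \to R$ being a quasi-isomorphism. This is the pivot of the whole argument; overlooking it means you have lost the one input that reduces sweetness of $N$ to perfectness of $N$ over $S$, after which the needed perfectness follows from the dg Morita identification $\per(S)\simeq D^b_{G,c}(F)$ together with $i^!(\mathcal{V})\in D^b_{G,c}(F)$. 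Second, your proposed replacement would be circular: triangle~\eqref{eq:prod-triangle-diagonal-E} plus Lemma~\ref{l:prod-from-source-cpt-implies-cpt} is exactly the machinery used in the proof of Theorem~\ref{t:diagonal-smooth-bimod-smooth-iff-extension-smooth} to deduce sweetness of $N$ \emph{from} smoothness of $\mathcal{E}$ — but in the reverse direction smoothness of $\mathcal{E}$ is the conclusion you are trying to establish, not a hypothesis you may invoke.

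A smaller imprecision: the dg endomorphism algebra $A$ of the full subcategory on $\{i_\as\mathcal{F}',\,\mathcal{V}\}$ is \emph{not} literally block-triangular; the hom complex $\mathcal{E}(\mathcal{V}, i_\as\mathcal{F}')$ is only acyclic, not zero. One must replace $\mathcal{E}$ by the quasi-equivalent dg category $\mathcal{E}'$ in which this entry is set to zero, which is legitimate by Lemma~\ref{l:smoothness-and-quasi-equis}. Your phrase ``the representatives can be arranged so that $A$ is block lower-triangular'' glosses over this; the arrangement happens at the level of dg categories, not by choosing better resolutions.
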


\begin{proof}
  Assume that $U$ is an open $G$-orbit in $X,$ and let $F$ be its complement.

  Let $\mathcal{A}_{X_G}=c^*(\Omega_{BG}),$ where $c: X_G \ra BG.$
  Diagram
  \eqref{eq:six-functors-dg-modules-over-dg-sheaf-of-algs}
  for $\groundring=\DR$ yields the following diagram
  (we write $i:F_G \ra X_G$ instead of $i_G$ etc.):
  \begin{equation*}
    \xymatrix@=45pt{
      {D(F_G)} \ar[r]|{i_{\as}} &
      \ar@/_1pc/[l]_{i^*} 
      \ar@/^1pc/[l]^{Ri^!} 
      {D(X_G)} 
      \ar[r]|{j^{\as}} 
      & 
      \ar@/_1pc/[l]_{j_!} 
      \ar@/^1pc/[l]^{Rj_*} 
      {D(U_G)}
      \\
      {D(\mathcal{A}_{F_G})} 
      \ar[u]^-{\res^{\mathcal{A}_{F_G}}_{\ul{\DR}_{F_G}}}_-{\sim}
      \ar[r]|{i_{\as}} &
      \ar@/_1pc/[l]_{i^*} 
      \ar@/^1pc/[l]^{Ri^!} 
      {D(\mathcal{A}_{X_G})} 
      \ar[u]^-{\res^{\mathcal{A}_{X_G}}_{\ul{\DR}_{X_G}}}_-{\sim}
      \ar[r]|{j^{\as}} 
      & 
      \ar@/_1pc/[l]_{j_!} 
      \ar@/^1pc/[l]^{Rj_*} 
      {D(\mathcal{A}_{U_G})}
      \ar[u]^-{\res^{\mathcal{A}_{U_G}}_{\ul{\DR}_{U_G}}}_-{\sim}
    }
  \end{equation*}
  We have explained its properties above. We claim that all functors
  in the upper row induce the following functors between the
  equivariant derived categories (and then it is clear that they
  coincide with the usual functors defined on the equivariant level).
  \begin{equation*}
    \xymatrix{
      {D^b_{G,c}(F)} 
      \ar[r]|{i_{\as}} &
      \ar@/_1pc/[l]_{i^*} 
      \ar@/^1pc/[l]^{Ri^!} 
      {D^b_{G,c}(X)} 
      \ar[r]|{j^{\as}} 
      & 
      \ar@/_1pc/[l]_{j_!} 
      \ar@/^1pc/[l]^{Rj_*}
      {D^b_{G,c}(U)}
    }
  \end{equation*}
  This is obvious for
  $i^*$ and $j^{\as}.$
  Theorem~\ref{t:derived-fibered-base-change} implies that it is
  true for $Rj_*$ and $i_{\as}$ (for constructibility use
  \cite{kaloshin-stratification}, which implies that the decomposition
  of $X$ into $G$-orbits is a Whitney stratification).
  To get the result for $Ri^!$ let $\mathcal{F} \in D^b_{G,c}(X)
  \subset D(X_G),$ and consider the triangle
  \begin{equation*}
    i_{\as}(Ri^!(\mathcal{F})) \ra 
    \mathcal{F} \ra 
    Rj_*(j^{\as}(\mathcal{F})) \ra 
    [1]i_{\as}(Ri^!(\mathcal{F})).
  \end{equation*}
  We already know that $\mathcal{F}$ and $Rj_*(j^{\as}(\mathcal{F}))$ are in
  $D^b_{G,c}(X).$ Hence 
  $i_{\as}(Ri^!(\mathcal{F})) \in D^b_{G,c}(X)$ and 
  $Ri^!(\mathcal{F}) \sila i^*(i_{\as}(Ri^!(\mathcal{F}))) \in
  D^b_{G,c}(F).$ Similarly one gets the result for $j_!.$

  Let 
  $\mathcal{U}:= \ul{\DR}_{U_G}.$ We have seen at
  the beginning of the proof of
  Theorem~\ref{t:one-orbit-case-smoothness-and-quisos} 
  that this is a classical generator of $D^b_{G,c}(U).$
  If $\mathcal{F}$ is a classical generator of $D^b_{G,c}(F),$ then
  $\{ i_{\as} \mathcal{F}, j_! \mathcal{U}\}$ 
  (or $i_{\as} \mathcal{F}\oplus j_! \mathcal{U}$)
  classically generates $D^b_{G,c}(X)$:
  Any object $\mathcal{X} \in D_{G,c}^b(X)$ fits into a
  (distinguished) triangle 
  \begin{equation*}
    j_! j^{\as} \mathcal{X} \ra \mathcal{X} \ra i_{\as}i^* \mathcal{X} \ra
    [1]j_! j^{\as} \mathcal{X},
  \end{equation*}
  and $j^{\as}\mathcal{X} \in D^b_{G,c}(U)$ 
  (resp.\ $i^*\mathcal{X} \in D^b_{G,c}(F)$) is in the subcategory
  classically generated by $\mathcal{U}$ (resp.\ $\mathcal{F}$).
  This argument and an induction on the number of $G$-orbits also
  shows that $D^b_{G,c}(F)$ has a classical generator; we fix such a
  generator $\mathcal{F}.$

  From Lemma~\ref{l:suitable-h-injective-lifts} we obtain (where
  $\res$ denotes the obvious restriction functors): There is an
  h-injective dg $\mathcal{A}_{F_G}$-module $\mathcal{F}'$
  such that $\res(\mathcal{F}') \cong \mathcal{F}$ in $D(F_G)$ and
  such that $i_{\as}\mathcal{F}'$ is h-injective and
  $\res (i_{\as}\mathcal{F}') \cong i_{\as}\mathcal{F}$ in $D(X_G).$
  There is an h-injective dg $\mathcal{A}_{X_G}$-module $\mathcal{V}$
  such that $\res(\mathcal{V}) \cong j_! \mathcal{U}$ in $D(X_G)$ and
  such that $j^{\as}(\mathcal{V})$ is h-injective and satisfies 
  $\mathcal{A}_{U_G} 
  = \mathcal{U} \otimes_{\ul{\DR}_{U_G}} \mathcal{A}_{U_G} 
  \cong j^{\as}(\mathcal{V})$ in
  $D(\mathcal{A}_{U_G})$
  and
  $\res(j^{\as}(\mathcal{V})) \cong \mathcal{U}$ in $D(U_G).$

  Let $\mathcal{E}$ be the full subcategory of
  $\calMod(\mathcal{A}_{X_G})$ whose objects are $i_{\as} \mathcal{F}'$ and
  $\mathcal{V}.$ We write this $\Gamma(\Omega_{BG})$-category
  $\mathcal{E}$ symbolically as 
  \begin{equation*}
    \begin{bmatrix}
      \mathcal{E}(i_{\as} \mathcal{F}')
      & \mathcal{E}(\mathcal{V},
      i_{\as}\mathcal{F}')\\
      \mathcal{E}(i_{\as}\mathcal{F}', \mathcal{V}) &
      \mathcal{E}(\mathcal{V})
    \end{bmatrix}.
  \end{equation*}
  From
  Remark~\ref{rem:dg-algebras-and-bimodule-versus-general-setting}
  and
  Lemma~\ref{l:test-smoothness-on-generator} (applied to
  $i_{\as}\mathcal{F}' \oplus \mathcal{V}$)
  we see that
  $X$ is
  $G$-smooth if and only if $\mathcal{E}$ is 
  $\Gamma(\Omega_{BG})$-smooth.
  
  Define $\mathcal{E}' \subset \mathcal{E}$ to be the 
  subcategory with the same objects and morphisms except that 
  $\mathcal{E}'(\mathcal{V}, i_{\as}\mathcal{F}'):=0,$ symbolically
  \begin{equation*}
    \mathcal{E}'=
    \begin{bmatrix}
      \mathcal{E}(i_{\as} \mathcal{F}')
      & 0 \\
      \mathcal{E}(i_{\as}\mathcal{F}', \mathcal{V}) &
      \mathcal{E}(\mathcal{V})
    \end{bmatrix}.
  \end{equation*}
  Then the obvious morphism $\mathcal{E} \ra \mathcal{E}'$ of
  dg $\Gamma(\Omega_{BG})$-categories is a quasi-equivalence:
  We only need to show that
  $\mathcal{E}(\mathcal{V}, i_{\as}\mathcal{F}')$ is acyclic. But
  \begin{align*}
    H(\mathcal{E}(\mathcal{V}, i_{\as}\mathcal{F}')) 
    & = (\mathcal{H}(\mathcal{A}_{X_G}))(\mathcal{V}, i_{\as}\mathcal{F}') \\
    \text{(since $i_{\as}\mathcal{F}'$ is h-injective)} & \sira
    (D(\mathcal{A}_{X_G}))(\mathcal{V}, i_{\as}\mathcal{F}') \\
    \text{(since $\res$ is an equivalence)} & \sira
    (D({X_G}))(\res \mathcal{V}, \res i_{\as}\mathcal{F}') \\
    & \cong
    (D({X_G}))(j_!\mathcal{U}, i_{\as} \mathcal{F}) \\
    & \cong
    (D({X_G}))(i^*j_!\mathcal{U}, \mathcal{F}) \\
    \text{(since $i^*j_!=0$)} &=0.
  \end{align*}
  
  Lemma~\ref{l:smoothness-and-quasi-equis} and
  Theorem~\ref{t:diagonal-smooth-bimod-smooth-iff-extension-smooth}
  show that $G$-smoothness of $X$
  is equivalent to
  \begin{enumerate}[label=(\alph*')]
  \item 
    \label{enum:end-F-smooth}
    $\mathcal{E}'(i_{\as}\mathcal{F}')$ is
    $\Gamma(\Omega_{BG})$-smooth, and
  \item 
    \label{enum:end-V-smooth}
    $\mathcal{E}'(\mathcal{V})$ is
    $\Gamma(\Omega_{BG})$-smooth, and
  \item 
    \label{enum:bimodule-F-sweet}
    $\mathcal{E}'(i_{\as}\mathcal{F}', \mathcal{V})$ is
    $\Gamma(\Omega_{BG})$-\sweet{} as a dg 
    $\mathcal{E}'(i_{\as}\mathcal{F}')
    \otimes_{\Gamma(\Omega_{BG})}
    \mathcal{E}'(\mathcal{V})^\opp$-module.
  \end{enumerate}
  We claim that these three conditions are equivalent to the following
  two conditions
  \begin{enumerate}[label=(\alph*'')]
  \item 
    \label{enum:F-G-smooth}
    $F$ is $G$-smooth, and
  \item 
    \label{enum:U-G-smooth}
    $U$ is $G$-smooth.
  \end{enumerate}

  \ref{enum:end-F-smooth} $\Leftrightarrow$
  \ref{enum:F-G-smooth}:
  Condition~\ref{enum:end-F-smooth}
  is equivalent to
  $(\calMod(\mathcal{A}_{X_F}))(\mathcal{F}')$ being
  $\Gamma(\Omega_{BG})$-smooth (since $i_{\as}$ is fully faithful), and
  hence to $G$-smoothness of $F,$ by Lemma~\ref{l:test-smoothness-on-generator}.

  \ref{enum:end-V-smooth}
  $\Leftrightarrow$
  \ref{enum:U-G-smooth}:
  Note that 
  \begin{equation}
    \label{eq:j-upper-star-qiso-on-dgend-V}
    j^{\as}:\mathcal{E}'(\mathcal{V})=
    (\calMod(\mathcal{A}_{X_G}))(\mathcal{V}) \ra
    (\calMod(\mathcal{A}_{U_G}))(j^{\as}\mathcal{V})
  \end{equation}
  is a quasi-isomorphism: on the $p$-th cohomology it is given by
  \begin{equation*}
    j^{\as}:
    (D(\mathcal{A}_{X_G}))(\mathcal{V}, [p]\mathcal{V}) \ra
    (D(\mathcal{A}_{U_G}))(j^{\as}\mathcal{V}, [p] j^{\as}\mathcal{V})
  \end{equation*}
  which becomes identified (using the equivalences $\res,$ 
  $\res j^{\as} \cong j^{\as}\res,$ and 
  $\res \mathcal{V}\cong j_! \mathcal{U}$)
  with
  \begin{equation*}
    j^{\as}:
    (D(X_G))(j_! \mathcal{U}, [p]j_!\mathcal{U}) \ra
    (D(U_G))(j^{\as}j_!\mathcal{U}, [p] j^{\as}j_!\mathcal{U})
  \end{equation*}
  and is an isomorphism since $j_!$ is fully faithful.
  Hence 
  condition \ref{enum:end-V-smooth} is equivalent to
  $\Gamma(\Omega_{BG})$-smoothness of
  $(\calMod(\mathcal{A}_{U_G}))(j^{\as}\mathcal{V})$
  (Lemma~\ref{l:smoothness-and-quasi-equis}).
  Now again
  use
  Lemma~\ref{l:test-smoothness-on-generator} and the fact that
  $j^{\as}\mathcal{V}$ is h-injective.

  \ref{enum:U-G-smooth} $\Rightarrow$
  \ref{enum:bimodule-F-sweet}:
  Assume that \ref{enum:U-G-smooth} holds.
  Since $\mathcal{A}_{U_G} \cong j^{\as}(\mathcal{V})$ 
  in $D(\mathcal{A}_{U_G}),$
  Theorem~\ref{t:one-orbit-case-smoothness-and-quisos}
  implies that the structure morphism
  $\Gamma(\Omega_{BG}) \ra
  (\calMod(\mathcal{A}_{U_G}))(j^{\as}(\mathcal{V}))$
  is a quasi-isomorphism.
  By the above quasi-isomorphism 
  \eqref{eq:j-upper-star-qiso-on-dgend-V}
  this is equivalent to 
  $\Gamma(\Omega_{BG}) \ra 
  \mathcal{E}'(\mathcal{V})$ being a
  quasi-isomorphism. 
  Hence
  Corollary~\ref{c:bimodule-smoothness-quasi-isomorphism-invariance}
  shows that 
  condition \ref{enum:bimodule-F-sweet}
  is equivalent to
  \begin{equation*}
    \mathcal{E}'(i_{\as}\mathcal{F}', \mathcal{V}) 
    \in \per(\mathcal{E}'(i_{\as}\mathcal{F}')).
  \end{equation*}
  Applying the adjunction $(i_{\as}, i^!)$ (on the dg level) and using
  that $i_{\as}$ is fully faithful we see
  that this is equivalent to
  \begin{equation}
    \label{eq:connecting-bimodule-sweet-reformulated}
    (\calMod(\mathcal{A}_{X_F}))(\mathcal{F}', i^!\mathcal{V}) 
    \in \per((\calMod(\mathcal{A}_{F_G}))(\mathcal{F}')).
  \end{equation}

  If we view $D^b_{G,c}(F)$ as a full subcategory of
  $D(\mathcal{A}_{F_G}),$ then
  equivalence~\eqref{eq:homotopy-cat-h-injectives-isom-derived-cat-dg-sheaves-version-quasi-inverse} 
  (cf.\ Lemma~\ref{l:test-smoothness-on-generator}) 
  and
  Corollary~\ref{c:test-smoothness-on-classical-generator}
  show that
  \begin{align*}
    D^b_{G,c}(F) & \sira 
    \per((\calMod(\mathcal{A}_{F_G}))(\mathcal{F}')),\\
    \mathcal{G} & \mapsto (\calMod(\mathcal{A}))(\mathcal{F}', \iota(\mathcal{G})),
  \end{align*}
  is an equivalence.

  Since $Ri^!$ preserves the equivariant derived categories and
  $\mathcal{V}$ is h-injective, we have
  $i^!(\mathcal{V}) \cong Ri^!(\mathcal{V}) \in  D^b_{G,c}(F).$
  Since $i_{\as}$ is left adjoint to $i^!$ and preserves acyclics,
  $i^!(\mathcal{V})$ is h-injective. Hence $i^!(\mathcal{V}) \cong
  \iota(i^!(\mathcal{V}))$ already in the homotopy category and therefore
  \begin{equation*}
    (\calMod(\mathcal{A}))(\mathcal{F}', i^!(\mathcal{V})) \cong 
    (\calMod(\mathcal{A}))(\mathcal{F}', \iota(i^!(\mathcal{V})))
    \in \per((\calMod(\mathcal{A}_{F_G}))(\mathcal{F}')).
  \end{equation*}
  This shows \eqref{eq:connecting-bimodule-sweet-reformulated} and hence
  that condition \ref{enum:bimodule-F-sweet} is satisfied.

  These arguments show that $X$ is $G$-smooth if
  and only if 
  $U$ and $F$ are $G$-smooth.
  An induction on the number of $G$-orbits in $X$ finishes the
  proof.
\end{proof}

\subsection{Results concerning the case of a homogeneous space}
\label{sec:results-for-homogeneous-space}

Let $H$ be a closed connected subgroup of a connected (real) Lie group $G.$
The inclusion morphism $H \ra G$ gives rise to the morphism
$H_G(\point) \ra H_H(\point)$ on equivariant cohomology which is a morphism
of (graded commutative) dg $\DR$-algebras (with differential zero).

\begin{proposition}
  \label{p:smoothness-one-orbit-case}
  Let $H$ be a closed connected subgroup of a connected Lie
  group $G.$ Then the following conditions are equivalent:
  \begin{enumerate}
  \item 
    \label{enum:HH-smooth-HG-algebra}
    $H_H(\point)$ is a smooth dg $H_G(\point)$-algebra;
  \item 
    \label{enum:iso-on-equiv-coho-point}
    $H_G(\point) \ra H_H(\point)$ is an isomorphism;
  \item
    \label{enum:max-compacts-coincide}
    a/any maximal compact subgroup of $H$ is a maximal compact subgroup of $G.$
  \item
    \label{enum:quotient-trivial-coho}
    $H(G/H):=H^*(G/H;\DR)=\DR.$ 
  \end{enumerate}
\end{proposition}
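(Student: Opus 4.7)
The plan is to prove (c) $\Leftrightarrow$ (d), (b) $\Leftrightarrow$ (d), and (a) $\Leftrightarrow$ (b), after first reducing to the case where $G$ and $H$ are compact. To perform the reduction, I would choose maximal compact subgroups $K_H \subset K_G$ (possible up to conjugation, since any compact subgroup of $G$ sits inside some conjugate of $K_G$). By Iwasawa decomposition $G/K_G$ and $H/K_H$ are contractible, so $BG \simeq BK_G$ and $BH \simeq BK_H$; consequently $H_G(\point) = H^*(BK_G;\DR)$, $H_H(\point) = H^*(BK_H;\DR)$, and $G/H \simeq K_G/K_H$. In particular condition (c) becomes $K_H = K_G$, and all four conditions remain unchanged under this reduction.

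For (c) $\Leftrightarrow$ (d), I observe that $K_G/K_H$ is a compact, connected, orientable smooth manifold (orientability follows because $K_H$ is connected and its adjoint action on the quotient tangent space lands in the identity component of the general linear group). If $K_H = K_G$ the quotient is a point, giving (d). Conversely, Poincar\'e duality forces $\dim K_G/K_H = 0$, hence $K_H = K_G$.

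For (b) $\Leftrightarrow$ (d), I would use the Borel fibration $K_G/K_H \to BK_H \to BK_G$, whose base is simply connected. For (d) $\Rightarrow$ (b) the Serre spectral sequence $E_2^{p,q} = H^p(BK_G) \otimes H^q(K_G/K_H)$ collapses onto the bottom row, and the edge map supplies the isomorphism. For (b) $\Rightarrow$ (d) I would invoke the Eilenberg--Moore spectral sequence (convergent in characteristic zero)
\begin{equation*}
E_2^{*,*} = \Tor_{H^*(BK_G)}^{*,*}(H^*(BK_H), \DR) \Rightarrow H^*(K_G/K_H;\DR),
\end{equation*}
which under hypothesis (b) reduces to $\Tor_{H^*(BK_G)}(H^*(BK_G), \DR) = \DR$ concentrated in bidegree $(0,0)$.

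The implication (b) $\Rightarrow$ (a) is immediate from Lemma~\ref{l:smoothness-and-quasi-equis}, since an isomorphism is in particular a quasi-equivalence. The main work lies in (a) $\Rightarrow$ (b), which I would deduce from Proposition~\ref{p:smoothness-over-local-graded-finite-homological-dim-algebras} applied with $\groundring = \DR$, $K = H_G(\point)$, and $A = H_H(\point)$, both carrying zero differential. By Borel's theorem, $K = H^*(BK_G;\DR)$ is a polynomial algebra on finitely many generators of positive even degree, hence graded commutative, Noetherian, of finite global dimension, concentrated in nonnegative degrees, with $K^0 = \DR$ and $K^1 = 0$; the vanishing of $K^1$ makes the injectivity hypothesis on $K^1 \hookrightarrow H^1(A)$ automatic, and connectedness of $H$ gives $H^0(A) = \DR$. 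The principal obstacle is verifying that $A = H_H(\point)$ is finitely generated as a $K$-module; this follows from the Serre spectral sequence of $K_G/K_H \to BK_H \to BK_G$, whose $E_2$-term $H^*(BK_G) \otimes H^*(K_G/K_H)$ is finitely generated over the Noetherian ring $H^*(BK_G)$ because $K_G/K_H$ is compact with finite-dimensional cohomology, and $H^*(BK_H)$ appears as a subquotient. Proposition~\ref{p:smoothness-over-local-graded-finite-homological-dim-algebras} then yields that $A$ is $K$-smooth if and only if $K \to A$ is a quasi-isomorphism, which, since both algebras have zero differential, is precisely condition (b).
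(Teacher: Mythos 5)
Your proposal is correct and reaches all four equivalences, but by a genuinely different route from the paper's.

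The paper works throughout with maximal tori and Weyl groups: it identifies $H_G(\point)$ with $\Sym((\Lie S)^*)^{W_L}$ for a maximal torus $S$ of a maximal compact $L\subset G,$ similarly for $H,$ and proves the equivalence (b)\,$\Leftrightarrow$\,(c) directly by an invariant-theoretic argument (it shows $T=S$ by taking a $W_L$-orbit product of a linear form vanishing on $\Lie T,$ gets $W_M=W_L$ by comparing free-module ranks $|W|$ over the invariants using Humphreys, and then invokes Dwyer--Wilkerson to conclude $M=L$). For (c)\,$\Rightarrow$\,(d) the paper uses the homotopy long exact sequence of $H/M\to G/M\to G/H$ plus Hurewicz, and for (d)\,$\Rightarrow$\,(b) it uses Leray--Hirsch. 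You instead reduce to compact groups at the outset (after verifying $G/H\simeq K_G/K_H,$ which does hold but deserves the fibration argument you gloss over: $K_G/K_H\hra G/K_H$ is an equivalence because $G/K_H\to G/K_G$ has contractible base, and $G/K_H\to G/H$ has contractible fiber $H/K_H$), prove (c)\,$\Leftrightarrow$\,(d) cleanly via Poincar\'e duality on the compact orientable manifold $K_G/K_H$ (this elegantly sidesteps the entire Dwyer--Wilkerson argument), and prove (b)\,$\Leftrightarrow$\,(d) via the Serre and Eilenberg--Moore spectral sequences. Both proofs then funnel (a)\,$\Leftrightarrow$\,(b) through Proposition~\ref{p:smoothness-over-local-graded-finite-homological-dim-algebras}; you establish the finite-generation hypothesis by the Serre spectral sequence of $K_G/K_H\to BK_H\to BK_G$ (here you should note more precisely that $E_\infty$ is the associated graded of a filtration on $H^*(BK_H),$ and one deduces finite generation from finite generation of $E_\infty$), whereas the paper gets it by exhibiting $H_H$ as a subquotient of the finitely generated $H_G$-module $H_S.$ Your spectral-sequence/duality route is arguably more conceptual and avoids the hardest external input (Dwyer--Wilkerson); the paper's route is more explicit and, as a byproduct, pins down $H_G(\point)$ as a concrete polynomial ring of Weyl-group invariants, which the authors reuse when verifying the hypotheses of Proposition~\ref{p:smoothness-over-local-graded-finite-homological-dim-algebras}.
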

 
\begin{proof}
  Recall the following facts 
  (see \cite{hochschild-structure-lie-groups,
    borel-sous-groupes-compact-max}):
  Any connected (real) Lie group $G'$ has maximal compact subgroups and 
  any compact subgroup is contained in one of them; they are connected
  and any two of them are conjugate by an inner automorphism.
  If $K'$ is a maximal compact subgroup, $G'$ is
  homeomorphic to $K' \times \DR^l$ for some $l,$ and
  the quotient $G'/K'$ is homeomorphic to $\DR^l.$

  We write $H_G$ instead of $H_G(\point),$ and similar for other groups.
  Let $M$ be a maximal compact subgroup of $H,$ and $L$ a maximal
  compact subgroup of $G$ containing $M.$
  Let $T$ be a maximal torus in $M,$ and $S$ a maximal torus in $L$
  containing $T$:
  \begin{equation*}
    \xymatrix{
      {S} \ar@{}[r]|-{\subset} &
      {L} \ar@{}[r]|-{\subset} &
      {G} \\
      {T} \ar@{}[r]|-{\subset} \ar@{}[u]|-{\cup} &
      {M} \ar@{}[r]|-{\subset} \ar@{}[u]|-{\cup} &
      {H} \ar@{}[u]|-{\cup} 
    }
  \end{equation*}
  Since $G \cong L \times \DR^g$ and
  $G/L \cong \DR^g$ for some $g \in \DN$ we have
  $H_G=H_L.$
  Let $W_L$ be the Weyl group of $(L,S).$ It acts 
  on 
  $\Sym((\Lie S)^*),$ the space of real valued polynomial functions on
  the Lie algebra $\Lie S$ of $S.$ We have canonically $\Sym((\Lie S)^*)=H_S.$ 
  From \cite[\S27,
  \S28]{borel-cohomologie-espaces-fibres-et-homogenes-de-lie-gp-cpt}
  we know that $H_L=H_S^{W_L}$ and that this is a polynomial ring
  (over $\DR$) in $s:=\dim_{\DR} S$ variables of positive even
  degrees; similarly $H_H=H_M=H_T^{W_M}$ is a polynomial ring 
  in $t:= \dim_{\DR} T$ 
  variables of positive even degrees; furthermore, 
  the inclusion $\Lie T \subset \Lie S$
  induces a morphism
  $H_S^{W_L} \ra H_T^{W_M}$ which coincides with $H_G \ra H_H$ under
  our identifications.

  Now it is clear that \ref{enum:max-compacts-coincide} implies
  \ref{enum:iso-on-equiv-coho-point}. Conversely,
  \ref{enum:iso-on-equiv-coho-point} implies that $S=T$ 
  (if $T \subsetneq S,$ choose $0\not=\chi \in (\Lie S)^*$ such that
  $\chi|_{\Lie T}=0$; then $\prod_{w \in W_L} w(\chi)$ is in
  $H_S^{W_L}$ and nonzero but goes to zero in $H_T^{W_M}$)
  and $W_L=W_M$ (since we know $S=T$ it is clear that $W_M
  \subset W_L$; by \cite[Prop.~3.6]{Humphreysreflect} 
  $H_S$ is a free
  $H_S^{W_L}$-module of rank $|W_L|,$ 
  and
  $H_T$ is a free
  $H_T^{W_M}$-module of rank $|W_M|$; since $H_S=H_T$ and
  $H_S^{W_L}=H_T^{W_M}$ we must have $W_M=W_L$),
  and hence $L=M$ (see e.\,g.\ \cite[Thm.~6.2]{dwyer-wilkerson}); this
  yields 
  \ref{enum:max-compacts-coincide} since $M$ was an arbitrary maximal
  compact subgroup of $H.$

  Proposition~\ref{p:smoothness-over-local-graded-finite-homological-dim-algebras} 
  can be applied to $\groundring=\DR,$ $K=H_G=H_S^{W_L}$
  (which has global dimension $s,$ cf.\ \cite[X.\S8.6,
  Cor.~2]{bourbaki-algebre-chap-10-algebre-homologique}), and 
  the dg $K$-algebra $A=H_H=H_T^{W_M}$ (note that $H(A)=A$ is a
  finitely generated $K$-module since $H_S$ is a finitely generated module over
  the Noetherian ring $K$ and has $A$ as a subquotient).
  This shows that 
  \ref{enum:HH-smooth-HG-algebra} and
  \ref{enum:iso-on-equiv-coho-point} are equivalent.

  If $H(G/H)=\DR,$ then the Leray-Hirsch theorem can be applied to
  $EG/H \ra EG/G$ and shows that $H_G \ra H_H$ is an isomorphism.
  Hence \ref{enum:quotient-trivial-coho} implies
  \ref{enum:iso-on-equiv-coho-point}.

  It remains to show 
  that \ref{enum:max-compacts-coincide}
  implies \ref{enum:quotient-trivial-coho}.
  As above let $M$ be a maximal compact subgroup of $H.$ Assume
  that $M$ is also a maximal compact subgroup of $G.$ 
  Since $H/M \cong \DR^h$ and $G/M \cong \DR^g$ for suitable $g, h \in \DN,$ the long exact
  sequence of homotopy groups of the fiber bundle $G/M \ra G/H$ with
  fiber $H/M$ shows that $\pi_n(G/H)=\{1\}$ for all $n \in \DN.$
  The Hurewicz theorem then shows that $H(G/H)=\DR.$
\end{proof}

\begin{proposition}
  \label{p:smoothness-one-orbit-case-algebraic-groups}
  Let $G$ be a connected complex affine algebraic group and 
  $H \subset G$ a closed connected subgroup.
  Then the equivalent conditions
  of Proposition~\ref{p:smoothness-one-orbit-case}
  are equivalent to the following condition:
  \begin{enumerate}[label=(e)]
  \item 
    \label{ref:GmodH-isomorph-to-Cn}
    $G/H \cong \DC^n$ as complex varieties for some $n \in \DN.$
  \end{enumerate}
\end{proposition}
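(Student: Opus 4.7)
The plan is to establish (e) $\Leftrightarrow$ (d) of Proposition~\ref{p:smoothness-one-orbit-case}. The direction (e) $\Rightarrow$ (d) is immediate since $H^*(\DC^n; \DR) = \DR$. The substantive direction is (c) $\Rightarrow$ (e), where (c) asserts that a maximal compact subgroup $K$ of $H$ is also a maximal compact subgroup of $G$. My strategy will be to exhibit $G/H$ as a quotient of connected complex unipotent algebraic groups, and then to invoke the classical fact that such quotients are isomorphic to affine spaces.

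First, I will produce a common Levi subgroup of $H$ and $G$. Let $L \subset G$ be the Zariski closure of $K$. Since $K$ is compact and connected, $L$ is a connected complex reductive algebraic group having $K$ as a maximal compact subgroup, and the maximality of $K$ in $G$ makes $L$ maximal reductive in $G$, i.e.\ a Levi subgroup. The Zariski closure of $K$ inside $H$ is contained in $L$, is reductive with maximal compact $K$, and, since a connected complex reductive group is determined up to isomorphism by its maximal compact subgroup, must equal $L$ by a dimension count. So $L$ is simultaneously a Levi subgroup of $H$ and $G$. Next, I will show $R_u(H) \subseteq R_u(G)$ for the Levi decompositions $H = L \ltimes R_u(H)$ and $G = L \ltimes R_u(G)$: the subgroup $R_u(H)$ is connected unipotent in $G$ and normalized by $L$, and under the canonical retraction $G \to L$ its image is a connected unipotent subgroup of $L$ normal under conjugation by $L$; as $L$ is connected reductive this image must be trivial.

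Then I will identify $G/H$ with $R_u(G)/R_u(H)$ as complex varieties. Using the isomorphism of varieties $L \times R_u(G) \sira G$, $(l,v) \mapsto lv$, the right action of $H = L \cdot R_u(H)$ on $G$ reads $(l,v)\cdot l_0 = (ll_0,\, l_0^{-1}vl_0)$ for $l_0 \in L$ and $(l,v)\cdot u = (l,\, vu)$ for $u \in R_u(H)$. Normalizing by choosing the representative with $l = 1$ and then quotienting by right multiplication of $R_u(H)$ on $R_u(G)$ yields the desired isomorphism $G/H \cong R_u(G)/R_u(H)$. Finally, for any closed connected subgroup $V$ of a connected complex unipotent algebraic group $U$ the quotient $U/V$ is isomorphic to $\DC^{\dim U - \dim V}$: refining a central composition series of $U$ to pass through $V$, successive quotients become isomorphic to $\mathbb{G}_a$, and each stage in the induction is a principal $\mathbb{G}_a$-bundle over an affine variety, hence trivial since $H^1_{\mathrm{Zar}}$ of the structure sheaf vanishes on affine varieties. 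Applying this to $V = R_u(H) \subseteq U = R_u(G)$ gives $G/H \cong \DC^n$ with $n = \dim R_u(G) - \dim R_u(H)$.

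The main obstacle will be the common-Levi construction together with the containment $R_u(H) \subseteq R_u(G)$: although both facts are classical, they depend delicately on how the complexification of the common maximal compact $K$ sits inside both $H$ and $G$, and on the connected-reductive-normalizer argument. Once these are in place, the identification of $G/H$ with a quotient of unipotent groups and the affine-space statement are essentially formal.
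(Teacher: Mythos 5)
Your proposal is correct and proves the substantive implication \ref{enum:max-compacts-coincide} $\Rightarrow$ \ref{ref:GmodH-isomorph-to-Cn}, but it takes a more elaborate structural route than the paper. You first produce a \emph{common} Levi subgroup $L$ of $H$ and $G$ (as the Zariski closure of the shared maximal compact $K$), then prove the containment $R_u(H)\subseteq R_u(G)$ via the retraction onto $L$, and finally identify $G/H$ with the quotient $R_u(G)/R_u(H)$ explicitly. The paper is lighter on its feet: it fixes a Levi $S$ of $G$ together with a maximal compact $L\subset S$, uses \ref{enum:max-compacts-coincide} and Zariski density of $L$ in $S$ to get a conjugate $S'=gSg^{-1}\subset H$, and then simply observes $G=U S'=UH$ (with $U=R_u(G)$), so the $U$-action on $G/H$ is transitive and $G/H\cong U/(U\cap H)$. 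This sidesteps both the common-Levi step and the containment $R_u(H)\subseteq R_u(G)$, and hands the problem directly to Lemma~\ref{l:quotient-unipotent-group}. Your approach buys a slightly sharper conclusion (an explicit identification of the stabilizer with $R_u(H)$ rather than the a priori possibly larger $U\cap H$), at the cost of the extra normality arguments.

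One phrase in your last step needs tightening. ``Refining a central composition series of $U$ to pass through $V$'' is not quite right, since an arbitrary closed subgroup $V$ need not be a term in any central series of $U$. What one actually uses is that subgroups of nilpotent (hence unipotent) groups are subnormal, so there is a chain $V=V_0\triangleleft V_1\triangleleft\dots\triangleleft V_k=U$ with each $V_{i+1}/V_i\cong\mathbb G_a$; the paper's Lemma~\ref{l:quotient-unipotent-group} constructs precisely such a step $V\subset V'$ with $V\triangleleft V'$ and $V'/V\cong\mathbb G_a$ by a double induction on $\dim U$ and $\dim U/V$ using the center of $U$. After that, the principal-$\mathbb G_a$-bundle argument (Zariski-local triviality is the content of the Serre and Kraft--Schwarz references) finishes the job exactly as you say. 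With that correction your proof is complete.
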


\begin{proof}
  Obviously   \ref{ref:GmodH-isomorph-to-Cn} implies
  \ref{enum:quotient-trivial-coho}.
  Assume that \ref{enum:max-compacts-coincide} is satisfied.
  We claim that the unipotent radical $U$ of $G$ acts
  transitively on $G/H.$ 
  Let $S \subset G$ be a Levi subgroup, and $L \subset S$ a maximal
  compact subgroup. Our assumption implies that $gLg\inv \subset H$
  for some $g \in G.$ Since $L$ is Zariski-dense in $S$ this implies
  that already $S':=gSg\inv \subset H.$ But then $G=U S' = U H$ and
  hence $U/(U \cap H) \sira G/H.$
  Now use the (presumably well known) Lemma~\ref{l:quotient-unipotent-group}.
\end{proof}

We could not find a good reference for the following result.

\begin{lemma}
  \label{l:quotient-unipotent-group}
  Let $V \subset U$ be a closed subgroup of a unipotent complex affine
  algebraic group. Then $U/V \cong \DC^n$ as complex varieties for
  some $n \in \DN.$
\end{lemma}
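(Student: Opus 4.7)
My plan is to proceed by induction on $\dim U,$ with the base case $\dim U = 0$ being trivial. For the inductive step, I would use the standard structural fact that any non-trivial unipotent complex algebraic group $U$ contains a non-trivial connected central subgroup $N \subset Z(U)$ with $N \cong \mathbb{G}_a$ (take a one-parameter subgroup of the center of $U,$ which is non-trivial since $U$ is unipotent). Having chosen such $N,$ I would split into two cases according to whether or not $N$ is contained in $V.$

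First, if $N \subset V,$ then the quotient map $U \to U/N$ sends $V$ to a closed subgroup $V/N$ of the unipotent group $U/N,$ and one has a natural isomorphism $U/V \cong (U/N)/(V/N).$ Since $\dim(U/N) < \dim U,$ the inductive hypothesis applied to $V/N \subset U/N$ delivers $U/V \cong \mathbb{C}^n.$ Second, if $N \not\subset V,$ then $N \cap V$ is a proper closed subgroup of $N \cong \mathbb{G}_a,$ hence finite, hence trivial (since in characteristic zero, $\mathbb{G}_a$ has no non-trivial finite subgroups). Because $N$ is central, $NV = VN$ is a closed subgroup of $U$ containing $V,$ with $\dim(NV) > \dim V,$ and the projection $U/V \to U/(NV)$ is a morphism of varieties whose fibers are the $N$-orbits, each isomorphic to $N/(N \cap V) \cong \mathbb{G}_a.$ Since $N$ is central, its action on $U/V$ by left translation commutes with right translation by $V,$ is free (because $x^{-1}Nx \cap V = N \cap V = \{e\}$ for every $x \in U$), and realises $U/V \to U/(NV)$ as a principal $\mathbb{G}_a$-bundle. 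Applying the inductive hypothesis to the inclusion $NV \subset U$ (which has strictly smaller codimension) yields $U/(NV) \cong \mathbb{C}^{n-1}.$

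To conclude in the second case I would invoke the fact that principal $\mathbb{G}_a$-bundles are Zariski-locally trivial (since $\mathbb{G}_a$ is special in the sense of Serre), and are classified on any variety $X$ by $H^1_{\mathrm{Zar}}(X,\mathcal{O}_X).$ Over $\mathbb{C}^{n-1}$ this cohomology group vanishes, so the bundle is trivial and $U/V \cong \mathbb{C}^{n-1} \times \mathbb{C} = \mathbb{C}^n$ as complex varieties.

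The main technical obstacle is the triviality of the $\mathbb{G}_a$-bundle in the second case; the induction itself and the case distinction are straightforward once a central $\mathbb{G}_a$ is extracted, but the identification of $U/V$ with $\mathbb{C}^n$ (rather than merely with a bundle over $\mathbb{C}^{n-1}$ with affine line fibers) really does use the characteristic zero hypothesis through the vanishing of $H^1(\mathbb{A}^{n-1}, \mathcal{O})$ and the speciality of $\mathbb{G}_a.$ An alternative, which avoids the cohomological argument, would be to choose a $\mathbb{G}_a$-equivariant algebraic section of $U \to U/(NV)$ over the affine space $U/(NV)$ by lifting an exponential coordinate, but either route leads to the same conclusion.
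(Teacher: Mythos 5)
Your argument follows essentially the same route as the paper's: extract a central copy of $\mathbb{G}_a$, enlarge $V$ by one dimension, and trivialize the resulting principal $\mathbb{G}_a$-bundle over an affine space obtained by induction. The paper works with the full center $Z(U)$ and then picks a one-dimensional quotient of $VZ/V$, whereas you start directly with a one-parameter central subgroup $N \cong \mathbb{G}_a$; these are cosmetic variants of the same construction, and the bundle trivialization uses exactly the facts you cite.

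There is, however, a gap in the way the induction is set up. You declare a single induction on $\dim U$, but in the second case you apply the inductive hypothesis to the inclusion $NV \subset U$ with $U$ unchanged, so the declared induction parameter has not decreased; your phrase ``strictly smaller codimension'' is appealing to a different parameter than the one you announced. The paper closes this by running a double induction, outer on $\dim U$ and inner on $\dim(U/V)$, and you should do the same if you keep your phrasing. Alternatively, you can stay with a single induction on $\dim U$ by observing that $N$ is normal in $U$ (being central), so that $U/(NV) \cong (U/N)/(NV/N)$, and then applying the inductive hypothesis to $NV/N \subset U/N$, where $\dim(U/N) < \dim U$ does decrease. Either fix renders the argument complete.
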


\begin{proof}
  (We learned this proof from Hanspeter Kraft
  \cite{kraft-email-2011}.)
  We prove this by an outer induction on $\dim(U)$ and an inner
  induction on $\dim(U/V),$ the cases $\dim(U)=0$ and $\dim(U/V)=0$ being
  trivial. So assume that $\dim(U/V)>0.$
  
  Let $Z$ be the (nontrivial) center of $U.$
  If $Z \subset V,$ then $U/V \cong (U/Z)/(V/Z)$ and we can use induction for
  $V/Z \subset U/Z.$ So assume that $Z \subsetneq V.$

  Claim: There is a closed subgroup  $V \subset V' \subset U$ such
  that $V$ is normal in $V'$ and $V'/V \cong (\DC,+).$
  
  Let $V'':= VZ.$ This is a closed subgroup satisfying $V
  \subsetneq V'' \subset U.$ Note that 
  $Z/(Z\cap V) \cong V''/V,$ and these groups are abelian and
  unipotent. Hence they are isomorphic to a finite product of additive
  groups $(\DC,+).$ Let $V'$ be the inverse image of a one-dimensional
  subgroup of $V''/V.$ This proves the claim.

  The operation of $V'$ on $U$ by right multiplication induces an
  operation of $V'/V \cong (\DC,+)$ on $U/V$ with quotient $U/V',$ and
  $U/V \ra U/V'$ is a principal $(\DC,+)$-bundle. 
  Every such bundle
  over an affine variety is trivial (see
  \cite{serre-espaces-fibres-algebriques}
  or 
  \cite[Ch.~IV]{kraft-schwarz-reductive-group-actions-one-dim-quotient}).
  By induction we know $U/V'\cong \DC^m$ for some $m \in \DN.$
\end{proof}

\appendix

\section{Derived fibered base change}
\label{sec:derived-fibered-base}

We provide a proof of Theorem~\ref{t:derived-fibered-base-change} 
in this appendix.
This theorem is a modification of
\cite[p.~56, Lemma~C1]{BL}; the proof we present is
essentially copied from \cite{soergel-private-note-2010}.
We thank W.~Soergel for giving his consent.

Sheaves are sheaves of $\groundring$-modules, for $\groundring$ an
arbitrary ring. We denote the category of sheaves on a topological
space $X$ by $\Sh(X).$

\begin{lemma}
  \label{l:open-surjective-adjunction-isom}
  Let $f: X \sra Y$ be an open and surjective morphism of topological
  spaces (for example a projection or a locally trivial fiber
  bundle). If all fibers of $f$ are connected, the  
  adjunction morphism
  $\mathcal{G} \ra f_*f^*\mathcal{G}$
  is an isomorphism
  for any sheaf $\mathcal{G}$ on $Y.$
\end{lemma}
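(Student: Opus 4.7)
My plan is to prove directly that the adjunction morphism $\mathcal{G}(V) \ra (f_*f^*\mathcal{G})(V) = (f^*\mathcal{G})(f^{-1}V)$ is a bijection for every open $V \subset Y$, which is stronger than the stalk-wise statement. I would work with the étalé space $p : \widetilde{\mathcal{G}} \ra Y$ of $\mathcal{G}$, since this makes the role of the topological hypotheses on $f$ transparent. Recall that $p$ is a local homeomorphism, so each fiber $p^{-1}(y') = \mathcal{G}_{y'}$ carries the discrete topology, and sections of $\mathcal{G}$ (resp. of $f^*\mathcal{G}$) over open sets correspond to continuous sections of $p$ (resp. of the pulled-back étalé map), i.\,e.\ to continuous maps to $\widetilde{\mathcal{G}}$ lying over $\id_Y$ (resp. over $f$). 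The adjunction morphism on sections then corresponds to precomposition with $f$.

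Given $\sigma : f^{-1}(V) \ra \widetilde{\mathcal{G}}$ continuous over $f$, I would first restrict to each fiber $f^{-1}(y')$, $y' \in V$. Since $\sigma$ sends this fiber into the discrete space $\mathcal{G}_{y'}$, the restriction is locally constant; as $f^{-1}(y')$ is connected, it is constant, with some value $\bar{\sigma}(y') \in \mathcal{G}_{y'}$. This defines a map $\bar{\sigma} : V \ra \widetilde{\mathcal{G}}$ over $\id_V$, and by construction $\sigma = \bar{\sigma} \circ f$.

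The main step is to verify that $\bar{\sigma}$ is continuous, and here is where the openness of $f$ enters. Since $\sigma$ is a continuous section of the étalé pullback, every point $x \in f^{-1}(V)$ admits an open neighborhood $U_x$ on which $\sigma$ coincides with $\tau_x \circ f|_{U_x}$ for some section $\tau_x \in \mathcal{G}(W_x)$, with $W_x$ an open neighborhood of $f(x)$ in $V$. By openness of $f$, the image $f(U_x) \cap W_x$ is open in $V$. For any $y'' \in f(U_x) \cap W_x$, picking $x'' \in U_x$ with $f(x'') = y''$ yields $\bar{\sigma}(y'') = \sigma(x'') = \tau_x(y'')$, so $\bar{\sigma}$ agrees with $\tau_x$ on this open set. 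Hence $\bar{\sigma}$ is locally given by sections of $\mathcal{G}$, proving continuity and surjectivity of $\mathcal{G}(V) \ra (f^*\mathcal{G})(f^{-1}V)$.

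For injectivity, suppose $\tau \in \mathcal{G}(V)$ satisfies $f^*\tau = 0$ in $(f^*\mathcal{G})(f^{-1}V)$. Then $\tau \circ f$ vanishes as a section of the étalé map over $f^{-1}(V)$, so $\tau(f(x)) = 0$ for every $x \in f^{-1}(V)$; surjectivity of $f$ gives $f(f^{-1}(V)) = V$, hence $\tau = 0$. The mild obstacle I anticipate is the continuity check for $\bar{\sigma}$: it is conceptually the point where all three hypotheses (openness, surjectivity, connected fibers) interlock, and it requires being careful with the local structure of $\sigma$ coming from the sheafification/étalé description of $f^*\mathcal{G}$. Everything else is then bookkeeping with adjunctions.
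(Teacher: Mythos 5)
Your proof is correct and follows essentially the same route as the paper: pass to the \'etal\'e space, show the adjunction is bijective on sections over each open set, use discreteness of the stalks plus connectedness of the fibers to make a continuous section of $f^*\mathcal{G}$ constant along fibers, and use openness of $f$ to descend to a continuous section of $\mathcal{G}$. The only cosmetic difference is that where the paper cites the fact that an open surjection induces the final topology (so the fiberwise-constant map automatically factors continuously), you instead verify continuity of the factored map directly by exhibiting it locally as one of the sections $\tau_x$; this is a more explicit rendering of the same step, not a different strategy.
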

  
\begin{proof}
  If $U \subset Y$ is open we have to show that 
  $\mathcal{G}(U) \ra f^*\mathcal{G}(f\inv(U)$ is bijective.
  It is convenient for this to work with the {\'e}tale space associated
  to a sheaf. In this picture we have a pullback diagram
  \begin{equation*}
    \xymatrix{
      {f\inv(U) \times_U \mathcal{G}|_U} \ar@{}[r]|-{=} & 
      {f^*(\mathcal{G}|_U)} \ar[d]^{p'} \ar[r]^-{f'} &
      {\mathcal{G}|_U} \ar[d]^{p} \\
      &
      {f\inv(U)} \ar[r]^-f 
      &
      {U}
    }
  \end{equation*}
  and we need to show that continuous sections of $p$ correspond to
  continuous sections of $p'$ via $t \mapsto (\id_{f\inv(U)}, tf).$
  This map is obviously injective. If $s=(\id, \tau)$ is a continuous
  section of $p',$ then $f=p\tau,$ and the restriction of $\tau$ to
  any fiber of $f$ is constant. Since $Y$ carries the final topology
  with respect to $f: X \ra Y,$ this implies that there is a continuous
  map $s: U \ra \mathcal{G}|_U$ such that $sf=\tau,$ and hence
  $ps=\id_U.$
\end{proof}  

Let (P) be a property of topological spaces.
We say that a topological space $X$ is \define{open-locally (P)} if
any neighborhood of any point $x \in X$ contains an open neighborhood
$U$ of $x$ in $X$ such that $U$ has property (P).

Let
\begin{equation}
  \label{eq:cartesian-derived-fibered-BC}
  \xymatrix{
    {W} \ar[r]^g \ar[d]^q &
    {V} \ar[d]^p \\
    {Y} \ar[r]^f &
    {X}
  }
\end{equation}
be a Cartesian diagram of topological spaces. 

\begin{theorem}
  [{Fibered base change, cf.\ \cite{soergel-private-note-2010}.}]
  \label{t:fibered-base-change}
  In the above setting 
  \eqref{eq:cartesian-derived-fibered-BC}
  assume that $p$ and $q$ are locally trivial
  fiber bundles with open-locally connected fiber. Then the obvious
  natural transformation
  \begin{equation*}
    p^* \comp f_* \ra g_* \comp q^*
  \end{equation*}
  is an isomorphism of functors $\Sh(Y) \ra \Sh(V).$
\end{theorem}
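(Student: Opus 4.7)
The plan is to check that the canonical natural transformation $\tau: p^* \comp f_* \ra g_* \comp q^*$ is an isomorphism stalkwise on $V.$ Fix a point $v \in V$ and set $x := p(v) \in X.$ On the left hand side, $(p^*f_*\mathcal{G})_v = (f_*\mathcal{G})_x = \varinjlim_{U \ni x}\mathcal{G}(f^{-1}(U)),$ with $U$ running over open neighborhoods of $x$ in $X.$ The content of the theorem is that the analogous stalk on the right is computed by exactly the same colimit, and that $\tau_v$ realizes the identification.

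Since $p$ is a locally trivial fiber bundle with open-locally connected fiber, I would choose an open neighborhood $V_0$ of $v$ together with an open subset $X' \subset X$ and a homeomorphism $V_0 \cong X' \times F$ under which $p|_{V_0}$ becomes the projection $\pi_1: X'\times F \ra X'$; here $F$ is the (open-locally connected) fiber, and $v$ corresponds to a pair $(x,e).$ Since the square \eqref{eq:cartesian-derived-fibered-BC} is Cartesian, this trivialization transports to a trivialization of $q$ over $g^{-1}(V_0) = f^{-1}(X') \times F,$ so that $q|_{g^{-1}(V_0)}$ becomes the projection to $f^{-1}(X')$ and $g|_{g^{-1}(V_0)}$ becomes $f|_{f^{-1}(X')} \times \id_F.$ A cofinal system of neighborhoods of $v$ inside $V_0$ is then given by products $U \times F''$ where $U \subset X'$ is an open neighborhood of $x$ and $F'' \subset F$ is an open neighborhood of $e$; by the open-local connectedness assumption I can restrict the colimit to those $F''$ which are connected.

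For such a basic neighborhood we have $g^{-1}(U \times F'') = f^{-1}(U) \times F''$ and $q$ restricts to the projection $q': f^{-1}(U) \times F'' \ra f^{-1}(U),$ which is open, surjective, and has connected fibres. Moreover $q^*\mathcal{G}|_{f^{-1}(U) \times F''} = (q')^{\ast}(\mathcal{G}|_{f^{-1}(U)})$ canonically. Lemma~\ref{l:open-surjective-adjunction-isom} applied to $q'$ and $\mathcal{G}|_{f^{-1}(U)}$ therefore gives
\begin{equation*}
  \mathcal{G}(f^{-1}(U)) \sira (q')_{\ast}(q')^{\ast}(\mathcal{G}|_{f^{-1}(U)})(f^{-1}(U))
  = q^{\ast}\mathcal{G}(f^{-1}(U) \times F''),
\end{equation*}
and I would trace through the definition of $\tau$ to see that this isomorphism is exactly the map sending $s \in \mathcal{G}(f^{-1}(U))$ to the corresponding germ in $(g_*q^*\mathcal{G})_v.$ Passing to the colimit over $U \ni x$ and connected $F'' \ni e$—the latter index drops out because the expression does not depend on $F''$—yields
\begin{equation*}
  (g_*q^*\mathcal{G})_v \;=\; \varinjlim_{U \ni x,\;F''\text{ conn.}} q^{\ast}\mathcal{G}(f^{-1}(U) \times F'') \;\cong\; \varinjlim_{U \ni x}\mathcal{G}(f^{-1}(U)) \;=\; (p^*f_*\mathcal{G})_v,
\end{equation*}
with $\tau_v$ as the displayed isomorphism.

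The main obstacle I expect is bookkeeping: verifying that the stalk isomorphism just constructed is induced by the canonical natural transformation $\tau$ (and not merely an abstract isomorphism of the same source and target), and checking that restricting the indexing of the colimit to products of opens with the second factor connected is cofinal in the system of all open neighborhoods of $v$ inside $V_0.$ Both points reduce to the local-trivialization picture together with the open-local connectedness of the fiber $F,$ so once the setup is fixed they are essentially formal.
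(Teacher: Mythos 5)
Your argument is correct and follows essentially the same route as the paper's proof: reduce to the locally trivial product picture, exploit the cofinality of product neighborhoods with connected fiber factor, and invoke Lemma~\ref{l:open-surjective-adjunction-isom}. The only cosmetic difference is that the paper applies that lemma to both projections $p$ and $q$ (after reducing to sections over $X\times Z$ with $Z$ connected), whereas you compute the $p^*$-side stalk directly from $(p^*\mathcal{H})_v=\mathcal{H}_{p(v)}$ and apply the lemma only on the $q$-side.
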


\begin{proof}
  We can assume without loss of generality that $p$ is globally
  trivial. Then 
  \eqref{eq:cartesian-derived-fibered-BC}
  becomes
  \begin{equation}
    \label{eq:cartesian-fibered-BC-trivial}
    \xymatrix@C3cm{
      {W=Y \times Z} \ar[r]^{g=f\times \id_Z} \ar[d]^q &
      {V=X \times Z} \ar[d]^p \\
      {Y} \ar[r]^f &
      {X,}
    }
  \end{equation}
  where $Z$ is some open-locally connected topological space, and
  $p$ and $q$ are the first projections. Let $\mathcal{F} \in \Sh(Y).$
  It is enough to show that for any point $(x, z) \in X \times Z$ the
  morphism
  \begin{equation*}
    (p^* f_* \mathcal{F})_{(x,z)} \ra (g_* q^* \mathcal{F})_{(x,z)}
  \end{equation*}
  is an isomorphism.
  The open neighborhoods of $(x,z)$ in $X \times Z$ have a cofinal
  subsystem formed by neighborhoods of the form $X' \times Z',$ where
  $X'$ is an open neighborhood of $x$ in $X$ and $Z'$ is an open 
  connected
  neighborhood of $z$ in $Z.$ 
  Hence it is enough to show that
  \begin{equation*}
    (p^*f_* \mathcal{F})(X \times Z) \ra 
    (g_*q^*\mathcal{F})(X \times Z)
  \end{equation*}
  is an isomorphism if we assume in addition that $Z$ is connected.
  In this case 
  Lemma~\ref{l:open-surjective-adjunction-isom} shows that
  \begin{equation*}
    (p^*f_* \mathcal{F})(X \times Z) =
    (p_*p^*f_* \mathcal{F})(X) \sila (f_*\mathcal{F})(X)= \mathcal{F}(Y)
  \end{equation*}
  and 
  \begin{equation*}
    (g_*q^*\mathcal{F})(X \times Z) = 
    (q^*\mathcal{F})(Y \times Z) =
    (q_*q^*\mathcal{F})(Y) \sila \mathcal{F}(Y).
  \end{equation*}
  Under these identifications the above morphism corresponds to the
  identity of $\mathcal{F}(Y).$ 
\end{proof}

A map $f: X \ra Y$ is called \define{pre-$\infty$-acyclic} if the
adjunction morphism $\mathcal{F} \ra Rf_*(f^*(\mathcal{F}))$ is an
isomorphism for any any sheaf $\mathcal{F}$ on $Y.$
We say that a topological space is \define{pre-$\infty$-acyclic} if $X \ra
\point$ is pre-$\infty$-acyclic.

\begin{theorem}
  [{Derived fibered base change, cf.\ 
    \cite[p.~56, Lemma~C1]{BL}, \cite{soergel-private-note-2010}}]
  \label{t:derived-fibered-base-change}
  In the above setting 
  \eqref{eq:cartesian-derived-fibered-BC}
  assume that $p$ and $q$ are locally trivial
  fiber bundles with open-locally pre-$\infty$-acyclic fiber. 
  Then the obvious natural transformation
  \begin{equation*}
    p^* \comp Rf_* \ra Rg_* \comp q^*
  \end{equation*}
  is an isomorphism of functors $D^+(Y) \ra D^+(V).$
\end{theorem}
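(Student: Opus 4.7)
My plan is to bootstrap the derived statement from the non-derived Theorem~\ref{t:fibered-base-change} by computing $Rf_*$ via a bounded-below injective resolution and then verifying that pullbacks of injective sheaves along $q$ remain $g_*$-acyclic. First, since the assertion is local on $V$ and $p$ is a locally trivial bundle, I would reduce to the case where $p$ is globally trivial: $V = X \times Z$, $W = Y \times Z$, $g = f \times \id_Z$, with $p$ and $q$ the projections onto the first factor and with $Z$ the fiber, which is open-locally pre-$\infty$-acyclic.

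Given $\mathcal{F}^\bullet \in D^+(Y)$, I would choose a quasi-isomorphism $\mathcal{F}^\bullet \to \mathcal{I}^\bullet$ into a bounded-below complex of injective sheaves on $Y$, so that $p^* Rf_*(\mathcal{F}^\bullet) = p^* f_*(\mathcal{I}^\bullet)$. Applying Theorem~\ref{t:fibered-base-change} in each degree then yields a natural isomorphism of complexes $p^* f_*(\mathcal{I}^\bullet) \cong g_* q^*(\mathcal{I}^\bullet)$ on $V$. It therefore remains to show that the canonical map $g_* q^*(\mathcal{I}^\bullet) \to Rg_* q^*(\mathcal{I}^\bullet)$ is a quasi-isomorphism; since $\mathcal{I}^\bullet$ is bounded below, this reduces to proving that each $q^* \mathcal{I}^n$ is $g_*$-acyclic, i.e.\ that $R^i g_*(q^* \mathcal{I}) = 0$ for all $i > 0$ and every injective sheaf $\mathcal{I}$ on $Y$.

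I would check this vanishing on stalks at a point $(x, z) \in X \times Z$ using the cofinal system of neighborhoods $\{U \times Z'\}$ where $U \subset X$ is open containing $x$ and $Z' \subset Z$ is a pre-$\infty$-acyclic open neighborhood of $z$ (such $Z'$ exist by the open-local pre-$\infty$-acyclicity of $Z$). Writing $Y_U := f^{-1}(U)$ and $\pi : Y_U \times Z' \to Y_U$ for the projection, the stalk becomes $\varinjlim_U H^i(Y_U \times Z'; \pi^*(\mathcal{I}|_{Y_U}))$, and the Leray spectral sequence for $\pi$ gives
\[
E_2^{p,q} = H^p(Y_U; R^q \pi_* \pi^*(\mathcal{I}|_{Y_U})) \Longrightarrow H^{p+q}(Y_U \times Z'; \pi^*(\mathcal{I}|_{Y_U})).
\]
Since $\mathcal{I}$ is injective it is flabby, hence $\mathcal{I}|_{Y_U}$ is flabby and thus $\Gamma$-acyclic, so $H^p(Y_U; \mathcal{I}|_{Y_U}) = 0$ for $p > 0$; if one further has $R\pi_* \pi^*(\mathcal{I}|_{Y_U}) \cong \mathcal{I}|_{Y_U}$, then $E_2$ collapses and the total cohomology vanishes in positive degrees, as desired.

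The main obstacle is therefore the auxiliary relative statement that $R\pi_* \pi^* \mathcal{G} \cong \mathcal{G}$ for every sheaf $\mathcal{G}$ on $Y_U$, i.e.\ that the projection $\pi : Y_U \times Z' \to Y_U$ inherits pre-$\infty$-acyclicity from $Z'$. My approach is another stalk computation: at $y \in Y_U$, the stalk of $R^q \pi_* \pi^* \mathcal{G}$ equals $\varinjlim_{V, Z''} H^q(V \times Z''; \pi^* \mathcal{G})$ with $V$ open in $Y_U$ containing $y$ and $Z''$ a pre-$\infty$-acyclic open in $Z'$. For $q = 0$, Lemma~\ref{l:open-surjective-adjunction-isom} applied to the projection $V \times Z'' \to V$ (whose fiber $Z''$ is connected since pre-$\infty$-acyclicity forces $H^0(Z''; \underline{M}) = M$) identifies the stalk with $\mathcal{G}_y$. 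For $q > 0$, I would proceed by induction on cohomological degree via a flabby resolution of $\mathcal{G}|_V$ combined with pre-$\infty$-acyclicity of $Z''$ (so that each $R\Gamma(V \times Z''; \underline{M})$ collapses to $R\Gamma(V; \underline{M})$ on the relevant stalks) to obtain the vanishing. Once this auxiliary statement is in place, the spectral sequence argument above concludes the proof.
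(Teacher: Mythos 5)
Your high-level strategy---reduce to the globally trivial case, apply the non-derived Theorem~\ref{t:fibered-base-change} componentwise to a bounded-below resolution of $\mathcal{F}^\bullet$, and then show that the pullbacks along $q$ of the resolution's terms remain $g_*$-acyclic---is the same as the paper's. The crucial divergence is the choice of resolution: you take an arbitrary complex of injective sheaves, whereas the paper uses the Godement resolution, whose terms are of the special form $d_*d^*(\cdot)$ for $d\colon Y'\to Y$ the identity from the discretization $Y'$ of $Y.$ This is not a cosmetic choice; it is what makes the acyclicity step provable.

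The gap is in your last paragraph. You correctly isolate that everything hinges on the relative statement that $\pi\colon Y_U\times Z'\to Y_U$ is pre-$\infty$-acyclic, i.e.\ that $R\pi_*\pi^*\mathcal{G}\cong\mathcal{G}$ for every sheaf $\mathcal{G}$ on $Y_U$, but the argument you sketch for it does not close. The dimension-shifting induction via a short exact sequence $0\to\mathcal{G}\to\mathcal{J}^0\to\mathcal{K}\to 0$ with $\mathcal{J}^0$ flabby only succeeds if one already knows that $\pi^*\mathcal{J}^0$ is $\pi_*$-acyclic, and this fails for a general flabby (or even injective) sheaf: pullback along $\pi$ destroys flabbiness and there is no a priori reason for $R^{>0}\pi_*\pi^*\mathcal{J}^0$ to vanish. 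Your parenthetical appeal to ``$R\Gamma(V\times Z'';\underline{M})$ collapsing to $R\Gamma(V;\underline{M})$'' is itself a constant-sheaf instance of exactly the relative statement you are in the middle of proving, so it cannot be invoked. What unblocks this in the paper is the Godement structure: a term $d_*d^*\mathcal{G}$ is not merely flabby---it factors through the discrete space $Y'$, and combining the non-derived base change Theorem~\ref{t:fibered-base-change} with a Leray argument shows $q^*(d_*d^*\mathcal{G})\cong d'_*\bigl(r^*d^*\mathcal{G}\bigr)$ where $r$ is the projection from $Y'\times Z$. On the discrete base $Y'$ the space $Y'\times Z'$ decomposes as a disjoint union $\coprod_{y'\in Y'}\{y'\}\times Z'$, sheaf cohomology becomes a product over the pieces, and the absolute pre-$\infty$-acyclicity of $Z'$ applies fiber by fiber. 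That reduction through a discrete base---enabled specifically by the Godement resolution---is the missing idea; an arbitrary injective resolution does not carry that structure, which is why your auxiliary acyclicity claim is left without a proof.
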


\begin{proof}
  Again we can assume that $p$ is globally trivial, so that
  \eqref{eq:cartesian-derived-fibered-BC} is given by
  \eqref{eq:cartesian-fibered-BC-trivial}
  where $Z$ is now an open-locally pre-$\infty$-acyclic topological space.

  \textbf{Step 1:} 
  Assume in the situation 
  \eqref{eq:cartesian-fibered-BC-trivial}
  that $Y$ has the discrete topology. If $\mathcal{F}$ is a sheaf on
  $Y,$ then 
  $q^*(\mathcal{F})$ is
  $g_*$-acyclic.

  Fix $(x,z) \in X \times Z.$
  It is enough to show that the stalk
  $(R^ig_*(q^*(\mathcal{F})))_{(x,z)}$ vanishes for all $i > 0.$
  We can compute this stalk as the colimit of
  \begin{equation*}
    U \mapsto H^i(g\inv(U); q^*(\mathcal{F})),
  \end{equation*}
  where $U$ ranges over the open neighborhoods of $(x,z)$ in $X \times
  Z$ of the form $U=X' \times Z',$ where
  $X'$ is an open neighborhood of $x \in X$ and $Z'$ is an open 
  pre-$\infty$-acyclic
  neighborhood of $z$ in $Z.$ 
  Fix such a neighborhood $U= X' \times Z',$ and put $Y':= f\inv(X').$
  Then
  \begin{equation*}
    H^i(g\inv(U); q^*(\mathcal{F}))=
    H^i(Y' \times Z'; q^*(\mathcal{F}))
    = \prod_{y' \in Y'} H^i(\{y'\} \times Z'; q^*(\mathcal{F})|_{\{y'\}
      \times Z'})
  \end{equation*}
  If $c: Z' \ra \point$ is the projection, then, for every $y' \in
  Y',$ we have
  \begin{equation*}
     H^i(\{y'\} \times Z'; q^*(\mathcal{F})|_{\{y'\} \times Z'}) 
     = H^i(Z'; c^*(\mathcal{F}_{y'}))
     = H^i(Rc_*(c^*(\mathcal{F}_{y'}))),
  \end{equation*}
  and this vanishes for $i>0$ by the assumption on $Z'.$

  \textbf{Step 2:}
  Let $d:Y' \ra Y$ be the identity map, where $Y'$ is the set $Y$ equipped
  with the discrete topology. We consider the situation
  \eqref{eq:cartesian-fibered-BC-trivial}.
  If $\mathcal{F}$ is a sheaf on
  $Y,$ then $d_*d^*(\mathcal{F})$ is $f_*$-acyclic and $q^*d_*d^*(\mathcal{F})$ is
  $g_*$-acyclic. 

  The first claim is obvious since $d_*d^*(\mathcal{F})$ is the flabby
  sheaf of discontinuous sections of the {\'e}tale space of
  $\mathcal{F}$; this is the first step in the Godement 
  resolution of $\mathcal{F}.$
  To prove the second claim, we expand diagram 
  \eqref{eq:cartesian-fibered-BC-trivial} 
  to
  \begin{equation}
    \label{eq:cartesian-fibered-BC-trivial-extended}
    \xymatrix@C3cm{
      {Y' \times Z} \ar[r]^{d':=d\times \id_Z} \ar[d]^r &
      {W=Y \times Z} \ar[r]^{g=f\times \id_Z} \ar[d]^q &
      {V=X \times Z} \ar[d]^p \\
      {Y'} \ar[r]^d &
      {Y} \ar[r]^f &
      {X}
    }
  \end{equation}
  where $r$ is the projection and $d':= d \times \id_Z.$
  Step 1 applied to $d^*(\mathcal{F})$ shows that the sheaf
  $\mathcal{E}:=r^*d^*(\mathcal{F})$ 
  is acyclic both for $d'_*$ and $(g \comp d')_*.$ 
  The Leray-Grothendieck spectral sequence 
  then shows that the sheaf
  $d'_*(\mathcal{E})$ is $g_*$-acyclic.

  Note that any pre-$\infty$-acyclic space is connected.
  So we can apply Theorem~\ref{t:fibered-base-change}
  to the 
  left square in
  \eqref{eq:cartesian-fibered-BC-trivial-extended}
  and obtain an isomorphism
  \begin{equation*}
    q^*d_*d^*(\mathcal{F}) \sira 
    d'_*r^*d^*(\mathcal{F}) = d'_*(\mathcal{E})
  \end{equation*}
  of sheaves. Hence 
  $q^*d_*d^*(\mathcal{F})$ is $g_*$-acyclic.

  \textbf{Step 3:}
  Let $\mathcal{F}$ be a sheaf on $Y.$ Let $\mathcal{F} \hra
  \mathcal{G}$ be its Godement resolution. Since all components of
  $\mathcal{G}$ are the image under $d_*d^*$ of some sheaf on $Y,$
  Step 2 shows that the morphism
  $p^*(Rf_*(\mathcal{F})) \ra Rg_*(q^*(\mathcal{F}))$ is given by
  $p^*(f_*(\mathcal{G}) \ra g_*(q^*(\mathcal{G})),$ and this morphism
  is an isomorphism by Theorem~\ref{t:fibered-base-change}.

  Now, using suitable truncation functors, it is easy to generalize
  this result from the sheaf $\mathcal{F}$ to arbitrary objects of
  $D^b(Y)$ and $D^+(Y).$
\end{proof}

\def\cprime{$'$} \def\cprime{$'$} \def\cprime{$'$} \def\cprime{$'$}
  \def\Dbar{\leavevmode\lower.6ex\hbox to 0pt{\hskip-.23ex \accent"16\hss}D}
  \def\cprime{$'$} \def\cprime{$'$}
\providecommand{\bysame}{\leavevmode\hbox to3em{\hrulefill}\thinspace}
\providecommand{\MR}{\relax\ifhmode\unskip\space\fi MR }
\providecommand{\MRhref}[2]{%
  \href{http://www.ams.org/mathscinet-getitem?mr=#1}{#2}
}
\providecommand{\href}[2]{#2}


\end{document}